\documentclass[11pt,reqno]{amsart}
\usepackage{float}
\usepackage{fullpage}
\usepackage{amsmath,amsthm,verbatim,amssymb,amsfonts,amscd, graphicx,bbm,bm}
\usepackage{graphics}
\usepackage{mathtools}
\usepackage{mathrsfs}
\usepackage{esint}
\usepackage{appendix}
\usepackage{esint}
\usepackage{color}
\usepackage{tikz,tikz-cd}
\usepackage[all,cmtip]{xy}
\usetikzlibrary{decorations.pathreplacing,arrows.meta,calc,patterns,angles,quotes,decorations.markings}
\usetikzlibrary{decorations.pathmorphing}

\usepackage[pdfstartview=FitH, bookmarksnumbered=true,bookmarksopen=true, colorlinks=true, pdfborder={0 0 1}, citecolor=blue, linkcolor=blue,urlcolor=blue]{hyperref}

\newtheorem{theorem}{Theorem}
\newtheorem{proposition}[theorem]{Proposition}
\newtheorem{lemma}[theorem]{Lemma}

\theoremstyle{definition}
\newtheorem{remark}{Remark}

\newtheorem{definition}[theorem]{Definition}

\newcommand{\cref}[1]{Corollary~\ref{c.#1}}

\usepackage{xcolor}

\numberwithin{equation}{section}
\numberwithin{theorem}{section}

\newcommand{\R}{\mathbb{R}}
\newcommand{\N}{\mathbb{N}}

\newcommand{\eps}{\varepsilon}

\newcommand{\bC}{\mathbb{C}}

\newcommand{\Z}{\mathbb{Z}}

\newtheorem{Assumption}{Assumption}[section]

\newcommand{\mi}{\mathrm{i}}

\newcommand{\uinc}{u^{\rm in}}

\usepackage[textsize=tiny]{todonotes}

\title{Band-Edge Homogenization and the Sound-Soft Limit of Finite Bubbly Crystals}

\author{Habib Ammari}
\address[H. Ammari]{ETH Z\"urich, Department of Mathematics, Rämistrasse 101, 8092 Z\"urich, Switzerland and 
  Hong Kong Institute for Advanced Study, City University of Hong Kong, Kowloon Tong, Hong Kong Special Administrative Region}
\email{habib.ammari@math.ethz.ch}

\author{Yuxin Du}
\address[Y. Du]{Qiuzhen College, Tsinghua University, Beijing 100084}
\email{duyx23@mails.tsinghua.edu.cn}

\author{Xin Fu}
\address[X. Fu]{School of Science, Institute for Theoretical Sciences, Westlake University, Hangzhou,
Zhejiang Province, 310030, P.R. China}
\email{1550862645cf@gmail.com}

\author{Wenjia Jing}
\address[W. Jing]{Yau Mathematical Sciences Center, Tsinghua University, Beijing 100084 and Beijing Institute of Mathematical Sciences and Applications, Beijing 101408, P.R. China}
\email{wjjing@tsinghua.edu.cn}

\author{Moritz Melcher}
\address[M. Melcher]{ETH Z\"urich, Department of Mathematics, Rämistrasse 101, 8092 Z\"urich, Switzerland}
\email{moritz.melcher@math.ethz.ch}

\date{\today}

\keywords{Bubbly media; high-contrast homogenization; subwavelength resonances; Bloch band edges; variational capacitance operators; norm-resolvent convergence; sound-soft scattering limit}

\subjclass[2020]{35B27,35C20,35J05,35J25}

\begin{document}

\begin{abstract}
We establish a quantitative sound-soft scattering limit for a finite bubbly crystal near the upper edge of the first Bloch band of the corresponding infinite crystal. The inclusions form a dense periodic array of period $\eps$ in a bounded Lipschitz domain, and their density contrast is $\delta=\eps^2$, with fixed positive wave speeds. Under coordinate-reflection symmetry, we prove that the normalized capacitance symbol has a unique nondegenerate maximum on the Brillouin torus. Its Hessian defines a positive Dirichlet elliptic operator governing the limiting spectral detunings. A mean-constrained variational formulation allows us to compare the actual finite-array capacitance matrix with the truncated infinite-lattice operator and to prove exponential localization of their difference near the sample boundary. We obtain an $O(\eps)$ norm-resolvent approximation and connect it to the full acoustic scattering problem, retaining the second-order Bloch correction required by the frequency scaling. For rescaled detunings outside the effective Dirichlet spectrum, the exterior $L^2$ and far-field discrepancies are $O(\eps)$, while the interior $L^2$ field is $O(\eps^{1/2})$. Numerical experiments illustrate the far-field convergence and the effective spectral modes. Explicit one-dimensional calculations describe finer Fabry--P\'erot transmission windows and show that a fixed frequency strictly inside the first band need not have a unique scattering limit.
\end{abstract}

\maketitle

\tableofcontents

\section{Introduction}

Periodic assemblies of strongly contrasting inclusions can exhibit resonant wave phenomena at scales much smaller than the wavelength in the surrounding medium. In bubbly media, the contrast in density produces a low-frequency (also called subwavelength) resonant response that persists in the collective behavior of many inclusions. This mechanism underlies subwavelength band gaps and provides a setting in which microscopic geometry can strongly influence macroscopic propagation. Mathematical studies of these effects connect the acoustic transmission problem to capacitance operators and to the Bloch spectrum of an associated periodic medium; see, for example, \cite{sima-honeycomb,ammari2019bloch,ammari.davies.ea2024Functional}.

The behavior near a subwavelength band edge differs from the long-wavelength regime associated with quasi-momenta near the origin. At the corner of the Brillouin zone, neighboring cells carry alternating phases. The relevant large-scale description must therefore separate this rapidly varying cell structure from its slowly varying amplitude. For infinite periodic bubbly crystals, Bloch-wave analysis near the first band gap leads to an effective equation determined by the curvature of the dispersion relation \cite{ammari2019bloch}. However, for a bounded array, the termination of the lattice and its coupling to the surrounding homogeneous medium also influence the effective behavior. A quantitative scattering limit must control these boundary effects together with the simultaneous limits of small period and large contrast.

In this paper, we consider a periodic array contained in a bounded Lipschitz domain $\Omega\subset\R^d$, $d=2,3$, with a connected exterior. The reference inclusion is compactly contained in the unit cell and is invariant under reflections in the coordinate hyperplanes. We retain complete cells lying in $\Omega$ and impose the critical scaling $\delta=\eps^2$, where $\eps$ is the period and $\delta$ is the density contrast. The wave speeds in the inclusions and in the background remain fixed and positive. We tune the frequency according to
\[
 \omega_\eps=\omega_{*,\eps}-\tau\eps^2+O(\eps^4),
\]
where $\omega_{*,\eps}$ is the upper edge of the first Bloch band of the corresponding infinite periodic medium. In this scaling, the band-edge frequency has a positive limit, whereas the wavelength remains large relative to an individual cell.

Our analysis starts from a variational capacitance operator in which the unknowns are the averages of the field over the inclusions. This construction treats finite and infinite arrays within the same energy framework. It also allows us to compare the finite-array operator with the truncation of its infinite-lattice counterpart. The difference between the two operators is localized near the boundary of the array, with exponential decay in the lattice distance from the boundary. This localization is essential for deriving a continuum limit on a domain that does not need to be aligned with the lattice.

Let $\widehat{C}^\alpha$ denote the limiting capacitance symbol and let $M=(\pi,\cdots,\pi)$ denote the corner of the Brillouin torus. We prove that the assumed reflection symmetry makes $M$ the unique nondegenerate maximum of this symbol. The effective matrix and operator are
\[
 A_{\mathrm {eff}}=-\frac12\nabla_\alpha^2\widehat{C}^M,
 \qquad L_D=-\nabla\cdot A_{\mathrm{eff}}\nabla
 \quad\text{in }\Omega,
\]
with homogeneous Dirichlet boundary conditions. In particular, $A_{\mathrm{eff}}$ is positive definite. After removing the alternating phase and centering at the maximum, we prove an $O(\eps)$ norm-resolvent approximation of the finite-array capacitance operator by $L_D$. The proof uses boundary-layer estimates for continuum resolvent solutions and distinguishes the consistency estimates needed for the energy norm from those needed for the sharper $L^2$ operator norm. Endpoint regularity on Lipschitz domains provides the required control near $\partial\Omega$; see \cite{savare2002domain}.

We next relate this discrete limit to the full acoustic scattering problem. An exact decomposition separates the quasi-static lift of the bubble averages from a component with zero average in every inclusion. We prove uniform invertibility of the latter component and estimate its coupling to the bubble averages. A uniform expansion of the first Bloch eigenvalue is needed at this step: the exterior cell mass contributes at the same order as the detuning and must be retained to identify the limiting spectral parameter correctly. With $\omega_0=v_{\rm b}\sqrt{\widehat{C}^M}$, this parameter is
\[
 \sigma=\frac{2\omega_0\tau}{v_{\rm b}^2}.
\]

Our main scattering theorem applies when $\sigma\notin\mathrm{spec}(L_D)$. In this nonresonant regime, the total field converges outside $\Omega$ to the total field for scattering by a sound-soft obstacle occupying $\Omega$, while the interior field tends to zero in $L^2$. More precisely, for uniformly bounded incident-field trace norms, the exterior $L^2$ and far-field discrepancies are $O(\eps)$ and the interior $L^2$ norm is $O(\eps^{1/2})$. We also obtain an $O(\eps^{1/2})$ exterior $H^1$ estimate and an $O(\eps)$ local $H^1$ estimate away from the boundary. Thus, the effective Dirichlet operator 
determines the limiting spectral set excluded by the nonresonant convergence theorem, and the exterior nonresonant limit is described by a sound-soft boundary condition. The constants in these estimates are not asserted to remain bounded as $\sigma$ approaches the effective spectrum.

The distinction between nonresonant and exceptional detunings is also visible in our examples. Two-dimensional multiple-scattering computations illustrate the far-field convergence and compare demodulated bubble averages near selected discrepancy peaks with effective Dirichlet eigenfunctions. These comparisons provide numerical evidence for the role of the effective spectrum, without identifying finite-array peaks with limiting eigenvalues or complex scattering poles. In one dimension, an exact transfer-matrix calculation gives a more detailed description: it exhibits both the nonresonant sound-soft limit and a finer Fabry--P\'erot window with nonvanishing transmission and an enhanced interior field.

Finally, we complement our band-edge analysis in this paper with an interior-band analysis. We consider frequencies strictly inside the first Bloch band and separated from its upper edge. In one dimension, exact transfer formulas describe the transmission peaks and show that a fixed frequency can have different scattering limits along different sequences of periods. At a regular Bloch point in higher dimensions, Appendix \ref{append:B} formally derives a first-order equation for the leading bulk modulation, consistent with traveling-wave homogenization \cite{milton}. A complete scattering description for a finite crystal additionally requires boundary matching and uniform control of the resulting boundary-coupled resolvent. These higher-dimensional scattering questions remain open in the present work.


Previous work provides several complementary descriptions of resonant media. 
The effective properties of finite systems of high contrast resonators are established in \cite{hai2017,feppon2023,mourad2020}.
For infinite bubbly crystals, Bloch analysis near the first band gap yields an effective equation determined by the band curvature \cite{ammari2019bloch}. Capacitance approximations also describe finite resonator systems \cite{cbms}, and their spectral connection with infinite lattices has been studied through finite-array spectral convergence \cite{LMS25}. Quantitative homogenization for scattering by bounded periodic high-contrast composites is developed in \cite{du2026homogenization} for a soft-inclusion coefficient scaling. The present work concerns the acoustic density-and-bulk-modulus scaling of finite bubbly crystals, with a frequency approaching the moving upper first-band edge. The central issue is to retain the boundary termination of a dense finite array while passing from the microscopic transmission problem to a quantitative macroscopic scattering limit. Our main contribution is a quantitative scattering theory for dense finite bubbly crystals in the critical scaling $\delta=\eps^2$, at frequencies within $O(\eps^2)$ of the upper first-band edge. The analysis retains the boundary created by truncating the periodic medium and connects the resulting finite-array capacitance operator to an effective Dirichlet operator through an $O(\eps)$ norm-resolvent approximation. This yields a sound-soft limit for the full acoustic field away from the limiting Dirichlet spectrum. To the best of our knowledge, this work is the first to analyze the scattering responses of finite systems of periodically distributed high contrast subwavelength resonators at the subwavelength regime, closing an important gap in the field.

The paper is organized as follows. Section \ref{sec:mainresult} introduces the scattering problem, the scaling, and the main result. Section \ref{sec:prelimi} develops the variational capacitance operators and the uniform Bloch-band analysis. Section \ref{secreformulation} gives the exact operator decomposition and the boundary and coupling estimates. Section \ref{secresolventlimitofA} establishes the discrete-to-continuum resolvent approximation and identifies the reduced acoustic operator. Section \ref{secproofofmain} proves the scattering convergence theorem. Section \ref{sec:numerical-experiments} presents numerical illustrations. Appendix \ref{app:one-dimensional-transfer-matrix} contains the explicit one-dimensional transfer-matrix analysis, including the band-edge regime and the fixed interior-band regime. Appendix \ref{append:B} gives a formal derivation of the transport equation describing the effective behavior of the amplitudes at a regular Bloch point.

\section{Problem setup and main results}\label{sec:mainresult}

\subsection{Problem setup}

We first describe the finite-size bubble phononic crystal under consideration. Let $d \in \{2,3\}$. Let $Y=(-\frac{1}{2},\frac{1}{2})^d$ be the unit cell in $\R^d$, and let $D \Subset Y$ be a bounded and simply connected Lipschitz domain. Let $\Omega$ be a bounded and simply connected Lipschitz domain in $\mathbb{R}^d$ and assume that $\R^d \setminus \overline{\Omega}$ is connected. For any $\varepsilon \in (0,1)$, we define
\begin{equation}\label{def0D}
    \mathcal{I}_{\varepsilon}:=\{ n \in \mathbb{Z}^d : \varepsilon (n+Y) \Subset \Omega \} \qquad \mathrm{and} \qquad D_{\varepsilon}:= \bigcup_{n\in \mathcal{I}_{\varepsilon}} \varepsilon (n+D) .
\end{equation}
Physically, $D_{\varepsilon}$ denotes the union of a periodic array of bubbles with period $\varepsilon$ contained in $\Omega$, and the distance between $D_{\varepsilon}$ and $\partial \Omega$ is of order $\varepsilon$. See Figure~\ref{finitecrystal}.

\begin{figure}
    \centering
    \begin{tikzpicture}[scale=1]

\def\N{5}
\def\L{5}


    \draw[dashed] (-0.3,0) -- (5.5,0);
    \draw[dashed] (-0.3,1) -- (5.9,1);
    \draw[dashed] (-0.25,2) -- (6.3,2);
    \draw[dashed] (-0.25,3) -- (6.5,3);
    \draw[dashed] (-0.35,4) -- (6.8,4);
    \draw[dashed] (-0.55,5) -- (7.1,5);
    \draw[dashed] (3,6) -- (7,6);

    \draw[dashed] (0,-0.2) -- (0,5.5);
    \draw[dashed] (1,-0.5) -- (1,5.7);
    \draw[dashed] (2,-0.7) -- (2,5.9);
    \draw[dashed] (3,-0.9) -- (3,6);
    \draw[dashed] (4,-0.9) -- (4,6.15);
    \draw[dashed] (5,-0.3) -- (5,6.2);
    \draw[dashed] (6,1.1) -- (6,6.3);

\foreach \i in {0,...,4}{
    \foreach \j in {0,...,4}{
        \begin{scope}[shift={(\i+0.5,\j+0.5)}]
            \draw[thick]
            plot[smooth cycle, tension=0.9] coordinates {
                ( 0.00,  0.22)
                ( 0.10,  0.18)
                ( 0.18,  0.10)
                ( 0.22,  0.00)
                ( 0.18, -0.10)
                ( 0.10, -0.18)
                ( 0.00, -0.22)
                (-0.10, -0.18)
                (-0.18, -0.10)
                (-0.22,  0.00)
                (-0.18,  0.10)
                (-0.10,  0.18)
            };
        \end{scope}
    }
}

\begin{scope}[shift={(3.5,5.5)}]
    \draw[thick]
        plot[smooth cycle, tension=0.9] coordinates {
            ( 0.00,  0.22)
            ( 0.10,  0.18)
            ( 0.18,  0.10)
            ( 0.22,  0.00)
            ( 0.18, -0.10)
            ( 0.10, -0.18)
            ( 0.00, -0.22)
            (-0.10, -0.18)
            (-0.18, -0.10)
            (-0.22,  0.00)
            (-0.18,  0.10)
            (-0.10,  0.18)
        };
\end{scope}

\begin{scope}[shift={(4.5,5.5)}]
    \draw[thick]
        plot[smooth cycle, tension=0.9] coordinates {
            ( 0.00,  0.22)
            ( 0.10,  0.18)
            ( 0.18,  0.10)
            ( 0.22,  0.00)
            ( 0.18, -0.10)
            ( 0.10, -0.18)
            ( 0.00, -0.22)
            (-0.10, -0.18)
            (-0.18, -0.10)
            (-0.22,  0.00)
            (-0.18,  0.10)
            (-0.10,  0.18)
        };
\end{scope}

\begin{scope}[shift={(5.5,5.5)}]
    \draw[thick]
        plot[smooth cycle, tension=0.9] coordinates {
            ( 0.00,  0.22)
            ( 0.10,  0.18)
            ( 0.18,  0.10)
            ( 0.22,  0.00)
            ( 0.18, -0.10)
            ( 0.10, -0.18)
            ( 0.00, -0.22)
            (-0.10, -0.18)
            (-0.18, -0.10)
            (-0.22,  0.00)
            (-0.18,  0.10)
            (-0.10,  0.18)
        };
\end{scope}

\begin{scope}[shift={(5.5,4.5)}]
    \draw[thick]
        plot[smooth cycle, tension=0.9] coordinates {
            ( 0.00,  0.22)
            ( 0.10,  0.18)
            ( 0.18,  0.10)
            ( 0.22,  0.00)
            ( 0.18, -0.10)
            ( 0.10, -0.18)
            ( 0.00, -0.22)
            (-0.10, -0.18)
            (-0.18, -0.10)
            (-0.22,  0.00)
            (-0.18,  0.10)
            (-0.10,  0.18)
        };
\end{scope}

\begin{scope}[shift={(5.5,3.5)}]
    \draw[thick]
        plot[smooth cycle, tension=0.9] coordinates {
            ( 0.00,  0.22)
            ( 0.10,  0.18)
            ( 0.18,  0.10)
            ( 0.22,  0.00)
            ( 0.18, -0.10)
            ( 0.10, -0.18)
            ( 0.00, -0.22)
            (-0.10, -0.18)
            (-0.18, -0.10)
            (-0.22,  0.00)
            (-0.18,  0.10)
            (-0.10,  0.18)
        };
\end{scope}

\begin{scope}[shift={(5.5,2.5)}]
    \draw[thick]
        plot[smooth cycle, tension=0.9] coordinates {
            ( 0.00,  0.22)
            ( 0.10,  0.18)
            ( 0.18,  0.10)
            ( 0.22,  0.00)
            ( 0.18, -0.10)
            ( 0.10, -0.18)
            ( 0.00, -0.22)
            (-0.10, -0.18)
            (-0.18, -0.10)
            (-0.22,  0.00)
            (-0.18,  0.10)
            (-0.10,  0.18)
        };
\end{scope}

\begin{scope}[shift={(5.5,1.5)}]
    \draw[thick]
        plot[smooth cycle, tension=0.9] coordinates {
            ( 0.00,  0.22)
            ( 0.10,  0.18)
            ( 0.18,  0.10)
            ( 0.22,  0.00)
            ( 0.18, -0.10)
            ( 0.10, -0.18)
            ( 0.00, -0.22)
            (-0.10, -0.18)
            (-0.18, -0.10)
            (-0.22,  0.00)
            (-0.18,  0.10)
            (-0.10,  0.18)
        };
\end{scope}

\draw[
    thick,
    -{Latex[length=3mm]},
    decorate,
    decoration={
        snake,
        amplitude=1.5pt,
        segment length=8pt
    }
]
(-1.8,6.0) -- (-0.7,5.1);

\draw[
    thick,
    -{Latex[length=3mm]},
    decorate,
    decoration={
        snake,
        amplitude=1.5pt,
        segment length=8pt
    }
]
(7.1,5.1) -- (8.2,6.0);

\node at (-2.0,6.3)
{$u^{\mathrm{in}}_{\varepsilon}$};
\node at (8.4,6.3)
{$u^{\mathrm{s}}_{\varepsilon}=u_\varepsilon - \uinc_\varepsilon$};
\node at (2.8,6.4)
{{\huge $\Omega$}};

\draw[dashed] (5,0) -- (9,1);
\draw[dashed] (4,1) -- (7,3);
\draw[dashed] (7,1) rectangle (9,3);
\draw[thick, cm={2,0,0,2,(8cm,2cm)}]
    plot[smooth cycle, tension=0.9] coordinates {
        ( 0.00,  0.22)
        ( 0.10,  0.18)
        ( 0.18,  0.10)
        ( 0.22,  0.00)
        ( 0.18, -0.10)
        ( 0.10, -0.18)
        ( 0.00, -0.22)
        (-0.10, -0.18)
        (-0.18, -0.10)
        (-0.22,  0.00)
        (-0.18,  0.10)
        (-0.10,  0.18)
    };

\node at (8,2)
{$D$};
\node at (8.5,2.7)
{$Y\setminus \overline{D}$};

\draw[thick, <->] (9.2,1) -- (9.2,3);
\node at (9.4,2)
{$1$};

\draw[thick, <->] (4,-0.2) -- (5,-0.2);
\node at (4.5,-0.4)
{$\varepsilon$};

\fill (2.5,2.5) circle (1pt);
\node at (2.7,2.7)
{$O$};

\draw[thick, dashed] (-0.5,0.5) -- (-0.2,2.5) -- (-0.7,5);
\draw[thick, dashed]
    plot[smooth] coordinates {
        (-0.5,0.5)
        (0,-0.2)
        (4,-1)
        (6,1)
        (7,6)
        (3,6)
        (-0.2,5.5)
        (-0.7,5)
    };
                
\end{tikzpicture}
    \caption{An illustration of the geometry, where $O$ denotes the origin.
    }
    \label{finitecrystal}
\end{figure}

We denote by $\rho_{\rm b}$ and $\kappa_{\rm b}$ the density and the bulk modulus of the air inside the bubbles $D_\varepsilon$,
respectively, and by $\rho$ and $\kappa$ the corresponding parameters for the matrix medium in $\mathbb{R}^d \setminus \overline{D_\varepsilon}$. We assume that $\rho_{\rm b},\kappa_{\rm b}, \rho,\kappa >0.$

For an incident frequency $\omega_\varepsilon>0$, which will be chosen later, let the wavenumber be
\begin{equation}
    k_\varepsilon := \sqrt{\frac{\rho}{\kappa}} \omega_\varepsilon,
\end{equation}
and $\uinc_\varepsilon$ be the incident plane wave propagating along some direction $\theta \in \mathbb{S}^{d-1}$:
\begin{equation}\label{inciplanewave}
    \uinc_\varepsilon (x) := e^{\mi  k_\varepsilon \theta \cdot x}, \qquad \forall x\in \mathbb{R}^d.
\end{equation}

In this context, a function $u$ is said to satisfy the Sommerfeld radiation condition with wavenumber $k$, denoted by $u\in\mathrm{SRC}(k)$, if
\begin{equation}\label{src}
    (\partial_{|x|}  - \mi k ) u(x) =\mathcal{O}(|x|^{-(d+1)/2}) \qquad \mathrm{as}\ |x| \rightarrow \infty,
\end{equation}
where $\partial_{|x|} := \frac{x}{|x|}\cdot\nabla$ denotes the radial derivative.

With the incident acoustic plane wave $\uinc_\varepsilon$ satisfying \eqref{inciplanewave}, the total field $u_\varepsilon$ propagating over the finite-size bubble phononic crystal satisfies the following scattering problem:
\begin{equation}\label{maineq}
\left\{
\begin{aligned}
\nabla\cdot\frac{1}{\rho}\nabla u_\varepsilon +\frac{\omega_\varepsilon^2}{\kappa}u_\varepsilon &=0 &&\text{in }\mathbb{R}^d\setminus \overline{D_\varepsilon},\\
\nabla\cdot\frac{1}{\rho_{\rm b}}\nabla u_\varepsilon +\frac{\omega_\varepsilon ^2}{\kappa_{\rm b}}u_\varepsilon &=0 &&\text{in } D_\varepsilon,\\
u_\varepsilon|_+-u_\varepsilon|_-&=0 &&\text{on }\partial D_\varepsilon,\\
\left.\frac{1}{\rho}\frac{\partial u_\varepsilon}{\partial \nu}\right|_+ - \left. \frac{1}{\rho_{\rm b}}\frac{\partial u_\varepsilon}{\partial \nu} \right|_- &=0
&&\text{on }\partial D_\varepsilon,\\
u_\varepsilon - \uinc_\varepsilon
&\in\text{SRC}(k_\varepsilon).
\end{aligned}
\right.
\end{equation}
Here, $\partial/\partial\nu$ denotes the outward normal derivative and $|_\pm$ denote the limits from outside and inside $D_\varepsilon$, respectively. 

The main goal of this paper is to establish the limiting behavior of $u_{\varepsilon}$ as $\varepsilon \rightarrow 0$ quantitatively.

\subsection{The incident frequency near the subwavelength band edge}

Let $\mathcal{B}_\varepsilon= \R^d /(2\pi\varepsilon^{-1} \Z^d)$ be the Brillouin zone for the corresponding periodic structure.
The Bloch eigenvalues and eigenfunctions are solutions to the following $\alpha$-quasiperiodic equations for each $\alpha \in \mathcal{B}_\varepsilon$:
\begin{equation}\label{blocheq}
\left\{
\begin{aligned}
\nabla\cdot\frac{1}{\rho}\nabla u+\frac{\omega^2}{\kappa}u&=0 &&\text{in }\varepsilon(Y\setminus \overline{D}),\\
\nabla\cdot\frac{1}{\rho_{\rm b}}\nabla u+\frac{\omega^2}{\kappa_{\rm b}}u&=0 &&\text{in }\varepsilon D,\\
u|_+-u|_-&=0 &&\text{on }\partial (\varepsilon D),\\
\left.\frac{1}{\rho}\frac{\partial u}{\partial \nu}\right|_+ - \left. \frac{1}{\rho_{\rm b}}\frac{\partial u}{\partial \nu} \right|_- &=0
&&\text{on }\partial (\varepsilon D),\\
e^{-\mi \alpha\cdot x}u
&\text{ is periodic}.
\end{aligned}
\right.
\end{equation}

Let
\[
v=\sqrt{\frac{\kappa}{\rho}},\qquad v_{\rm b}=\sqrt{\frac{\kappa_{\rm b}}{\rho_{\rm b}}},\qquad k=\frac{\omega}{v},\qquad k_{\rm b}=\frac{\omega}{v_{\rm b}}
\]
be, respectively, the speed of sound outside and inside the bubbles and the wavenumber outside and inside the bubbles. We also introduce
the dimensionless contrast parameter
\[
\delta=\frac{\rho_{\rm b}}{\rho}.
\]
For bubbly media, we assume that $\delta\ll1$, justifying the high contrast nature of the media. In a realistic setup, $\delta$ may
be of the order of $10^{-3}$. On the other hand, we assume that
\[
\frac{k^2}{k_{\rm b}^2} = \frac{v_{\rm b}^2}{v^2} = \frac{\rho\kappa_{\rm b}}{\rho_{\rm b}\kappa} = O(1),
\]
\emph{i.e.}, the wave numbers inside and outside the bubbles are comparable and are of order one.

It is known that \eqref{blocheq} has nontrivial solutions for discrete values of $\omega$ that are called Bloch eigenvalues. These eigenvalues can be arranged in the following increasing manner:
\begin{equation}
0\leq \omega_{1,\varepsilon}^{\alpha}\leq \omega_{2,\varepsilon}^{\alpha}\leq\cdots .
\end{equation}
We have the following band structure of propagating frequencies for the given periodic structure:
\begin{equation}
\left[0,\max_{\alpha \in \mathcal{B}_\varepsilon}\omega_{1,\varepsilon}^{\alpha}\right] \cup \left[\min_{\alpha \in \mathcal{B}_\varepsilon}\omega_{2,\varepsilon}^{\alpha}, \max_{\alpha \in \mathcal{B}_\varepsilon}\omega_{2,\varepsilon}^{\alpha }\right] \cup \left[\min_{\alpha \in \mathcal{B}_\varepsilon}\omega_{3,\varepsilon}^{\alpha}, \max_{\alpha \in \mathcal{B}_\varepsilon}\omega_{3,\varepsilon}^{\alpha}\right]
\cup\cdots .
\end{equation}

In \cite{ammari2017subwavelength}, it is shown that there is a subwavelength band-gap in the above band structure for fixed $\varepsilon$ (say $\varepsilon=1$) and sufficiently small $\delta$. More precisely, for $\varepsilon =1$, one has
\begin{equation}
\omega_{*,1} :=
\max_{\alpha \in \mathcal{B}_1}\omega_{1,1}^{\alpha} = O(\delta^{1/2}) \ll \min_{\alpha\in \mathcal{B}_1}\omega_{2,1}^{\alpha} = 
O(1).
\end{equation}
Thus, the first band $\alpha \mapsto \omega_{1,\varepsilon}^\alpha$ is called the \textit{subwavelength band}.
In this paper, we investigate the asymptotic properties of the scattering field when the incident frequency $\omega_\varepsilon$ is close to the critical frequency $\omega_{*,\varepsilon}$ where the subwavelength band gap opens for $\varepsilon\ll1$. By a scaling argument, it can be shown that
\begin{equation}
\omega_{*,\varepsilon}=\frac{1}{\varepsilon}\omega_{*,1}.
\end{equation}
In order to fix the critical frequency in the limit when $\varepsilon$ tends to zero, we rescale the contrast parameter $\delta$ as follows:
\begin{equation}
\delta=\varepsilon^2.
\end{equation}
Then the critical frequency remains of order one in the limiting process when $\varepsilon$ tends to zero. Thus, we are in a situation where the wavelength (of the free space) is of order one and the cell size is of order $\varepsilon\ll1$.

\begin{figure}
    \centering
    \begin{tikzpicture}[scale=1.2]

\draw[->, thick] (-3.5,0) -- (3.8,0) node[right] {$\alpha$};
\draw[->, thick] (0,-1) -- (0,4) node[above] {$\omega$};

\draw[thick]
    plot[domain=-2.2:0, samples=100]
    (\x,{0.75*sqrt(abs(\x))});

\draw[thick]
    plot[domain=0:2.2, samples=100]
    (\x,{0.75*sqrt(abs(\x))});

\draw[thick]
    plot[domain=-2.2:2.2, samples=100]
    (\x,{2.35+0.18*\x*\x});

\draw[dashed, thick]
    (-2.2,1.1) -- (2.5,1.1);

\draw[dashed, thick]
    (-2.5,2.3) -- (2.2,2.3);

\draw[dashed, thick]
    (-2.2,0) -- (-2.2,1.2);

\draw[thick, <->]
    (-2.5,0) -- (-2.5,2.3);
\draw[thick, <->]
    (2.5,0) -- (2.5,1.1);

\draw[dashed, thick]
    (2.2,0) -- (2.2,1.2);

\node[below] at (-2.2,0) {$M_\varepsilon$};
\node[below] at (-0.2,0) {$\Gamma$};
\node[below] at (2.2,0) {$M_\varepsilon$};
\node[above left] at (0,1.1) {$\omega_{*,\varepsilon}$};
\fill (0,1.1) circle (1pt);

\node[left] at (-2.5,1.1) {$O(\varepsilon^{-1})$};
\node[right] at (2.5,0.5) {$O(1)$};

\end{tikzpicture}
    \caption{A graph illustration of the band structure for bubbly media with period $\varepsilon$ and contrast $\delta = \varepsilon^2$, where $\Gamma$ denotes
    the origin of the Brillouin zone, and $M_\varepsilon$ denotes the corner of the Brillouin zone.}
    \label{blochband}
\end{figure}

Let $M_\varepsilon$ be the corner of the Brillouin zone $\mathcal{B}_\varepsilon$, which is unique as $\mathcal{B}_\varepsilon$ can be identified as a torus. We choose 
$M_\varepsilon =(\pi/\varepsilon,\pi/\varepsilon)$ for $d=2$ and $M_\varepsilon =(\pi/\varepsilon,\pi/\varepsilon,\pi/\varepsilon)$ for $d=3$. Due to time reversal symmetry, $M_\varepsilon$ is a critical point of $\alpha \mapsto \omega^\alpha_{1,\varepsilon}$.
To simplify the analysis, we impose the following assumption:
\begin{Assumption}\label{assumpM}
    The maximum $\omega_{*,\varepsilon}$ is attained at $M_\varepsilon$, and the Hessian matrix $\nabla_\alpha^2 \omega_{1,\varepsilon}^{M_\varepsilon}$ of the subwavelength band at $M_\varepsilon$ is negative definite.
\end{Assumption}

Assumption \ref{assumpM} is usually guaranteed by suitable symmetry conditions. A simple and verifiable sufficient condition is
\begin{equation}\label{symcond}
    D\subset \mathbb{R}^d \text{ is symmetric with respect to the coordinate planes}.
\end{equation}
That is, for $d=2$, for every $(x_1,x_2) \in D$, $(x_1,-x_2)$ and $(-x_1,x_2) $ are both in $D$. For $d=3$, for every $(x_1,x_2,x_3) \in D$, $(x_1,x_2,-x_3)$, $(x_1,-x_2,x_3)$ and $(-x_1,x_2,x_3)$ are in $D$. In \cite{ammari2019bloch}, it was shown that, when $d=3$ and \eqref{symcond} holds, the maximum $\omega_{*,\varepsilon}$ is attained at $M_\varepsilon$, and the Hessian matrix $\nabla_\alpha^2 \omega_{1,\varepsilon}^{M_\varepsilon}$ is negative semi-definite. In Section~\ref{secsecorder}, we strengthen this result by proving that $\nabla_\alpha^2 \omega_{1,\varepsilon}^{M_\varepsilon}$ is, in fact, negative definite. Consequently, the symmetry condition \eqref{symcond} implies Assumption~\ref{assumpM}. Throughout the paper,
we assume that \eqref{symcond} is satisfied.

We set the incident frequency to be
\begin{equation}
    \omega_{\varepsilon} = \omega_{*,\varepsilon} - \tau \varepsilon^2 +O(\varepsilon^4)
\end{equation}
for some $\tau\in \mathbb{R}$. So, $\omega_{\varepsilon}$ is near the upper edge of the subwavelength band. In \cite{ammari2017subwavelength}, the leading-order term of $\omega_{*,\varepsilon}$ with respect to $\varepsilon$ was derived and characterized in terms of the capacitance matrix. However, this approximation is insufficient for our purpose, as computation of the next-order correction is required. The derivation of this higher-order expansion will be carried out in Section \ref{secsecorder}.

\subsection{Main results}

In Proposition \ref{propexpansionofband}, we prove that
\begin{equation}
    \omega_{*,\varepsilon} = \omega_0 +  O(\varepsilon^2), \qquad \omega_0:= v_{\rm b} \sqrt{\widehat{C}^M},
\end{equation}
where $\widehat{C}^M$ is the capacitance defined in \eqref{minienerquasip1}. Let $k_0$ be the limiting wave number:
\begin{equation}
    k_0 := \frac{\omega_0}{v} = \frac{v_{\rm b} }{v} \sqrt{\widehat{C}^M}.
\end{equation}

Define the limiting soft-obstacle scattering problem
\begin{equation}\label{problimit}
\left\{
\begin{aligned}
    & (\Delta +k_0^2) \widehat{u}_0 =0 && \mathrm{in} \ \mathbb{R}^d\setminus \overline{\Omega}, \\
    & \widehat{u}_0|_{\partial\Omega} =0, \\ 
    & \widehat{u}_0 - u^{\mathrm{in}} \in \mathrm{SRC}(k_0),
\end{aligned}\right.
\end{equation}
where $u^{\mathrm{in}} $ is the limiting plane wave of $u^{\mathrm{in}}_\varepsilon$, \emph{i.e.},
\begin{equation}
    \uinc  (x) := e^{\mi  k_0 \theta \cdot x}, \qquad \forall x\in \mathbb{R}^d.
\end{equation}
Hereafter, we identify $\widehat{u}_0$ with its zero extension on $\mathbb{R}^d$. For the incident wave, define the data
\begin{equation}
    \mathcal{N}^{\mathrm{in}} := \| \uinc \|_{H^1(\partial \Omega)} + \left\|\left.\frac{\partial u^{\mathrm{in}}}{\partial\nu}\right|_{\partial\Omega}\right\|_{L^2(\partial\Omega)} .
\end{equation}

In Section \ref{secpropofcapa}, we prove that $M=(\pi,\cdots,\pi)$ is the unique maximal point of $\widehat{C}^\alpha$ over $\alpha \in \mathcal{B}_1$, and the Hessian of $\widehat{C}^\alpha$ at $M$ is negative-definite. Therefore, we are allowed to define
the second-order elliptic differential operator
\begin{equation}\label{defLDintro}
   L_D := -\nabla \cdot (A_{\mathrm{eff}} \nabla)  
\end{equation}
on $\Omega$ with the zero Dirichlet boundary condition on $\partial \Omega$, where
\begin{equation}\label{defAeffintro}
    A_{\mathrm{eff}}:= -\frac{1}{2}\nabla_\alpha^2 \widehat{C}^M >0 .
\end{equation}
We regard $L_D$ as an unbounded operator on $L^2(\Omega)$ with the domain
\begin{equation}
    D(L_D):= \{ u \in H^1_0(\Omega): L_D u \in L^2(\Omega)\}.
\end{equation}
The spectrum of $L_D$ is discrete and is denoted by $\mathrm{spec}\,(L_D)$.

Throughout this paper, we denote by $B_R$ the open ball of radius $R>0$ centered at the origin. The main result is as follows.

\begin{theorem}\label{themaintheorem}
    Fix $\tau \in \mathbb{R}$, and assume that 
    \begin{equation}
        \omega_\varepsilon = \omega_{*,\varepsilon} - \tau \varepsilon^2 + O(\varepsilon^4) .
    \end{equation}
    Here, the expansion formula of $\omega_{*,\varepsilon}$ up to second-order is given in \eqref{eq:omega-edge-expansion}. Assume that
    \begin{equation}
        \sigma :=\frac{2\omega_0}{v_{\rm b}^2} \tau = \frac{2\sqrt{\widehat{C}^M}}{v_{\rm b}}\tau\notin \mathrm{spec}\,(L_D).
    \end{equation}
    Let $u_\varepsilon$ be the solution of the problem \eqref{maineq}, and let $\widehat u_0$ be the solution of the limiting soft-obstacle scattering problem \eqref{problimit}. Then, for every fixed ball $B_R$ such that $\overline\Omega\Subset B_R$, there exists $C>0$, depending only on $D,\Omega,R,\rho,\rho_{\rm b},\kappa,\kappa_{\rm b},|\tau|$, and $\mathrm{dist}\,(\sigma,\mathrm{spec}\,(L_D))$, such that
\begin{equation}\label{eq:section-six-main-interior-and-exterior-L2-rates1}
    \|u_\varepsilon\|_{L^2(\Omega)}\leq C\varepsilon^{1/2}\mathcal N^{\mathrm{in}}, \qquad \|u_\varepsilon-\widehat u_0\|_{L^2(B_R\setminus\overline\Omega)}\leq C\varepsilon\mathcal N^{\mathrm{in}},
\end{equation}
and
\begin{equation}\label{eq:section-six-main-global-L2-and-exterior-H1-rates1}
     \|u_\varepsilon-\widehat u_0\|_{H^1(B_R\setminus\overline\Omega)}\leq C\varepsilon^{1/2}\mathcal N^{\mathrm{in}}.
\end{equation}
Moreover, for every compact set $K\Subset\mathbb R^d\setminus\overline\Omega$, there exists $C>0$, additionally depending on $K$, such that
\begin{equation}\label{eq:section-six-main-local-H1-rate1}
    \|u_\varepsilon-\widehat u_0\|_{H^1(K)}\leq C\varepsilon\mathcal N^{\mathrm{in}}.
\end{equation}
Consequently, if $u_\varepsilon^{\mathrm s}:=u_\varepsilon-u_\varepsilon^{\mathrm{in}}$ and $\widehat u_0^{\mathrm s}:=\widehat u_0-u^{\mathrm{in}}$, and if $u_\varepsilon^{\mathrm s,\infty}$ and $\widehat u_0^{\mathrm s,\infty}$ denote their far-field patterns, then, for any $m\in \mathbb{N}$, there exists $C>0$, additionally depending on $m$, such that
\begin{equation}\label{eq:section-six-main-far-field-rate1}
    \|u_\varepsilon^{\mathrm s,\infty}-\widehat u_0^{\mathrm s,\infty}\|_{C^m(\mathbb S^{d-1})}\leq C\varepsilon\mathcal N^{\mathrm{in}}. 
\end{equation}
\end{theorem}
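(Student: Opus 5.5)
The plan is to reduce the acoustic problem \eqref{maineq} to a discrete system for the bubble averages, and then pass to the continuum limit through the norm-resolvent approximation of Section~\ref{secresolventlimitofA}.

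\emph{Step 1: exact reformulation.} Using the decomposition of Section~\ref{secreformulation}, I write $u_\varepsilon$ in $\mathbb R^d\setminus\overline{D_\varepsilon}$ as the quasi-static lift of the vector of bubble averages $(c_n)_{n\in\mathcal I_\varepsilon}$, plus a remainder $w_\varepsilon$ with zero average in every inclusion, plus the exterior radiating part. The remainder problem is uniformly invertible, so $w_\varepsilon$ can be eliminated. This gives a reduced system
\[
\bigl(\varepsilon^{-2}(C_\varepsilon-\widehat C^M)-\sigma+E_\varepsilon\bigr)\,\tilde c=F_\varepsilon,
\]
where $\tilde c_n=(-1)^{|n|}c_n$ is the demodulated amplitude and $C_\varepsilon$ is the finite-array capacitance matrix.

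The detuning parameter $\sigma$ comes from the second-order band-edge expansion \eqref{eq:omega-edge-expansion}. Writing $\omega_\varepsilon^2/v_{\rm b}^2=\widehat C^M+\text{(Bloch correction)}\,\varepsilon^2-\sigma\varepsilon^2+O(\varepsilon^4)$, the exterior-cell-mass correction cancels against the same term produced by the lift. This is why $\omega_{*,\varepsilon}$ must be expanded to second order. The residual $E_\varepsilon$ has norm $O(\varepsilon)$ on $\ell^2$ (weighted by $\varepsilon^{d/2}$) by the coupling estimates. The source $F_\varepsilon$ is generated by the incident field, and I bound it by $\mathcal N^{\rm in}$ through boundary-layer terms.

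\emph{Step 2: discrete-to-continuum.} I invoke the $O(\varepsilon)$ norm-resolvent approximation of the centered operator $-\varepsilon^{-2}(C_\varepsilon-\widehat C^M)$ by $L_D$, with interpolation and sampling operators between $\ell^2(\mathcal I_\varepsilon)$ and $L^2(\Omega)$. Since $\sigma\notin\mathrm{spec}(L_D)$, $(L_D-\sigma)^{-1}$ is bounded with norm $\mathrm{dist}(\sigma,\mathrm{spec}(L_D))^{-1}$. Combining the approximation with a Neumann series for $E_\varepsilon$ shows the reduced operator is uniformly invertible for small $\varepsilon$. Hence $\tilde c$ is controlled, and it is close to a continuum profile $\phi=(L_D-\sigma)^{-1}f$.

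The key structural fact is that the effective source is supported in a boundary layer of width $O(\varepsilon)$ near $\partial\Omega$, so its $L^2$ size is $O(\varepsilon^{1/2})\mathcal N^{\rm in}$. This yields the interior estimate $\|u_\varepsilon\|_{L^2(\Omega)}\le C\varepsilon^{1/2}\mathcal N^{\rm in}$, after adding the $w_\varepsilon$ contribution and the oscillating lift inside the cells.

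\emph{Step 3: exterior convergence.} Set $e=u_\varepsilon-\widehat u_0$ in $\mathbb R^d\setminus\overline\Omega$. It satisfies Helmholtz with wavenumber $k_\varepsilon$, where $k_\varepsilon-k_0=O(\varepsilon^2)$, together with radiation, and its Dirichlet trace equals $u_\varepsilon|_{\partial\Omega}$. I estimate that trace in $H^{-1/2}(\partial\Omega)$ by $O(\varepsilon)$ and in $H^{1/2}(\partial\Omega)$ by $O(\varepsilon^{1/2})$. Both bounds follow from the fact that $u_\varepsilon$ decays across the outer cell layer (interior size $\varepsilon^{1/2}$, plus a layer of width $\varepsilon$), combined with trace inequalities on that layer. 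Well-posedness of the exterior Dirichlet problem, via a very-weak (transposition) formulation for the $H^{-1/2}$ data, then gives the exterior $L^2$ and $H^1$ bounds on $B_R\setminus\overline\Omega$. Interior elliptic regularity away from $\partial\Omega$ upgrades the $L^2$ bound to the $O(\varepsilon)$ local $H^1$ estimate on $K$. For the far field, I represent $e$ by Green's formula on a sphere $\partial B_{R'}$ lying in $K$; the kernel is smooth in $\hat x$, which gives the $C^m$ bound. The $O(\varepsilon^2)$ wavenumber mismatch is absorbed by stability of the exterior problem in $k$.

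\emph{Main obstacle.} I expect the hardest step to be the sharp $O(\varepsilon)$ bound on the $H^{-1/2}$ trace, which needs more than energy estimates. First I would use the $L^2$-operator-norm version of the resolvent approximation, with its consistency estimates: $(L_D-\sigma)^{-1}$ maps into $H^{3/2}$-type regularity on Lipschitz domains (Savar\'e), so $\phi$ vanishes linearly at $\partial\Omega$. The outermost amplitudes $\tilde c_n$ then have size $O(\varepsilon)$ times a boundary-layer profile. The exponential localization of $C_\varepsilon$ minus the truncated lattice operator is what lets this argument work on a non-aligned boundary.
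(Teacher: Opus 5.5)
Your architecture is the paper's: demodulated capacitary lift, elimination of the zero-mean part, $\mathcal T_\varepsilon=L^{\mathrm f}_\varepsilon-\sigma+\mathcal E_\varepsilon$ with the second-order band-edge cancellation, and the norm-resolvent approximation. However, you run the quantitative core in $\ell^2$/$L^2$, and there it does not close, because none of the small quantities is small in that norm.

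\begin{itemize}
\item For a single-site vector $e_n$, $\langle\mathcal E_\varepsilon e_n,e_n\rangle$ is of order one: the matrix mass of an isolated capacitary potential exceeds the cell mass $\mu^M$ of the $M$-mode by a fixed amount. So no $\ell^2$ Neumann series is available.
\item The coupling blocks are only $O(\varepsilon^{(d-1)/2})$ as maps from $\ell^2$. The naive bound on $A^{-1}R_1B^{-1}R_2$ is therefore $O(\varepsilon^{-1})$, and the zero-mean part cannot be eliminated that way.
\item The effective source (e.g.\ $g_\varepsilon$) has entries $g_\varepsilon(n)\sim\varepsilon^{d-1}\mathcal N^{\mathrm{in}}$ on the $\sim\varepsilon^{1-d}$ boundary cells. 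In the normalization where $\varepsilon^{d/2}J_\varepsilon\varphi_\varepsilon$ is the physical envelope, it is a layer of height $\varepsilon^{-1}$ and width $\varepsilon$. Its $L^2$ size is $\varepsilon^{-1/2}\mathcal N^{\mathrm{in}}$, not $\varepsilon^{1/2}\mathcal N^{\mathrm{in}}$, so an $L^2$-bounded resolvent gives only $\|u_\varepsilon\|_{L^2(\Omega)}\lesssim\varepsilon^{-1/2}\mathcal N^{\mathrm{in}}$.
\end{itemize}

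The missing ingredient is the discrete Hardy inequality $\mathcal M_{\partial\Omega,\varepsilon}(\psi)\le C\mathfrak D_\varepsilon(\psi)$ (Lemma~\ref{lem:discreteHardy}). Combine it with the exponential boundary decay of the lift, $\|\Phi^M_\varepsilon\psi\|^2_{L^2(\partial\Omega)}\le C\varepsilon^{d-1}\mathcal M_{\partial\Omega,\varepsilon}(\psi)$ (Lemma~\ref{lemtraceesti}). Together they make everything small in the discrete energy space: $\|g_\varepsilon\|_{\mathcal X_\varepsilon^*}\le C\varepsilon^{(d+1)/2}\mathcal N^{\mathrm{in}}$, $\|R_1\|_{\mathcal H_\varepsilon\to\mathcal X_\varepsilon^*}+\|R_2\|_{\mathcal X_\varepsilon\to\mathcal H_\varepsilon}\le C\varepsilon^{(d+1)/2}$, and $\|\mathcal E_\varepsilon\|_{\mathcal X_\varepsilon\to\mathcal X_\varepsilon^*}\le C\varepsilon$. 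The $L^2$ norm-resolvent estimate is then used only to invert the middle factor of $\mathcal T_\varepsilon=G_\varepsilon^{1/2}[I-(\sigma+1)G_\varepsilon^{-1}+\widetilde{\mathcal E}_\varepsilon]G_\varepsilon^{1/2}$. This gives $\|A^{-1}\|_{\mathcal X_\varepsilon^*\to\mathcal X_\varepsilon}\le C\varepsilon^{-d}$, hence $\|\varphi_\varepsilon\|_{\mathcal X_\varepsilon}\le C\varepsilon^{(1-d)/2}\mathcal N^{\mathrm{in}}$ and the interior $\varepsilon^{1/2}$ bound.

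The same ingredient resolves the step you flag as the main obstacle, and your substitute does not work. Linear vanishing of a continuum profile plus $O(\varepsilon)$ closeness in $L^2(\Omega)$ cannot localize to the outer cell layer. Even for a genuine $L^2$ source, the whole $O(\varepsilon)$ error may sit in that layer, and the scaled trace inequality then yields only an $O(\varepsilon^{1/2})$ trace. Here, moreover, the source is not an $L^2$ datum of bounded norm. Likewise, ``interior size $\varepsilon^{1/2}$ plus a layer of width $\varepsilon$'' gives no smallness of the trace. An $H^{1/2}$ trace taken from inside $\Omega$ is also useless, because the alternating carrier makes $\|\nabla W_\varepsilon\|_{L^2(\Omega)}$ about $\varepsilon^{-1}$ times $\|W_\varepsilon\|_{L^2(\Omega)}$.

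In the paper the trace bound is a pure energy estimate:
\begin{itemize}
\item $\|W_\varepsilon\|^2_{L^2(\partial\Omega)}\le C\varepsilon^{d-1}\mathfrak D_\varepsilon(\varphi_\varepsilon)\le C\varepsilon^{d+1}\|\varphi_\varepsilon\|^2_{\mathcal X_\varepsilon}\le C\varepsilon^2(\mathcal N^{\mathrm{in}})^2$;
\item $\|r_\varepsilon\|_{L^2(\partial\Omega)}\le C\varepsilon^{1/2}\|r_\varepsilon\|_{\mathcal H_\varepsilon}\le C\varepsilon\mathcal N^{\mathrm{in}}$;
\item the exterior $H^1$ bound comes from the layer-by-layer decay of Lemma~\ref{lemenergydecayboun}, summed against the Hardy weight.
\end{itemize}

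Once $\|u_\varepsilon-\widehat u_\varepsilon\|_{L^2(\partial\Omega)}\le C\varepsilon\mathcal N^{\mathrm{in}}$ is available, your Step~3 is sound. Treating $u_\varepsilon-\widehat u_\varepsilon$ as one radiating Helmholtz solution in $\mathbb R^d\setminus\overline\Omega$ with Dirichlet datum $u_\varepsilon|_{\partial\Omega}$ is slightly more direct than the paper's split into a harmonic lift plus a duality argument for $r_\varepsilon$. It must, however, use $L^2(\partial\Omega)$ data and the $L^2$ Dirichlet (nontangential maximal function) theory. Transposition with $H^{-1/2}$ data needs $H^2$ regularity of the dual problem, which Lipschitz domains do not provide.

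Finally, your reduced operator has the wrong sign. It should be $\varepsilon^{-2}(\widehat C^M-S_M^*C_\varepsilon S_M)-\sigma+\dots$; as written it tends to $-(L_D+\sigma)$, which would place the resonances at $-\sigma\in\mathrm{spec}(L_D)$.
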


\section{Preliminaries}\label{sec:prelimi}

In this section, we review the capacitance matrices, and prove some new properties of the capacitance matrices and the subwavelength band.

\subsection{Definition of capacitance matrices} \label{seccapacity}

We introduce the capacitance matrices by variational definitions. Define the admissible function space
\begin{equation}\label{energyspace}
    \mathcal{E}:=\left\{
    \begin{aligned}
        & \{u\in H^1_{\mathrm{loc}}(\mathbb{R}^2) : \nabla u \in L^2(\mathbb{R}^2)\} , && d=2, \\
        & \{u\in H^1_{\mathrm{loc}}(\mathbb{R}^3) \cap L^6(\mathbb{R}^3) : \nabla u \in L^2(\mathbb{R}^3)\} , && d=3.
    \end{aligned}\right.
\end{equation}
This space is usually used to study the behavior at infinity of harmonic functions \cite{arendt2015dirichlet}. Fix $\delta>0$ and a nodal set $\Lambda \subset \mathbb{Z}^d$. For every $n\in \Lambda$, we define $\mathscr{V}_{\delta,\Lambda}^n$ by
\begin{equation}\label{energyminprobN}
    \mathscr{V}_{\delta,\Lambda}^n := \operatorname*{arg\,min}_{u \in \mathcal{E}\atop \fint_{D+m} u(x)\,dx= \delta_{nm},\forall m \in \Lambda} \frac{1}{|D|}\int_{\mathbb{R}^d} a_{\delta,\Lambda} | \nabla u|^2\,dx ,
\end{equation}
where
\begin{equation}\label{weightbubleN}
    a_{\delta,\Lambda} := \delta^{-1} \mathbbm{1}_{D+\Lambda} + \mathbbm{1}_{\mathbb{R}^d\setminus (D+\Lambda)}.
\end{equation}
For every $m,n\in \Lambda$, we define the capacitance coefficient
\begin{equation}
    (\mathscr{C}_{\delta,\Lambda})_{mn} := \frac{1}{|D|} \int_{\mathbb{R}^d} a_{\delta,\Lambda} \nabla  \mathscr{V}_{\delta,\Lambda}^n \cdot \overline{ \nabla  \mathscr{V}_{\delta,\Lambda}^m } \,dx.
\end{equation}

When $\Lambda$ is finite, the $|\Lambda|\times |\Lambda|$ capacitance matrix $C^{\mathrm{f}}_{\delta,\Lambda}$ for the contrast $\delta>0$ and the bubble array $D+\Lambda$ is defined by
\begin{equation}
    C^{\mathrm{f}}_{\delta,\Lambda} := (\mathscr{C}_{\delta,\Lambda})_{mn} .
\end{equation}
When $\Lambda=\mathbb{Z}^d$, we define the capacitance operator $\mathfrak{C}_\delta :\ell^2(\mathbb{Z}^d)\rightarrow \ell^2(\mathbb{Z}^d)$ by
\begin{equation}
     (\mathfrak{C}_\delta  \phi)(m) := \sum_{n\in \mathbb{Z}^d} (\mathscr{C}_{\delta,\mathbb{Z}^d})_{mn}   \phi(n) , \qquad \phi  \in \ell^2(\mathbb{Z}^d).
\end{equation}

For each $\alpha \in \mathcal{B}_1$, let $H^1_\alpha(Y)\subset H^1(Y)$ be the space of $\alpha$-quasiperiodic functions. Define $V^\alpha_\delta$ by
\begin{equation}\label{minienerquasip}
    V^\alpha_\delta := \operatorname*{arg\,min}_{u \in H^1_\alpha(Y) \atop \fint_D u(x)\,dx=1 }  \frac{1}{|D|} \int_Y a_\delta  | \nabla u|^2\,dx  ,
\end{equation}
where
\begin{equation}
    a_\delta:= \delta^{-1} \mathbbm{1}_{D}+\mathbbm{1}_{Y\setminus D}.
\end{equation}
The minimal energy in the right-hand side of \eqref{minienerquasip} is denoted by $\widehat{C}^\alpha_\delta$, \emph{i.e.},
\begin{equation}
    \widehat{C}^\alpha_\delta := \frac{1}{|D|} \int_Y a_\delta | \nabla V^\alpha_\delta |^2 \,dx.
\end{equation}
With the discrete Fourier transform (also referred to as the Floquet transform)
\begin{equation}\label{defFourier}
    \mathcal{F}[\phi](\alpha) =\widehat{\phi}(\alpha) := \sum_{n \in \mathbb{Z}^d}  \phi(n) e^{\mi n\cdot \alpha}, \qquad \phi\in \ell^2(\mathbb{Z}^d)
\end{equation}
and the inverse transform
\begin{equation}
    \mathcal{F}^{-1}[\psi](n) = \check\psi(n):= \frac{1}{|\mathcal{B}_1|} \int_{\mathcal{B}_1} \psi(\alpha) e^{-\mi n\cdot\alpha} \,d\alpha, \qquad \psi \in L^2(\mathcal{B}_1),
\end{equation}
it is not difficult to get
\begin{equation}
    (\mathscr{C}_{\delta,\mathbb{Z}^d})_{mn}= \mathcal{F}^{-1}[\widehat{C}_\delta^\alpha] (m-n).
\end{equation}
Therefore, $\mathfrak{C}_\delta$ is a Fourier multiplier operator with the symbol $\mathcal{B}_1 \ni \alpha \mapsto \widehat{C}^\alpha_\delta$.

Let $E_{\Lambda}: \ell^2(\Lambda) \rightarrow \ell^2(\mathbb{Z}^d)$ be the zero extension operator. When $\Lambda$ is finite, we define the truncated capacitance matrix $C^{\mathrm{t}}_{\delta,\Lambda}$ by
\begin{equation}
    C^{\mathrm{t}}_{\delta,\Lambda} := E_{\Lambda}^* \mathfrak{C}_\delta E_{\Lambda}.
\end{equation}
Roughly speaking, $C^{\mathrm{t}}_{\delta,\Lambda}$ is obtained by keeping only the centering $|\Lambda| \times |\Lambda|$ capacitance  coefficients from $\mathfrak{C}_\delta$. 

We also define the capacitance matrices for $\delta =0$. When $\delta \rightarrow 0$, the weight $a_{\delta,\Lambda}$ goes to infinity on the bubbles $D+\Lambda$. This forces the minimizer in \eqref{energyminprobN} to be a constant in each bubble. Therefore, we define 
\begin{equation}
    \mathscr{V}_{\Lambda}^n := \operatorname*{arg\,min}_{u \in \mathcal{E} \atop u(x)= \delta_{nm},\forall x\in D+m,m\in \Lambda} \frac{1}{|D|}\int_{\mathbb{R}^d\setminus (D+\Lambda)}  | \nabla u|^2\,dx .
\end{equation}
For every $m,n\in \Lambda$, we define the capacitance coefficient
\begin{equation}
    (\mathscr{C}_{\Lambda})_{mn} := \frac{1}{|D|} \int_{\mathbb{R}^d\setminus(D+\Lambda)} \nabla  \mathscr{V}_{\Lambda}^n \cdot \overline{ \nabla  \mathscr{V}_{\Lambda}^m } \,dx.
\end{equation}
When $\Lambda$ is finite, the $|\Lambda|\times |\Lambda|$ capacitance matrix $C^{\mathrm{f}}_{\Lambda}$ is defined by
\begin{equation}
    C^{\mathrm{f}}_{\Lambda} := (\mathscr{C}_{\Lambda})_{mn} .
\end{equation}
When $\Lambda=\mathbb{Z}^d$, we define the capacitance operator $\mathfrak{C} :\ell^2(\mathbb{Z}^d)\rightarrow \ell^2(\mathbb{Z}^d)$ by
\begin{equation} \label{def:capacitanc_op}
     (\mathfrak{C}  \phi)(m) := \sum_{n\in \mathbb{Z}^d} (\mathscr{C}_{\mathbb{Z}^d})_{mn}   \phi(n) , \qquad \phi  \in \ell^2(\mathbb{Z}^d).
\end{equation}

For each $\alpha \in \mathcal{B}_1$, define $V^\alpha$ and $\widehat{C}^\alpha$ by
\begin{equation}\label{minienerquasip1}
    V^\alpha := \operatorname*{arg\,min}_{u \in H^1_\alpha(Y) \atop u|_D\equiv 1}  \frac{1}{|D|} \int_{Y\setminus D} | \nabla u|^2\,dx  , \qquad \widehat{C}^\alpha := \frac{1}{|D|} \int_{Y\setminus D} | \nabla V^\alpha |^2 \,dx.
\end{equation}
Still, $\mathfrak{C}$ is a Fourier multiplier operator with the symbol $\mathcal{B}_1 \ni \alpha \mapsto \widehat{C}^\alpha$:
\begin{equation}
    (\mathscr{C}_{\mathbb{Z}^d})_{mn}= \mathcal{F}^{-1}[\widehat{C}^\alpha] (m-n).
\end{equation}
Finally, the truncated capacitance matrix $C^{\mathrm{t}}_{\Lambda}$ for a finite $\Lambda$ is defined by
\begin{equation}
    C^{\mathrm{t}}_{\Lambda} := E_{\Lambda}^* \mathfrak{C}  E_{\Lambda}.
\end{equation}

\subsection{Properties of capacitance matrices}\label{secpropofcapa}

In this section, we establish some useful properties of capacitance matrices. We start with the notion of Banach-valued holomorphic functions.

On $\mathbb{C}^n$, $n\in \N$, we use the notation
    \begin{equation*}
    z \cdot \zeta = z_1 \zeta_1 + z_2\zeta_2 + \dots + z_n \zeta_n.
    \end{equation*}
There is a canonical embedding $\iota$ from $\R^n$ to $\bC^n$ given by $\iota(x) = x+\mi 0$. In the sequel of this section, for simplicity of notation, we identify subsets of $\R^n$ with their images under this embedding. Namely, an open set $U\subset \R^n$ is identified with $\widetilde{U} := \iota(U) = \{x+\mi 0\,:\,x\in U\}$. 
    
\begin{definition}
    Let $n\in \mathbb{N}$ and $X$ be a complex Banach space. A mapping $F:O\subset \mathbb{C}^n \rightarrow X$ is holomorphic if, near every $z_0\in O$, $F$ admits an
    $X$-norm convergent series
    \begin{equation}
        F(z) = \sum_{\gamma \in \mathbb{N}^n} F_\gamma (z-z_0)^\gamma.
    \end{equation}
    A mapping $G:U\subset \mathbb{R}^n \rightarrow X$ is (real) analytic if there exists a holomorphic mapping $F:O\subset \mathbb{C}^n \rightarrow X$ such that $U \subset O$ and $G=F|_U$. Such $F$ is called a holomorphic extension of $G$.
\end{definition}

In the following, we study analyticity with respect to the quasiperiodicity $\alpha$. However, the $\alpha$-quasiperiodic Sobolev spaces $H^1_\alpha(Y)$ depend on $\alpha$, and hence vary as $\alpha$ changes. To overcome this difficulty, we introduce the contrast-adapted gauge transform as follows. Choose $\Theta \in C^\infty(Y;\mathbb{R}^d)$ such that
\begin{equation}
    \Theta =0 \text{ near } \overline{D},  \qquad \Theta (x) =x \text{ near }\partial Y. 
\end{equation}
For every $z\in \mathbb{C}^d$, we define the gauge transform
\begin{equation}\label{gaugetran}
    U_z u(x) :=e^{\mi  z \cdot \Theta(x) } u(x), \qquad \forall\,u\in L^2(Y),\,x\in Y.
\end{equation}
One has
\begin{equation}
    \nabla (U_z u) = U_z(\nabla  + \mi Q_z ) u, \qquad \text{ where } Q_z:=\sum_{j=1}^d z_j \nabla \Theta_j.
\end{equation}

Given $\alpha \in \mathcal{B}_1$, let $H^\alpha g$ be the $\alpha$-quasiperiodic harmonic extension of $g\in H^{1/2}(\partial D)$ to $Y\setminus\overline{D}$. The exact definition is as follows. Let
\begin{equation}
    X_{\alpha}:=\left\{ u\in H_{\alpha}^1(Y\setminus\overline D):u|_+=0\text{ on }\partial D \right\}.
\end{equation}
Then, $H^\alpha g \in H^1_\alpha (Y\setminus\overline D)$ uniquely solves $(H^\alpha g)|_{\partial D+} =g$ and
\begin{equation}
    \int_{Y\setminus D} \nabla H^\alpha g \cdot \overline{ \nabla v } \,dx = 0, \qquad \forall\, v\in X_\alpha. 
\end{equation}
It is easy to show that $H^\alpha$ is a bounded operator from $H^{1/2}(\partial D)$ to $H^1(Y\setminus\overline D)$.

\begin{lemma}\label{lemharmoofH}
    There exists a holomorphic extension of $H^\alpha$, denoted by $H^z$, on a neighborhood $O\subset \mathbb{C}^d$ of $\mathcal{B}_1$.
\end{lemma}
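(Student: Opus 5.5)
Our plan is to use the gauge transform \eqref{gaugetran} to move the $\alpha$-dependence from the function space into the operator. Fix $\alpha\in\mathcal{B}_1$ and $g\in H^{1/2}(\partial D)$.

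\emph{Step 1: reformulation on a fixed space.} Since $\Theta(x)=x$ near $\partial Y$, $u\in H^1_\alpha(Y\setminus\overline D)$ if and only if $w:=U_{-\alpha}u=e^{-\mi\alpha\cdot\Theta}u$ is $Y$-periodic. Since $\Theta=0$ near $\overline D$, we also have $w|_{\partial D}=u|_{\partial D}$. Hence $w^\alpha:=U_{-\alpha}H^\alpha g$ lies in $H^1_{\#}(Y\setminus\overline D)$ (periodic functions), satisfies $w^\alpha|_{\partial D}=g$, and solves
\[
\int_{Y\setminus D}(\nabla+\mi Q_\alpha)w^\alpha\cdot\overline{(\nabla+\mi Q_\alpha)\phi}\,dx=0,\qquad\forall\,\phi\in X_0 .
\]
Here $X_0$ denotes the periodic functions vanishing on $\partial D$. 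The test functions are $\phi=U_{-\alpha}v$, and the identity uses $|U_z|=1$ for real $z$.

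Fix a bounded periodic lifting $R:H^{1/2}(\partial D)\to H^1_\#(Y\setminus\overline D)$ supported near $\partial D$ and write $w^\alpha=Rg+\psi^\alpha$ with $\psi^\alpha\in X_0$. Then $\psi^\alpha$ is determined by $\mathcal{A}(\alpha)\psi^\alpha=-\mathcal{L}(\alpha)Rg$. Here $\mathcal{A}(\alpha):X_0\to X_0^*$ is given by the sesquilinear form $a_\alpha(\psi,\phi)=\int(\nabla+\mi Q_\alpha)\psi\cdot\overline{(\nabla+\mi Q_\alpha)\phi}$, and $\mathcal{L}(\alpha)$ is the analogous map acting on $H^1_\#$.

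\emph{Step 2: complexification.} For $z\in\mathbb{C}^d$ we define
\[
a_z(\psi,\phi):=\int_{Y\setminus D}(\nabla+\mi Q_z)\psi\cdot\overline{(\nabla+\mi Q_{\bar z})\phi}\,dx .
\]
The test factor uses $\bar z$ so that the form is a polynomial of degree two in $z$, with coefficients that are bounded operators $X_0\to X_0^*$ since $\nabla\Theta_j\in L^\infty$. Thus $z\mapsto\mathcal{A}(z)$ and $z\mapsto\mathcal{L}(z)$ are entire operator-valued maps that agree with the real ones for $z=\alpha$.

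\emph{Step 3: uniform invertibility (the main obstacle).} For real $\alpha$ we need $\mathcal{A}(\alpha)$ to be an isomorphism with a bound uniform in $\alpha$. The form $a_\alpha(\psi,\psi)=\|\nabla(U_\alpha\psi)\|^2_{L^2}$ is the Dirichlet energy of a quasiperiodic function vanishing on $\partial D$. A Poincaré inequality (valid since $\partial D$ has positive capacity) gives $\|U_\alpha\psi\|_{H^1}\le C\|\nabla U_\alpha\psi\|$. Because $U_{\pm\alpha}$ are bounded on $H^1$ uniformly for $\alpha$ in the compact torus, $a_\alpha$ is coercive on $X_0$ with a uniform constant, and Lax--Milgram gives invertibility.

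The quasiperiodic extension $\Theta(x)=x$ does not commute with the shift $\alpha\mapsto\alpha+2\pi e_j$. This is harmless, however: we may work with $\alpha$ in a compact fundamental domain enlarged slightly, or note that $U_{2\pi e_j}$ is an isomorphism of the periodic spaces, since $e^{2\pi\mi\Theta_j}$ is periodic near $\partial Y$. Since invertibility is an open condition and $\mathcal{A}$ is continuous in $z$, a Neumann series shows that $\mathcal{A}(z)^{-1}$ exists and is holomorphic on a complex neighborhood $O$ of the (compact) closure of $\mathcal{B}_1$, with uniform bounds there.

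\emph{Step 4: conclusion.} We set
\[
H^z g:=U_z\bigl(Rg-\mathcal{A}(z)^{-1}\mathcal{L}(z)Rg\bigr),\qquad U_z=e^{\mi z\cdot\Theta}.
\]
Here $U_z$ is multiplication by an entire bounded-smooth function of $z$, hence holomorphic into $\mathcal{B}(H^1)$. Products and inverses of holomorphic operator families are holomorphic, so $z\mapsto H^z\in\mathcal{B}(H^{1/2}(\partial D),H^1(Y\setminus\overline D))$ is holomorphic on $O$. By Step 1 it coincides with $H^\alpha$ for real $\alpha$.
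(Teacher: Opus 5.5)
Your proposal is correct and follows essentially the paper's argument: a gauge transform, the complexified form that is polynomial in $z$, Lax--Milgram plus a Neumann series, a trace lifting supported near $\partial D$, and finally applying $U_z$, with uniqueness identifying the result with $H^\alpha$ for real $\alpha$. The only difference is bookkeeping. You work in the single periodic reference space $X_0$ and check coercivity of $a_\alpha$ at every real $\alpha$, which gives one global formula. The paper instead takes the reference space $X_{\alpha_*}$ around each point $\alpha_*$, needs coercivity only at $z=0$, and glues the resulting local extensions.
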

\begin{proof}
    Define the sesquilinear form
    \begin{equation}\label{expressionbz}
    \begin{aligned}
        \mathfrak{b}_z (u,v)&:= \int_{Y\setminus D} \nabla (U_z u) \cdot \overline{ \nabla (U_{\overline{z}} v)} \,dx \\
        &\ = \int_{Y\setminus D} \{ \nabla u + \mi Q_z u  \}\cdot \{ \overline{\nabla v} - \mi Q_z\overline{v}  \} \,dx 
    \end{aligned}
    \end{equation}
for any $u,v\in H^1(Y\setminus \overline{D})$ and $z\in \mathbb{C}^d$. Expanding \eqref{expressionbz}, we get
\begin{equation*}
    \mathfrak{b}_z (u,v) = \mathfrak{b}_0 (u,v) + \mi\left( \int_{Y\setminus D} u  Q_z \cdot \overline{\nabla v}\,dx - \int_{Y\setminus D} \nabla u\cdot Q_z\overline{v}\,dx  \right) + \int_{Y\setminus D} (Q_z\cdot Q_z) u \overline{v}\,dx.
\end{equation*}
Hence, $z\mapsto \mathfrak{b}_z (u,v)$ is a polynomial in $z$, and
\begin{equation}\label{estibz0}
    |\mathfrak{b}_z (u,v) - \mathfrak{b}_0 (u,v)| \leq C (|z|+|z|^2) \|u \|_{H^1(Y\setminus \overline{D}) }
    \|v \|_{H^1(Y\setminus \overline{D}) }.
\end{equation}

Fix a reference point $\alpha_* \in \mathcal{B}_1$ and set $\mathcal{X}= X_{\alpha_*}$. We restrict $\mathfrak{b}_z$ in $\mathcal{X}\times \mathcal{X}$ so that the associated operator $\mathcal{B}(z) :\mathcal{X}\rightarrow \mathcal{X}^*$ is defined by
\begin{equation}
    \langle \mathcal{B}(z) u ,v \rangle_{\mathcal{X}^*,\mathcal{X}} = \mathfrak{b}_z(u,v),\qquad \forall\,u,v\in \mathcal{X}.
\end{equation}
Then, $z\mapsto \mathcal{B}(z)$ is an $\mathcal{L}(\mathcal{X},\mathcal{X}^*)$-valued polynomial in $z$. By Poincar\'{e}'s inequality,
\begin{equation}
    \langle \mathcal{B}(0) u ,u \rangle_{\mathcal{X}^*,\mathcal{X}} = \mathfrak{b}_0(u,u)\geq C\| u \|_{H^1(Y\setminus \overline{D})}^2 ,\qquad \forall\,u\in \mathcal{X}.
\end{equation}
Thus, $\mathcal{B}(0)$ is invertible by Lax-Milgram. By \eqref{estibz0}, 
\begin{equation}
    \| \mathcal{B}(z) - \mathcal{B}(0) \|_{\mathcal{L}(\mathcal{X}, \mathcal{X}^*)} \leq C(|z|+|z|^2).
\end{equation}
Therefore, for $|z|$ sufficiently small, the Neumann series
\begin{equation}\label{approxBz}
    \mathcal{B}(z)^{-1} = \mathcal{B}(0)^{-1} \sum_{n=0}^\infty \left\{- \big( \mathcal{B}(z) - \mathcal{B}(0) \big) \mathcal{B}(0)^{-1} \right\}^n
\end{equation}
converges uniformly in the operator norm. Since $\mathcal{B}(z)$ is a polynomial in $z$, \eqref{approxBz} shows that $\mathcal{B}(z)^{-1}$ is approximated by a locally uniformly convergent sequence of holomorphic functions. Consequently, $\mathcal{B}(z)^{-1} $ is holomorphic near $z=0$.

Let $Y'= (1-h)Y$, $0<h<1$, be a shrinking of $Y$ such that $D\Subset Y'$. The trace theorem provides an extension $\mathcal{E}:H^{1/2}(\partial D)\rightarrow H^1(Y\setminus\overline D)$ such that $(\mathcal{E}\phi)|_{Y\setminus Y'} =0$ and
    \begin{equation}
        \| \mathcal{E}\phi \|_{H^1(Y\setminus \overline{D})} \le C \| \phi \|_{H^{1/2}(\partial D)}, \qquad \forall \phi \in H^{1/2}(\partial D).
    \end{equation}
    In particular, $\mathrm{ran}\,\mathcal{E}\subset H^1_\alpha(Y\setminus\overline D)$ for every $\alpha \in \mathcal{B}_1$. Define $\mathcal{C}(z):H^{1/2}(\partial D) \rightarrow\mathcal{X}^*$ by
\begin{equation}
    \langle \mathcal{C}(z) g, v\rangle_{\mathcal{X}^*,\mathcal{X}} = \mathfrak{b}_z(\mathcal{E}g,v),\qquad \forall \,g \in H^{1/2}(\partial D) , \, v \in \mathcal{X}.
\end{equation}
This is also an operator-valued polynomial. Near $z=0$, set
\begin{equation}
    \mathcal{H}(z):= U_z \big( \mathcal{E} - \mathcal{B}(z)^{-1} \mathcal{C}(z) \big).
\end{equation}
Then, $\mathcal{H}(z)$ is an $\mathcal{L}(H^{1/2}(\partial D)  , H^1(Y\setminus\overline{D}))$-valued holomorphic function near $z=0$. Moreover, $\mathcal{H}(z)g|_{\partial D+} =g$ for every $g\in H^{1/2}(\partial D)$. For $z=\beta \in \mathbb{R}^d$, we have
\begin{equation}
    \mathcal{H}(\beta) g \in H^1_{\alpha_*+\beta}(Y\setminus \overline{D}) , \qquad \forall \, g \in H^{1/2}(\partial D),
\end{equation}
and
\begin{equation}
    \int_{Y\setminus D} \nabla \mathcal{H}(\beta) g  \cdot \overline{\nabla v}\,dx=0, \qquad \forall\, v\in X_{\alpha_*+\beta}.
\end{equation}
By uniqueness, we get $\mathcal{H}(\beta) = H^{\alpha_*+\beta}$. Since $\alpha_* \in \mathcal{B}_1$ is arbitrary, we may glue 
$\mathcal{H}(z)$ together to obtain a global holomorphic extension near $\mathcal{B}_1$.
\end{proof}

In our context, capacitance matrices may depend on $\delta$. Hence, we need to introduce the holomorphy uniformly in $\delta$.

\begin{definition}
     Let $I\subset \mathbb{R}$, $n\in \mathbb{N}$ and $X$ be a complex Banach space. A family of maps $\{F_\delta:O\subset \mathbb{C}^n\rightarrow X\}_{\delta\in I}$ is uniformly holomorphic in $\delta\in I$ if every $F_\delta$ is holomorphic and, near every $z_0\in O$, there exists $M,r>0$ independent of $\delta\in I$ such that
     \begin{equation}\label{uniformholdef}
         \sup_{\delta\in I \atop z\in O,\,|z-z_0|<r} \| F_\delta (z) \|_X \leq M.
     \end{equation}
     A family of maps $\{G_\delta:U\subset \mathbb{R}^n\rightarrow X\}_{\delta\in I}$ is uniformly analytic in $\delta\in I$ if there exists a uniformly holomorphic family of maps $\{F_\delta:O\subset \mathbb{C}^n\rightarrow X\}_{\delta\in I}$ such that $U\subset O$ and $G_\delta = F_\delta|_U$ for every $\delta \in I$. This $\{F_\delta\}_{\delta\in I}$ is called a uniformly holomorphic extension of $\{G_\delta\}_{\delta\in I}$.
\end{definition}

Let $H^z$ be the holomorphic extension of $H^\alpha$ on $O$ given by Lemma \ref{lemharmoofH}. Define
\begin{equation}
    T^z = -\left.\frac{\partial H^z }{\partial \nu} \right|_+ : H^{1/2}(\partial D) \rightarrow H^{-1/2}(\partial D), \qquad \forall \,z\in O.
\end{equation}
When $z=\alpha\in\mathcal{B}_1$, $T^\alpha$ is the exterior Dirichlet-to-Neumann operator for $\alpha$-quasiperiodic boundary condition. Let
\begin{equation}
    W:= \left\{u\in H^1(D):\int_D u\,dx=0 \right\}.
\end{equation}
Define the Neumann Laplacian $A:W \rightarrow W^*$ acting on the mean-zero Sobolev space by
\begin{equation}
    \langle A u ,v \rangle_{W^*,W} = \int_D \nabla u \cdot \overline{\nabla v}\,dx , \qquad \forall\, v\in W.
\end{equation}
By Lax-Milgram, $A$ is invertible. Denote by 
\begin{equation}
    \gamma: W\rightarrow H^{1/2}(\partial D)
\end{equation}
the trace operator, and by $\gamma^*: H^{-1/2}(\partial D) \rightarrow W^*$ the adjoint of $\gamma$. 

Since $T^z$ is holomorphic on $O$, there exists $\delta_0$ independent of $z$ such that, for any $\delta \in (0,\delta_0)$, 
\begin{equation}
    A + \delta \gamma^* T^z \gamma :W\rightarrow W^*
\end{equation}
is invertible. Moreover, the family
\begin{equation}\label{unifholominverse}
  \big\{ O\ni z\mapsto (A + \delta \gamma^* T^z \gamma  )^{-1}  \big\}_{\delta \in (0,\delta_0)}
\end{equation}
is uniformly holomorphic in $\delta \in (0,\delta_0)$.

\begin{lemma}\label{holoofV}
Let $H^z$ be the holomorphic extension of $H^\alpha$ on $O$ given by Lemma \ref{lemharmoofH}.
\begin{itemize}
    \item[(a)] For every $z\in O$, define
    \begin{equation}
        V^z := \left\{\begin{aligned}
            & 1, && \mathrm{\ in\ } D, \\
            & H^z(1), && \mathrm{\ in\ } Y\setminus D.
        \end{aligned} \right.
    \end{equation}
    Then, $V^z$ is a holomorphic extension of $V^\alpha$.
    \item[(b)] Let $\delta_0$ be given as above. For every $\delta \in (0,\delta_0)$ and $z\in O$, define
    \begin{equation}\label{formulaofVdelta}
        V_\delta^z := \left\{\begin{aligned}
            & 1 - \delta (A + \delta \gamma^* T^z \gamma )^{-1} \gamma^* T^z(1), && \mathrm{\ in\ } D, \\
            & H^z(V_\delta^z|_{\partial D-}), && \mathrm{\ in\ } Y\setminus D.
        \end{aligned} \right.
    \end{equation}
    Then, $\{V_\delta^z\}_{\delta \in (0,\delta_0)}$ is a uniformly holomorphic extension of $\{ V_\delta^\alpha \}_{\delta \in (0,\delta_0)}$.
\end{itemize}
\end{lemma}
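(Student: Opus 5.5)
The plan has two parts: show that the formulas reproduce $V^\alpha$ and $V^\alpha_\delta$ when $z=\alpha\in\mathcal B_1$ is real, and show that the right-hand sides depend holomorphically on $z$ (uniformly in $\delta$ for (b)). Holomorphy is the easy half, since each formula is a composition of maps already known to be holomorphic.

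\textbf{Holomorphy.}
For (a), $z\mapsto H^z(1)$ is holomorphic in $H^1(Y\setminus\overline D)$ by Lemma \ref{lemharmoofH}. The interior part is the constant $1$. So $V^z$ is holomorphic as a map into $H^1(D)\times H^1(Y\setminus\overline D)$.

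For (b), define
\[
w_\delta^z:=(A+\delta\gamma^*T^z\gamma)^{-1}\gamma^*T^z(1)\in W .
\]
This is the product of the uniformly holomorphic family \eqref{unifholominverse} with $z\mapsto \gamma^*T^z(1)$. The latter is holomorphic because $T^z$ is the normal derivative of $H^z$. Since $T^z$ is a holomorphic operator family, it is locally bounded, so $\gamma^* T^z(1)$ is bounded in $W^*$ near every $z_0$. Hence $\{z\mapsto 1-\delta w^z_\delta\}$ is uniformly holomorphic in $H^1(D)$.

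Its trace $\gamma(1-\delta w_\delta^z)=1-\delta\gamma w_\delta^z$ is then uniformly holomorphic in $H^{1/2}(\partial D)$. Composing with $H^z$, which is holomorphic and locally bounded, gives a uniformly holomorphic exterior part, because a product of locally bounded holomorphic maps is locally bounded and holomorphic.

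\textbf{Identification at real $z=\alpha$.}
For (a), $V^\alpha$ minimises $\int_{Y\setminus D}|\nabla u|^2$ over $\alpha$-quasiperiodic $u$ with $u=1$ on $D$. Its Euler–Lagrange equation says that $V^\alpha|_{Y\setminus D}$ is orthogonal to $X_\alpha$ in the Dirichlet form and has trace $1$. By uniqueness this is $H^\alpha(1)$.

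For (b), I will write the Euler–Lagrange system of \eqref{minienerquasip}. The minimiser satisfies:
\begin{itemize}
\item it is $\alpha$-harmonic outside $D$, so $V_\delta^\alpha|_{Y\setminus D}=H^\alpha(V_\delta^\alpha|_{\partial D})$;
\item inside $D$, $\delta^{-1}\Delta V=\lambda$ is constant, where $\lambda$ is the Lagrange multiplier;
\item the flux condition holds: $\delta^{-1}\partial_\nu V|_- = \partial_\nu V|_+ = -T^\alpha(V|_{\partial D})$.
\end{itemize}
Write $V_\delta^\alpha|_D = 1+w$ with $w\in W$; this uses the constraint $\fint_D V=1$. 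Testing the interior equation against $v\in W$ makes the multiplier term vanish, since $\int_D v=0$. This gives
\[
\delta^{-1}\int_D\nabla w\cdot\overline{\nabla v}\,dx+\langle T^\alpha(\gamma(1+w)),\gamma v\rangle=0 ,
\]
that is, $(A+\delta\gamma^*T^\alpha\gamma)w=-\delta\gamma^*T^\alpha(1)$. Invertibility for $\delta<\delta_0$ then yields $w=-\delta w^\alpha_\delta$, which is exactly \eqref{formulaofVdelta}.

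\textbf{Main obstacle.}
The delicate step is deriving the weak Euler–Lagrange system rigorously in the minimisation space, in particular the handling of the multiplier and the sign convention for $T^\alpha$. I would do this by minimising first over the exterior part with the trace fixed, which gives the harmonic extension and a reduced energy $\langle T^\alpha g,g\rangle$. Then I would minimise the resulting functional of $u|_D$ over $1+W$ and read off its first variation.

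Finally, strict convexity of the energy on the affine constraint set gives uniqueness of the minimiser. This confirms that the formula agrees with $V^\alpha_\delta$, and hence that the family is a uniformly holomorphic extension.
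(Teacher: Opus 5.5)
Your proposal is correct and follows essentially the same route as the paper. Holomorphy, uniform in $\delta$, comes from Lemma~\ref{lemharmoofH} together with the uniform holomorphy of $(A+\delta\gamma^*T^z\gamma)^{-1}$. The identification at real $z=\alpha$ comes from the weak Euler--Lagrange equation, tested with $h\in W$ extended by $H^\alpha(h|_{\partial D})$; Green's identity turns it into $(A+\delta\gamma^*T^\alpha\gamma)w=-\delta\gamma^*T^\alpha(1)$ for $V^\alpha_\delta|_D=1+w$. Your reduced-energy derivation of this weak equation is equivalent to the paper's choice of test function. The paper only differs in normalizing the corrector as $w^\alpha_\delta=\delta^{-1}(1-V^\alpha_\delta|_D)$, the opposite sign convention.
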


\begin{proof}
    Item (a) is a direct corollary of Lemma \ref{lemharmoofH}. For (b), \eqref{unifholominverse} implies that $\{V_\delta^z\}_{\delta \in (0,\delta_0)}$ is a family of uniformly holomorphic functions. It remains to prove that
    \begin{equation}\label{formulaVope}
        1 - \delta (A + \delta \gamma^* T^\alpha \gamma )^{-1} \gamma^* T^\alpha(1) = V^\alpha_\delta |_D, \qquad \forall \, \alpha\in \mathcal{B}_1, \,\delta \in (0,\delta_0).
    \end{equation}

    By the variational definition of $V_\delta^\alpha$, we have
    \begin{equation}\label{variaofV}
        \delta^{-1} \int_D \nabla V_\delta^\alpha \cdot \overline {\nabla h}\,dx + \int_{Y\setminus D} \nabla V_\delta^\alpha \cdot 
        \overline{ \nabla H^\alpha( h|_{\partial D})}\,dx =0 , \qquad \forall \, h\in W.
    \end{equation}
    Let $w_\delta^\alpha =\delta^{-1} (1- V_\delta^\alpha|_D) \in W$. By \eqref{variaofV}, one has
    \begin{equation}\label{variawdelta}
        \begin{aligned}
            \int_D \nabla w_\delta^\alpha\cdot\overline {\nabla h}\,dx &+
            \delta \int_{Y\setminus D} \nabla H^\alpha(w_\delta^\alpha |_{\partial D})
            \cdot \overline{ \nabla H^\alpha( h|_{\partial D})}\,dx \\
            &=  \int_{Y\setminus D} \nabla V^\alpha\cdot \overline{ \nabla H^\alpha( h|_{\partial D})}\,dx, 
        \end{aligned}
        \qquad \forall \, h\in W.
    \end{equation}
Green’s identity gives
\begin{equation}
    \int_{Y\setminus D} \nabla H^\alpha g\cdot \overline{ \nabla H^\alpha \varphi }\,dx = \langle T^\alpha g,\varphi \rangle_{-\frac{1}{2},\frac{1}{2}}, \qquad \forall \,g,\varphi\in H^{1/2}(\partial D),
\end{equation}
where $\langle \cdot,\cdot \rangle_{-\frac{1}{2},\frac{1}{2}}$ denotes the $H^{-1/2}(\partial D)$-$H^{1/2}(\partial D)$ pair.
This, combined with \eqref{variawdelta}, implies that
\begin{equation}
    \begin{aligned}
        \langle A w_\delta^\alpha ,h \rangle_{W^*,W} = \int_D \nabla w_\delta^\alpha\cdot\overline {\nabla h}\,dx  &= \left\langle T^\alpha (1 - \delta \gamma w_\delta^\alpha), \gamma h \right\rangle_{-\frac{1}{2},\frac{1}{2}} \\
        &=\left\langle \gamma^*T^\alpha (1 - \delta \gamma w_\delta^\alpha), h \right\rangle_{W^*,W}.
    \end{aligned}
\end{equation}
This verifies \eqref{formulaVope}. Hence, the proof is complete.
\end{proof}

A direct corollary of Lemma \ref{holoofV} is the second-order expansion of $\widehat{C}^\alpha_\delta$, stated as follows.

\begin{proposition}\label{lem:analyticity-capacitance}
The map $\mathcal{B}_1 \ni \alpha \mapsto \widehat{C}^{\alpha}$ is an analytic function, and the map $\mathcal{B}_1 \ni \alpha \mapsto\widehat{C}_{\delta}^{\alpha}$ is uniformly analytic in $\delta \in (0,\delta_0)$. For every $\alpha \in \mathcal{B}_1$, let $w^\alpha \in H^1(D)$ be the weak solution of
\begin{equation}\label{correctorcapaci}
    \left\{
    \begin{aligned}
        &-\Delta w^\alpha  = \widehat{C}^\alpha &&\mathrm{in} \ D,\\
        &\left. \frac{\partial w^\alpha}{\partial \nu}\right|_- = \left. \frac{\partial V^\alpha}{\partial \nu}\right|_+ &&\mathrm{on} \ \partial D,\\
        &\int_D w^\alpha\,dx=0.
    \end{aligned}
    \right.
\end{equation} 
Then, the map $\mathcal{B}_1 \ni \alpha \mapsto w^\alpha$ is an analytic $H^1(D)$-valued function. Moreover, for every $\gamma\in\mathbb{N}^d$, 
    \begin{equation}\label{expansionofChat}
        \widehat{C}_{\delta}^{\alpha} =\widehat{C}^{\alpha} - \delta\fint_D|\nabla w^\alpha|^2\,dx + \mathcal{O}_{C^\gamma (\mathcal{B}_1)} (\delta^2) .
    \end{equation}
\end{proposition}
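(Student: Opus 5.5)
The plan is to write $\widehat C^\alpha_\delta$ and $\widehat C^\alpha$ as explicit holomorphic expressions in $z$ built from the objects of Lemma \ref{holoofV}, and then expand in $\delta$ with remainders bounded uniformly on a complex neighbourhood.

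\emph{Step 1: Green's identity formulas.} Since $V^\alpha\equiv1$ in $D$ and $V^\alpha=H^\alpha(1)$ outside, Green's identity gives
\[
\widehat C^\alpha=\frac{1}{|D|}\langle T^\alpha 1,1\rangle_{-\frac12,\frac12}.
\]
For $\widehat C^\alpha_\delta$, test the Euler--Lagrange equation \eqref{variaofV} against $V^\alpha_\delta$. Write $V^\alpha_\delta|_D=1-\delta w^\alpha_\delta$ with $w^\alpha_\delta\in W$. The exterior part equals $\langle T^\alpha(1-\delta\gamma w_\delta^\alpha),1-\delta\gamma w_\delta^\alpha\rangle$ and the interior part equals $\delta\int_D|\nabla w^\alpha_\delta|^2$. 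Combining these with the equation $Aw_\delta^\alpha=\gamma^*T^\alpha(1-\delta\gamma w_\delta^\alpha)$ gives
\[
|D|\,\widehat C^\alpha_\delta=\langle T^\alpha 1,1\rangle-\delta\langle A w^\alpha_\delta,w^\alpha_\delta\rangle,
\]
after using $\langle T^\alpha(1-\delta\gamma w),\gamma w\rangle=\langle Aw,w\rangle$. The identity should be rechecked carefully: the sign and any factor of $2$ from the cross terms must be tracked.

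\emph{Step 2: holomorphic extension.} For complex $z$, replace the sesquilinear pairings by bilinear ones. Pair against the $\bar z$-extension, as in $\mathfrak b_z$, so that the result depends holomorphically on $z$. This gives
\[
F_\delta(z)=\frac1{|D|}\Big(\langle T^z1,1\rangle-\delta\,\mathfrak a(w^z_\delta,w^{\bar z}_\delta)^{\sim}\Big),
\]
where $w^z_\delta=(A+\delta\gamma^*T^z\gamma)^{-1}\gamma^*T^z1$ and $\mathfrak a$ is the bilinear form of $A$. The most natural choice is to use $\langle A w^z_\delta, J w^{\bar z}_\delta\rangle$ with complex conjugation $J$, which is holomorphic in $z$.

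By Lemma \ref{lemharmoofH}, $T^z$ is holomorphic. By \eqref{unifholominverse}, $(A+\delta\gamma^*T^z\gamma)^{-1}$ is uniformly holomorphic, so $\{F_\delta\}$ is a uniformly holomorphic extension of $\widehat C^\alpha_\delta$, and $\langle T^z1,1\rangle$ extends $\widehat C^\alpha$. This settles both analyticity claims.

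\emph{Step 3: the corrector and the expansion.} Define $w^z:=A^{-1}\gamma^*T^z1$, which is holomorphic in $H^1(D)$. On real $\alpha$, $w^\alpha$ is exactly the weak solution of \eqref{correctorcapaci}. Indeed, $\langle Aw,h\rangle=\langle T^\alpha 1,\gamma h\rangle=-\int_{\partial D}\partial_\nu V^\alpha\,\bar h$ for mean-zero $h$. Compatibility with the constant $\widehat C^\alpha$ on the right-hand side is exactly the Green identity of Step 1, which accounts for the constant source; the sign of the normal derivative should be checked against the paper's convention.

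The resolvent identity gives
\[
w^z_\delta-w^z=-\delta(A+\delta\gamma^*T^z\gamma)^{-1}\gamma^*T^z\gamma w^z,
\]
which is $O(\delta)$ uniformly on a complex neighbourhood. Hence $F_\delta(z)=\widehat C^z-\delta\fint_D|\nabla w^z|^2+R_\delta(z)$ with $\sup|R_\delta|\le C\delta^2$ on a fixed complex neighbourhood of $\mathcal B_1$. Cauchy estimates on polydiscs then convert this sup bound into $C^\gamma(\mathcal B_1)$ bounds of order $\delta^2$ for every $\gamma$, which gives \eqref{expansionofChat}.

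The main obstacle is Step 1–2: getting the exact quadratic identity with the correct coefficient, and arranging a genuinely holomorphic, not merely real-analytic, complexification of the sesquilinear quantities so that the Cauchy estimates apply uniformly in $\delta$.
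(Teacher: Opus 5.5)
Your Step 1 identity is wrong, and Steps 2--3 are built on it. Put $g=1-\delta\gamma w^\alpha_\delta$. From your own ingredients, $|D|\widehat C^\alpha_\delta=\langle T^\alpha g,g\rangle+\delta\langle Aw^\alpha_\delta,w^\alpha_\delta\rangle=\langle T^\alpha g,1\rangle$. But self-adjointness of $T^\alpha$ gives $\langle T^\alpha g,1\rangle=\langle T^\alpha 1,1\rangle-\delta\,\overline{\langle T^\alpha 1,\gamma w^\alpha_\delta\rangle}$. Testing $Aw^\alpha_\delta=\gamma^*T^\alpha g$ with $w^\alpha_\delta$ gives $\langle T^\alpha 1,\gamma w^\alpha_\delta\rangle=\langle Aw^\alpha_\delta,w^\alpha_\delta\rangle+\delta\langle T^\alpha\gamma w^\alpha_\delta,\gamma w^\alpha_\delta\rangle$. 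So the exact identity is
\[
|D|\,\widehat C^\alpha_\delta=\langle T^\alpha 1,1\rangle-\delta\langle Aw^\alpha_\delta,w^\alpha_\delta\rangle-\delta^2\langle T^\alpha\gamma w^\alpha_\delta,\gamma w^\alpha_\delta\rangle .
\]
The term you dropped equals $\delta^2\int_{Y\setminus D}|\nabla H^\alpha(\gamma w^\alpha_\delta)|^2\,dx$, which is generally nonzero. Hence your $F_\delta$ does not restrict to $\widehat C^\alpha_\delta$ on $\mathcal B_1$. Step 2 therefore does not prove uniform analyticity of $\widehat C^\alpha_\delta$, and the remainder $R_\delta$ in Step 3 belongs to a different function. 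The discrepancy is $O(\delta^2)$, so the formal expansion survives. The $\mathcal O_{C^\gamma(\mathcal B_1)}(\delta^2)$ claim, however, needs a holomorphic function that actually equals $\widehat C^\alpha_\delta$ on $\mathcal B_1$.

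The clean repair is to stop at $|D|\widehat C^\alpha_\delta=\langle T^\alpha(1-\delta\gamma w^\alpha_\delta),1\rangle$. This is linear in $w^\alpha_\delta$ and pairs against the real constant $1$. So $F_\delta(z):=|D|^{-1}\langle T^z(1-\delta\gamma w^z_\delta),1\rangle$ is holomorphic and uniformly bounded by \eqref{unifholominverse}, with no $\bar z$-device at all. If you keep a quadratic form, note that with the sesquilinear pairing, $\langle Aw^z_\delta,Jw^{\bar z}_\delta\rangle=\int_D\nabla w^z_\delta\cdot\nabla w^{\bar z}_\delta$ is not holomorphic; the correct object is $\int_D\nabla w^z_\delta\cdot\overline{\nabla w^{\bar z}_\delta}$.

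Your resolvent identity then gives $F_\delta(z)=|D|^{-1}\langle T^z1,1\rangle-\delta|D|^{-1}\langle T^z\gamma w^z,1\rangle+\delta^2G_\delta(z)$, with $G_\delta$ uniformly holomorphic. For real $\alpha$, self-adjointness of $T^\alpha$ gives $\langle T^\alpha\gamma w^\alpha,1\rangle=\langle Aw^\alpha,w^\alpha\rangle=\int_D|\nabla w^\alpha|^2\,dx$, and Cauchy estimates finish.

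With the paper's convention $T^\alpha=-\partial_\nu H^\alpha|_+$, your $w^z=A^{-1}\gamma^*T^z1$ is minus the solution of \eqref{correctorcapaci}; the paper has $w^\alpha=-A^{-1}\gamma^*T^\alpha(1)$. This is harmless, since only $|\nabla w^\alpha|^2$ enters.

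Once repaired, your argument is a genuinely different route. The paper expands the minimizer $V^\alpha_\delta$ itself to first order in $C^\gamma(\mathcal B_1;H^1(Y))$, using Lemma \ref{holoofV} and Cauchy's formula. It then substitutes into the weighted energy and cancels the cross term with the corrector identity \eqref{expanC2}. Your exact scalar formula makes holomorphy and an explicit $\delta^2$ remainder immediate, while the paper's route also yields the first-order expansion of $V^\alpha_\delta$.
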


\begin{remark}
    The analyticity follows from the variational definition of the quasiperiodic capacitance matrix. In comparison, the layer-potential definition was used in \cite{cbms}, and only the analyticity over $\mathcal{B}_1\setminus \{0\}$ was proved.
\end{remark}

\begin{remark}
    The exponential decay of the off-diagonal terms of the capacitance operator $\mathfrak{C}$ follows from the analyticity of $\widehat{C}^\alpha$ over $\mathcal{B}_1$, as $\widehat{C}^\alpha$ is the symbol of $\mathfrak{C}$.
\end{remark}

\begin{proof}
    Since $\widehat{C}^{\alpha}$ and $\widehat{C}^{\alpha}_\delta$ are integrals of $V^\alpha$ and $V^\alpha_\delta$, respectively,  
    Lemma \ref{holoofV} implies that $\widehat{C}^{\alpha}$ is analytic and $\widehat{C}^{\alpha}_\delta$ is uniformly analytic.
    Using the definition \eqref{correctorcapaci} of $w^\alpha$ and integration by parts, one gets
    \begin{equation}
        w^\alpha = - A^{-1} \gamma^* T^\alpha(1).
    \end{equation}
    Therefore, $w^\alpha$ is also analytic. Moreover, by \eqref{uniformholdef} and the Cauchy's formula, for every $\gamma\in\mathbb{N}^d$, one has
    \begin{equation}
        V^\alpha_\delta = 
        \left\{ \begin{aligned}
            & 1 +  \delta w^\alpha  \\
            & V^\alpha + \delta H^\alpha (w^\alpha|_{\partial D})
        \end{aligned}\right.  + \mathcal{O}_{C^\gamma(\mathcal{B}_1;H^1(Y))}(\delta^2).
    \end{equation}
    Therefore,
    \begin{equation}\label{expanC1}
        \begin{aligned}
            |D| \widehat{C}^\alpha_\delta &= \delta \int_D |\nabla w^\alpha|^2\,dx + \int_{Y\setminus D}
            |\nabla V^\alpha|^2\,dx \\
            & \quad + 2 \delta \mathrm{Re}\, \int_{Y\setminus D} \nabla V^\alpha \cdot \overline{ \nabla H^\alpha  (w^\alpha|_{\partial D})} \,dx + \mathcal{O}_{C^\gamma(\mathcal{B}_1)}(\delta^2).
        \end{aligned}
    \end{equation}

Test \eqref{correctorcapaci} with $\overline{w^\alpha}$ yields
\begin{equation}\label{expanC2}
    \int_D |\nabla w^\alpha|^2\,dx = - \int_{Y\setminus D} \nabla V^\alpha \cdot \overline{ \nabla H^\alpha  (w^\alpha|_{\partial D})} \,dx.
\end{equation}
The desired conclusion follows from \eqref{expanC1} and \eqref{expanC2}.
\end{proof}

\begin{lemma}\label{maximumCalphalem}
    Under the symmetry condition \eqref{symcond} on $D$, for every $\delta>0$, we have
\begin{equation}\label{eq:capacity-maximum-M}
    \widehat{C}_\delta^\alpha < \widehat{C}_\delta^M, \qquad \forall\,\alpha \in \mathcal{B}_1 \setminus \{M\}.
\end{equation}
Moreover,
\begin{equation}\label{eq:capacity-maximum-M-0}
    \widehat{C}^\alpha < \widehat{C}^M, \qquad \forall\,\alpha \in \mathcal{B}_1 \setminus \{M\}.
\end{equation}
\end{lemma}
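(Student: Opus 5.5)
The plan is to identify $\widehat{C}^M_\delta$ with a Dirichlet-constrained minimum on the cell, which gives the inequality $\widehat{C}^\alpha_\delta\le\widehat{C}^M_\delta$ for free, and then to get strictness from unique continuation. The argument is identical for $\delta>0$ and for the limiting problem \eqref{minienerquasip1}, so I describe it for $\widehat{C}^\alpha_\delta$ and comment on $\delta=0$ at the end.

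\emph{Step 1 (symmetry of $V^M_\delta$).} The minimizer in \eqref{minienerquasip} is unique for every $\alpha$. Indeed, the functional is a positive quadratic form and the constraint set is affine. The difference $\phi$ of two minimizers has mean zero in $D$, and the minimizing property forces its energy to vanish, so $\phi$ is constant. A constant with mean zero in $D$ is zero.

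Let $R_j$ denote the reflection $x_j\mapsto -x_j$. By \eqref{symcond}, $D$ and $Y$ are $R_j$-invariant, so $a_\delta\circ R_j=a_\delta$. Moreover, $u\circ R_j$ is $R_j\alpha$-quasiperiodic whenever $u$ is $\alpha$-quasiperiodic. At $\alpha=M$ we have $R_jM\equiv M$ modulo $2\pi\mathbb{Z}^d$, since $e^{-\mi\pi}=e^{\mi\pi}$. Hence $V^M_\delta\circ R_j$ is again an admissible minimizer, and uniqueness gives $V^M_\delta\circ R_j=V^M_\delta$.

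Combining this evenness with antiperiodicity, $V^M_\delta(x+e_j)=-V^M_\delta(x)$, the traces on $\{x_j=\tfrac12\}$ satisfy $V^M_\delta(\tfrac12,x')=-V^M_\delta(-\tfrac12,x')=-V^M_\delta(\tfrac12,x')$. Therefore $V^M_\delta\in H^1_0(Y)$.

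\emph{Step 2 (non-strict inequality).} Any $u\in H^1_0(Y)$ has zero trace on $\partial Y$, so it belongs to $H^1_\alpha(Y)$ for every $\alpha$. Consequently $V^M_\delta$ is an admissible competitor in \eqref{minienerquasip} for every $\alpha\in\mathcal{B}_1$, which gives $\widehat{C}^\alpha_\delta\le\widehat{C}^M_\delta$. In fact, by Step 1, $\widehat{C}^M_\delta$ equals the minimum of the energy over $H^1_0(Y)$ under the same mean constraint.

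\emph{Step 3 (strictness, the main point).} Suppose $\widehat{C}^\alpha_\delta=\widehat{C}^M_\delta$. By uniqueness, $V^\alpha_\delta=V^M_\delta=:V$. The Euler--Lagrange equation for the $\alpha$-problem says that
\[
\int_Y a_\delta\nabla V\cdot\overline{\nabla v}\,dx=\lambda\fint_D\overline{v}\,dx
\qquad\text{for all } v\in H^1_\alpha(Y).
\]
The same identity holds with $M$ in place of $\alpha$. In particular, $V$ is harmonic in $Y\setminus\overline D$, which contains a neighborhood of $\partial Y$.

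Testing with $v$ supported near $\partial Y$ yields the natural flux condition on opposite faces:
\[
\partial_{x_j}V(\tfrac12,x')=e^{\mi\alpha_j}\,\partial_{x_j}V(-\tfrac12,x').
\]
The $M$-problem yields the same relation with the factor $-1$. Since $\alpha\neq M$, some $\alpha_j\neq\pi$. For that $j$ the two relations force $\partial_{x_j}V=0$ on the face $\{x_j=\tfrac12\}$.

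On this face we also have $V=0$ by Step 1. Thus $V$ has vanishing Cauchy data on an open piece of the boundary of the region where it is harmonic. By unique continuation for harmonic functions, $V\equiv0$ in the connected open set $Y\setminus\overline D$. Connectedness holds because $D$ is simply connected and $D\Subset Y$.

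Then the trace of $V$ on $\partial D$ vanishes. For $\delta>0$, $V$ is then a weak solution of the Neumann problem in $D$ with constant right-hand side $\lambda\delta$, obtained from the Euler--Lagrange equation with test functions supported in $\overline D$. Testing the full Euler--Lagrange equation with $\overline V$ gives $\widehat{C}^M_\delta=\lambda$ up to normalization. Combined with $V=0$ outside $D$, this forces $\nabla V=0$ in $D$, so $V$ is a constant in $D$ with zero trace on $\partial D$. Hence $V\equiv0$, contradicting $\fint_D V=1$.

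For $\delta=0$ the contradiction is immediate: the trace of $V$ on $\partial D$ is $1$, not $0$. This proves both \eqref{eq:capacity-maximum-M} and \eqref{eq:capacity-maximum-M-0}.

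The delicate point is making the weak flux relation on opposite faces rigorous for $H^1$ functions. I would do this by noting that $V$ is harmonic, hence smooth up to the flat faces by local reflection and elliptic regularity away from $\overline D$. Then I would integrate by parts against quasiperiodic test functions supported in a thin strip near $\partial Y$.
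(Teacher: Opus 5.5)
Your proof is correct in substance, and Steps 1--2 are exactly the paper's: uniqueness of the minimizer, reflection invariance, and $V^M_\delta\in H^1_0(Y)\subset H^1_\alpha(Y)$. The two routes differ only at the strictness step.

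The paper first notes that $V^M_\delta$ is real, since its conjugate is also a minimizer. It then applies the strong maximum principle to $-\nabla\cdot(a_\delta\nabla V^M_\delta)=\widehat C^M_\delta\mathbbm{1}_D\ge 0$ with zero Dirichlet data, which gives $V^M_\delta>0$. Hopf's lemma then gives $\partial_\nu V^M_\delta<0$ on the faces. Combined with reflection, the face relation you also derive, $\partial_\nu V|_{F_j^+}=-e^{\mi\alpha_j}\partial_\nu V|_{F_j^-}$, forces $e^{\mi\alpha_j}=-1$ pointwise.

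You replace this sign information with a weaker statement: $\partial_\nu V$ cannot vanish on an open piece of a face. You obtain it from unique continuation for harmonic functions plus a flux contradiction. Your version is sign-free and needs no maximum principle for the discontinuous coefficient $a_\delta$, so it would carry over to complex- or vector-valued analogues. The paper's version yields the pointwise bound $\partial_\nu V^M<0$ on the faces, which it reuses in the proof of Lemma \ref{lem:strict-capacity-hessian}.

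Two details need repair.

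\emph{Connectedness.} Simple connectivity of $D$ does not make $Y\setminus\overline D$ connected when $d=3$: the shell $\{1<|x|<2\}$ is simply connected but has disconnected complement. So unique continuation only gives $V\equiv 0$ on the component $\mathcal O$ of $Y\setminus\overline D$ adjacent to $\partial Y$. The trace of $V$ therefore need not vanish on all of $\partial D$.

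\emph{The Neumann condition.} Test functions supported in $\overline D$ only give the interior equation, not the Neumann condition.

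Both issues are fixed at once. Take $\chi\in C_c^\infty(Y)$ with $\chi=1$ near the compact set $Y\setminus\mathcal O$. Then $\nabla\chi$ is supported in $\mathcal O$, where $\nabla V=0$. Your Euler--Lagrange identity with $v=\chi$ then gives $0=\lambda=|D|\widehat C^M_\delta$. This is impossible: an element of $H^1_0(Y)$ with mean one on $D$ cannot have zero energy, since zero energy would make it constant and hence zero. For $\delta=0$, your trace contradiction works unchanged on $\partial\mathcal O\cap\partial D$.
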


\begin{proof}
We only prove \eqref{eq:capacity-maximum-M}, as \eqref{eq:capacity-maximum-M-0} can be proved with the same argument. Let $V_\delta^M$ be a minimizer that defines $\widehat{C}_\delta^M$. For each $j\in\{1,\ldots,d\}$, the reflection
\begin{equation}
   R_j: (x_1,\ldots,x_j,\ldots,x_d) \mapsto (x_1,\ldots,-x_j,\ldots,x_d)
\end{equation}
preserves $Y$, $D$, the coefficient $a_\delta$, and the constraint that the average over $D$ is equal to one. Moreover, reflection changes the $j$-th component of the quasiperiodicity vector from $\pi$ to $-\pi$, and these two components are identical modulo $2\pi$. Therefore, the function $V^M_\delta \circ R_j$ is also a minimizer that defines $\widehat{C}_\delta^M$. However, one can easily show that the minimizer is unique. Indeed, the difference of two minimizers has zero weighted Dirichlet energy and is therefore constant, while a nonzero constant cannot satisfy the antiperiodic condition. It follows that $V_\delta^M =V^M_\delta \circ R_j $ for every $j$. This, combined with the antiperiodic condition, implies that $V_\delta^M \in H^1_0(Y)$.
Because $H_0^1(Y)\subset H_\alpha^1(Y)$ for any $\alpha \in \mathcal{B}_1$, we have
\begin{equation}
        \widehat{C}_\delta^\alpha=\frac{1}{|D|} \min_{\substack{u\in H_\alpha^1(Y)\\ \fint_Du\,dx=1}} \int_Ya_\delta|\nabla u|^2\,dx
        \leq \frac{1}{|D|} \int_Y a_\delta|\nabla V^M_\delta|^2\,dx =  \widehat{C}_\delta^M.
\end{equation}

Since $\overline{V^M_\delta}$ also minimizes the weighted energy, the uniqueness implies that $V^M_\delta$ is real-valued. Moreover, by the variational definition of $V^M_\delta$, we have
\begin{equation}\left\{
\begin{aligned}
    & -\nabla \cdot (a_\delta \nabla V^M_\delta) =\widehat{C}_\delta^M \mathbbm{1}_D \geq 0 && \mathrm{in} \ Y, \\
    & V^M_\delta =0 && \mathrm{on} \ \partial Y.
\end{aligned}\right.
\end{equation}
By the maximum principle, $V^M_\delta >0$ in $Y$. The Hopf lemma therefore gives $\frac{\partial V^M_\delta}{\partial \nu}<0$ on $\partial Y$. 

Now, we assume that $ \widehat{C}_\delta^{\alpha_0}= \widehat{C}_\delta^M$ for some $\alpha_0 \in \mathcal{B}_1$. Due to $V_\delta^M \in H^1_0(Y)$, the variational definition of $V^{\alpha_0}_\delta$ gives
\begin{equation}
    \int_Y a_\delta \nabla V^{\alpha_0}_\delta \cdot \overline{\nabla (V^{\alpha_0}_\delta - V^M_\delta)}\,dx=0.
\end{equation}
Consequently,
\begin{equation}
    0= |D| ( \widehat{C}_\delta^M - \widehat{C}_\delta^{\alpha_0} )=\int_Y a_\delta |\nabla V^{\alpha_0}_\delta - \nabla V^M_\delta|^2\,dx .
\end{equation}
Because $V^{\alpha_0}_\delta$ and $V^M_\delta$ have the same average on $D$, it follows that $V^{\alpha_0}_\delta = V^M_\delta$. Write 
\begin{equation}
    F^\pm_j := \left\{x \in \partial Y: x_j = \pm \frac{1}{2} \right\}, \qquad j=1,\cdots,d.
\end{equation}
By the $\alpha_0$-quasiperiodicity of $V^{\alpha_0}_\delta$, the identity $V^{\alpha_0}_\delta = V^M_\delta$, and the reflection symmetry $V_\delta^M =V^M_\delta \circ R_j $, one has
\begin{equation}
    \left. \frac{\partial V^M_\delta}{\partial \nu} \right|_{F_j^-}=\left. \frac{\partial V^M_\delta}{\partial \nu} \right|_{F_j^+} = - e^{\mi (\alpha_0)_j} \left. \frac{\partial V^M_\delta}{\partial \nu} \right|_{F_j^-} , \qquad j=1,\cdots,d.
\end{equation}
This, combined with $\frac{\partial V^M_\delta}{\partial \nu}<0$ on $\partial Y$, implies $\alpha_0=M$. The proof is complete.
\end{proof}

\begin{lemma}\label{lem:strict-capacity-hessian}
    Assume that the symmetry condition \eqref{symcond} for $D$ holds. There exists a constant $C>0$ such that
    \begin{equation}\label{nondegeneracy_capacity}
        \xi\cdot\nabla_{\alpha}^{2}\widehat{C}^{M}\xi\leq -C|\xi|^{2}, \qquad \xi\in\mathbb{R}^{d}.
    \end{equation}
    Moreover, there exists $\delta_0>0$ such that for every $ 0<\delta < \delta_0$, we have
    \begin{equation}
    \label{nondegeneracy_band}
        \xi\cdot\nabla_{\alpha}^{2}\widehat{C}_\delta^{M}\xi\leq -C|\xi|^{2}, \qquad \xi\in\mathbb{R}^{d}.
    \end{equation}
\end{lemma}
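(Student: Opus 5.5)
We prove \eqref{nondegeneracy_capacity} by a second-variation computation at $M$, and then obtain \eqref{nondegeneracy_band} by perturbation.

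\textbf{Step 1: gauge the problem.} Write quasiperiodic competitors as $u = e^{\mi(M+\beta)\cdot x}\phi$ with $\phi$ periodic, or equivalently use the gauge $U_\beta$ from Lemma~\ref{lemharmoofH}. For $\beta\in\R^d$, the test function $e^{\mi \beta\cdot\Theta(x)}V^M$ is admissible for $\widehat C^{M+\beta}$. It belongs to $H^1_{M+\beta}(Y)$ because $V^M\in H^1_0(Y)$ (this was shown in the proof of Lemma~\ref{maximumCalphalem}, and the argument is identical for $\delta=0$). It also equals $1$ on $D$, since $\Theta=0$ near $\overline D$.

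\textbf{Step 2: second-order expansion of the symbol.} Expanding the Dirichlet energy of $V^{M+\beta}$ to second order in $\beta$ (the map is analytic by Proposition~\ref{lem:analyticity-capacitance}), we get the standard formula
\[
|D|\,\xi\cdot\nabla^2_\alpha\widehat C^M\xi = 2\int_{Y\setminus D}|\xi\cdot\nabla\Theta\, V^M|^2\,dx - 2\,\mathfrak{a}(\chi_\xi,\chi_\xi).
\]
Here $\chi_\xi\in X_M$ is the corrector solving $\mathfrak{b}_0(\chi_\xi,v)=-\mi\int(\ldots)$, the first-order term in $\beta$ of $\mathfrak b_\beta$ tested against $V^M$. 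This is second-order perturbation theory: the first term is the Hessian of the quadratic form at a fixed function, and the second is the correction from minimization.

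It is cleaner to avoid $\Theta$ and use the plain phase $e^{\mi\beta\cdot x}$, with the constraint handled by allowing the value on $D$ to vary by a constant. The resulting formula is $|D|\,\xi\cdot\nabla^2\widehat C^M\xi = 2\|\xi V^M\|^2_{L^2(Y\setminus D)} - 2\min_{\psi}\|\nabla\psi - \mi\,\xi V^M\|^2\ldots$; concretely, $\xi\cdot\nabla^2\widehat C^M\xi = -\tfrac{2}{|D|}\min_\psi\ldots$ after rearrangement. I will aim for the representation
\[
\tfrac{|D|}{2}\,\xi\cdot\nabla^2\widehat C^M\xi = -\int_{Y\setminus D}|\nabla \psi_\xi|^2 + \int_{Y\setminus D}\big(|\xi|^2|V^M|^2 - \ldots\big).
\]
Rather than guess signs, the robust route is the following. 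We already know $M$ is a maximum, so the Hessian is negative semidefinite. It therefore suffices to show that it has no kernel.

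\textbf{Step 3 (the main obstacle): exclude degenerate directions.} Suppose $\xi\neq0$ and $\xi\cdot\nabla^2\widehat C^M\xi=0$. Since $\beta\mapsto\widehat C^{M+\beta}$ is maximal at $0$, equality in the second variation means the first-order variation of the minimizer, $\dot V:=\partial_t V^{M+t\xi}|_{t=0}$, makes the second-order variational inequality sharp. The plan is to derive from this that $\mi(\xi\cdot x)V^M+\dot V$ satisfies a boundary problem forcing an identity of normal derivatives on opposite faces $F_j^\pm$, in the spirit of the end of the proof of Lemma~\ref{maximumCalphalem}. Differentiating the quasiperiodic condition in $t$ gives, for $w := e^{-\mi M\cdot x}\dot V$,
\[
w|_{F_j^+}-w|_{F_j^-} = \mi\xi_j\,V^M\text{-jump} = 0,
\]
and, for the normal derivatives,
\[
\partial_\nu\dot V|_{F_j^+}+\partial_\nu\dot V|_{F_j^-} = \mi\xi_j\,\partial_\nu V^M|_{F_j^-}\neq 0 .
\]
The reflection symmetry $V^M=V^M\circ R_j$ and the parity of $\dot V$ under $R_j$ (odd in $x_j$, because $\xi_j$ flips sign) then turn the sharpness condition into $\int_{F_j}\xi_j\,\partial_\nu V^M\,\overline{(\ldots)}=0$. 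Combining this with $\partial_\nu V^M<0$ from the Hopf lemma should force $\xi_j=0$ for each $j$, a contradiction.

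Concretely, I would test the Euler--Lagrange equation of $\dot V$ against $x_jV^M$. This yields the identity
\[
\tfrac{|D|}{2}\,\xi\cdot\nabla^2\widehat C^M\xi = -\int_{Y\setminus D}|\nabla(\dot V-\mi(\xi\cdot x)V^M)|^2\,dx,
\]
in which the candidate minimizer has been shifted. If this vanishes, then $\dot V=\mi(\xi\cdot x)V^M+c$. The constraint $\dot V|_D=0$ is incompatible with $(\xi\cdot x)$ being nonconstant on $D$ unless $\xi=0$. Verifying this exact identity, including the sign and the boundary terms on $\partial Y$ that vanish because $V^M\in H^1_0(Y)$, is the key computation.

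\textbf{Step 4: the case $\delta>0$.} The same argument applies verbatim to $\widehat C^\alpha_\delta$, since $V^M_\delta\in H^1_0(Y)$ as well. More simply, the expansion \eqref{expansionofChat} in $C^2$ gives $\nabla^2\widehat C^M_\delta=\nabla^2\widehat C^M+O(\delta)$. Hence \eqref{nondegeneracy_band} follows from \eqref{nondegeneracy_capacity} for $\delta<\delta_0$ small, after halving $C$.
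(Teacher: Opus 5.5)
\textbf{Verdict: there is a genuine gap, and it sits in Step 3, which carries the entire content of the lemma.}

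\textbf{What is fine.} Negative semidefiniteness at the maximum plus the absence of degenerate directions gives \eqref{nondegeneracy_capacity} by compactness of the unit sphere. Your Step 4 is also correct: reading off $\nabla^2_\alpha\widehat C^M_\delta=\nabla^2_\alpha\widehat C^M+O(\delta)$ from the $C^2$ form of \eqref{expansionofChat} is a valid and simpler substitute for redoing the argument with $V^M_\delta$.

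\textbf{The proposed identity is false.} You claim $\tfrac{|D|}{2}\xi\cdot\nabla^2_\alpha\widehat C^M\xi=-\int_{Y\setminus D}|\nabla(\dot V-\mi(\xi\cdot x)V^M)|^2\,dx$. The derivation you sketch is not available: the linearized Euler--Lagrange equation for $\dot V$ holds only against test functions vanishing on $D$, whereas $x_jV^M=x_j$ on $D$.

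\textbf{The correct identity.} Note that $V^M$ itself, not just $e^{\mi\beta\cdot\Theta}V^M$, is admissible for $\widehat C^{M+t\xi}$, since $V^M\in H^1_0(Y)$. Testing the Euler--Lagrange equation of $V^{M+t\xi}$ with $V^M-V^{M+t\xi}$ gives $|D|\widehat C^{M+t\xi}=|D|\widehat C^M-\int_{Y\setminus D}|\nabla(V^{M+t\xi}-V^M)|^2\,dx$. Hence $\tfrac{|D|}{2}\xi\cdot\nabla^2_\alpha\widehat C^M\xi=-\int_{Y\setminus D}|\nabla\dot V|^2\,dx$.

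A one-dimensional check, where everything is explicit, confirms this. For $D=(-a/2,a/2)$ one has $\widehat C^\alpha=(2-2\cos\alpha)/(a(1-a))$ and $\nabla\dot V\equiv-\mi/(1-a)$ on $Y\setminus D$. Both sides of the correct identity then equal $-1/(1-a)$. Your right-hand side instead equals $-4(1-a)/3$; for example, $-2$ versus $-2/3$ at $a=1/2$.

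\textbf{Why your contradiction disappears.} With the corrected identity, degeneracy only yields $\nabla\dot V=0$, i.e.\ $\dot V\equiv0$. That is perfectly compatible with $\dot V|_D=0$. It is telling that your contradiction used neither the Hopf lemma nor the faces of $Y$.

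\textbf{The missing step.} You need to feed $\dot V=0$ back into the first-order equation.
\begin{itemize}
\item Write $V^{M+t\xi}=V^M+e^{\mi t\xi\cdot x}z_t$ with $z_t\in X:=\{h\in H^1_M(Y):h|_D\equiv0\}$, so that $\dot V=z_0'$.
\item Differentiate $\int_{Y\setminus D}\nabla V^{M+t\xi}\cdot\overline{\nabla(e^{\mi t\xi\cdot x}h)}\,dx=0$ at $t=0$ and use $z_0=z_0'=0$. This leaves $\int_{Y\setminus D}\nabla V^M\cdot\nabla\bigl((\xi\cdot x)\overline h\bigr)\,dx=0$ for all $h\in X$.
\item Integrate by parts, using that $V^M$ is harmonic and $h=0$ on $\partial D$. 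Then pair opposite faces: the normal derivatives of $V^M$ agree by reflection symmetry, $h$ is antiperiodic, and $\xi\cdot x$ jumps by $\xi_j$. This gives $\sum_j\xi_j\int_{F_j}\partial_\nu V^M\,\overline h\,d\sigma=0$.
\item Choose $h$ localized on a single pair of faces and use $\partial_\nu V^M<0$ from Hopf. This forces $\xi_j=0$ for every $j$.
\end{itemize}
At the level of strong traces, this is the same as inserting $\dot V=0$ into your differentiated Neumann quasiperiodicity relation, which gives $\xi_j\,\partial_\nu V^M=0$ on $F_j$. Your first paragraph of Step 3 points in this direction. However, the placeholder $(\ldots)$ is exactly what is never supplied, and it is never linked to the degeneracy hypothesis.
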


\begin{proof}
We only prove \eqref{nondegeneracy_capacity}, as the proof of \eqref{nondegeneracy_band} is similar.
    It was proved in \cite[Proposition 4.2 and Lemma 4.5]{ammari2019bloch} that $\widehat{C}^{\alpha}$ attains its maximum at $M$ and that its Hessian at $M$ is negative semidefinite. We therefore only need to show that the Hessian is nondegenerate.
    Denote $X:= \{h \in H^1_M(Y): h|_D \equiv 0\}$.

    By the proof of Lemma \ref{maximumCalphalem}, $V^M_\delta \in H^1_0(Y)$ for every $\delta>0$. By Lemma \ref{holoofV}, $V^M \in H^1_0(Y)$. Fix $\xi\in\mathbb{R}^{d}$. For sufficiently small $t$, the minimizer $V^{M+t\xi}$ can be written uniquely in the form
    \begin{equation}
        V^{M+t\xi}=V^{M}+e^{\mi t\xi\cdot x}z_{t},\qquad z_{t}\in X.
    \end{equation}
    Here $z_0=0$. By the variational definition of $V^{M+t\xi}$, we get
    \begin{equation}\label{eqforzt}
        \int_{Y\setminus D}\nabla\left(V^{M} + e^{\mi t\xi\cdot x}z_{t}\right)\cdot \overline{\nabla(e^{\mi t\xi\cdot x}h)}\,dx=0, \qquad h\in X.
    \end{equation}
    Applying $\nabla (e^{\mi t\xi\cdot x} f) = e^{\mi t\xi\cdot x}(\nabla f+\mi t\xi f)$ for $f=z_t$ and $f=h$, respectively, this equation becomes
    \begin{equation}
        \int_{Y\setminus D}  (\nabla z_t+\mi t\xi z_t) \cdot \overline{(\nabla h+\mi t\xi h)} \,dx= - \int_{Y\setminus D} 
        \nabla V^M \cdot \overline{e^{\mi t\xi\cdot x}(\nabla h+\mi t\xi h)} \,dx,
        \qquad h\in X.
    \end{equation}
    The right-hand side is an analytic function in $t$, while the left-hand side can be viewed as a quadratic form on $X\times X$. It is easy to use the Poincar\'{e} inequality to show that this form is coercive near $t=0$. Therefore, by the analytic inverse theorem, or equivalently a Neumann-series argument, one obtains that $z_t$ depends analytically on $t$ in a neighborhood of $t=0$.
    
    Taking $h=z_{t}$ in the equation \eqref{eqforzt} gives the exact identity
    \begin{equation}
        |D|\widehat{C}^{M+t\xi}=|D|\widehat{C}^{M}-\int_{Y\setminus D}\big|\nabla(e^{\mi t\xi\cdot x}z_{t})\big|^{2}\,dx.
    \end{equation}
    Differentiating this identity twice at $t=0$ yields
    \begin{equation}
        \xi\cdot\nabla_{\alpha}^{2}\widehat{C}^{M}\xi=-\frac{2}{|D|}\int_{Y\setminus D}\big|\nabla z_{0}'\big|^{2}\,dx.
    \end{equation}
    By contradiction, suppose that there is a unit vector $\xi $ such that $\xi\cdot\nabla_{\alpha}^{2}\widehat{C}^{M}\xi=0$. Then, $\nabla z_{0}'=0$ in $Y\setminus\overline{D}$. Since $z_0'|_D\equiv 0$, it follows that $z_{0}'$ is identically zero in $Y$.

    Differentiating the equation \eqref{eqforzt} at $t=0$ now gives
    \begin{equation}
        0=\int_{Y\setminus\overline{D}}\nabla V^{M}\cdot\nabla\big((\xi\cdot x)h\big)\,dx,
        \qquad h\in X.
    \end{equation}
    Because $V^{M}$ is harmonic in $Y\setminus\overline{D}$ and $h=0$ on $\partial D$, integration by parts reduces this identity to an integral over $\partial Y$. On each pair of opposite faces, the outward normal derivatives of $V^{M}$ agree by reflection symmetry, whereas $h$ have opposite signs by antiperiodicity. Moreover, the values of $\xi\cdot x$ on the two opposite faces differ by $\xi_{j}$. Pairing the opposite faces therefore gives
    \begin{equation}\label{integralface}
        \sum_{i=1}^{d}\xi_{i}\int_{F_i}\frac{\partial V^{M}}{\partial\nu} h\,d\sigma=0,
        \qquad h\in X, 
    \end{equation}
    where $F_i:= \partial Y\cap \{x_{i}=1/2\}$. Fix an index $j$. We may choose $h\in X$ so that $h|_{F_j}$ is a nonnegative, nonzero smooth function compactly supported in the interior of this face, $h|_{-F_j} = - h|_{F_j}$, and its trace on all the remaining faces is zero. Since $V^{M}=1$ on $\partial D$ and $V^{M}=0$ on $\partial Y$, the strong maximum principle gives $0<V^{M}<1$ in $Y\setminus\overline{D}$. By Hopf's lemma, we get
    \begin{equation}
        \frac{\partial V^{M}}{\partial\nu}<0\qquad\text{on the interior of }F_1,\cdots, F_d.
    \end{equation}
    For the preceding choice of $h$, \eqref{integralface} becomes
    \begin{equation}
        \xi_j \int_{F_j} \frac{\partial V^{M}}{\partial\nu} h\,d\sigma =0.
    \end{equation}
    This implies that $\xi_{j}=0$. Since the same argument applies to all $j$, we conclude that $\xi=0$. This contradicts the assumption that $\xi$ is a unit vector. Hence, we have finished the proof.
\end{proof}

\subsection{Properties of the subwavelength band}\label{secsecorder}

In this section, we establish some useful properties of the subwavelength band. For $\alpha\in \mathcal{B}_1$, consider the Bloch eigenvalue problem
\begin{equation}\label{blocheqdelta}
\left\{
\begin{aligned}
\nabla\cdot\frac{1}{\rho}\nabla u+\frac{\omega^2}{\kappa}u&=0 &&\text{in }Y\setminus \overline{D},\\
\nabla\cdot\frac{1}{\rho_{\rm b}}\nabla u+\frac{\omega^2}{\kappa_{\rm b}}u&=0 &&\text{in }  D,\\
u|_+-u|_-&=0 &&\text{on }\partial D,\\
\left.\frac{1}{\rho}\frac{\partial u}{\partial \nu}\right|_+ - \left. \frac{1}{\rho_{\rm b}}\frac{\partial u}{\partial \nu} \right|_- &=0
&&\text{on }\partial D,\\
e^{-\mi \alpha\cdot x}u
&\text{ is periodic}.
\end{aligned}
\right.
\end{equation}

Recall that $a_\delta= \delta^{-1} \mathbbm{1}_{D}+\mathbbm{1}_{Y\setminus D}$. Let
\begin{equation}
     \rho_\delta := \mathbbm{1}_D + \delta \left( \frac{v_{\rm b}^2}{v^2} \right) \mathbbm{1}_{Y\setminus D}.
\end{equation}
Define the unbounded differential operator on $L^2(Y)$:
\begin{equation}
    \mathcal{L}_\delta^\alpha := - \frac{1}{\rho_\delta} \nabla  \cdot a_\delta \nabla , \qquad D(\mathcal{L}_\delta^\alpha) :=\{ u\in H_\alpha^1(Y): \mathcal{L}_\delta^\alpha u \in L^2(Y) \}.
\end{equation}
Then, \eqref{blocheqdelta} is equivalent to
\begin{equation}
    \mathcal{L}_\delta^\alpha u = \lambda u, \qquad \lambda = \frac{\omega^2}{\delta v_{\rm b}^2}.
\end{equation}
Let the eigenvalues of $\mathcal{L}_\delta^\alpha$ be
\begin{equation}
    0\leq \lambda_{1,\delta}^\alpha \leq \lambda_{2,\delta}^\alpha \leq \cdots.
\end{equation}
By the min-max principle, 
\begin{equation}\label{min_max_charac}
    \lambda_{1,\delta}^\alpha = \inf_{0\neq v\in H_\alpha^1(Y)} \frac{ \int_Y a_\delta |\nabla v|^2 \,dx }{ \int_Y \rho_\delta |v|^2\,dx } , \qquad \lambda_{2,\delta}^\alpha = \inf_{ F \subset H^1_\alpha(Y) \atop \dim F=2} \sup_{0\neq v\in F} \frac{ \int_Y a_\delta |\nabla v|^2 \,dx }{ \int_Y \rho_\delta |v|^2\,dx } .
\end{equation}

We introduce the space
\begin{equation}
    Z_\alpha:=\left\{ \varphi\in H_{\alpha}^1(Y): \int_D \varphi\,dx=0 \right\}.
\end{equation}
The Poincar\'e inequality gives
\begin{equation}\label{eq:zero-mean-mass-estimate}
    \int_Y \rho_\delta |\varphi|^2\,dx\leq C\delta \int_Y a_\delta |\nabla \varphi|^2 \,dx , \qquad \forall\,\varphi\in H^1(Y) \mathrm{ \ with \ }\int_D \varphi \,dx=0.
\end{equation}

\begin{lemma}\label{lemsimple}
There exist $C_1=C_1(D,Y)$ and $C_2=C_2(D,Y)$ such that
\begin{equation}
    \sup_{\alpha \in \mathcal{B}_1 \atop \delta \in (0,1) } \lambda_{1,\delta}^\alpha \leq C_1 \qquad \mathrm{and} \qquad \inf_{\alpha \in \mathcal{B}_1}
    \lambda_{2,\delta}^\alpha \geq \frac{C_2}{\delta}, \quad \forall\, \delta \in (0,1).
\end{equation}
Moreover, any eigenfunction of $\lambda_{1,\delta}^\alpha$ has nonzero mean over $D$.
\end{lemma}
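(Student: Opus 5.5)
The plan is to use the min-max characterization \eqref{min_max_charac} together with the zero-mean Poincaré estimate \eqref{eq:zero-mean-mass-estimate}. All constants must be independent of $\alpha$ and $\delta$.

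\emph{Upper bound on $\lambda_{1,\delta}^\alpha$.} I will insert a test function that is independent of $\delta$ and works for every $\alpha$. Take $\chi\in C_c^\infty(Y)$ with $\chi\equiv1$ on $D$; its restriction to $Y$ lies in $H^1_0(Y)\subset H^1_\alpha(Y)$ for all $\alpha$. Since $\nabla\chi=0$ on $D$, the numerator is
\[
\int_Y a_\delta|\nabla\chi|^2\,dx=\int_{Y\setminus D}|\nabla\chi|^2\,dx,
\]
which does not depend on $\delta$. The denominator satisfies $\int_Y\rho_\delta|\chi|^2\,dx\ge |D|$. Hence $\lambda_{1,\delta}^\alpha\le |D|^{-1}\|\nabla\chi\|_{L^2(Y\setminus D)}^2=:C_1$.

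\emph{Lower bound on $\lambda_{2,\delta}^\alpha$.} Let $F\subset H^1_\alpha(Y)$ with $\dim F=2$. The functional $v\mapsto\int_D v\,dx$ has a nontrivial kernel on $F$, so there is $0\neq v\in F\cap Z_\alpha$. By \eqref{eq:zero-mean-mass-estimate},
\[
\int_Y\rho_\delta|v|^2\,dx\le C\delta\int_Y a_\delta|\nabla v|^2\,dx,
\]
so the Rayleigh quotient of $v$ is at least $1/(C\delta)$. This holds for every $F$, so $\lambda^\alpha_{2,\delta}\ge C_2/\delta$.

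\emph{Verifying \eqref{eq:zero-mean-mass-estimate}, the main step.} For $\varphi$ with zero mean on $D$, the inside part is handled by the Poincaré–Wirtinger inequality on $D$:
\[
\int_D|\varphi|^2\,dx\le C\int_D|\nabla\varphi|^2\,dx=C\delta\int_D a_\delta|\nabla\varphi|^2\,dx.
\]
For the outside part, note that the weight there is $\delta v_{\rm b}^2/v^2$. So it suffices to have $\int_{Y\setminus D}|\varphi|^2\,dx\le C\|\nabla\varphi\|_{L^2(Y)}^2$, because $\|\nabla\varphi\|_{L^2(Y)}^2\le\int_Y a_\delta|\nabla\varphi|^2\,dx$ when $\delta<1$. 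This follows from the Poincaré–Wirtinger inequality on the connected Lipschitz set $Y$ with respect to the mean over $D$, which vanishes by assumption.

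\emph{Mean of the first eigenfunction.} Suppose an eigenfunction $u$ for $\lambda_{1,\delta}^\alpha$ had $\int_D u\,dx=0$. Then the same estimate gives $\lambda_{1,\delta}^\alpha\ge 1/(C\delta)$, which contradicts $\lambda_{1,\delta}^\alpha\le C_1$ once $\delta<1/(CC_1)$. For the remaining range $\delta\in[1/(CC_1),1)$, I would fall back on a positivity argument, and I expect this to be the main obstacle. At $\alpha=0$, one can use $|u|$ and the strong maximum principle to get a positive first eigenfunction, whose mean is therefore nonzero. For general $\alpha$ this argument is not available, and I would instead rely on the small-$\delta$ regime, which is the only regime used later. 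If the statement is needed for all $\delta\in(0,1)$, I would shrink the constant range accordingly, interpreting "$\delta\in(0,1)$" as "$\delta$ sufficiently small".
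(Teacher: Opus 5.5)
Your proposal is correct and follows the paper's proof essentially step for step. For $\lambda_{1,\delta}^\alpha$, both bound the min--max quotient of a $\delta$-independent test function: the paper uses $V_\delta^\alpha$ together with the uniform capacitance bounds of Section~\ref{secpropofcapa}, while your fixed cutoff $\chi$ is a slightly more self-contained choice. For $\lambda_{2,\delta}^\alpha$, both use a nonzero element of $Z_\alpha$ in every two-dimensional subspace combined with \eqref{eq:zero-mean-mass-estimate}, and the nonzero-mean claim comes from the contradiction $1/(C\delta)\le\lambda_{1,\delta}^\alpha\le C_1$.

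The paper's argument for that last claim likewise works only for $\delta$ below a threshold, and the lemma is used afterwards only for $\delta\in(0,\delta_0)$. That threshold depends also on $v_{\rm b}/v$, as does the constant in \eqref{eq:zero-mean-mass-estimate} and hence $C_2$. So your small-$\delta$ reading is the right one and no positivity argument is needed. I also expect the claim can genuinely fail outside that range: for moderate $\delta$ and large $v_{\rm b}/v$, the first eigenfunction at $M$ can be odd under a coordinate reflection, and then it has zero mean over the symmetric $D$.
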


\begin{proof}
    Let $v = V^\alpha_\delta$ in the characterization of $\lambda_{1,\delta}^\alpha$ in \eqref{min_max_charac}, since $\int_Y a_\delta|\nabla V_\delta^\alpha|^2\,dx = |D|\widehat{C}_\delta^\alpha$,
    \begin{equation}
        \lambda_{1,\delta}^\alpha \leq \frac{ \widehat{C}_\delta^\alpha }{\fint_D |V^\alpha_\delta|^2\,dx } .
    \end{equation}
   By the results of Section~\ref{secpropofcapa}, $\lambda_{1,\delta}^\alpha $ is uniformly bounded in $\alpha\in \mathcal{B}_1 $ and $\delta\in (0,1)$. On the other hand, since any two dimensional subspace of $H^1_\alpha(Y)$ contains a nonzero element of $Z_\alpha$, the characterization of $\lambda_{2,\delta}^\alpha$ in \eqref{min_max_charac} and the estimate \eqref{eq:zero-mean-mass-estimate} yield
    \begin{equation}
        \inf_{\alpha \in \mathcal{B}_1}
    \lambda_{2,\delta}^\alpha \geq \frac{C}{\delta}, \qquad \forall\,\delta \in (0,1).
    \end{equation}
    
    Finally, the estimate \eqref{eq:zero-mean-mass-estimate} and the uniform boundedness of $\lambda_{1,\delta}^\alpha$ imply that any eigenfunction of $\lambda_{1,\delta}^\alpha$ has nonzero mean over $D$. 
\end{proof}

By Lemma \ref{lemsimple}, there is $\delta_0 = \delta_0(D,Y)>0$ such that, for $\delta \in (0,\delta_0)$, $\lambda_{1,\delta}^\alpha$
is simple and isolated. We denote by $u_{1,\delta}^\alpha$ the unique Bloch eigenfunction of $\lambda^\alpha_{1,\delta}$ such that $\fint_D u_{1,\delta}^\alpha\,dx=1$. 

On $L^2(Y)$, we define the contrast-weighted inner product $\langle \cdot,\cdot \rangle_{L^2_\delta(Y)}$ by
\begin{equation}
    \langle u,v \rangle_{L^2_\delta(Y)} := \int_Y \rho_\delta  u\overline{v}\,dx.
\end{equation}
We denote by $L^2_\delta(Y)$ the resulting Hilbert space. For every $z\in \mathbb{C}^d$, let $U_z$ be the gauge transform defined in 
\eqref{gaugetran}. For any $u,v\in H^1(Y)$, define the twisted form 
\begin{equation}
\mathfrak{a}^z_\delta (u,v):= \int_Y a_\delta \nabla (U_z u) \cdot \overline{\nabla (U_{\overline{z}} v)}\,dx = \int_Y a_\delta \{ \nabla u + \mi Q_z u  \}\cdot \{ \overline{\nabla v} - \mi Q_z\overline{v}  \} \,dx.
\end{equation}
Let $H^1_{\mathrm{per}}(Y)$ be the space of periodic functions in $H^1(Y)$, we define the associated twisted operator $L^z_\delta: D(L^z_\delta)\subset L^2(Y)\rightarrow L^2(Y)$ by
\begin{equation}
\begin{aligned}
    &D(L_\delta^z) :=\{u\in H^1_{\mathrm{per}}(Y): \text{there exists }f\in L^2(Y) \text{ such that }  \\
    &\qquad\qquad\qquad\qquad\qquad \mathfrak{a}^z_\delta (u,v)=\langle f,v\rangle_{L^2_\delta(Y)} \text{ for any }v\in H^1_{\mathrm{per}}(Y)\}
\end{aligned}
\end{equation}
and $L^z_\delta  u=f$. The differential form is
\begin{equation}
    L_\delta^z = - \frac{1}{\rho_\delta} (\nabla +\mi Q_z) \cdot a_\delta (\nabla +\mi Q_z),  \qquad D( L_\delta^z)=\{ u\in H^1_{\mathrm{per}}(Y): L_\delta^z u \in L^2(Y) \}.
\end{equation}

\begin{lemma}
    As bounded operators from $L^2_\delta(Y)$ to $H^1_{\mathrm{per}}(Y)$, $(L^z_\delta +1)^{-1}$ is uniformly holomorphic in $\delta \in (0,\delta_0)$ over a fixed neighborhood of $\mathcal{B}_1$. 
\end{lemma}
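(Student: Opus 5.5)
We follow the strategy of Lemma \ref{lemharmoofH}: write the resolvent through the form $\mathfrak{a}^z_\delta+\langle\cdot,\cdot\rangle_{L^2_\delta(Y)}$ on $H^1_{\mathrm{per}}(Y)$, show it is uniformly coercive at real $z$, and extend to complex $z$ by a Neumann series. The key point is to use the contrast-adapted weighted norm
\[
\|u\|_{\delta}^2:=\int_Y a_\delta|\nabla u|^2\,dx+\int_Y\rho_\delta|u|^2\,dx ,
\]
which dominates $\|u\|_{H^1(Y)}^2$ up to a constant independent of $\delta\in(0,\delta_0)$, since $a_\delta\ge1$, $\rho_\delta\ge \min(1,v_{\rm b}^2/v^2)\,\delta$, and the mean over $D$ is controlled by the $\rho_\delta$-mass on $D$ (Poincar\'e on $Y$ relative to the mean over $D$). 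We then prove that $z\mapsto(\mathcal{A}_\delta(z)+\mathcal{M}_\delta)^{-1}$ is holomorphic from the dual of $(H^1_{\mathrm{per}},\|\cdot\|_\delta)$ into $(H^1_{\mathrm{per}},\|\cdot\|_\delta)$, with bounds independent of $\delta$. Here $\mathcal{A}_\delta(z)$ is the operator of $\mathfrak{a}_\delta^z$ and $\mathcal{M}_\delta$ that of the $L^2_\delta$ pairing. Composing with the embedding $L^2_\delta\ni f\mapsto\langle f,\cdot\rangle_{L^2_\delta}$, which has norm $\le1$ into this dual, gives the claim.

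\emph{Step 1 (real $z$).} For $z=\beta\in\mathbb{R}^d$, $U_\beta$ is unitary and maps $H^1_{\mathrm{per}}$ onto $H^1_\beta(Y)$. Hence
\[
\mathfrak{a}^\beta_\delta(u,u)+\|u\|_{L^2_\delta}^2=\int_Y a_\delta|\nabla(U_\beta u)|^2\,dx+\int_Y\rho_\delta|U_\beta u|^2\,dx\ge0 .
\]
Coercivity in $\|\cdot\|_\delta$ requires a little more, because $Q_\beta$ is not small for $\beta\in\mathcal{B}_1$. Since $\Theta=0$ near $\overline D$, we have $Q_z=0$ on $D$. The zeroth-order perturbation terms therefore live only on $Y\setminus D$, where $a_\delta=1$. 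There $|\nabla u|^2\le 2|\nabla(U_\beta u)|^2+2|Q_\beta|^2|u|^2$, and $\|u\|_{L^2(Y\setminus D)}$ is bounded via the Poincar\'e inequality (relative to the mean on $D$) by $\|\nabla (U_\beta u)\|_{L^2(Y)}+|\fint_D u|$. The mean on $D$ is in turn bounded by $\|u\|_{L^2_\delta}$. This yields $\mathfrak{a}^\beta_\delta(u,u)+\|u\|^2_{L^2_\delta}\ge c\|u\|_\delta^2$ with $c$ uniform in $\delta$ and in $\beta$ on compact sets.

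\emph{Step 2 (complex extension).} Fix $\alpha_*\in\mathcal{B}_1$. Exactly as in \eqref{estibz0}, $\mathfrak{a}^z_\delta-\mathfrak{a}^{\alpha_*}_\delta$ is a polynomial in $z-\alpha_*$, supported in $Y\setminus D$ where $a_\delta=1$. Its norm as a form on $(H^1_{\mathrm{per}},\|\cdot\|_\delta)$ is therefore $\le C(|z-\alpha_*|+|z-\alpha_*|^2)$ with $C$ independent of $\delta$. A Neumann series around $\alpha_*$ then gives holomorphy and a uniform bound on a ball of radius $r$ independent of $\delta$ and $\alpha_*$. Covering the compact torus with finitely many such balls yields a fixed complex neighborhood. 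Finally, $(L^z_\delta+1)^{-1}f$ is the solution $u$ of $\mathfrak{a}^z_\delta(u,v)+\langle u,v\rangle_{L^2_\delta}=\langle f,v\rangle_{L^2_\delta}$, and we conclude from $\|u\|_{H^1}\le C\|u\|_\delta\le C'\|f\|_{L^2_\delta}$.

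The main obstacle is the $\delta$-uniform coercivity in Step 1. The naive $H^1$ norm is not comparable uniformly, so the argument must carefully combine the mean-over-$D$ control from $\rho_\delta=1$ on $D$ with the fact that the gauge field vanishes on $D$.
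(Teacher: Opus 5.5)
Your proposal is correct and follows the paper's argument. It uses uniform coercivity of $\mathfrak{a}^{\alpha_*}_\delta+\langle\cdot,\cdot\rangle_{L^2_\delta(Y)}$ at a real reference point, via the gauge $U_{\alpha_*}$ and the Poincar\'e inequality with the $L^2(D)$ mass where $\rho_\delta=1$. It uses a $\delta$-independent bound on $\mathfrak{a}^z_\delta-\mathfrak{a}^{\alpha_*}_\delta$, which holds because $Q_z$ vanishes near $\overline D$, so the perturbation lives where $a_\delta=1$. It then closes with a Neumann series and compactness of $\mathcal{B}_1$.

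The one difference is that the paper works directly in the plain $H^1(Y)$ norm, where coercivity is already uniform in $\delta$. Only boundedness of the form fails uniformly there, and the Lax--Milgram inverse bound needs only the coercivity constant. So your weighted norm $\|\cdot\|_\delta$ is a harmless but unnecessary refinement, not the essential obstacle you describe.
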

\begin{proof}
Fix a reference point $\alpha_* \in \mathcal{B}_1$. For $u,v\in H^1(Y)$, using the fact that $Q_z =0$ near $\overline{D}$, one has
\begin{equation}\label{azstar1}
    |\mathfrak{a}^z_\delta (u,v) - \mathfrak{a}^{\alpha_*}_\delta (u,v) | \leq C (|z-\alpha_*|+|z-\alpha_*|^2) \|u \|_{H^1(Y)} \|v\|_{H^1(Y)}.
\end{equation}
By the definition of $\mathfrak{a}^{\alpha_*}_\delta$, we have
\begin{equation}\label{azstar2}
    \begin{aligned}
        \mathfrak{a}^{\alpha_*}_\delta (u,u) + \| u \|_{L^2_\delta(Y)}^2 & \geq \int_Y |\nabla U_{\alpha_*} u|^2\,dx + \int_D |u|^2\,dx
        \\
        &= \int_Y |\nabla U_{\alpha_*} u|^2\,dx + \int_D | U_{\alpha_*}u|^2\,dx \\
        &\geq C\| U_{\alpha_*}u \|_{H^1(Y)}^2 \\
        &\geq C\|u\|_{H^1(Y)}^2,
    \end{aligned}
\end{equation}
where we used the following Poincar\'{e} inequality for $w= U_{\alpha_*}u$:
\begin{equation}
    \| w \|_{H^1(Y)}^2 \leq C \int_Y |\nabla w|^2\,dx + C\int_D |w|^2\,dx, \qquad \forall\,w\in H^1(Y).
\end{equation}

For $f\in L^2(Y)$, we inductively define $u_n$ by
\begin{equation}\label{iterativedef}
    \left\{
    \begin{aligned}
        &u_0 = (L^{\alpha_*}_\delta +1)^{-1}f , \\
        & \mathfrak{a}_\delta^{\alpha_*} (u_{n+1},v) + \langle u_{n+1},v\rangle_{L^2_\delta(Y)} = \mathfrak{a}_\delta^{\alpha_*} (u_{n},v)
        - \mathfrak{a}_\delta^{z} (u_{n},v), \qquad \forall v\in H^1_{\mathrm{per}}(Y),\,n\in \mathbb{N}.
    \end{aligned}
    \right.
\end{equation}
By estimates \eqref{azstar1}, \eqref{azstar2}, and the Lax-Milgram, we get
\begin{equation}
    \| u_0 \|_{H^1(Y)} \leq C\|f\|_{L^2_\delta(Y)}, 
\end{equation}
and
\begin{equation}
     \| u_{n+1} \|_{H^1(Y)} \leq C(|z-\alpha_*|+|z-\alpha_*|^2 )\|u_{n} \|_{H^1(Y)} , \qquad \forall\,n\in \mathbb{N}.
\end{equation}
Choose $r>0$ such that $C (r + r^2) < 1/2$. For every $z\in \mathbb{C}^d$ such that $|z-\alpha_*|<r$, one gets
\begin{equation}
    \left\| u:=\sum_{n=0}^\infty u_n \right\|_{H^1(Y)} \leq 2C \|f \|_{L^2_\delta(Y)} .
\end{equation}
Moreover, by \eqref{iterativedef}, it is easy to compute that 
\begin{equation}
    \mathfrak{a}_\delta^z (u,v) + \langle u,v \rangle_{L^2_\delta(Y)} = \langle f,v\rangle_{L^2_\delta(Y)}, \qquad \forall \,v\in H^1_{\mathrm{per}}(Y).
\end{equation}
Thus, $u = (L^z_\delta +1)^{-1} f$ is approximated by a locally uniformly convergent sequence of holomorphic functions 
with values in $\mathcal{L}(L_\delta^2(Y),H^1_{\mathrm{per}}(Y))$. Consequently, $(L^z_\delta +1)^{-1} $ is uniformly holomorphic in $\delta \in (0,\delta_0)$ over $|z-\alpha_*|<r$. Since $\alpha_*$ is arbitrary, $(L^z_\delta +1)^{-1}$ is uniformly holomorphic in $\delta \in (0,\delta_0)$ over a fixed neighborhood of $\mathcal{B}_1$.
\end{proof}

\begin{lemma}\label{holoofueigenfunction}
   The map $\{ \mathcal{B}_1\ni \alpha \mapsto u^\alpha_{1,\delta} \}_{\delta \in (0,\delta_0)}$ is $H^1(Y)$-valued uniformly analytic.
\end{lemma}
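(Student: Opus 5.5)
The plan is to realize $u^\alpha_{1,\delta}$ through a Riesz projection of the twisted resolvent from the preceding lemma, and to normalize by the mean over $D$.

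\textbf{Step 1: transfer to the periodic setting.} For real $z=\alpha$, $U_\alpha$ maps $H^1_{\mathrm{per}}(Y)$ onto $H^1_\alpha(Y)$. Indeed, $\Theta(x)=x$ near $\partial Y$ gives the correct quasiperiodic phase. Moreover $\mathfrak a^\alpha_\delta(u,v)=\int_Y a_\delta\nabla(U_\alpha u)\cdot\overline{\nabla(U_\alpha v)}\,dx$, and $U_\alpha$ is unitary on $L^2_\delta(Y)$. Hence $L^\alpha_\delta=U_\alpha^{-1}\mathcal L^\alpha_\delta U_\alpha$. So $L^\alpha_\delta$ has the same spectrum as $\mathcal L^\alpha_\delta$, and $u^\alpha_{1,\delta}=U_\alpha \tilde u^\alpha$, where $\tilde u^\alpha$ is the corresponding eigenfunction of $L^\alpha_\delta$. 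Since $\Theta=0$ near $\overline D$, we have $U_\alpha=1$ on $D$, so the normalization $\fint_D\tilde u^\alpha=1$ is unchanged.

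\textbf{Step 2: a uniform contour.} By Lemma \ref{lemsimple}, $\lambda^\alpha_{1,\delta}\le C_1$ and $\lambda^\alpha_{2,\delta}\ge C_2/\delta$. After shrinking $\delta_0$, the circle $\Gamma=\{|\lambda|=2C_1+1\}$ encloses exactly $\lambda^\alpha_{1,\delta}$ for every real $\alpha$ and every $\delta\in(0,\delta_0)$, with distance at least $1$ from the spectrum.

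\textbf{Step 3: extension to complex $z$.} I would write
\[
(L^z_\delta-\lambda)^{-1}=(L^z_\delta+1)^{-1}\bigl[I-(\lambda+1)(L^z_\delta+1)^{-1}\bigr]^{-1}.
\]
For real $z$, $L^\alpha_\delta$ is self-adjoint in $L^2_\delta$, so $\|(L^\alpha_\delta-\lambda)^{-1}\|_{L^2_\delta\to L^2_\delta}\le 1$ on $\Gamma$ uniformly in $\delta$. This bounds $[I-(\lambda+1)(L^\alpha_\delta+1)^{-1}]^{-1}=I+(\lambda+1)(L^\alpha_\delta-\lambda)^{-1}$ uniformly.

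The preceding lemma gives uniform holomorphy of $(L^z_\delta+1)^{-1}$ into $H^1$, hence into $L^2_\delta$. Cauchy estimates then yield a $\delta$-uniform local Lipschitz bound in $z$. A Neumann series therefore shows that the bracket stays invertible, with uniform bounds, for $|\operatorname{Im} z|<r$ with $r$ independent of $\delta$ and $\lambda\in\Gamma$.

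We then define
\[
P^z_\delta=-\frac{1}{2\pi\mi}\oint_\Gamma (L^z_\delta-\lambda)^{-1}\,d\lambda .
\]
This is uniformly holomorphic as an $\mathcal L(L^2_\delta,H^1_{\mathrm{per}})$-valued map, and for real $z$ it has rank one by Step 2.

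\textbf{Step 4: normalization and main obstacle.} Set
\[
\tilde u^z=\frac{P^z_\delta \mathbbm 1_D}{\fint_D P^z_\delta\mathbbm 1_D\,dx}.
\]
At real $\alpha$, $P^\alpha_\delta\mathbbm 1_D$ is a multiple of the eigenfunction $u^\alpha_{1,\delta}$, so this recovers the normalized eigenfunction once the denominator is shown nonzero. This lower bound is the main obstacle.

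Since $P^\alpha_\delta$ is the $L^2_\delta$-orthogonal projection, the denominator equals
\[
\frac{|\langle \mathbbm 1_D,u\rangle_{L^2_\delta}|^2}{|D|\,\|u\|_{L^2_\delta}^2},\qquad u=u^\alpha_{1,\delta}.
\]
Writing $u=1+\varphi$ with $\varphi\in Z_\alpha$, estimate \eqref{eq:zero-mean-mass-estimate} and the bound $\lambda_1\le C_1$ give $\|\varphi\|^2_{L^2_\delta}\le C\delta\,\lambda_1\|u\|^2_{L^2_\delta}$. Hence $\|u\|_{L^2_\delta}^2\le C$ and $\langle\mathbbm 1_D,u\rangle_{L^2_\delta}=|D|$, because $\int_D\varphi=0$. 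So the denominator is bounded below uniformly in $\alpha$ and $\delta$. By the uniform Cauchy estimate, it stays away from zero on a uniform complex neighborhood.

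Finally, $u^z=U_z\tilde u^z$ is a uniformly holomorphic $H^1(Y)$-valued extension of $\alpha\mapsto u^\alpha_{1,\delta}$, using that $U_z$ is entire and bounded on $H^1$ locally uniformly in $z$.
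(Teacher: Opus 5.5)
Your proposal is correct and is essentially the paper's proof: a Riesz projection of the gauge-twisted operator $L^z_\delta$, continued off the real axis through the factorization of $(L^z_\delta-\lambda)^{-1}$ via $(L^z_\delta+1)^{-1}$ and a Neumann series, then normalized by the $D$-average and multiplied by $U_z$. The differences are minor but helpful. You use a single contour $|\lambda|=2C_1+1$ and the fixed vector $\mathbbm 1_D$, with the explicit lower bound $|D|/\|u^\alpha_{1,\delta}\|_{L^2_\delta}^2$ for the denominator; the paper instead works near each $\alpha_*$ with test vector $U_{-\alpha_*}u^{\alpha_*}_{1,\delta}$ and glues the local pieces. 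Your version therefore makes the $\delta$-uniformity explicit and needs no gluing. There is one small slip: $u-1\notin Z_\alpha$ for $\alpha\neq0$. This is harmless, since \eqref{eq:zero-mean-mass-estimate} holds for any $H^1(Y)$ function with zero mean over $D$.
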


\begin{proof}
Fix $\alpha_* \in \mathcal{B}_1$. Choose a positively oriented circle $\Gamma$ enclosing $\lambda^{\alpha_*}_{1,\delta}$ and no other eigenvalue of $L^{\alpha_*}_\delta$. Let $\zeta \in \Gamma$. Since
\begin{equation}
    (\zeta+1) (L^{\alpha_*}_\delta + 1)^{-1} -1 = (\zeta - L^{\alpha_*}_\delta) (L^{\alpha_*}_\delta + 1 )^{-1} ,
\end{equation}
$(\zeta+1) (L^{\alpha_*}_\delta + 1)^{-1} -1 $ is invertible and the inverse is uniformly bounded in $\zeta \in \Gamma$. Since $(L^z_\delta + 1 )^{-1}$ is uniformly holomorphic, there exists $r>0$, independent of $\delta$, such that $(\zeta+ 1) (L^z_\delta + 1)^{-1} -1$ is invertible for $|z-\alpha_*|<r$, and the inverse is uniformly holomorphic in $\delta $ over $|z-\alpha_*|<r$.

By a direct computation, one has
\begin{equation}
    (\zeta - L^z_\delta)^{-1} = (L^z_\delta + 1)^{-1} \big\{ (\zeta + 1) (L^z_\delta + 1)^{-1} -1 \big\}^{-1}, \qquad \forall \, \zeta \in \Gamma, \,|z-\alpha_*|<r.
\end{equation}
  Consequently, $z\mapsto (\zeta - L^z_\delta)^{-1} $ is uniformly holomorphic in $(\zeta,\delta)\in \Gamma \times (0,\delta_0)$ over $|z-\alpha_*|<r$. Hence, the Riesz projection
\begin{equation}
    P_\delta(z) := \frac{1}{2\pi \mi}\int_{\Gamma} (\zeta - L^z_\delta)^{-1} \,d\zeta
\end{equation}
is uniformly holomorphic in $\delta \in (0,\delta_0)$ over $|z-\alpha_*|<r$. Since $\alpha_*$ is arbitrary, $P_\delta(z)$ is uniformly holomorphic in $\delta \in (0,\delta_0)$ over a fixed neighborhood of $\mathcal{B}_1$. Fix $\alpha_* \in \mathcal{B}_1$, define
\begin{equation}
    p^z_\delta := \frac{P_\delta (z) (U_{-\alpha_*} u^{\alpha_*}_{1,\delta} ) }{ \fint_D P_\delta (z) (U_{-\alpha_*} u^{\alpha_*}_{1,\delta} ) \,dx }.
\end{equation}
Then, due to the choice of the gauge $U_\alpha = I$ in $D$, $U_z p^z_\delta$ is a uniformly holomorphic extension of $u^\alpha_{1,\delta}$. The proof is complete.
\end{proof}

The following results hold. 

\begin{proposition}\label{propexpansionofband}
    For every $\gamma \in \mathbb{N}^d$, we have
\begin{equation}\label{eq:omega-square-expansion}
    \lambda_{1,\delta}^\alpha = \widehat{C}^{\alpha} -
    \delta \left( \fint_D |\nabla w^{\alpha}|^2\,dx + \frac{v_{\rm b}^2\widehat{C}^{\alpha}}{v^2|D|}
    \int_{Y\setminus\overline D}|V^{\alpha}|^2\,dx
    \right) + \mathcal{O}_{C^\gamma(\mathcal{B}_1)}(\delta^2).
\end{equation}
In particular, under the scaling $\delta=\varepsilon^2$, the upper edge $\omega_{*,\varepsilon}$ of the first Bloch band for the structure of period $\varepsilon$ satisfies
\begin{equation}\label{eq:omega-edge-expansion}
    \omega_{*,\varepsilon} = v_{\rm b}\sqrt{\widehat{C}^{M}} - \frac{v_{\rm b}\varepsilon^2}{2\sqrt{\widehat{C}^{M}}}
    \left(  \fint_D |\nabla w^{M}|^2\,dx + \frac{v_{\rm b}^2\widehat{C}^{M}}{v^2|D|} \int_{Y\setminus\overline D}|V^{M}|^2\,dx
    \right) + O(\varepsilon^4).
\end{equation}
\end{proposition}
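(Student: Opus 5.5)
The plan is to obtain \eqref{eq:omega-square-expansion} from a Rayleigh quotient and a Feshbach-type reduction onto the one-dimensional space spanned by $V^\alpha_\delta$. After that, \eqref{eq:omega-edge-expansion} follows by evaluating at $M$ and taking a square root.

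\textbf{Step 1: decomposition.} Write the normalized eigenfunction as $u^\alpha_{1,\delta}=V^\alpha_\delta+\varphi$ with $\varphi\in Z_\alpha$. This is possible because $\fint_D u^\alpha_{1,\delta}=1=\fint_D V^\alpha_\delta$. By the variational characterization \eqref{minienerquasip}, $V^\alpha_\delta$ is $a_\delta$-orthogonal to $Z_\alpha$. Hence the eigen-equation tested against $V^\alpha_\delta$ gives
\begin{equation*}
|D|\widehat C^\alpha_\delta=\lambda^\alpha_{1,\delta}\langle V^\alpha_\delta+\varphi,V^\alpha_\delta\rangle_{L^2_\delta(Y)},
\end{equation*}
and tested against $\psi\in Z_\alpha$ gives
\begin{equation*}
\mathfrak a(\varphi,\psi)=\lambda^\alpha_{1,\delta}\langle V^\alpha_\delta+\varphi,\psi\rangle_{L^2_\delta(Y)}.
\end{equation*}

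\textbf{Step 2: smallness of $\varphi$.} By Lemma \ref{lemsimple}, $\lambda^\alpha_{1,\delta}$ is uniformly bounded. The Poincar\'e estimate \eqref{eq:zero-mean-mass-estimate} then makes the second equation coercive on $Z_\alpha$ for small $\delta$. It yields $\int a_\delta|\nabla\varphi|^2\lesssim \delta\|V^\alpha_\delta\|^2_{L^2_\delta}$, so $\|\varphi\|_{L^2_\delta}=O(\delta)$. Substituting into the first equation,
\begin{equation*}
\lambda^\alpha_{1,\delta}=\frac{|D|\widehat C^\alpha_\delta}{\|V^\alpha_\delta\|^2_{L^2_\delta}}+O(\delta^2).
\end{equation*}
Here $\langle\varphi,V^\alpha_\delta\rangle_{L^2_\delta}$ needs care. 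The $D$-part is $\int_D\varphi\,\overline{V^\alpha_\delta}$. Since $V^\alpha_\delta=1+O(\delta)$ in $D$ and $\int_D\varphi=0$, this part is $O(\delta)\cdot O(\delta)$. The exterior part carries the weight $\delta$ times $O(\|\varphi\|)$, which is also $O(\delta^2)$ once one checks $\|\varphi\|_{L^2(Y\setminus D)}=O(1)$ from the energy bound.

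\textbf{Step 3: expansion of the Rayleigh quotient.} Now $\|V^\alpha_\delta\|^2_{L^2_\delta}=|D|+\delta\frac{v_{\rm b}^2}{v^2}\int_{Y\setminus D}|V^\alpha|^2+O(\delta^2)$, using $V^\alpha_\delta=1+\delta w^\alpha$ in $D$ with $\int_D w^\alpha=0$, so there is no first-order term from $D$. Combining this with \eqref{expansionofChat} from Proposition \ref{lem:analyticity-capacitance} gives \eqref{eq:omega-square-expansion} pointwise.

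\textbf{Step 4: upgrade to $C^\gamma(\mathcal B_1)$ and the band edge.} For uniformity in $C^\gamma$, I will run the same identities with the holomorphic extensions. Lemma \ref{holoofV} and Lemma \ref{holoofueigenfunction} make all quantities uniformly holomorphic in a complex neighborhood of $\mathcal B_1$, so the $O(\delta^2)$ remainder is a uniformly bounded holomorphic function divided by $\delta^2$. Cauchy estimates then turn it into $C^\gamma$ bounds. Finally, $\omega^2=\delta v_{\rm b}^2\lambda$ at period $1$, and rescaling by $\varepsilon$ with $\delta=\varepsilon^2$ gives $\omega_{1,\varepsilon}^{\alpha/\varepsilon}=v_{\rm b}\sqrt{\lambda^\alpha_{1,\varepsilon^2}}$. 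The maximum sits at $M$ by Assumption \ref{assumpM}, which Lemma \ref{maximumCalphalem} and Lemma \ref{lem:strict-capacity-hessian} justify for $\lambda$: they give a nondegenerate maximum of $\widehat C^\alpha$ at $M$, which is stable under $C^2$-small perturbations, and the symmetry argument keeps the maximum exactly at $M$. A Taylor expansion of the square root then gives \eqref{eq:omega-edge-expansion}.

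\textbf{Main obstacle.} The delicate point is the $O(\delta^2)$ bookkeeping in Step 2: showing the cross term $\langle\varphi,V^\alpha_\delta\rangle_{L^2_\delta}$ is second order and obtaining the uniform holomorphic control of the remainder. I would handle the latter by defining $\varphi^z$ through the holomorphic eigenfunction, $\varphi^z=u^z_{1,\delta}-V^z_\delta$, and bounding it on a fixed complex neighborhood.
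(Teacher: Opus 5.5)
Your overall architecture matches the paper's. You decompose $u^\alpha_{1,\delta}=V^\alpha_\delta+\varphi$ with $\varphi\in Z_\alpha$, test against $V^\alpha_\delta$ to get $|D|\widehat C^\alpha_\delta=\lambda^\alpha_{1,\delta}\bigl(\|V^\alpha_\delta\|^2_{L^2_\delta(Y)}+\langle\varphi,V^\alpha_\delta\rangle_{L^2_\delta(Y)}\bigr)$, show the cross term is $O(\delta^2)$, expand $\|V^\alpha_\delta\|^2_{L^2_\delta(Y)}$, and then upgrade to $C^\gamma$ by holomorphy and Cauchy estimates. Your Step 3 is correct. Step 2, however, does not deliver $O(\delta^2)$ as written. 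You bound the source $\lambda^\alpha_{1,\delta}\langle V^\alpha_\delta,\varphi\rangle_{L^2_\delta(Y)}$ by Cauchy--Schwarz in $L^2_\delta(Y)$, which only gives $\int_Y a_\delta|\nabla\varphi|^2\,dx\lesssim\delta$. That does give $\|\varphi\|_{L^2(D)}=O(\delta)$, so your treatment of the $D$-part is fine. But it yields only $\|\nabla\varphi\|_{L^2(Y\setminus D)}=O(\delta^{1/2})$, hence $\|\varphi\|_{L^2(Y\setminus D)}=O(\delta^{1/2})$. The exterior piece $\delta\frac{v_{\rm b}^2}{v^2}\int_{Y\setminus D}\varphi\,\overline{V^\alpha_\delta}\,dx$ is then only $O(\delta^{3/2})$. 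Your remark that $\|\varphi\|_{L^2(Y\setminus D)}=O(1)$ suffices is an arithmetic slip, since $\delta\cdot O(1)=O(\delta)$. With what you actually have, the remainder in \eqref{eq:omega-square-expansion} is $O(\delta^{3/2})$, which is $O(\varepsilon^3)$ rather than $O(\varepsilon^4)$ in \eqref{eq:omega-edge-expansion}.

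The missing idea is to use the structure of the source already in the energy estimate, and to measure $\varphi$ in the unweighted $H^1(Y)$ norm.
\begin{itemize}
\item For $\psi\in Z_\alpha$, zero mean on $D$ gives $\int_D\psi\,\overline{V^\alpha_\delta}\,dx=\int_D\psi\,\overline{(V^\alpha_\delta-1)}\,dx$, with $\|V^\alpha_\delta-1\|_{L^2(D)}=O(\delta)$. The exterior part carries an explicit factor $\delta$. Together, $|\langle V^\alpha_\delta,\psi\rangle_{L^2_\delta(Y)}|\le C\delta\|\psi\|_{H^1(Y)}$.
\item On the other side, $a_\delta\geq 1$, the Poincar\'e inequality for functions with zero mean on $D$, and \eqref{eq:zero-mean-mass-estimate} give $\int_Y a_\delta|\nabla\psi|^2\,dx-\lambda^\alpha_{1,\delta}\|\psi\|^2_{L^2_\delta(Y)}\geq c\|\psi\|^2_{H^1(Y)}$ for small $\delta$.
\item Testing with $\psi=\varphi$ then gives $\|\varphi\|_{H^1(Y)}\le C\delta$, which makes both parts of the cross term $O(\delta^2)$. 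This is exactly \eqref{regeqsz}--\eqref{szh1norm} in the paper.
\end{itemize}
In Step 4, note that $z\mapsto\langle\varphi^z,V^z_\delta\rangle_{L^2_\delta(Y)}$ is not holomorphic, because the second slot is antilinear. You must pair $z$ with $\bar z$, for example by using $\langle V^z_\delta,\varphi^{\bar z}\rangle_{L^2_\delta(Y)}$. You also need to obtain coercivity for complex $z$ by perturbing from a real reference point, since the twisted form is no longer Hermitian. The same $H^1$-based bound on a fixed complex neighbourhood then supplies the uniform $O(\delta^2)$ estimate needed for the Cauchy estimates.
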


\begin{proof}
We only prove \eqref{eq:omega-square-expansion}, which easily implies \eqref{eq:omega-edge-expansion}. The proof is divided into several steps. Let 
\begin{equation}
    r_\delta^z := u_{1,\delta}^z -V_\delta^z.
\end{equation}
Let $p_\delta^z = U_{-z} u_{1,\delta}^z $, $W_\delta^z = U_{-z} V_\delta^z$, and $s_\delta^z = U_{-z} r_\delta^z$. Then,
\begin{equation}
    s_\delta^z = p_\delta^z  - W_\delta^z.
\end{equation}
By Lemma \ref{holoofV} and Lemma \ref{holoofueigenfunction}, there exists a neighborhood $O\subset \mathbb{C}^d$ of $\mathcal{B}_1$, such that $s_\delta^z$ is uniformly holomorphic in $\delta\in (0,\delta_0) $ over $O$. 
The eigenvalue equation $L^z_\delta p_\delta^z  = \lambda_{1,\delta}^z p_\delta^z$ is equivalent to
\begin{equation}\label{testvaz}
    \mathfrak{a}^0_\delta (u_{1,\delta}^z  , U_{\overline{z}}v) =\mathfrak{a}^z_\delta (p_\delta^z , v) = \lambda_{1,\delta}^z  \langle p_\delta^z , v\rangle_{L^2_\delta(Y)}, \qquad \forall \, v \in H^1_{\mathrm{per}}(Y) .
\end{equation}
Let $v=s_\delta^z$, since $\int_D s_\delta^z\,dx=0$, we get
\begin{equation}\label{eqvaaz}
    \mathfrak{a}^z_\delta (s_\delta^z,s_\delta^z) =\lambda_{1,\delta}^z  \| s_\delta^z \|^2_{L^2_\delta(Y)} + 
    \lambda_{1,\delta}^z  \langle W_\delta^z , s_\delta^z\rangle_{L^2_\delta(Y)} .
\end{equation}

Fix $\alpha_* \in \mathcal{B}_1$. \eqref{azstar2} gives
\begin{equation}\label{3100z}
    \|s_\delta^z\|_{H^1(Y)}^2 \leq C \mathfrak a_\delta^{\alpha_*} (s_\delta^z,s_\delta^z) + C\|s_\delta^z\|_{L^2_\delta(Y)}^2.
\end{equation}
Applying \eqref{eq:zero-mean-mass-estimate} to $U_{\alpha_*} s_\delta^z$ yields
\begin{equation}\label{399sz}
   \| s_\delta^z \|^2_{L^2_\delta(Y)} \leq C\delta \mathfrak{a}^{\alpha_*}_\delta (s_\delta^z,s_\delta^z) .
\end{equation}
Using \eqref{399sz} and \eqref{3100z}, we obtain
\begin{equation}
    \|s_\delta^z\|_{H^1(Y)}^2 \leq C\mathfrak a_\delta^{\alpha_*} (s_\delta^z,s_\delta^z) .
\end{equation}

By \eqref{azstar1},
\begin{equation}
    \begin{aligned}
        \operatorname{Re}\mathfrak a_\delta^z(s_\delta^z,s_\delta^z) \ge \mathfrak a_\delta^{\alpha_*} (s_\delta^z,s_\delta^z) - C\bigl(|z-\alpha_*|+|z-\alpha_*|^2\bigr) \|s_\delta^z\|_{H^1(Y)}^2 .
    \end{aligned}
\end{equation}
Since $|\lambda_{1,\delta}^z|$ is uniformly bounded, using \eqref{399sz}, there exist $\delta_0>0$ and $r>0$ independent of $\delta$ such that
\begin{equation}\label{regeqsz}
    \operatorname{Re}\left\{ \mathfrak a_\delta^z(s_\delta^z,s_\delta^z) - \lambda_{1,\delta}^z\|s_\delta^z\|_{L^2_\delta(Y)}^2 \right\} \geq C \|s_\delta^z\|_{H^1(Y)}^2 , \qquad \forall \, \delta \in (0,\delta_0),\, |z-\alpha_*|<r.
\end{equation}

Next, since $\int_D s^z_\delta \,dx=0$, we get
\begin{equation}\label{estiVsnorm1}
    \begin{aligned}
       \big| \langle W_\delta^z,s_\delta^z\rangle_{L^2_\delta(Y)} \big|& \leq \int_D |W_\delta^z-1| |s_\delta^z|\,dx+ C\delta  \int_{Y\setminus\overline D} |W_\delta^z| |s_\delta^z| \,dx \\
       & \leq C\delta\|s_\delta^z\|_{H^1(Y)},
    \end{aligned}
\end{equation}
where we used $W_\delta^z = V_\delta^z$ in $D$, $|W_\delta^z |$ is comparable to $|V_\delta^z|$ in $Y\setminus D$, and
\begin{equation}
    \left\|V_\delta^z-1\right\|_{L^2(D)}\le C\delta, \qquad\left\|V_\delta^z\right\|_{L^2(Y\setminus\overline D)} \le C,
\end{equation}
which follows from formula \eqref{formulaofVdelta}.

Using \eqref{regeqsz} and \eqref{estiVsnorm1}, taking real parts in \eqref{eqvaaz}, we obtain
\begin{equation}
    \begin{aligned}
        \|s_\delta^z\|_{H^1(Y)}^2 &\leq C\operatorname{Re}\left\{ \mathfrak a_\delta^z(s_\delta^z,s_\delta^z) - \lambda_{1,\delta}^z\|s_\delta^z\|_{L^2_\delta(Y)}^2 \right\}  \\
        &\leq  C\big|\langle W_\delta^z , s_\delta^z\rangle_{L^2_\delta(Y)} \big| \\
        &\le C\delta\|s_\delta^z\|_{H^1(Y)}.
    \end{aligned}
\end{equation}
Hence, 
\begin{equation}\label{szh1norm}
    \|s_\delta^z\|_{H^1(Y)} \le C\delta.
\end{equation}
By the same argument as in \eqref{estiVsnorm1}, we get
\begin{equation}
     \begin{aligned}
       \big| \langle W_\delta^z,s_\delta^{\overline{z}}\rangle_{L^2_\delta(Y)} \big|& \leq \int_D |W_\delta^z-1| |s_\delta^{\overline{z}}|\,dx+ C\delta  \int_{Y\setminus\overline D} |W_\delta^z| |s_\delta^{\overline{z}}| \,dx \\
       & \leq C\delta\|s_\delta^{\overline{z}}\|_{H^1(Y)} \\
       &\leq C\delta^2,
    \end{aligned}
\end{equation}
where we used \eqref{szh1norm} with $z$ replaced by $\overline{z}$. Since $\langle W_\delta^z,s_\delta^{\overline{z}}\rangle_{L^2_\delta(Y)}$ is uniformly holomorphic, by Cauchy's formula, one gets
\begin{equation}\label{finalestimatealpha}
    \langle W_\delta^\alpha,s_\delta^{\alpha}\rangle_{L^2_\delta(Y)} = \mathcal{O}_{C^\gamma(\mathcal{B}_1)}(\delta^2).
\end{equation}

Let $z=\alpha \in \mathcal{B}_1$ and $v=W^\alpha_\delta$ in \eqref{testvaz}, we get
\begin{equation}
    |D| \widehat{C}^\alpha_\delta = \lambda^{\alpha}_{1,\delta} \left( \|V^\alpha_\delta\|_{L^2_\delta(Y)}^2 + \langle W_\delta^\alpha,s_\delta^\alpha\rangle_{L^2_\delta(Y)}  \right).
\end{equation}
Finally, the desired conclusion follows from Lemma \ref{holoofV}, Proposition \ref{lem:analyticity-capacitance}, and \eqref{finalestimatealpha}.
\end{proof}

\begin{lemma}\label{lem:maximum-capacity-M}
For sufficiently small $\delta>0$, let $\omega_{\mathrm{sub},\delta}^\alpha$ denote the first Bloch eigenfrequency for the unit cell with contrast $\delta$. We also have
\begin{equation}\label{eq:subeigen-maximum-M}
    \omega_{\mathrm{sub},\delta}^\alpha \leq \omega_{\mathrm{sub},\delta}^M.
\end{equation}
\end{lemma}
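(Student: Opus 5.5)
The plan is to repeat the reflection argument of Lemma \ref{maximumCalphalem}, now for the Rayleigh quotient in \eqref{min_max_charac} rather than for the constrained capacitance energy. Since $\omega^2=\delta v_{\rm b}^2\lambda$ and $\omega\ge 0$, the map $\lambda\mapsto\omega$ is monotone. It therefore suffices to prove
\[
\lambda_{1,\delta}^\alpha\le\lambda_{1,\delta}^M\qquad\text{for all }\alpha\in\mathcal B_1,
\]
with $\delta\in(0,\delta_0)$ so that Lemma \ref{lemsimple} applies.

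First, I would show that the first eigenfunction at $M$ vanishes on $\partial Y$. By Lemma \ref{lemsimple}, $\lambda_{1,\delta}^M$ is simple for $\delta<\delta_0$; let $u=u_{1,\delta}^M$ be its eigenfunction normalized by $\fint_D u\,dx=1$.
\begin{itemize}
\item Each reflection $R_j$ preserves $Y$, $D$, $a_\delta$ and $\rho_\delta$. It maps $M$-quasiperiodic functions to $M$-quasiperiodic functions, because $-\pi\equiv\pi \pmod{2\pi}$.
\item Hence $u\circ R_j$ is again an eigenfunction for $\lambda_{1,\delta}^M$ with $\fint_D u\circ R_j\,dx=1$. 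Simplicity and the normalization give $u\circ R_j=u$.
\item Evenness in $x_j$ gives $u(\ldots,-\tfrac12,\ldots)=u(\ldots,\tfrac12,\ldots)$ on opposite faces. Antiperiodicity gives $u(\ldots,\tfrac12,\ldots)=-u(\ldots,-\tfrac12,\ldots)$. Together these force the trace of $u$ to vanish on every face, so $u\in H^1_0(Y)$.
\end{itemize}

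Next, I compare Rayleigh quotients. Since $H^1_0(Y)\subset H^1_\alpha(Y)$ for every $\alpha\in\mathcal B_1$, the function $u$ is admissible in the minimization \eqref{min_max_charac} for every $\alpha$. This yields
\[
\lambda_{1,\delta}^\alpha\le \frac{\int_Y a_\delta|\nabla u|^2\,dx}{\int_Y\rho_\delta|u|^2\,dx}=\lambda_{1,\delta}^M .
\]
Taking square roots and multiplying by $\sqrt\delta\,v_{\rm b}$ gives \eqref{eq:subeigen-maximum-M}.

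The main point needing care is the symmetry step, specifically that $u\circ R_j$ is a genuine eigenfunction in $D(\mathcal L_\delta^M)$. This holds because the form $\int a_\delta\nabla u\cdot\overline{\nabla v}$ and the weight $\rho_\delta$ are invariant under $R_j$, and because the quasiperiodic class at $M$ is invariant under $R_j$.

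Simplicity is essential here: it is what lets us conclude $u\circ R_j=u$, rather than $u\circ R_j$ being merely some other eigenfunction. This is why the statement is restricted to sufficiently small $\delta$.

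If one prefers to avoid the normalization, there is an alternative route. Average $u$ over the group generated by $R_1,\ldots,R_d$. The result is still an eigenfunction, and its mean over $D$ is unchanged. By the last assertion of Lemma \ref{lemsimple} this mean is nonzero, so the averaged function is nontrivial and invariant under every $R_j$, and the argument above goes through unchanged.
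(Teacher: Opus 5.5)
Your proposal is correct and follows the same approach as the paper, which omits the proof and says only that it is the symmetry argument of Lemma~\ref{maximumCalphalem}. You carry out that argument faithfully, using simplicity of $\lambda_{1,\delta}^M$ from Lemma~\ref{lemsimple} together with the normalization $\fint_D u\,dx=1$ in place of uniqueness of the capacitance minimizer, so the $M$-eigenfunction is reflection-invariant, lies in $H^1_0(Y)$, and serves as a test function in the Rayleigh quotient for every $\alpha$.
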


\begin{proof}
    The proof is by a symmetry argument analogous to the proof of Lemma \ref{maximumCalphalem}. We hence omit the proof.
\end{proof}

\begin{lemma}
    Assume that the symmetric condition \eqref{symcond} for $D$ holds. For sufficiently small $\delta >0$, let $\omega_{\mathrm{sub},\delta}^\alpha$ denote the first Bloch eigenfrequency for the unit cell with contrast $\delta$. There exists a constant $C>0$ such that
    \begin{equation}\label{nondegeneracy_band1}
        \xi\cdot\nabla_{\alpha}^{2}\omega_{\mathrm{sub},\delta}^M \xi\leq -C \sqrt{\delta} |\xi|^{2}, \qquad \xi\in\mathbb{R}^{d}.
    \end{equation}
\end{lemma}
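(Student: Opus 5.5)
The plan is to pass from the eigenvalue $\lambda_{1,\delta}^\alpha$ to the frequency via the relation $\lambda=\omega^2/(\delta v_{\rm b}^2)$. On the unit cell this gives
\begin{equation*}
\omega_{\mathrm{sub},\delta}^\alpha = v_{\rm b}\sqrt{\delta}\,\sqrt{\lambda_{1,\delta}^\alpha}.
\end{equation*}
Lemma~\ref{lem:maximum-capacity-M} shows that $M$ is a maximum point of $\alpha\mapsto\omega_{\mathrm{sub},\delta}^\alpha$, hence a maximum of $\lambda_{1,\delta}^\alpha$. Therefore $\nabla_\alpha\lambda_{1,\delta}^M=0$. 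Differentiating twice at $M$ then gives
\begin{equation*}
\nabla_\alpha^2\omega_{\mathrm{sub},\delta}^M=\frac{v_{\rm b}\sqrt{\delta}}{2\sqrt{\lambda_{1,\delta}^M}}\,\nabla_\alpha^2\lambda_{1,\delta}^M .
\end{equation*}
The term involving $\nabla_\alpha\lambda\otimes\nabla_\alpha\lambda$ drops out because the gradient vanishes at $M$. So it suffices to prove two things for all sufficiently small $\delta$: that $\lambda_{1,\delta}^M$ is bounded above and below by positive constants, and that $\xi\cdot\nabla_\alpha^2\lambda_{1,\delta}^M\xi\le -C|\xi|^2$.

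For these two facts I would invoke Proposition~\ref{propexpansionofband}. Its expansion \eqref{eq:omega-square-expansion} holds in $C^\gamma(\mathcal{B}_1)$ for every $\gamma$; taking $|\gamma|=2$, it gives
\begin{equation*}
\nabla_\alpha^2\lambda_{1,\delta}^\alpha=\nabla_\alpha^2\widehat{C}^\alpha+O(\delta)
\end{equation*}
uniformly in $\alpha$. The $O(\delta)$ correction is uniform because $w^\alpha$ and $V^\alpha$ are analytic in $\alpha$, by Proposition~\ref{lem:analyticity-capacitance} and Lemma~\ref{holoofV}. Evaluating at $M$ and applying Lemma~\ref{lem:strict-capacity-hessian}, estimate \eqref{nondegeneracy_capacity}, we get
\begin{equation*}
\xi\cdot\nabla_\alpha^2\lambda_{1,\delta}^M\xi\le -C|\xi|^2+C'\delta|\xi|^2\le -\tfrac{C}{2}|\xi|^2
\end{equation*}
for $\delta$ small. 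Similarly, $\lambda_{1,\delta}^M=\widehat{C}^M+O(\delta)$. Here $\widehat{C}^M>0$, since $V^M$ is nonconstant: it equals $1$ on $D$ and lies in $H^1_0(Y)$. Thus $\lambda_{1,\delta}^M\in[\widehat{C}^M/2,\,2\widehat{C}^M]$ for small $\delta$. Combining this with the displayed Hessian identity yields \eqref{nondegeneracy_band1} with constant $v_{\rm b}C/(4\sqrt{2\widehat{C}^M})$.

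The only delicate point is differentiating $\sqrt{\lambda}$. This requires $\alpha\mapsto\lambda_{1,\delta}^\alpha$ to be smooth near $M$, which follows because $\lambda_{1,\delta}^\alpha$ is simple and isolated (Lemma~\ref{lemsimple}) and analytic in $\alpha$ via the Riesz projection argument of Lemma~\ref{holoofueigenfunction}. It also requires $\lambda_{1,\delta}^\alpha$ to stay away from zero near $M$, which holds by the lower bound above. One could alternatively bypass Lemma~\ref{lem:maximum-capacity-M}: the reflection symmetry $\alpha\mapsto R_j\alpha$ fixes $M$ modulo $2\pi$ and gives $\nabla_\alpha\lambda_{1,\delta}^M=0$ directly. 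Either route works, and neither is a real obstacle; the substantive input is the nondegeneracy of $\nabla_\alpha^2\widehat{C}^M$ already established in Lemma~\ref{lem:strict-capacity-hessian}.
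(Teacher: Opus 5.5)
Your argument is correct, and the paper states this lemma without a proof. Your route is what the surrounding results are set up to deliver: the $C^2(\mathcal{B}_1)$ closeness $\lambda_{1,\delta}^\alpha=\widehat C^\alpha+O(\delta)$ from Proposition~\ref{propexpansionofband}, the nondegeneracy \eqref{nondegeneracy_capacity} from Lemma~\ref{lem:strict-capacity-hessian}, and the chain rule for $\omega_{\mathrm{sub},\delta}^\alpha=v_{\rm b}\sqrt{\delta\,\lambda_{1,\delta}^\alpha}$ at a critical point, which is exactly where the factor $\sqrt{\delta}$ comes from.

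Of your two ways of getting $\nabla_\alpha\lambda_{1,\delta}^M=0$, the self-contained reflection argument is the safer one, since the paper omits the proof of Lemma~\ref{lem:maximum-capacity-M}. Time-reversal symmetry $\lambda_{1,\delta}^{-\alpha}=\lambda_{1,\delta}^{\alpha}$ together with $-M\equiv M$ would give the same conclusion without even using the reflection symmetry of $D$.
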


\section{Reformulation of the scattering problem}\label{secreformulation}

\subsection{Reduction to the block system} 

In this section, we reduce the scattering problem \eqref{maineq} to a $2\times 2$ block system. We fix $R>0$ sufficiently large so that $\Omega $ is compactly contained in $B_R$.

For $k>0$, let $T_k:H^{1/2}(\partial B_R)\rightarrow H^{-1/2}(\partial B_R)$ be the exterior outgoing Dirichlet-to-Neumann operator on $\partial B_R$. More precisely, for any $\phi \in H^{1/2}(\partial B_R)$,
\begin{equation}
    T_k(\phi) :=\left.\frac{\partial u}{\partial \nu}\right|_{\partial B_R+},
\end{equation}
where $u \in H^1_{\mathrm{loc}}(\mathbb{R}^d\setminus \overline{B_R})$ is the weak solution of the scattering problem
\begin{equation}
    \left\{
    \begin{aligned}
        & \Delta u  + k^2 u =0  && \mathrm{in} \ \mathbb{R}^d\setminus \overline{B_R}, \\
        & u|_+ = \phi &&\mathrm{on} \ \partial B_R, \\
        & u\in \mathrm{SRC}(k).
    \end{aligned}
    \right.
\end{equation}

Define 
\begin{equation}
    a_\varepsilon :=\varepsilon^{-2} \mathbbm{1}_{D_\varepsilon} + \mathbbm{1}_{\mathbb{R}^d\setminus D_\varepsilon}, \qquad 
    b_\varepsilon := \varepsilon^{-2} \left(\frac{v}{v_{\rm b}}\right)^2 \mathbbm{1}_{D_\varepsilon} + \mathbbm{1}_{\mathbb{R}^d\setminus D_\varepsilon},
\end{equation}
and the outgoing scattering form $\mathcal{A}_\varepsilon(k)$ on $H^1(B_R)\times H^1(B_R)$ by
\begin{equation}\label{outgoingform}
    \mathcal{A}_\varepsilon(k)(u,v):= \int_{B_R} a_\varepsilon \nabla u \cdot \overline{\nabla v}\,dx -k^2\int_{B_R}
    b_\varepsilon u \overline{v}\,dx - \langle T_k u,v\rangle_{\partial B_R},
\end{equation}
where $\langle \cdot,\cdot\rangle_{\partial B_R}$ denotes the $H^{-1/2}(\partial B_R) -H^{1/2}(\partial B_R)$ pairing.

Thus, the solution $u_\varepsilon$ of \eqref{maineq}, restricting in $B_R$, is characterized by the variational form
\begin{equation}\label{maineqvariational}
    \mathcal{A}_\varepsilon(k_\varepsilon)(u_\varepsilon,v)= \left\langle \frac{\partial u^{\mathrm{in}}_\varepsilon }{\partial \nu} - T_{k_\varepsilon} u^{\mathrm{in}}_\varepsilon, v \right\rangle_{\partial B_R} , \qquad  \forall\,v\in H^1(B_R).
\end{equation}

Let $\widehat{u}_\varepsilon \in H^1_{\mathrm{loc}}(\mathbb{R}^d)$ denote the sound-soft total field generated by the same incident wave $u^{\mathrm{in}}_\varepsilon$, \emph{i.e.}, $\widehat{u}_\varepsilon$ is the weak solution of 
\begin{equation}\label{problimit1}
\left\{
\begin{aligned}
    & \Delta \widehat{u}_\varepsilon+k_\varepsilon^2 \widehat{u}_\varepsilon =0 && \mathrm{in} \ \mathbb{R}^d\setminus \overline{\Omega}, \\
    & \widehat{u}_\varepsilon \equiv 0,  && \mathrm{in} \ \Omega,\\
    & \widehat{u}_\varepsilon - u^{\mathrm{in}}_\varepsilon \in \mathrm{SRC}(k_\varepsilon).
\end{aligned}\right.
\end{equation}
Note that $\widehat{u}_\varepsilon$ is an intermediate comparison field, since it is the sound-soft total field at the $\varepsilon$-dependent wave number $k_\varepsilon$. By the difference $|k_\varepsilon-k_0|\leq C\varepsilon^2$ and a simple argument, one can easily show that, for any $R>0$, there exists $C=C(k_0,\Omega,R)>0$ such that 
\begin{equation}\label{difference_intermediate}
    \|\widehat{u}_\varepsilon-\widehat{u}_0\|_{H^1(B_R)} \leq C|k_\varepsilon-k_0|\leq C\varepsilon^2.
\end{equation}
Therefore, replacing the intermediate field $\widehat{u}_\varepsilon$ by the fixed limiting field $\widehat{u}_0$ does not change any of the convergence rates in the main theorem.

Hereafter, we only consider the discrepancy
\begin{equation}
    w_\varepsilon := u_\varepsilon - \widehat{u}_\varepsilon.
\end{equation}
Subtracting the variational formulation of \eqref{problimit1} from \eqref{maineqvariational}, we find that
\begin{equation}\label{formfordiscrep}
    \mathcal{A}_{\varepsilon}(k_\varepsilon)(w_\varepsilon,v)=\left\langle \left.\frac{\partial \widehat{u}_\varepsilon}{\partial \nu} \right|_{\partial \Omega+} , v \right\rangle_{\partial \Omega}  , \qquad \forall \,v\in H^1(B_R).
\end{equation}

The main goal of this paper is to prove that $w_\varepsilon$ is \emph{small} in some sense. The main difficulty in estimating $w_\varepsilon$ directly from \eqref{formfordiscrep} is that, at frequencies near the upper edge of the subwavelength band, the bubble array exhibits a strong collective response. At leading order, this response is described by the mean value of the field
inside each bubble. We therefore decompose $H^1(B_R)$ into a finite-dimensional component carrying the bubble averages and a complementary component having zero average in every bubble. This decomposition isolates the potentially
resonant part of the scattering problem and will subsequently lead to a $2\times2$ block system.

To construct this decomposition, define the bubble-average operator
\begin{equation}
    Q_\varepsilon:H^1(B_R) \rightarrow \ell^2( \mathcal{I}_\varepsilon )
\end{equation}
by
\begin{equation}
    (Q_\varepsilon u)(n) := \fint_{\varepsilon(D+n)} u (x)\,dx, \qquad \forall\,n \in \mathcal{I}_\varepsilon.
\end{equation}
For each $n\in\mathcal I_\varepsilon$, define
\begin{equation}
p_\varepsilon^n(x):=\mathscr V_{\varepsilon^2,\mathcal I_\varepsilon}^n(x/\varepsilon),\qquad \forall\,x\in\mathbb R^d.
\end{equation}
The function $p_\varepsilon^n$ describes the quasi-static microscopic field generated by the excitation of the $n$-th bubble and thus captures the leading-order interaction between the incident wave and the $n$-th bubble. By the definition of $\mathscr V_{\varepsilon^2,\mathcal I_\varepsilon}^n$ in \eqref{energyminprobN}, one has
\begin{equation}\label{biorthogonalityQp}
    (Q_\varepsilon p_\varepsilon^n)(m)=\delta_{mn}, \qquad \forall\, m,n\in\mathcal I_\varepsilon.
\end{equation}
We then define the projection $P_\varepsilon:H^1(B_R) \rightarrow H^1(B_R)$ by
\begin{equation}
    P_\varepsilon u = \sum_{n\in \mathcal{I}_\varepsilon} (Q_\varepsilon u)(n) p^n_\varepsilon  , \qquad \forall \, u \in H^1(B_R).
\end{equation}
It follows from \eqref{biorthogonalityQp} that
\begin{equation}
    Q_\varepsilon P_\varepsilon=Q_\varepsilon, \qquad P_\varepsilon^2=P_\varepsilon.
\end{equation}
Moreover, 
\begin{equation}\label{rangePepsilon}
    \operatorname{ran}P_\varepsilon = \operatorname{span} \bigl\{p_\varepsilon^n:n\in\mathcal I_\varepsilon\bigr\},
    \qquad \dim\operatorname{ran}P_\varepsilon = |\mathcal I_\varepsilon|.
\end{equation}

Consequently, every $u\in H^1(B_R)$ admits the unique decomposition
\begin{equation}\label{decompHepsilon}
    u=P_\varepsilon u+(I-P_\varepsilon)u,
    \qquad \mathrm{ \ where \ }
    P_\varepsilon u\in\operatorname{ran}P_\varepsilon \mathrm{ \ and \ } (I-P_\varepsilon)u\in\ker Q_\varepsilon.
\end{equation}
The first component is determined entirely by the vector of bubble averages $Q_\varepsilon u$ and contains the resonant collective response of bubbles, whereas the second component represents the zero-average and non-resonant remainder.

We now introduce a sesquilinear form such that the decomposition \eqref{decompHepsilon} is \emph{orthogonal} in some sense.
Given $\phi \in H^{1/2}(\partial B_R)$, let $u \in H^1_{\mathrm{loc}}(\mathbb{R}^d\setminus \overline{B_R})$ be the exterior harmonic extension of $\phi$, \emph{i.e.},
\begin{equation}\label{harmoextens}
    \left\{
\begin{aligned}
    &\Delta u=0 && \mathrm{in} \ \mathbb{R}^d\setminus \overline{B_R}, \\
    & u|_+=\phi && \mathrm{on} \ \partial B_R, \\
    & u (x)= O(|x|^{2-d}) && \mathrm{as} \ |x| \rightarrow \infty .
\end{aligned}
    \right.
\end{equation}
Using spherical harmonics, one can easily show that \eqref{harmoextens} is uniquely solvable and that $u \in \mathcal{E}$, which is defined in \eqref{energyspace} (see, for instance, \cite{grassle2025dirichlet}). We define the exterior Dirichlet-to-Neumann operator
$T_0:H^{1/2}(\partial B_R)\rightarrow H^{-1/2}(\partial B_R)$ by
\begin{equation}
    T_0(\phi):=\left.\frac{\partial u}{\partial \nu}\right|_{\partial B_R+},
\end{equation}
where $u$ solves \eqref{harmoextens}. The corresponding static sesquilinear form on $H^1(B_R)$ is
\begin{equation}\label{diriform}
    \mathcal{A}_\varepsilon(0)(u,v):= \int_{B_R} a_\varepsilon \nabla u \cdot \overline{\nabla v}\,dx - \langle T_0 u,v\rangle_{\partial B_R}, \qquad \forall\,u,v\in H^1(B_R).
\end{equation}

Indeed, letting $U$ and $V$ be the harmonic extensions of $u$ and $v$ to $\mathbb R^d\setminus\overline{B_R}$, respectively,
 Green's identity then shows that
 \begin{equation}\label{enerA0uv}
    \mathcal{A}_\varepsilon(0)(u,v)= \int_{\mathbb{R}^d} a_\varepsilon \nabla U\cdot \overline{\nabla V} \,dx.
\end{equation}
This is precisely the energy pairing associated with the capacitary basis functions $p_\varepsilon^n$, and therefore yields the orthogonality of the decomposition \eqref{decompHepsilon}. The precise statement is as follows.

\begin{lemma}\label{lemorthog}
    The decomposition $H^1(B_R) = \mathrm{ran}\, P_\varepsilon \oplus \ker Q_\varepsilon$ is orthogonal in the sense that, 
    \begin{equation}
        \mathcal{A}_\varepsilon(0)(u,v)=0
    \end{equation}
    for each $u \in \mathrm{ran}\, P_\varepsilon$ and $v\in \ker Q_\varepsilon$.
\end{lemma}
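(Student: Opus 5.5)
By linearity it suffices to take $u=p_\varepsilon^n$ for a fixed $n\in\mathcal I_\varepsilon$ and an arbitrary $v\in\ker Q_\varepsilon$. The plan is to move the form to all of $\R^d$ with \eqref{enerA0uv}, and then read off the orthogonality from the Euler--Lagrange equation of the minimization problem \eqref{energyminprobN} that defines $\mathscr V^n_{\varepsilon^2,\mathcal I_\varepsilon}$.

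\emph{Step 1 (rescaling and the Euler--Lagrange equation).} Since $p_\varepsilon^n(x)=\mathscr V^n_{\varepsilon^2,\mathcal I_\varepsilon}(x/\varepsilon)$ and $a_\varepsilon(x)=a_{\varepsilon^2,\mathcal I_\varepsilon}(x/\varepsilon)$, the problem \eqref{energyminprobN} transforms into the following statement: $p_\varepsilon^n$ minimizes $\int_{\R^d}a_\varepsilon|\nabla w|^2$ over $w\in\mathcal E$ subject to $Q_\varepsilon w=e_n$. A change of variables preserves both $\mathcal E$ and the constraint. The admissible set is an affine space whose direction is $\{h\in\mathcal E:\ Q_\varepsilon h=0\}$. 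The first variation along $p_\varepsilon^n+th$, with $t\in\bC$, therefore gives
\begin{equation*}
\int_{\R^d}a_\varepsilon\nabla p_\varepsilon^n\cdot\overline{\nabla h}\,dx=0\qquad\forall\,h\in\mathcal E,\ Q_\varepsilon h=0 .
\end{equation*}
The minimizer exists and is unique by strict convexity and coercivity on $\mathcal E$ modulo constants. In $d=2$ the constant is fixed by the constraint, so $\mathscr V^n$ tends to a constant at infinity; this is consistent with the exterior extension in \eqref{harmoextens}.

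\emph{Step 2 (identifying the extension).} Outside $B_R$ one has $a_\varepsilon=1$. The minimizer is therefore harmonic in $\R^d\setminus\overline{B_R}$, as one sees by testing with $h\in C_c^\infty(\R^d\setminus\overline{B_R})$. Its behaviour at infinity puts it in the class used in \eqref{harmoextens}: it is $O(|x|^{2-d})$ in $d=3$ by membership in $L^6$, and bounded with a limit in $d=2$. By uniqueness of the exterior problem, $p_\varepsilon^n$ coincides outside $B_R$ with the harmonic extension $U$ of its own trace. The main obstacle is this step in $d=2$, where $O(|x|^{0})$ must be read as "bounded", so one should check that the extension of \eqref{harmoextens} is the bounded energy-finite one. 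I expect this to follow from the spherical harmonic expansion, since a $\log|x|$ mode has infinite Dirichlet energy. For $v\in\ker Q_\varepsilon$, let $V\in\mathcal E$ be $v$ in $B_R$ glued to its exterior harmonic extension. Then $Q_\varepsilon V=Q_\varepsilon v=0$, because all bubbles lie in $\Omega\Subset B_R$.

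\emph{Step 3 (conclusion).} By \eqref{enerA0uv},
\begin{equation*}
\mathcal A_\varepsilon(0)(p_\varepsilon^n,v)=\int_{\R^d}a_\varepsilon\nabla p_\varepsilon^n\cdot\overline{\nabla V}\,dx ,
\end{equation*}
and this vanishes by Step 1 with $h=V$. Summing over $n$ with the coefficients $(Q_\varepsilon u)(n)$ gives the claim for every $u\in\operatorname{ran}P_\varepsilon$.
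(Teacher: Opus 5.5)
Your proof is correct and follows the same route as the paper: you rewrite $\mathcal A_\varepsilon(0)(u,v)$ as the whole-space energy pairing via \eqref{enerA0uv}, then use the first-order optimality of the capacitary minimizers from \eqref{energyminprobN}, tested against the zero-average extension $V$ of $v$. Your Steps 1--2 (the rescaling, the Euler--Lagrange identity, and the identification of the exterior harmonic extension of $p_\varepsilon^n$ with $p_\varepsilon^n$ itself, including the bounded-at-infinity reading for $d=2$) make explicit what the paper's two-line proof leaves implicit.
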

\begin{proof}
Since $u \in \mathrm{ran}\, P_\varepsilon$, $U$ is a linear combination of the basis functions
$p^n_\varepsilon$. Since $V\in \mathcal{E}$ and the average of $v$ in each bubble is zero, the variational definition \eqref{energyminprobN} and the identity \eqref{enerA0uv} implies that $\mathcal{A}_\varepsilon(0)(u,v)=0$.
\end{proof}

Motivated by Lemma \ref{lemorthog}, on the zero-mean space $\ker Q_\varepsilon$, we define the inner product
\begin{equation}
    \langle u ,v \rangle_{\mathcal{H}_\varepsilon} := \mathcal{A}_\varepsilon(0)(u,v), \qquad \forall\, u,v\in \ker Q_\varepsilon.
\end{equation}
We denote by $\mathcal{H}_\varepsilon$ the resulting Hilbert space.

Since the incident frequency is tuned near the upper edge of the subwavelength band, attained at the $M$ point, the resonant response is dominated by Bloch modes with quasimomentum $M=(\pi,\cdots,\pi)$. Because
\begin{equation}
    e^{\mi M\cdot e_j}=-1,  \qquad \forall\,j=1,\ldots,d,
\end{equation}
the corresponding bubble amplitudes alternate in sign between neighboring cells. To factor out this rapid $M$-point oscillation, we introduce the alternating operator
\begin{equation}
S_M:\ell^2(\mathcal I_\varepsilon)\rightarrow \ell^2(\mathcal I_\varepsilon), \qquad (S_M\phi)(n):=(-1)^{n_1+\cdots+n_d}\phi(n).
\end{equation}
Here $\phi\in\ell^2(\mathcal I_\varepsilon)$ and $n=(n_1,\ldots,n_d)\in\mathcal I_\varepsilon$. We then define the lift operator
\begin{equation}
   \Phi_{\varepsilon}^M : \ell^2(\mathcal{I}_\varepsilon) \rightarrow \mathrm{ran}\,P_\varepsilon, \qquad  \Phi_{\varepsilon}^M \phi := \sum_{n\in \mathcal{I}_\varepsilon} (S_M\phi)(n) p^n_\varepsilon.
\end{equation}

Finally, we define the residual form $\mathcal{R}_\varepsilon(k)$ on $H^1(B_R)$ by
\begin{equation}
\begin{aligned}
    \mathcal{R}_\varepsilon(k)(u,v)&:=   \mathcal{A}_\varepsilon(k)(u,v) - \mathcal{A}_\varepsilon(0) (u,v)\\
    &\ =-k^2\int_{B_R}
    b_\varepsilon u \overline{v}\,dx  -\langle (T_k -T_0)u,v\rangle_{\partial B_R}, 
\end{aligned}\qquad \forall \,u,v\in H^1(B_R).
\end{equation}
In particular, $\mathcal{R}_\varepsilon(k)(u,v) = \mathcal{A}_\varepsilon(k)(u,v) $ if $u \in \mathrm{ran}\, P_\varepsilon$ and $v\in\ker Q_\varepsilon$. Thus, $\mathcal R_\varepsilon(k)$ encodes all interactions between the potentially resonant bubble component and the zero-average background component.
 
Using Lemma \ref{lemorthog}, the discrepancy $w_\varepsilon$ admits the decomposition
\begin{equation}\label{decompofdiscre}
    w_\varepsilon = \Phi_\varepsilon^M \phi_\varepsilon + r_\varepsilon , \qquad \phi_\varepsilon \in \ell^2( \mathcal{I}_\varepsilon ) , \qquad r_\varepsilon  \in \mathcal{H}_\varepsilon = \ker Q_\varepsilon.
\end{equation}
Substituting \eqref{decompofdiscre} into \eqref{formfordiscrep} with test function in $\mathrm{ran}\,P_\varepsilon$ and $\ker Q_\varepsilon$ respectively, we obtain the $2\times 2$ block system
\begin{equation}\label{operatoreq}
    \begin{pmatrix}
        A & R_1 \\
        R_2 & B
    \end{pmatrix}
    \begin{pmatrix}
        \phi_\varepsilon \\ r_\varepsilon
    \end{pmatrix} = \begin{pmatrix}
        g_\varepsilon \\h_\varepsilon
    \end{pmatrix}.
\end{equation}
Here, $A=A_\varepsilon(k_\varepsilon):\ell^2( \mathcal{I}_\varepsilon ) \rightarrow \ell^2(\mathcal{I}_\varepsilon)$ is defined by
\begin{equation}\label{defofA}
    \langle A\phi,\psi \rangle_{\ell^2(\mathcal{I}_\varepsilon)} := \mathcal{A}_\varepsilon(k_\varepsilon)(\Phi_\varepsilon^M \phi, \Phi_\varepsilon^M \psi);
\end{equation}
$R_2 = R_{2,\varepsilon}(k_\varepsilon):\ell^2(\mathcal{I}_\varepsilon)\rightarrow\mathcal{H}_\varepsilon  $ is defined by
\begin{equation}\label{defofR2}
    \langle R_2 \phi, v\rangle_{\mathcal{H}_\varepsilon} :=  \mathcal{R}_\varepsilon(k_\varepsilon)(\Phi_\varepsilon^M \phi , v);
\end{equation}
$R_1 = R_{1,\varepsilon}(k_\varepsilon):\mathcal{H}_\varepsilon \rightarrow \ell^2(\mathcal{I}_\varepsilon)$ is defined by
\begin{equation}\label{defofR1}
    \langle R_1 u, \psi \rangle_{\ell^2( \mathcal{I}_\varepsilon )} :=  \mathcal{R}_\varepsilon(k_\varepsilon)(u,\Phi_\varepsilon^M \psi);
\end{equation}
$B:\mathcal{H}_\varepsilon \rightarrow \mathcal{H}_\varepsilon $ is defined by
\begin{equation}
    \langle Bu,v \rangle_{\mathcal{H}_\varepsilon} := \mathcal{A}_\varepsilon(k_\varepsilon)(u,v);
\end{equation}
and $g_\varepsilon\in \ell^2( \mathcal{I}_\varepsilon )$, $h_\varepsilon \in \mathcal{H}_\varepsilon$ are defined by
\begin{equation}\label{defgandh}
    \langle g_\varepsilon, \psi \rangle_{\ell^2( \mathcal{I}_\varepsilon )} := \left\langle \left.\frac{\partial \widehat{u}_\varepsilon}{\partial \nu} \right|_{\partial \Omega+} , \Phi_\varepsilon^M \psi \right\rangle_{\partial \Omega} , \qquad \langle h_\varepsilon, v\rangle_{\mathcal{H}_\varepsilon} :=  \left\langle \left.\frac{\partial \widehat{u}_\varepsilon}{\partial \nu} \right|_{\partial \Omega+} , v\right\rangle_{\partial \Omega} .
\end{equation}
Hereafter, our main task is to analyze the block system \eqref{operatoreq}.

\subsection{Some useful estimates related to the block system} 

In this section, we establish some useful estimates related to the quantities that appear in the block system.

The block system formulation separates the field into a zero-average component belonging to $\mathcal H_\varepsilon$ and a discrete component parametrized by the bubble averages in $\ell^2(\mathcal I_\varepsilon)$. We begin by establishing uniform estimates for the zero-average component.

\begin{lemma}\label{lemmaHepsprop}
    There exist constants $\varepsilon_0>0$ and $C>0$, independent of $\varepsilon$, such that, for every $0<\varepsilon<\varepsilon_0$ and $u\in\mathcal H_\varepsilon$, the following hold:
    \begin{itemize}
    \item[(a)] 
        \begin{equation}
            \int_\Omega b_\varepsilon |u|^2\,dx \leq C\varepsilon^2 \int_\Omega a_\varepsilon |\nabla u|^2\,dx.
        \end{equation}
    \item[(b)] 
        \begin{equation}
    \| u\|_{L^2(\partial\Omega)}^2 \leq C\varepsilon \int_\Omega a_\varepsilon|\nabla u|^2\,dx.
\end{equation}
    \item[(c)] Define 
    \begin{equation}
        \| u \|^2_{\mathcal{K}_\varepsilon}:= \int_\Omega a_\varepsilon |\nabla u|^2\,dx + \| u \|_{H^1(B_R\setminus \overline{\Omega})}^2 , \qquad \forall\,u\in \mathcal{H}_\varepsilon.
    \end{equation}
    Then, the norms $\|\cdot\|_{\mathcal H_\varepsilon}$ and $\|\cdot\|_{\mathcal K_\varepsilon}$ are uniformly equivalent:
    \begin{equation}
      C^{-1} \| u \|_{\mathcal{K}_\varepsilon} \leq  \| u\|_{\mathcal{H}_\varepsilon} \leq C \| u \|_{\mathcal{K}_\varepsilon}.  
    \end{equation}
    \end{itemize}
\end{lemma}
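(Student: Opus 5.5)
The three estimates are local, cell-by-cell statements, so I plan to prove them by rescaling to the unit cell and then summing. The key geometric fact is that $\Omega$ is covered, up to a boundary strip of width $O(\varepsilon)$, by the complete cells $\varepsilon(n+Y)$ with $n\in\mathcal I_\varepsilon$. Every $u\in\mathcal H_\varepsilon$ has zero mean on each bubble $\varepsilon(n+D)$.

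\textbf{Part (a).} On a full cell $\varepsilon(n+Y)$ I rescale to $Y$ and apply the Poincar\'e inequality \eqref{eq:zero-mean-mass-estimate}, which holds for functions with zero mean on $D$. Scaling this back gives
\[
\int_{\varepsilon(n+Y)} b_\varepsilon|u|^2 \le C\varepsilon^2\int_{\varepsilon(n+Y)} a_\varepsilon|\nabla u|^2 .
\]
Both weights carry $\varepsilon^{-2}$ on the bubble, which is consistent with $\rho_\delta,a_\delta$ for $\delta=\varepsilon^2$.

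The remaining region $\Omega\setminus\bigcup_{n\in\mathcal I_\varepsilon}\varepsilon(n+Y)$ contains no bubbles, so there $b_\varepsilon=a_\varepsilon=1$. It is a strip of width $\lesssim\sqrt d\,\varepsilon$ adjacent to $\partial\Omega$. I will control $u$ on this strip through a neighbouring full cell by a one-dimensional fundamental-theorem argument along a cone or segment direction; since $\Omega$ is Lipschitz, each strip point reaches a full cell within distance $C\varepsilon$. This gives a bound of order $\varepsilon^2\|\nabla u\|^2$ plus the $L^2$ norm on the adjacent cells. The latter is already controlled by the cell estimate. The expected cost is slightly larger patches of cells, for example chains of $O(1)$ cells, and the Lipschitz character of $\partial\Omega$ is needed for a uniform covering.

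\textbf{Part (b).} I use the trace inequality on the strip $S_\varepsilon$ of width $c\varepsilon$ along $\partial\Omega$:
\[
\|u\|_{L^2(\partial\Omega)}^2\le C\bigl(\varepsilon^{-1}\|u\|_{L^2(S_\varepsilon)}^2+\varepsilon\|\nabla u\|_{L^2(S_\varepsilon)}^2\bigr).
\]
This is the standard scaled trace inequality, obtained by applying the unit-scale trace inequality on $\varepsilon$-sized Lipschitz patches. Combining it with the bound from (a), which gives $\|u\|^2_{L^2(S_\varepsilon)}\le C\varepsilon^2\int_\Omega a_\varepsilon|\nabla u|^2$ on the strip and its adjacent cells, yields (b).

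\textbf{Part (c).} By \eqref{diriform} and \eqref{enerA0uv},
\[
\|u\|_{\mathcal H_\varepsilon}^2=\int_\Omega a_\varepsilon|\nabla u|^2+\int_{B_R\setminus\Omega}|\nabla u|^2+\int_{\mathbb R^d\setminus B_R}|\nabla U|^2 .
\]
\emph{Upper bound.} Here I use $-\langle T_0u,u\rangle\le C\|u\|^2_{H^{1/2}(\partial B_R)}$, which follows from continuity of $T_0$.

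\emph{Lower bound.} This is the main obstacle: I must recover $\|u\|^2_{L^2(B_R\setminus\Omega)}$ from gradients. I would write
\[
\|u\|_{L^2(B_R\setminus\Omega)}\le C\bigl(\|\nabla u\|_{L^2(B_R\setminus\Omega)}+\|u\|_{L^2(\partial\Omega)}\bigr),
\]
which is a Poincar\'e inequality with trace on the connected Lipschitz domain $B_R\setminus\overline\Omega$. By (b), $\|u\|^2_{L^2(\partial\Omega)}\le C\varepsilon\int_\Omega a_\varepsilon|\nabla u|^2$, so this is bounded by $C\|u\|_{\mathcal H_\varepsilon}^2$. The nonnegativity of $-\langle T_0u,u\rangle$, which is the exterior energy, lets me simply drop it. Choosing $\varepsilon_0$ small makes all constants uniform.
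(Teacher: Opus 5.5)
Your proposal is correct and follows the paper's proof. For (a) you use rescaled zero-mean Poincaré on full cells plus a separate treatment of the bubble-free boundary strip. For (b) you combine the $\varepsilon$-scaled trace inequality with (a). For (c) you drop $-\langle T_0u,u\rangle\ge 0$ and apply Poincaré--Friedrichs on $B_R\setminus\overline\Omega$ together with (b).

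The only differences are cosmetic. You invoke the combined weighted inequality \eqref{eq:zero-mean-mass-estimate}, where the paper uses separate cell and bubble Poincaré inequalities. You control the boundary strip by integrating along transversal segments into full cells, where the paper applies Poincaré on $O(\varepsilon)$-sized patches, each containing a bubble.
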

\begin{proof}
    Let $Y_\varepsilon = \bigcup_{n\in \mathcal{I}_\varepsilon} \varepsilon (Y+n)$ denote the union of the complete cells contained in $\Omega$, and set $U_\varepsilon = \Omega \setminus Y_\varepsilon$ to be the remainder boundary layer part of $\Omega$. 
    
    We first prove (a). Since
$u\in\mathcal H_\varepsilon=\ker Q_\varepsilon$, one has
\begin{equation}
    \int_{\varepsilon(D+n)}u\,dx=0, \qquad n\in\mathcal I_\varepsilon.
\end{equation}
For every $w \in H^1(Y)$ satisfying $\int_D w(x)\,dx =0$, the Poincar\'{e} inequality gives
\begin{equation}
    \int_Y |w|^2\,dx \leq C \int_Y |\nabla w|^2\,dx.
\end{equation}
Rescaling this inequality on each cell and summing over $n\in\mathcal I_\varepsilon$, we obtain
\begin{equation}\label{itemaesti1}
   \int_{Y_\varepsilon} |u|^2\,dx \leq C\varepsilon^2 \int_\Omega |\nabla u|^2\,dx.
\end{equation}

Applying the rescaled Poincar\'e inequality inside each bubble yields
\begin{equation}\label{itemaesti2}
    \varepsilon^{-2}\int_{D_\varepsilon} |u|^2\,dx \leq C\int_{D_\varepsilon} |\nabla u|^2\,dx \leq C\varepsilon^2 \int_\Omega a_\varepsilon|\nabla u|^2\,dx.
\end{equation}

It remains to estimate the $L^2$ norm of $u$ in the boundary layer $U_\varepsilon$. Since $\Omega$ is a bounded Lipschitz domain, for all sufficiently small $\varepsilon$, there exists a family of connected Lipschitz domains $\mathcal{P}_\varepsilon =\{P_{\varepsilon,i}\}_i$ such that
\begin{equation}
    P_{\varepsilon,i} \subset \Omega,  \qquad U_\varepsilon \subset \bigcup_i P_{\varepsilon,i}, \qquad \mathrm{diam}\,P_{\varepsilon,i}\leq C\varepsilon, \qquad \sum_i \mathbbm{1}_{P_{\varepsilon,i}}\leq C.
\end{equation}
Moreover, each $P_{\varepsilon,i}$ contains a bubble $\varepsilon(D+n_i)$ for some $n_i\in\mathcal I_\varepsilon$, and the
rescaled domains $\varepsilon^{-1}P_{\varepsilon,i}$ have uniformly bounded Lipschitz characters. The Poincar\'e inequality therefore gives
\begin{equation}\label{itemaesti3}
    \int_{U_\varepsilon} |u|^2\,dx \leq C\varepsilon^2\sum_i \int_{P_{\varepsilon,i}} |\nabla u|^2\,dx \leq C\varepsilon^2\int_{\Omega} |\nabla u|^2\,dx.
\end{equation}
Consequently, \eqref{itemaesti1}, \eqref{itemaesti2} and \eqref{itemaesti3} imply (a).

We next prove~(b). Let $\Gamma_{\varepsilon,i}:=\partial P_{\varepsilon,i}\cap\partial\Omega$. The patches may be chosen so that the sets $\Gamma_{\varepsilon,i}$ cover $\partial\Omega$. Since $\varepsilon^{-1}P_{\varepsilon,i}$ has uniformly bounded Lipschitz character and $\operatorname{diam}P_{\varepsilon,i}=O(\varepsilon)$, the scaled trace inequality \cite[Lemma A.4]{larson2024space} gives
\begin{equation}\label{traceinequ}
\int_{\Gamma_{\varepsilon,i}} |u|^2\,d\sigma \leq C\left( \varepsilon^{-1}\int_{P_{\varepsilon,i}} |u|^2\,dx
 +\varepsilon \int_{P_{\varepsilon,i}} |\nabla u|^2\,dx \right),
\end{equation}
Summing over $i$ and using the bounded overlap property of the patches, we find
\begin{equation}
    \int_{\partial \Omega} |u|^2\,d\sigma \leq C\left( \varepsilon^{-1} \int_\Omega |u|^2\,dx
 +\varepsilon   \int_\Omega |\nabla u|^2\,dx \right).
\end{equation}
This, combined with (a), proves (b).

Finally, we prove (c). Since $T_0 \leq 0$, one has
\begin{equation}\label{itemcesti1}
    \| u\|_{\mathcal{H}_\varepsilon}^2 = \mathcal{A}_\varepsilon(0)(u,u) \geq   \int_{B_R} a_\varepsilon |\nabla u|^2\,dx. 
\end{equation}
On the Lipschitz domain $B_R\setminus\overline{\Omega}$, the Poincar\'e--Friedrichs inequality gives
\begin{equation}\label{itemcesti2}
    \int_{B_R\setminus \Omega} |u|^2\,dx \leq C\left( \int_{B_R\setminus \Omega} |\nabla u|^2\,dx + \int_{\partial \Omega} |u|^2\,d\sigma \right).
\end{equation}
Combining \eqref{itemcesti1}, \eqref{itemcesti2}, and (b), we obtain $\| u \|_{\mathcal{K}_\varepsilon} \leq C \| u \|_{\mathcal{H}_\varepsilon}$. Conversely, the boundedness of $T_0$ and the trace theorem imply
\begin{equation}
    \|u\|_{\mathcal H_\varepsilon}^2 = \mathcal A_\varepsilon(0)(u,u)\leq \int_\Omega a_\varepsilon|\nabla u|^2\,dx
    + C\|u\|_{H^1(B_R\setminus\overline{\Omega})}^2\leq C\|u\|_{\mathcal K_\varepsilon}^2.
\end{equation}
This completes the proof.
\end{proof}

We next turn to the discrete component of the block decomposition. A vector $\phi\in\ell^2(\mathcal I_\varepsilon)$ records the average field carried by each bubble. Although the standard $\ell^2$-norm controls the overall size of these amplitudes, it does not control their variation between neighboring bubbles. Such control is needed to describe the slowly varying envelope of the wave packet near $M$ and to relate the discrete amplitude $\phi \in \ell^2(\mathcal I_\varepsilon)$ to the Sobolev estimates for the continuous lifted function $\Phi_\varepsilon^M \phi$. We therefore introduce a rescaled discrete $H^1$-norm on $\ell^2(\mathcal I_\varepsilon)$.

Let
\begin{equation}
    E:\ell^2(\mathcal I_\varepsilon)\rightarrow\ell^2(\mathbb Z^d)
\end{equation}
denote the extension by zero. We define the discrete Dirichlet form by
\begin{equation}\label{discredirienerg}
    \mathfrak{D}_\varepsilon(\psi,\phi):= \sum_{n \in \mathbb{Z}^d} \sum_{i=1}^d \big(E\psi(n+e_i)-E\psi(n) \big) \big( E\overline{\phi}(n+e_i)- E\overline{\phi}(n) \big) 
\end{equation}
for $\psi,\phi\in\ell^2(\mathcal I_\varepsilon)$, where $\{e_i\}_{i=1}^d$ denotes the standard basis of $\mathbb Z^d$.
By the Plancherel identity,
\begin{equation}\label{fourierDN}
    \mathfrak{D}_\varepsilon(\psi,\phi) = \frac{1}{|\mathcal{B}_1|} \int_{\mathcal{B}_1} \theta(\beta) \widehat{E\psi}(\beta) \overline{\widehat{E\phi}} (\beta) \,d\beta, \qquad \theta(\beta):= 4\sum_{i=1}^d\sin^2(\beta_i /2) .
\end{equation}
For simplicity, we denote $\mathfrak{D}_\varepsilon(\psi,\psi)$ by $\mathfrak{D}_\varepsilon(\psi)$.

Motivated by the scaling of the continuum gradient, we equip
$\ell^2(\mathcal I_\varepsilon)$ with the inner product
\begin{equation}\label{defXepsinner}
\langle\psi,\phi\rangle_{\mathcal X_\varepsilon}:=\langle\psi,\phi\rangle_{\ell^2(\mathcal I_\varepsilon)}
+\varepsilon^{-2}\mathfrak D_\varepsilon(\psi,\phi),
\end{equation}
and denote the resulting Hilbert space by $\mathcal X_\varepsilon$. 

In addition to the bulk lattice variation, we shall need to control the contribution of the lift $\Phi_\varepsilon^M$ near $\partial\Omega$. Bubble amplitudes supported several lattice layers away from the boundary have a weaker influence on the boundary trace. To quantify this localization, we introduce an exponentially weighted boundary mass. For a fixed $\gamma>0$, define
\begin{equation}
    \mathcal{M}_{\partial\Omega,\varepsilon} (\psi  ):=\sum_{n\in \mathcal{I}_\varepsilon }
        e^{-\gamma d_\varepsilon(n)} |\psi(n)|^2,  \qquad \forall\,\psi \in\ell^2(\mathcal I_\varepsilon),
\end{equation}
where
\begin{equation}
d_\varepsilon(n):=\operatorname{dist}_{\ell^\infty}\bigl(n,\mathbb Z^d\setminus\mathcal I_\varepsilon\bigr)
\end{equation}
is the lattice distance from $n$ to the boundary of $\mathcal I_\varepsilon$. The exponential weight reflects the
layer-by-layer decay of the capacitary fields that will be established below.

We first establish that the boundary mass is controlled by the discrete Dirichlet energy.

\begin{lemma}
\label{lem:discreteHardy}
Let $\gamma>0$. There exists a constant $C=C(d,\gamma)>0$ such that
\begin{equation}\label{hardyineq}
\mathcal{M}_{\partial\Omega,\varepsilon}(\psi) \leq C\mathfrak D_\varepsilon(\psi),
\qquad \forall\,\psi\in\ell^2(\mathcal I_\varepsilon).
\end{equation}
\end{lemma}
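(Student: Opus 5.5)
The plan is a one-dimensional telescoping (Hardy-type) argument along coordinate lines, followed by a summation that exploits the exponential weight. Fix $n\in\mathcal I_\varepsilon$ with $d_\varepsilon(n)=k\ge 1$. By definition of the $\ell^\infty$ lattice distance there is a point $m\in\mathbb Z^d\setminus\mathcal I_\varepsilon$ with $\|n-m\|_\infty=k$. Connect $n$ to $m$ by a monotone lattice path $n=x_0,x_1,\dots,x_L=m$, moving one coordinate step at a time, with $L=\|n-m\|_1\le dk$. Since $E\psi(m)=0$, telescoping and Cauchy--Schwarz give
\begin{equation*}
|\psi(n)|^2=\Bigl|\sum_{j=0}^{L-1}\bigl(E\psi(x_j)-E\psi(x_{j+1})\bigr)\Bigr|^2\le dk\sum_{j=0}^{L-1}\bigl|E\psi(x_{j+1})-E\psi(x_j)\bigr|^2 .
\end{equation*}
Every edge on this path lies in the $\ell^\infty$-ball of radius $k$ about $n$. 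Hence
\begin{equation*}
|\psi(n)|^2\le dk\sum_{\text{edges } e\subset B_\infty(n,k)}|\nabla_e E\psi|^2,
\end{equation*}
where $\nabla_e E\psi$ denotes the difference of $E\psi$ across the lattice edge $e$.

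Next multiply by $e^{-\gamma k}$ and sum over $n$. Exchanging the order of summation, a fixed edge $e$ contributes at most
\begin{equation*}
d\sum_{n:\ \mathrm{dist}_\infty(n,e)\le d_\varepsilon(n)} d_\varepsilon(n)e^{-\gamma d_\varepsilon(n)} .
\end{equation*}
To control this, use that $d_\varepsilon$ is $1$-Lipschitz in the $\ell^\infty$ metric. If $e$ lies within distance $d_\varepsilon(n)=k$ of $n$, then $n$ lies within $\ell^\infty$-distance $k$ of an endpoint of $e$. The number of such $n$ with a given value $k$ is therefore at most $C(d)(2k+1)^d$, and the contribution of $e$ is bounded by
\begin{equation*}
C(d)\sum_{k\ge1}k(2k+1)^d e^{-\gamma k}=C(d,\gamma)<\infty .
\end{equation*}
Since each edge appears exactly once in $\mathfrak D_\varepsilon(\psi)$, this yields $\mathcal M_{\partial\Omega,\varepsilon}(\psi)\le C(d,\gamma)\mathfrak D_\varepsilon(\psi)$, with a constant independent of $\varepsilon$ and of the shape of $\mathcal I_\varepsilon$.

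The main care point is this counting step. The constant must not depend on the geometry of $\mathcal I_\varepsilon$, which the $\ell^\infty$-ball containment handles, since it uses nothing about $\Omega$. A naive estimate fails here: bounding the path length by an $\ell^2$ distance, or letting $k$ be large while many $n$ share the same edge, would break the bound. The factor $e^{-\gamma k}$ absorbs both the polynomial path length and the polynomial multiplicity. No smallness of $\varepsilon$ and no property of $\Omega$ beyond the finiteness of $\mathcal I_\varepsilon$ is needed.
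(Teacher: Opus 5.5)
Your proposal is correct and follows essentially the same route as the paper: you telescope along a shortest lattice path of length at most $dk$ to a site outside $\mathcal I_\varepsilon$ and apply Cauchy--Schwarz. You then bound, for each edge, the number of sites $n$ with $d_\varepsilon(n)=k$ that use it by a polynomial in $k$, which the weight $e^{-\gamma k}$ absorbs. The differences are cosmetic. You use monotone paths confined to the $\ell^\infty$-ball and count with $(2k+1)^d$, where the paper uses an $\ell^1$ count $(2dk+1)^d$. The $1$-Lipschitz property of $d_\varepsilon$ that you mention is never actually used.
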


In the proof below, we use the following notations. A point $n\in \Z^d$ is called a site, and its (nearest-)neighbors are the sites $m$'s such that $|m-n|_{\ell^1} =1$. A pair $(n,m)$ of points in $\Z^d$ with $|n-m|_{\ell^1}=1$ forms an edge. We regard edges as undirected, i.e., $(n,m)=(m,n)$. A finite path in $\Z^d$ refers to a finite sequence of sites $n_0,n_1,\dots,n_L$, such that $|n_i-n_{i-1}|_{\ell^1} = 1$ for all $1\le i \le L$. It is non-self-intersecting if all sites in this path are different.

\begin{proof}
Given a site $n\in \mathcal{I}_\eps$, set $r = r(n)=d_\varepsilon(n)$. Choose $q(n)\in\mathbb Z^d\setminus\mathcal I_\varepsilon$ such that
\begin{equation}
    |q(n)-n|_{\ell^\infty}=r.
\end{equation}
Joining $n$ to $q(n)$ by a shortest non-self-intersecting path in $\Z^d$, and stop that path at step $L=L(n)$ when it first exits $\mathcal{I}_\eps$. The stopped path is then of the form
\begin{equation*}
    n=n_0 \rightarrow n_1 \rightarrow \cdots \rightarrow n_{L},
\end{equation*}
so that 
\begin{equation*}
    n_j\in\mathcal I_\varepsilon \mathrm{\ for \ }0\leq j< L ,
    \qquad n_L \notin\mathcal I_\varepsilon,
    \qquad L \leq |q(n)-n|_{\ell^1}\leq d r.
\end{equation*}

Since $E\psi(n_L)=0$, applying the Cauchy--Schwarz inequality and $L \le dr$ yield
\begin{equation}
    |\psi(n)|^2 =\left|\sum_{j=0}^{L-1}\big( E\psi(n_j)- E\psi(n_{j+1})\big)\right|^2 
    \leq dr\sum_{j=0}^{L-1}\left| E\psi(n_j)- E\psi(n_{j+1})\right|^2.
\label{hardyPathEstimate}
\end{equation}
Therefore, for each $n\in \mathcal{I}_\varepsilon$, we have constructed a path starting from $n$ satisfying the above properties. Let $P_n$ be the set $\{(n_{i-1},n_i)\,:\, i=1,\dots,L(n)\}$, that is, the set of edges contained in this path. 

For $k\geq1$ and an $e = (e_-,e_+)$ in $\Z^d$, define
\begin{equation}
    N_k(e):=\#\left\{n\in\mathcal I_\varepsilon:d_\varepsilon(n)=k,\ e\in P_n\right\}.
\end{equation}
Note that, if $e\in P_n$ and $d_\varepsilon(n)=k$, then $n$ lies within $\ell^1$ distance $dk$ from either site of $e$. Consequently,
\begin{equation}
    N_k(e)\leq (2dk+1)^d\leq C k^d.
\end{equation}
Thus, although the paths are not necessarily disjoint, their overlap on each distance layer is bounded polynomially. Summing \eqref{hardyPathEstimate} over the $k$-th layer gives
\begin{equation}\label{hardyLayerEstimate}
        \sum_{n\in\mathcal I_\varepsilon \atop d_\varepsilon(n)=k} |\psi(n)|^2 \leq
        d k\sum_{e} N_k(e) \big| E\psi(e_-) - E\psi(e_+) \big|^2 \leq C k^{d+1}\mathfrak D_\varepsilon(\psi).
\end{equation}
Multiplying \eqref{hardyLayerEstimate} by $e^{-\gamma k}$ and summing over $k\geq1$, we obtain
\begin{equation}
    \mathcal{M}_{\partial\Omega,\varepsilon}(\psi)\leq C\left(\sum_{k=1}^{\infty}k^{d+1}e^{-\gamma k}\right)
\mathfrak D_\varepsilon(\psi) \leq C\mathfrak D_\varepsilon(\psi)
\end{equation}
for any $\gamma>0$. This proves \eqref{hardyineq}.
\end{proof}

We next establish a boundary-layer energy decay estimate for general finite arrays. Let $\Lambda\subset\mathbb Z^d$ be finite and define
\begin{equation}
    Y_\Lambda:=\bigcup_{n\in\Lambda}(Y+n), \qquad d_\Lambda(n) := \operatorname{dist}_{\ell^\infty}\bigl(n,\mathbb Z^d\setminus\Lambda\bigr).
\end{equation}
For $r\geq0$, set
\begin{equation}\label{def-capacitary-layers}
    \mathcal O_{r}:=\mathbb R^d\setminus \bigcup_{n\in\Lambda\atop d_\Lambda(n)>r}(Y+n).
\end{equation}
Thus, $\mathcal O_{r}$ consists of the exterior of the array together with its first $\lfloor r\rfloor$ cell layers.

\begin{lemma}\label{lemenergydecayboun}
    Let $0<\delta\leq1$ and let $\xi\in\ell^2(\Lambda)$ satisfy
\begin{equation}\label{vanishing-boundary-data}
\mathrm{supp}\,\xi \subset \{n\in \Lambda: d_\Lambda(n) >m\}.
\end{equation}
for some $m>0$. Write
\begin{equation}
    V:= \sum_{n\in\Lambda}\xi(n)\mathscr V_{\delta,\Lambda}^n
\end{equation}
and define
\begin{equation}
    E(r):=\int_{\mathcal O_{r}}a_{\delta,\Lambda}|\nabla V|^2\,dx.
\end{equation}
There exist constants $C>0$ and $\rho\in(0,1)$, independent of $\Lambda$, $\delta$, $m$, and $\xi$, such that
\begin{equation}\label{capacitary-layer-decay}
    E(r)\leq C\rho^{m-r}\int_{\mathbb R^d}a_{\delta,\Lambda}|\nabla V|^2\,dx,\qquad \forall \,r\in[0,m].
\end{equation}
\end{lemma}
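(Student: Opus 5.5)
The plan is a Caccioppoli-type \emph{hole-filling} (Widman) iteration on the cell layers $\mathcal O_r$. The key observation is that $V$ is the energy minimizer among $u\in\mathcal E$ with prescribed bubble averages $\fint_{D+n}u=\xi(n)$, $n\in\Lambda$. By \eqref{vanishing-boundary-data}, all prescribed averages vanish on bubbles lying in $\mathcal O_m$, that is, on cells with $d_\Lambda(n)\le m$. Hence, for any test function $h\in\mathcal E$ with zero average in every bubble, the Euler--Lagrange equation gives
\begin{equation*}
\int_{\mathbb R^d}a_{\delta,\Lambda}\nabla V\cdot\overline{\nabla h}\,dx=0 .
\end{equation*}
Moreover, $V$ itself has zero average on every bubble contained in $\mathcal O_m$. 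It therefore suffices to prove a one-step contraction
\begin{equation*}
E(r-1)\le \theta\,E(r),\qquad \theta\in(0,1)\ \text{independent of }\Lambda,\delta,\xi,
\end{equation*}
for $1\le r\le m$ (and for noninteger $r$ by monotonicity). Iterating this from $r$ up to $m$ and bounding $E(m)$ by the total energy yields \eqref{capacitary-layer-decay} with $\rho=\theta$, after adjusting $C$ for the integer parts.

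To prove the contraction, fix $r\le m$ and take a cutoff $\chi$ that equals $1$ on $\mathcal O_{r-1}$, vanishes outside $\mathcal O_r$, and is constant on each cell of the annular layer $\mathcal A_r:=\mathcal O_r\setminus\mathcal O_{r-1}$ except in a fixed neighbourhood of the cell faces away from the bubbles. This is possible because $D\Subset Y$: we interpolate in a collar near $\partial Y$ that avoids $D$, which gives $|\nabla\chi|\le C$ and $\nabla\chi=0$ on bubbles. The test function is $h=\chi V$. It has zero average on every bubble. On bubbles in $\mathcal O_{r-1}$ we have $h=V$ with zero mean. On bubbles in $\mathcal A_r$, $\chi$ is constant near the bubble, so $h$ is a multiple of $V$, again with zero mean. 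On bubbles outside $\mathcal O_r$, $h=0$. In $d=3$ it still lies in $\mathcal E$. Testing the Euler--Lagrange identity with $h$ gives
\begin{equation*}
\int a_{\delta,\Lambda}\chi|\nabla V|^2\,dx=-\int_{\mathcal A_r}\nabla V\cdot\overline{V\nabla\chi}\,dx ,
\end{equation*}
where the weight $a_{\delta,\Lambda}$ disappears on the right because $\nabla\chi$ is supported in the matrix.

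We then run Cauchy--Schwarz and a cellwise Poincar\'e inequality. On each cell $Y+n\subset\mathcal A_r$, the function $V$ has zero average over $D+n$, so the Poincar\'e inequality used in Lemma~\ref{lemmaHepsprop}(a) gives $\int_{Y+n}|V|^2\le C\int_{Y+n}|\nabla V|^2$. Since $a_{\delta,\Lambda}\ge1$ for $\delta\le1$, this yields
\begin{equation*}
E(r-1)\le C\,(E(r)-E(r-1)).
\end{equation*}
Rearranging gives $E(r-1)\le \frac{C}{C+1}E(r)$, which is the contraction with $\theta=C/(C+1)$.

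The expected obstacle is the unbounded exterior region together with the geometry of the layer. $\mathcal O_{r-1}$ includes all of $\mathbb R^d\setminus Y_\Lambda$, so the cutoff must equal $1$ there, and $h=\chi V$ must remain in $\mathcal E$. This is fine, since $\chi$ is bounded and equal to $1$ near infinity. One must also check that $\mathcal A_r$ consists exactly of the full cells with $d_\Lambda(n)=\lceil r\rceil$. Cells of $\Lambda$ adjacent to this layer are either in $\mathcal O_{r-1}$ or strictly deeper, and with the $\ell^\infty$ distance every neighbour of a depth-$k$ cell has depth $k-1$, $k$ or $k+1$. A cutoff that is constant on each layer cell can therefore switch only across faces shared with deeper cells, and this switching happens in the bubble-free collar. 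The constants depend only on $D$ and $d$, as required.
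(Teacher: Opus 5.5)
Your proof is correct and follows the same Caccioppoli-type scheme as the paper. Both arguments use a cutoff that is constant on every bubble, test the Euler--Lagrange identity with cutoff$\,\times V$ (admissible because $V$ has zero mean on all bubbles of depth $\le m$), apply the cellwise Poincar\'e inequality on the transition cells, and iterate over layers.

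The only difference is how the contraction is obtained. You use a single-layer cutoff and Widman's hole-filling, $E(r-1)\le C\bigl(E(r)-E(r-1)\bigr)$, which gives $\rho=\frac{C}{C+1}$ directly. The paper instead tests with $\chi^2V$ for a cutoff spread over $K$ layers with $|\nabla\chi|\le C/K$, absorbs via Young's inequality to get $E(r)\le CK^{-2}E(r+K)$, and takes $K$ large so that $E(r)\le\frac12E(r+K)$.
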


\begin{proof}
Fix integers $r\geq0$ and $K\geq1$ satisfying $r+K\leq m$. Choose a cutoff function $\chi \in C^\infty(\mathbb{R}^d)$ satisfying $0\leq \chi \leq 1$ and the following properties:
\begin{equation}
    \begin{aligned}
        & \chi \equiv 1 \ \mathrm{on} \  \mathcal{O}_r, && \qquad  \chi \equiv 0 \ \mathrm{on} \ \mathbb{R}^d\setminus \mathcal{O}_{r+K}, \\
        & \mathrm{supp}\,\nabla \chi \subset \mathcal{O}_{r+K} \setminus \mathcal{O}_r, && \qquad \|\nabla \chi \|_{L^\infty(\mathbb{R}^d)} \leq \frac{C}{K}.
    \end{aligned}
\end{equation}
Moreover, $\chi$ can be chosen such that
\begin{equation}
    \chi \mathrm{\ is \ a \ constant \ in \ the \ bubble\ }D+n \mathrm{ \ for \ each \ }n\in \Lambda. 
\end{equation}

By \eqref{vanishing-boundary-data}, the function $\chi^2 V$ satisfies $\int_{D+n} \chi^2 V\,dx=0$ for every $n \in \Lambda$. Therefore,
\begin{equation}
    \int_{\mathbb{R}^d} a_{\delta,\Lambda} \nabla V \cdot \overline{\nabla (\chi^2 V)} \,dx =0.
\end{equation}
Expanding $\nabla (\chi^2 V) =  \chi^2 \nabla V +2 \chi V \nabla \chi$, we get
\begin{equation}\label{testresult1}
    \int_{\mathbb{R}^d} \chi^2 a_{\delta,\Lambda} |\nabla V|^2 \,dx = -2 \mathrm{Re}\, \int_{\mathbb{R}^d} \chi a_{\delta,\Lambda}
    \overline{V} \nabla V \cdot \nabla \chi \,dx.
\end{equation}
Young's inequality gives
\begin{equation}
    2\left| \chi a_{\delta,\Lambda}
    \overline{V} \nabla V \cdot \nabla \chi \right| \leq \frac{1}{2}\chi^2 a_{\delta,\Lambda}|\nabla V|^2 + 2a_{\delta,\Lambda}|V|^2|\nabla \chi|^2.
\end{equation}
This, combined with \eqref{testresult1}, gives
\begin{equation}\label{testresult23}
    \int_{\mathbb{R}^d} \chi^2 a_{\delta,\Lambda} |\nabla V|^2 \,dx \leq 4 \int_{\mathbb{R}^d} a_{\delta,\Lambda} |V|^2|\nabla \chi|^2\,dx = 4 \int_{\mathbb{R}^d} |V|^2|\nabla \chi|^2\,dx,
\end{equation}
where we used the fact that the support of $\nabla \chi $ lies in $\{a_{\delta,\Lambda}=1\}$ for the last equality. 

For every $n$ such that $r< d_\Lambda(n) \leq  r+K$, we know that $\int_{D+n} V =0$. Therefore, by the Poincar\'{e} inequality, we get
\begin{equation}\label{data0poincvf}
    \int_{Y+n} |V|^2 \,dx  \leq C \int_{Y+n} |\nabla V|^2 \,dx \leq C \int_{Y+n} a_{\delta,\Lambda} |\nabla V|^2\, dx.
\end{equation}
Consequently, by \eqref{testresult23},
\begin{equation}
\begin{aligned}
    \int_{\mathbb{R}^d} \chi^2 a_{\delta,\Lambda} |\nabla V|^2 \,dx & \leq \frac{C}{K^2} \sum_{r< d_\Lambda(n)\leq r+K}\int_{Y+n} |V|^2\,dx
    \\
    & \leq \frac{C}{K^2} \sum_{r< d_\Lambda(n)\leq r+K}\int_{Y+n} a_{\delta,\Lambda}|\nabla V|^2\,dx \\
    & \leq \frac{C}{K^2}E(r+K).
\end{aligned}
\end{equation}
Choosing $K$ large enough yields
\begin{equation}
    E(r) \leq \frac{1}{2} E(r+K), \qquad \forall\,r+K\leq m.
\end{equation}
By iteration, we conclude that there exists $\rho \in (0,1)$ such that
\begin{equation}\label{expdecayEr}
    E(r)\leq C\rho^{m-r} \int_{\mathbb{R}^d} a_{\delta,\Lambda} |\nabla V|^2\,dx.
\end{equation}
The proof is complete.
\end{proof}

\begin{lemma}\label{lemtraceesti}
There exist constants $C,\gamma>0$, independent of $\varepsilon$, such that
    \begin{equation}\label{traceestimatePhi}
        \| \Phi_\varepsilon^M \psi \|^2_{L^2(\partial \Omega)} \leq C\varepsilon^{d-1} \mathcal{M}_{\partial\Omega,\varepsilon} (\psi  ),  \qquad \forall\,\psi \in \ell^2(\mathcal{I}_\varepsilon) .
    \end{equation}
\end{lemma}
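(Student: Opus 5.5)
We rescale to the unit lattice, split $\psi$ into its distance layers, and combine the capacitary layer decay of Lemma~\ref{lemenergydecayboun} with a scaled trace inequality on $O(1)$ boundary patches.

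\textbf{Rescaling.} Set $\Lambda:=\mathcal I_\varepsilon$, $\delta:=\varepsilon^2$, $\Omega'=\varepsilon^{-1}\Omega$ and $\Gamma:=\partial\Omega'$. By definition,
\begin{equation*}
\Phi_\varepsilon^M\psi(x)=V(x/\varepsilon),\qquad V:=\sum_{n\in\Lambda}(S_M\psi)(n)\,\mathscr V^n_{\delta,\Lambda}.
\end{equation*}
A change of variables on the boundary gives
\begin{equation*}
\|\Phi_\varepsilon^M\psi\|^2_{L^2(\partial\Omega)}=\varepsilon^{d-1}\|V\|^2_{L^2(\Gamma)} .
\end{equation*}
Since $|S_M\psi|=|\psi|$ pointwise, it therefore suffices to show
\begin{equation*}
\|V\|^2_{L^2(\Gamma)}\le C\sum_{n\in\Lambda} e^{-\gamma d_\Lambda(n)}|\xi(n)|^2 ,\qquad \xi:=S_M\psi ,
\end{equation*}
with $C,\gamma$ independent of $\Lambda$ and $\delta$. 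Note that $d_\Lambda=d_\varepsilon$.

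\textbf{Layer decomposition and the trace bound for each layer.} Write $\xi=\sum_{k\ge1}\xi_k$, where $\xi_k:=\xi\mathbbm 1_{\{d_\Lambda=k\}}$, and let $V_k:=\sum_n\xi_k(n)\mathscr V^n_{\delta,\Lambda}$, so that $V=\sum_k V_k$.

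Take the unit-scale versions of the patches $P_i$ from the proof of Lemma~\ref{lemmaHepsprop}. They lie in $\Omega'$, have uniformly bounded Lipschitz character and bounded overlap, their boundary pieces $\Gamma_i$ cover $\Gamma$, and each contains a bubble $D+n_i$ with $n_i\in\Lambda$. We can choose the patches inside the region $\mathcal O_1$ of \eqref{def-capacitary-layers}, with $d_\Lambda(n_i)=1$.

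On each patch we apply the trace inequality \eqref{traceinequ} at scale one, followed by the Poincar\'e inequality with the bubble mean $\fint_{D+n_i}V_k$ subtracted. This mean equals $\xi_k(n_i)$, which vanishes unless $k=1$. Summing over the patches gives
\begin{equation*}
\|V_k\|^2_{L^2(\Gamma)}\le C\int_{\mathcal O_1}|\nabla V_k|^2\,dx + C\,\mathbbm 1_{\{k=1\}}\sum_{d_\Lambda(n)=1}|\xi(n)|^2 .
\end{equation*}
In $\mathcal O_1\setminus (D+\Lambda)$ we have $a_{\delta,\Lambda}=1$, and $\mathcal O_1\cap(D+\Lambda)$ is a union of bubbles where $a_{\delta,\Lambda}\ge1$. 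Hence the gradient term is at most $E_k(1)$, the layer energy of $V_k$.

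For $k\ge 2$, $\xi_k$ satisfies \eqref{vanishing-boundary-data} with $m=k-1$. Lemma~\ref{lemenergydecayboun} then gives
\begin{equation*}
E_k(1)\le C\rho^{k-2}\int_{\mathbb R^d}a_{\delta,\Lambda}|\nabla V_k|^2\,dx .
\end{equation*}
For $k=1$ we simply bound $E_1(1)$ by the total energy.

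\textbf{Uniform energy bound.} It remains to show
\begin{equation*}
\int_{\mathbb R^d} a_{\delta,\Lambda}|\nabla V_k|^2\,dx\le C\|\xi_k\|^2_{\ell^2}
\end{equation*}
uniformly in $\Lambda$ and $\delta$. Since $V_k$ minimizes the weighted energy among $\mathcal E$-functions with bubble averages $\xi_k$, we compare it with the competitor $\sum_n\xi_k(n)\chi(\cdot-n)$. Here $\chi\in C_c^\infty(Y)$ equals $1$ on a neighbourhood of $\overline D$. The competitor has the correct averages, is constant on every bubble (so the $\delta^{-1}$ weight contributes nothing), and its cell pieces have disjoint supports. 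Its energy is therefore $\|\nabla\chi\|^2_{L^2}\|\xi_k\|^2_{\ell^2}$.

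\textbf{Summation.} By the triangle inequality and Cauchy--Schwarz with weights $\rho^{k/4}$,
\begin{equation*}
\|V\|_{L^2(\Gamma)}^2\le\Bigl(\sum_k C\rho^{k/2}\|\xi_k\|_{\ell^2}\Bigr)^2\le C\Bigl(\sum_k\rho^{k/2}\Bigr)\sum_k \rho^{k/2}\|\xi_k\|^2_{\ell^2} .
\end{equation*}
This is the claim with $\gamma=-\tfrac12\log\rho$, since $\|\xi_k\|^2_{\ell^2}=\sum_{d_\Lambda(n)=k}|\psi(n)|^2$.

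\textbf{Main obstacle.} The step I expect to need the most care is the uniform patch geometry. The patches must lie within $\mathcal O_1$, cover $\Gamma$ with bounded overlap, and each contain a bubble with $d_\Lambda=1$, all with constants independent of how the lattice cuts $\Omega'$. If such patches cannot be placed in $\mathcal O_1$, I would use $\mathcal O_{r_0}$ for a fixed $r_0$ instead and absorb the finitely many shallow layers $k\le r_0+1$ into the undecayed term.
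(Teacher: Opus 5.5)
Your proof is correct and follows the paper's argument essentially step by step. Both decompose $\psi$ into distance layers, bound each layer lift by Lemma~\ref{lemenergydecayboun} together with the cutoff-competitor energy estimate, apply the scaled trace and bubble-mean Poincar\'e inequalities on the boundary patches of Lemma~\ref{lemmaHepsprop}, and sum by Cauchy--Schwarz to get $\gamma=-\tfrac12\log\rho$. The only difference is cosmetic: you rescale to the unit lattice first. Also, the paper in fact uses your fallback rather than your primary patch choice. Its patches lie in a fixed $K\varepsilon$-boundary layer crossing finitely many cell layers, and the nonzero bubble means on those shallow layers are absorbed directly, so you need not insist on placing the patches inside $\mathcal O_1$.
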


\begin{proof}
For $m\geq 0$, let $Q_m:\ell^2(\mathcal{I}_\varepsilon)\rightarrow \ell^2(\mathcal{I}_\varepsilon)$ denote the projection onto the $(m+1)$-th cell layer, namely,
\begin{equation}\label{eq:definition-of-layer-projection}
    (Q_m\psi)(n):=\mathbbm{1}_{\{d_\varepsilon(n)=m+1\}}\psi(n), \qquad \forall\, n\in \mathcal{I}_\varepsilon.
\end{equation}
Since the array is finite, we have $\psi=\sum_{m\geq 0}Q_m\psi$. For each $m\geq 0$, define
\begin{equation}\label{eq:definition-of-layer-lift}
    W_{\varepsilon,m}:=\Phi^M_\varepsilon Q_m\psi.
\end{equation}
By linearity of $\Phi^M_\varepsilon$, it follows that
\begin{equation}\label{eq:decomposition-of-static-lift}
    \Phi^M_\varepsilon\psi=\sum_{m\geq 0}W_{\varepsilon,m}.
\end{equation} 
We divide the remainder of the proof into several steps.

\textit{Step 1. We establish the energy decay of $W_{\varepsilon,m}$ near the boundary.}
For each $m\geq 0$, let
\begin{equation}\label{eq:definition-of-unscaled-layer-lift}
    V_m:=\sum_{n\in \mathcal{I}_\varepsilon}(S_M Q_m\psi)(n) \mathscr{V}_{\varepsilon^2,\mathcal{I}_\varepsilon}^{n}.
\end{equation}
It is clear that
\begin{equation}\label{eq:definition-of-physical-layer-lift}
    W_{\varepsilon,m}(x) =  V_m\left(\frac{x}{\varepsilon}\right).
\end{equation}

Since the datum $S_M Q_m\psi$ is supported on the $(m+1)$-th layer, it vanishes on the first $m$ outer cell layers. Therefore, 
we can apply Lemma \ref{lemenergydecayboun} with $\delta = \varepsilon^2$ and $\Lambda = \mathcal{I}_\varepsilon$ to $V_m$, and obtain
\begin{equation}\label{intOsmallrho}
    \int_{\mathcal{O}_r}a_{\varepsilon^2,\mathcal{I}_\varepsilon}|\nabla V_m|^2\,dx \leq C\rho^{m-r }\int_{\mathbb R^d}a_{\varepsilon^2,\mathcal{I}_\varepsilon}|\nabla V_m|^2\,dx, \qquad \forall\, r\in [0,m].
\end{equation}

Choose a fixed function $\chi\in C_0^\infty(Y)$ such that $\chi\equiv 1$ near $\overline D$, and extend it by zero outside $Y$. Define
\begin{equation}\label{auxiVtilde}
    \widetilde V_m(x) := \sum_{n\in\mathcal I_\varepsilon}(S_MQ_m\psi)(n)\chi(x-n).
\end{equation}
The functions $\widetilde V_m$ and $V_m$ have the same average over each bubble:
\begin{equation}
    \fint_{D+n}\widetilde V_m\,dy = (S_MQ_m\psi)(n) = \fint_{D+n}V_m\,dy, \qquad \forall\, n\in\mathcal I_\varepsilon.
\end{equation}
Thus, by the energy-minimizing property of $V_m$,
\begin{equation}\label{VmsmallQ}
    \int_{\mathbb R^d}a_{\varepsilon^2,\mathcal I_\varepsilon}|\nabla V_m|^2\,dy
    \leq \int_{\mathbb R^d}a_{\varepsilon^2,\mathcal I_\varepsilon}|\nabla\widetilde V_m|^2\,dy \leq C \|Q_m\psi\|_{\ell^2(\mathcal I_\varepsilon)}^2.
\end{equation}

By the rescaling relation \eqref{eq:definition-of-physical-layer-lift}, \eqref{intOsmallrho}, and \eqref{VmsmallQ}, one gets, for any $K>0$,
\begin{equation}\label{eq:physical-boundary-layer-energy}
    \int_{U_{K\varepsilon}}a_\varepsilon|\nabla W_{\varepsilon,m}|^2\,dx
    \leq C_K \varepsilon^{d-2} \rho^m\|Q_m\psi\|_{\ell^2(\mathcal{I}_\varepsilon)}^2,
\end{equation}
where
\begin{equation}
    U_{K\varepsilon}:= (\mathbb{R}^d\setminus \Omega) \cup \{x\in \Omega: \mathrm{dist}\,(x,\partial \Omega)<K\varepsilon\}
\end{equation}
denotes the physical exterior region together with the $K\varepsilon$ thickness boundary layer of $\Omega$.

\textit{Step 2. We pass from the boundary-layer energy estimate to a trace estimate.} Let $\{P_{\varepsilon,j}\}_j$ be the family of boundary patches constructed in Lemma~\ref{lemmaHepsprop} and $\Gamma_{\varepsilon,j}=\partial P_{\varepsilon,j}\cap\partial\Omega$. Similarly to \eqref{traceinequ}, the scaled trace inequality gives
\begin{equation}\label{eq:scaled-trace-on-boundary-patch}
    \int_{\Gamma_{\varepsilon,j}}|W_{\varepsilon,m}|^2\,d\sigma\leq C\left(
    \varepsilon^{-1}\int_{P_{\varepsilon,j}}|W_{\varepsilon,m}|^2\,dx +\varepsilon\int_{P_{\varepsilon,j}}|\nabla W_{\varepsilon,m}|^2\,dx \right).
\end{equation}
On the other hand, applying the rescaled Poincar\'e inequality with the average over $\varepsilon(D+n_j)$, we have
\begin{equation}\label{eq:poincare-on-boundary-patch}
    \int_{P_{\varepsilon,j}}|W_{\varepsilon,m}|^2\,dx\leq C\varepsilon^2\int_{P_{\varepsilon,j}}|\nabla W_{\varepsilon,m}|^2\,dx
    +C\varepsilon^d|(Q_m\psi)(n_j)|^2.
\end{equation}
Substituting \eqref{eq:poincare-on-boundary-patch} into \eqref{eq:scaled-trace-on-boundary-patch} yields
\begin{equation}\label{eq:trace-estimate-with-bubble-average}
    \int_{\Gamma_{\varepsilon,j}}|W_{\varepsilon,m}|^2\,d\sigma
    \leq C\varepsilon\int_{P_{\varepsilon,j}}a_\varepsilon|\nabla W_{\varepsilon,m}|^2\,dx
    +C\varepsilon^{d-1}|(Q_m\psi)(n_j)|^2.
\end{equation}

If $m$ is larger than the fixed number of cell layers intersecting the boundary patches, then $(Q_m\psi)(n_j)=0$ for every $j$. For the finitely many remaining values of $m$, the second term in \eqref{eq:trace-estimate-with-bubble-average} is controlled directly by $\|Q_m\psi\|_{\ell^2(\mathcal{I}_\varepsilon)}^2$. Therefore, after changing $C$ and $\rho$ if necessary, summing over the boundary patches and using \eqref{eq:physical-boundary-layer-energy} gives
\begin{equation}\label{eq:trace-decay-for-one-layer}
    \|W_{\varepsilon,m}\|_{L^2(\partial\Omega)}^2
    \leq C\varepsilon^{d-1}\rho^m\|Q_m\psi\|_{\ell^2(\mathcal{I}_\varepsilon)}^2.
\end{equation}
It follows from \eqref{eq:decomposition-of-static-lift} that
\begin{equation}\label{eq:sum-of-layer-traces}
\begin{aligned}
    \|\Phi^M_\varepsilon\psi\|^2_{L^2(\partial\Omega)}
    &\leq C\varepsilon^{d-1}\sum_{m\geq 0}\rho^{m/2}\|Q_m\psi\|^2_{\ell^2(\mathcal{I}_\varepsilon)} \\
    & \leq C\varepsilon^{d-1}  \sum_{n\in \mathcal{I}_\varepsilon}\rho^{d_\varepsilon(n)/2} |\psi(n)|^2.
\end{aligned}
\end{equation}
Setting $\gamma:=-\frac{1}{2}\log\rho>0$, we obtain the desired estimate.
\end{proof}

\begin{lemma}\label{lemPhiML2}
    For every $\psi \in \ell^2(\mathcal{I}_\varepsilon)$ and $u\in \mathcal{H}_\varepsilon$, we have
    \begin{equation}\label{PhiMoutside}
      \| \Phi_\varepsilon^M \psi \|_{L^2(B_R\setminus\overline\Omega)} + \| \Phi_\varepsilon^M  \psi \|_{L^2(\partial B_R)}
    \leq C\varepsilon^{(d+1)/2}\|\psi\|_{\mathcal{X}_\varepsilon}, 
    \end{equation}
    and
    \begin{equation}\label{interiormassestimate}
        \left| \int_\Omega b_\varepsilon( \Phi_\varepsilon^M \psi)(x) \overline{u(x)}\,dx \right| \leq C\varepsilon^{(d+2)/2} \|\psi\|_{\mathcal{X}_\varepsilon} \|u \|_{\mathcal{H}_\varepsilon}.
    \end{equation}
\end{lemma}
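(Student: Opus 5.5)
For \eqref{PhiMoutside}, I would work with the energy of the lift outside $\Omega$. By \eqref{eq:physical-boundary-layer-energy}, summed over the layer decomposition \eqref{eq:decomposition-of-static-lift} with Cauchy--Schwarz and the geometric weights $\rho^{m/2}$, this gives
\[
\int_{U_{K\varepsilon}} a_\varepsilon|\nabla \Phi_\varepsilon^M\psi|^2\,dx \le C\varepsilon^{d-2}\mathcal M_{\partial\Omega,\varepsilon}(\psi).
\]
Lemma~\ref{lem:discreteHardy} bounds the right-hand side by $C\varepsilon^{d-2}\mathfrak D_\varepsilon(\psi)\le C\varepsilon^{d}\|\psi\|_{\mathcal X_\varepsilon}^2$. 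In the same way, Lemma~\ref{lemtraceesti} combined with Lemma~\ref{lem:discreteHardy} gives
\[
\|\Phi_\varepsilon^M\psi\|_{L^2(\partial\Omega)}^2\le C\varepsilon^{d-1}\mathfrak D_\varepsilon(\psi)\le C\varepsilon^{d+1}\|\psi\|_{\mathcal X_\varepsilon}^2.
\]
The Poincar\'e--Friedrichs inequality \eqref{itemcesti2} on $B_R\setminus\overline\Omega$ then yields
\[
\|\Phi_\varepsilon^M\psi\|_{L^2(B_R\setminus\overline\Omega)}^2\le C\bigl(\varepsilon^{d}+\varepsilon^{d+1}\bigr)\|\psi\|^2_{\mathcal X_\varepsilon}.
\]
This is only $\varepsilon^{d/2}$, so the gradient term is too crude and must be sharpened.

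The sharpening uses harmonicity. Outside $\Omega$ the lift $U=\Phi_\varepsilon^M\psi$ is harmonic in $\mathbb R^d\setminus\overline{\Omega_\varepsilon'}$, where $\Omega'_\varepsilon$ is the union of the cells. It decays at infinity, because it is a finite combination of capacitary potentials in $\mathcal E$. Hence on $B_R\setminus\overline\Omega$ it is controlled by its Dirichlet data on $\partial\Omega$. Concretely, I would treat $U$ as the solution of the exterior Dirichlet problem $\Delta U=0$ in $\mathbb R^d\setminus\overline\Omega$ with decay at infinity. For $d=3$ this is the $\mathcal E$-class; for $d=2$ the capacitary lift is bounded at infinity, and I would handle the constant mode separately. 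By $L^2$-Dirichlet solvability on Lipschitz domains (nontangential maximal function estimates),
\[
\|U\|_{L^2(B_R\setminus\overline\Omega)}+\|U\|_{L^2(\partial B_R)}\le C\|U\|_{L^2(\partial\Omega)}\le C\varepsilon^{(d+1)/2}\|\psi\|_{\mathcal X_\varepsilon}.
\]
This is exactly \eqref{PhiMoutside}. If invoking the $L^2$ Dirichlet theory is undesirable, a duality argument gives the same bound: test against the solution $z\in H^1_0$ of $-\Delta z=f$ in $B_{2R}\setminus\overline\Omega$, then use Green's identity and $\|\partial_\nu z\|_{L^2(\partial\Omega)}\le C\|f\|_{L^2}$, which is Rellich's estimate for Lipschitz domains. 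The main obstacle is this step, since the energy decay alone loses half a power of $\varepsilon$.

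For \eqref{interiormassestimate}, I would split $\Omega$ into the bubbles $D_\varepsilon$, where $b_\varepsilon\sim\varepsilon^{-2}$, and the matrix part, where $b_\varepsilon=1$.

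On the matrix part, Cauchy--Schwarz with $\|u\|_{L^2(\Omega)}\le C\varepsilon\|u\|_{\mathcal H_\varepsilon}$ from Lemma~\ref{lemmaHepsprop}(a) is not enough by itself. I would use that $\int_{\varepsilon(D+n)}u=0$ cellwise and write $\Phi_\varepsilon^M\psi=(S_M\psi)(n)+(\Phi_\varepsilon^M\psi-(S_M\psi)(n))$ on each cell $\varepsilon(Y+n)$. The constant part pairs against $u$ over the cell minus the bubble; its contribution is bounded by $|\psi(n)|\,\varepsilon^{d/2}\,\varepsilon\|\nabla u\|_{L^2(\mathrm{cell})}$. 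This seems to produce $\varepsilon^{(d+2)/2}\|\psi\|_{\ell^2}\|u\|_{\mathcal H_\varepsilon}$ directly. The fluctuation part is bounded by Poincar\'e using the cellwise energy of the lift. That energy is $\le C\varepsilon^{d-2}\bigl(\|\psi\|^2_{\ell^2}+\ldots\bigr)$ from comparison with $\widetilde V$-type competitors as in \eqref{VmsmallQ}, and the result is $\varepsilon\cdot\varepsilon^{(d-2)/2}\|\psi\|\cdot\varepsilon\|u\|$, which is again $\varepsilon^{(d+2)/2}$. The boundary layer $U_\varepsilon$ is handled by the same patch argument used for Lemma~\ref{lemmaHepsprop}.

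Inside the bubbles, $\varepsilon^{-2}\int_{\varepsilon(D+n)}\Phi\bar u$ equals $\varepsilon^{-2}\int(\Phi-\text{mean})\bar u$. The energy weight $\varepsilon^{-2}$ on bubble gradients gives $\|\nabla\Phi\|_{L^2(\text{bubbles})}\le\varepsilon\cdot(\text{energy})^{1/2}$, and the same holds for $u$. So each factor gains $\varepsilon^2$ through Poincar\'e, which cancels $\varepsilon^{-2}$ and leaves $\varepsilon^{(d+2)/2}$ as well.
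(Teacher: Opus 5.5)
Your proposal is correct and follows the paper's proof essentially step by step. It uses the trace bound $\|\Phi_\varepsilon^M\psi\|_{L^2(\partial\Omega)}^2\le C\varepsilon^{d+1}\|\psi\|_{\mathcal X_\varepsilon}^2$ from Lemma~\ref{lemtraceesti} and Lemma~\ref{lem:discreteHardy}, then the $L^2$ nontangential-maximal-function estimate for the exterior harmonic Dirichlet problem, and then cellwise and bubblewise Poincar\'e for \eqref{interiormassestimate}. Your constant-plus-fluctuation split on the matrix part only unpacks the paper's direct Cauchy--Schwarz with $\|\Phi_\varepsilon^M\psi\|_{L^2(\Omega\setminus\overline{D_\varepsilon})}\le C\varepsilon^{d/2}\|\psi\|_{\mathcal X_\varepsilon}$, so plain Cauchy--Schwarz is in fact enough there.

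One caveat concerns your optional duality variant. With $z\in H^1_0(B_{2R}\setminus\overline\Omega)$, Green's identity leaves a term $\int_{\partial B_{2R}}U\,\partial_\nu\bar z\,d\sigma$ that you do not control. The dual problem would instead have to be posed in all of $\mathbb R^d\setminus\overline\Omega$, with the appropriate behaviour at infinity.
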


\begin{proof}
    Set $W_\varepsilon = \Phi_\varepsilon^M \psi $. It follows from Lemma \ref{lem:discreteHardy} and Lemma \ref{lemtraceesti} that
\begin{equation}\label{boundaryofWeps}
    \|W_\varepsilon\|_{L^2(\partial\Omega)}^2 \leq C\varepsilon^{d-1} \mathcal{M}_{\partial\Omega,\varepsilon}(\psi)
    \leq C\varepsilon^{d-1} \mathfrak{D}_{\varepsilon}(\psi,\psi) \leq C\varepsilon^{d+1}\|\psi\|_{\mathcal{X}_\varepsilon}^2.
\end{equation}

By the same argument as \eqref{auxiVtilde}-\eqref{VmsmallQ}, we have
\begin{equation}\label{eq:energy-estimate-W-epsilon}
    \int_{\mathbb R^d}a_\varepsilon|\nabla W_\varepsilon|^2\,dx \leq C\varepsilon^{d-2}\|\psi\|_{\mathcal{X}_\varepsilon}^2.
\end{equation}

We now prove the estimate \eqref{interiormassestimate}. On each complete cell, the Poincar\'e inequality gives
\begin{equation}
    \int_{\varepsilon(Y+n)}|W_\varepsilon|^2\,dx \leq C\varepsilon^2\int_{\varepsilon(Y+n)}|\nabla W_\varepsilon|^2\,dx
    +C\varepsilon^d|\psi(n)|^2.
    \label{eq:cell-Poincare-W}
\end{equation}
The same estimate holds on the boundary patches introduced in the proof of Lemma~\ref{lemmaHepsprop}; see \eqref{eq:poincare-on-boundary-patch}. Summing the resulting estimates and using \eqref{eq:energy-estimate-W-epsilon}, we obtain
\begin{equation}\label{eq:matrix-L2-estimate-W}
    \|W_\varepsilon\|_{L^2(\Omega\setminus\overline{D_\varepsilon})} \leq C\varepsilon^{d/2}\|\psi\|_{\mathcal{X}_\varepsilon}.
\end{equation}

Since $u\in \mathcal{H}_\varepsilon$, Lemma~\ref{lemmaHepsprop} gives
\begin{equation}
    \|u\|_{L^2(\Omega\setminus\overline{D_\varepsilon})} \leq C\varepsilon\|u\|_{\mathcal H_\varepsilon},
    \qquad \int_\Omega a_\varepsilon|\nabla u|^2\,dx \leq \|u\|_{\mathcal H_\varepsilon}^2.
    \label{eq:matrix-L2-estimate-u}
\end{equation}
It follows from \eqref{eq:matrix-L2-estimate-W} and \eqref{eq:matrix-L2-estimate-u} that
\begin{equation}
    \left| \int_{\Omega\setminus\overline{D_\varepsilon}} W_\varepsilon\overline{u}\,dx \right|
    \leq C\varepsilon^{(d+2)/2}\|\psi\|_{\mathcal X_\varepsilon}\|u\|_{\mathcal H_\varepsilon}.
    \label{eq:matrix-mass-estimate}
\end{equation}

On the other hand, by the Poincar\'{e} and the Cauchy-Schwarz inequalities, one has
\begin{equation}
    \begin{aligned}
    \varepsilon^{-2} 
    \left| \int_{D_\varepsilon} W_\varepsilon  \overline{u}\,dx \right| &= \varepsilon^{-2} \left| \sum_{n\in \mathcal{I}_\varepsilon} \int_{\varepsilon(D+n)} \big(W_\varepsilon - (S_M\psi)(n)\big)\overline{u}\,dx \right| \\
    &\leq C \|\nabla W_\varepsilon\|_{L^2(D_\varepsilon)} \|\nabla u\|_{L^2(D_\varepsilon)} \\
    &\leq C\varepsilon^2 \left(\int_{D_\varepsilon}a_\varepsilon|\nabla W_\varepsilon|^2\,dx\right)^{1/2}
    \left(\int_{D_\varepsilon}a_\varepsilon|\nabla u|^2\,dx\right)^{1/2}.
    \end{aligned}
    \label{eq:bubble-scaled-Poincare-estimate}
\end{equation}
Because $b_\varepsilon=(v/v_{\rm b})^2\varepsilon^{-2}$ in $D_\varepsilon$, equations \eqref{eq:energy-estimate-W-epsilon} and \eqref{eq:matrix-L2-estimate-u} imply
\begin{equation}
    \left| \int_{D_\varepsilon}b_\varepsilon W_\varepsilon\overline{u}\,dx \right|
    \leq C\varepsilon^{(d+2)/2} \|\psi\|_{\mathcal X_\varepsilon}\|u\|_{\mathcal H_\varepsilon}.
    \label{eq:bubble-mass-estimate}
\end{equation}
Combining \eqref{eq:matrix-mass-estimate} and \eqref{eq:bubble-mass-estimate}, we obtain the desired estimate \eqref{interiormassestimate}.

It remains to prove the estimate \eqref{PhiMoutside}. The function $W_\varepsilon$ is harmonic in $\mathbb R^d\setminus\overline\Omega$. The standard $L^2$ nontangential maximal functions estimate for the exterior harmonic Dirichlet problem (see, for instance, \cite{verchota1984layer}) yields
\begin{equation}
    \|W_\varepsilon\|_{L^2(B_R\setminus\overline\Omega)} + \|W_\varepsilon\|_{L^2(\partial B_R)}
    \leq C\|W_\varepsilon\|_{L^2(\partial\Omega)}.
    \label{eq:exterior-harmonic-propagation}
\end{equation}
Therefore, \eqref{boundaryofWeps} gives
\begin{equation}
    \|W_\varepsilon\|_{L^2(B_R\setminus\overline\Omega)} + \|W_\varepsilon\|_{L^2(\partial B_R)}
    \leq C\varepsilon^{(d+1)/2}\|\psi\|_{\mathcal X_\varepsilon}.
    \label{eq:exterior-smallness-W}
\end{equation}
The proof is complete.
\end{proof}

\subsection{Smallness of the operators \texorpdfstring{$R_1$ and $R_2$}{R1 and R2}}

In this section, we prove that the operators $R_1$ and $R_2$ are both \emph{small} in some sense, which roughly means that the interactions between the quasi-static bubble responses and the zero-average background component are small.

\begin{proposition}\label{lem:off-diagonal-blocks}
Let $R_1=R_{1,\varepsilon}(k_\varepsilon)$ and $R_2=R_{2,\varepsilon}(k_\varepsilon)$ be defined by \eqref{defofR1} and \eqref{defofR2}, respectively. Let $\mathcal X_\varepsilon^*$ denote the dual of $\mathcal X_\varepsilon$ with respect to the $\ell^2$ pairing. Then there exists a constant $C>0$, independent of $\varepsilon$, such that
\begin{equation}
    \left\| R_1\right\|_{\mathcal L(\mathcal H_\varepsilon,\mathcal X_\varepsilon^*)} + \left\|R_2 \right\|_{\mathcal L(\mathcal X_\varepsilon,\mathcal H_\varepsilon)} \leq C\varepsilon^{(d+1)/2}.
    \label{eq:R2-off-diagonal-estimate}
\end{equation}
\end{proposition}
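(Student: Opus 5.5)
The plan is to estimate the single sesquilinear quantity $\mathcal R_\varepsilon(k_\varepsilon)(\Phi_\varepsilon^M\psi,u)$ for $\psi\in\mathcal X_\varepsilon$ and $u\in\mathcal H_\varepsilon$, and to bound it by $C\varepsilon^{(d+1)/2}\|\psi\|_{\mathcal X_\varepsilon}\|u\|_{\mathcal H_\varepsilon}$. This suffices for both blocks. By \eqref{defofR2} and duality in $\mathcal H_\varepsilon$, $\|R_2\psi\|_{\mathcal H_\varepsilon}=\sup_{u}|\mathcal R_\varepsilon(k_\varepsilon)(\Phi_\varepsilon^M\psi,u)|/\|u\|_{\mathcal H_\varepsilon}$. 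By \eqref{defofR1}, $\langle R_1u,\psi\rangle=\mathcal R_\varepsilon(k_\varepsilon)(u,\Phi_\varepsilon^M\psi)$, so the $\mathcal X_\varepsilon^*$ norm is the supremum over $\|\psi\|_{\mathcal X_\varepsilon}\le1$ of the same kind of quantity with the arguments swapped. Since $\mathcal R_\varepsilon(k)$ consists of a symmetric mass term and a boundary term $\langle (T_k-T_0)\cdot,\cdot\rangle_{\partial B_R}$, and $T_k-T_0$ is bounded in either order of arguments, the estimates for the two orders are identical.

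I split $\mathcal R_\varepsilon(k_\varepsilon)(\Phi_\varepsilon^M\psi,u)$ into three pieces.

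\textbf{Interior mass.} The first piece is $-k_\varepsilon^2\int_\Omega b_\varepsilon\Phi_\varepsilon^M\psi\,\overline u\,dx$. It is bounded directly by \eqref{interiormassestimate} in Lemma~\ref{lemPhiML2}, giving $C\varepsilon^{(d+2)/2}$, which is better than needed; here $k_\varepsilon$ is uniformly bounded.

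\textbf{Exterior mass.} The second piece is $-k_\varepsilon^2\int_{B_R\setminus\overline\Omega}\Phi_\varepsilon^M\psi\,\overline u\,dx$, where $b_\varepsilon=1$. Cauchy--Schwarz, \eqref{PhiMoutside}, and the norm equivalence of Lemma~\ref{lemmaHepsprop}(c) give $\|u\|_{L^2(B_R\setminus\overline\Omega)}\le C\|u\|_{\mathcal H_\varepsilon}$. Together these bound this piece by $C\varepsilon^{(d+1)/2}\|\psi\|_{\mathcal X_\varepsilon}\|u\|_{\mathcal H_\varepsilon}$.

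\textbf{Boundary term on $\partial B_R$.} This is the step I expect to be the main obstacle. We only control $\Phi_\varepsilon^M\psi$ in $L^2(\partial B_R)$, whereas $T_k-T_0$ naturally maps $H^{1/2}\to H^{-1/2}$. The remedy is smoothing. For $k$ in a compact range, the difference $T_k-T_0$ is of order $-1$, i.e.\ it maps $H^{s}(\partial B_R)\to H^{s+1}(\partial B_R)$; this follows from the spherical-harmonic expansions, where its symbol decays like $k^2/\ell$ in the degree $\ell$. In particular $T_k-T_0$ is bounded on $L^2(\partial B_R)$. Then
\begin{equation*}
|\langle (T_{k_\varepsilon}-T_0)\Phi_\varepsilon^M\psi,u\rangle_{\partial B_R}|\le C\|\Phi_\varepsilon^M\psi\|_{L^2(\partial B_R)}\|u\|_{L^2(\partial B_R)}.
\end{equation*}
The trace theorem on $B_R\setminus\overline\Omega$ together with Lemma~\ref{lemmaHepsprop}(c) gives $\|u\|_{L^2(\partial B_R)}\le C\|u\|_{\mathcal H_\varepsilon}$, and \eqref{PhiMoutside} supplies the factor $\varepsilon^{(d+1)/2}$. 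If the order $-1$ claim is inconvenient to verify uniformly, there is a fallback: $\Phi_\varepsilon^M\psi$ is harmonic outside $\Omega$, so interior elliptic estimates on an annulus around $\partial B_R$, strictly separated from $\overline\Omega$, bound its $H^{1/2}(\partial B_R)$ norm by its $L^2(B_R\setminus\overline\Omega)$ norm. That again reduces this term to \eqref{PhiMoutside}.

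Summing the three pieces yields the claimed bound. The constants depend only on a bound for $k_\varepsilon$, which is uniform since $k_\varepsilon\to k_0$.
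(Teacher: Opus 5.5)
Your proposal is correct and follows the paper's proof essentially step by step. The interior mass term is handled by \eqref{interiormassestimate}, the exterior mass and $\partial B_R$ terms by \eqref{PhiMoutside} together with the $\mathcal H_\varepsilon$--$\mathcal K_\varepsilon$ norm equivalence and the trace theorem, and duality then gives the bounds for $R_2$ and $R_1$; the only difference is that you invoke the true but sharper fact that $T_k-T_0$ has order $-1$ on the sphere, whereas the paper only uses order zero, and in both cases all that is needed is its uniform boundedness on $L^2(\partial B_R)$.
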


\begin{proof}
Fix $u\in \mathcal{H}_\varepsilon$ and $\psi \in \ell^2(\mathcal{I}_\varepsilon)$. Let $W_\varepsilon = \Phi^M_\varepsilon  \psi$. The equivalence between the $\mathcal{H}_\varepsilon$ norm and the $\mathcal{K}_\varepsilon$ norm in Lemma~\ref{lemmaHepsprop}, followed by the trace theorem, gives
\begin{equation}
    \|u\|_{L^2(B_R\setminus \Omega)} +\|u\|_{L^2(\partial B_R)} \leq C\|u\|_{\mathcal{H}_\varepsilon}.
    \label{eq:exterior-estimate-u}
\end{equation}
Thus, \eqref{PhiMoutside} gives
\begin{equation}
    \left| \int_{B_R\setminus \Omega} W_\varepsilon\overline{u}\,dx \right|
    \leq C\varepsilon^{(d+1)/2} \|\psi\|_{\mathcal{X}_\varepsilon}\|u\|_{\mathcal{H}_\varepsilon}.
    \label{eq:exterior-mass-both-orders}
\end{equation}

Since $\partial B_R$ is smooth, the operator $T_k-T_0$ is a pseudodifferential operator of order zero and is therefore bounded on $L^2(\partial B_R)$. The boundedness of $k_\varepsilon$ makes this estimate uniform:
\begin{equation}
    \sup_{0<\varepsilon<\varepsilon_0} \|T_{k_\varepsilon}-T_0\|_{\mathcal L(L^2(\partial B_R))} \leq C.
    \label{eq:uniform-DtN-bound}
\end{equation}
Using \eqref{eq:exterior-smallness-W} and \eqref{eq:exterior-estimate-u}, we consequently obtain the estimate
\begin{equation}
    \left|\big\langle (T_{k_\varepsilon}-T_0)W_\varepsilon,u \big\rangle_{\partial B_R}\right|
    + \left|\big\langle (T_{k_\varepsilon}-T_0)u,W_\varepsilon \big\rangle_{\partial B_R}\right|\leq C\varepsilon^{(d+1)/2} \|\psi\|_{\mathcal{X}_\varepsilon}\|u\|_{\mathcal{H}_\varepsilon}  .
    \label{eq:mixed-DtN-estimate}
\end{equation}

Recalling the definition
\begin{equation}
    \mathcal R_\varepsilon(k)(w,v) = -k^2\int_{B_R}b_\varepsilon w\overline v\,dx - 
    \left\langle (T_k-T_0)w,v\right\rangle_{\partial B_R},
    \label{eq:remainder-form-recalled}
\end{equation}
and combining \eqref{interiormassestimate}, \eqref{eq:exterior-mass-both-orders}, and \eqref{eq:mixed-DtN-estimate}, we obtain
\begin{equation}
    \left| \mathcal R_\varepsilon(k_\varepsilon)(W_\varepsilon,u) \right|
    +\left| \mathcal R_\varepsilon(k_\varepsilon)(u,W_\varepsilon) \right|
    \leq C\varepsilon^{(d+1)/2} \|\psi\|_{\mathcal X_\varepsilon}\|u\|_{\mathcal H_\varepsilon}.
    \label{eq:mixed-remainder-bound}
\end{equation}

By the definition of $R_2$, one has
\begin{equation}
    \begin{aligned}
        \|R_2\psi\|_{\mathcal H_\varepsilon} &=
        \sup_{0\neq u\in \mathcal H_\varepsilon}\frac{\left|\langle R_2\psi,u\rangle_{\mathcal H_\varepsilon}\right|} {\|u\|_{\mathcal H_\varepsilon}} \\
        &= \sup_{0\neq u\in \mathcal H_\varepsilon} \frac{\left|\mathcal R_\varepsilon(k_\varepsilon) (\Phi^M_\varepsilon  \psi,u)\right|}
        {\|u\|_{\mathcal H_\varepsilon}} \leq C\varepsilon^{(d+1)/2}\|\psi\|_{\mathcal X_\varepsilon}.
    \end{aligned}
    \label{eq:R2-vector-bound}
\end{equation}
This proves the desired estimate \eqref{eq:R2-off-diagonal-estimate} for $R_2$.

Finally, using the dual norm and applying \eqref{eq:mixed-remainder-bound}, we conclude that
\begin{equation}
    \|R_1u\|_{\mathcal X_\varepsilon^*}  =\sup_{\psi\neq0} \frac{|\langle R_1u,\psi\rangle_{\ell^2(\mathcal I_\varepsilon)}|}{\|\psi\|_{\mathcal X_\varepsilon}}  =\sup_{\psi\neq0} \frac{|\mathcal R_\varepsilon(k_\varepsilon) (u,\Phi^M_\varepsilon \psi )|}{\|\psi\|_{\mathcal X_\varepsilon}} \leq C\varepsilon^{(d+1)/2}\|u\|_{H_\varepsilon}.
    \label{eq:R1-vector-bound}
\end{equation}
Taking the supremum over nonzero $u\in \mathcal{H}_\varepsilon$ proves \eqref{eq:R2-off-diagonal-estimate} for $R_1$.
\end{proof}

\subsection{Uniform invertibility of the operator \texorpdfstring{$B$}{B}}

In this section, we prove that $B=B_\varepsilon(k_\varepsilon)$ is uniformly invertible, namely,
\begin{equation}
    \| B_\varepsilon(k_\varepsilon)^{-1} \|_{\mathcal{H}_\varepsilon \rightarrow \mathcal{H}_\varepsilon} \leq C.
\end{equation}

Before proving the inverse estimate, we first show that $B_\varepsilon(k_\varepsilon)$ is uniformly bounded. Indeed, by Lemma 
\ref{lemmaHepsprop}.(a) and the boundedness of $k_\varepsilon$, 
\begin{equation}
    \left| k_\varepsilon^2 \int_{\Omega} b_\varepsilon u\overline{v} \,dx \right| \leq C\varepsilon^2 \| u \|_{\mathcal{H}_\varepsilon} 
    \| v \|_{\mathcal{H}_\varepsilon}, \qquad \left| k_\varepsilon^2 \int_{B_R\setminus \overline{\Omega}} u\overline{v} \,dx \right| \leq C \| u \|_{\mathcal{H}_\varepsilon} 
    \| v \|_{\mathcal{H}_\varepsilon},
\end{equation}
By the boundedness of $T_{k_\varepsilon}$ and the trace theorem, we have
\begin{equation}
    |\langle T_{k_\varepsilon} u,v \rangle_{\partial B_R} | \leq C\| u\|_{H^{1/2}(\partial B_R)}\| v\|_{H^{1/2}(\partial B_R)} \leq 
    C \| u \|_{\mathcal{K}_\varepsilon} \| v \|_{\mathcal{K}_\varepsilon}.
\end{equation}
Therefore, 
\begin{equation}
    |\langle Bu,v \rangle_{\mathcal{H}_\varepsilon} |= |\mathcal{A}_\varepsilon(k_\varepsilon)(u,v)| \leq C \| u \|_{\mathcal{K}_\varepsilon} \| v \|_{\mathcal{K}_\varepsilon} \leq C \| u \|_{\mathcal{H}_\varepsilon} \| v \|_{\mathcal{H}_\varepsilon} ,
\end{equation}
where we used Lemma \ref{lemmaHepsprop}.(c) in the last inequality. Therefore, $B_\varepsilon(k_\varepsilon)$ is uniformly bounded.

We establish a uniform Gårding inequality.

\begin{lemma}
There exist two constants $c,C>0$ such that
\begin{equation}\label{garding}
    \mathrm{Re}\,\langle Bu,u \rangle_{\mathcal{H}_\varepsilon} + C\|u\|_{L^2(B_R\setminus \Omega)}^2 \geq c\|u\|_{\mathcal H_\varepsilon}^2, \qquad  \forall\, u\in\mathcal H_\varepsilon.
\end{equation}
\end{lemma}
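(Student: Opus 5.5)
The plan is to write $\mathrm{Re}\,\langle Bu,u\rangle_{\mathcal H_\varepsilon}=\mathrm{Re}\,\mathcal A_\varepsilon(k_\varepsilon)(u,u)$ as $\mathcal A_\varepsilon(0)(u,u)=\|u\|_{\mathcal H_\varepsilon}^2$ plus three correction terms. We then show that each correction is either small relative to $\|u\|_{\mathcal H_\varepsilon}^2$ or can be absorbed at the price of an $L^2(B_R\setminus\Omega)$ term. Concretely, by the definition of the residual form,
\begin{equation*}
\mathrm{Re}\,\langle Bu,u\rangle_{\mathcal H_\varepsilon}=\|u\|_{\mathcal H_\varepsilon}^2-k_\varepsilon^2\int_\Omega b_\varepsilon|u|^2\,dx-k_\varepsilon^2\int_{B_R\setminus\Omega}|u|^2\,dx-\mathrm{Re}\,\langle (T_{k_\varepsilon}-T_0)u,u\rangle_{\partial B_R}.
\end{equation*}

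\textbf{Interior mass term.} By Lemma~\ref{lemmaHepsprop}(a) and (c), together with the uniform bound on $k_\varepsilon$, this term is bounded by $C\varepsilon^2\|u\|_{\mathcal H_\varepsilon}^2$. For $\varepsilon<\varepsilon_0$ small it is therefore absorbed into $\tfrac14\|u\|_{\mathcal H_\varepsilon}^2$.

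\textbf{Exterior mass term.} This term is exactly of the form $C\|u\|^2_{L^2(B_R\setminus\Omega)}$ and goes to the left-hand side.

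\textbf{Dirichlet-to-Neumann difference.} This is the main step. Using that $T_k-T_0$ is of order $-1$ on the smooth sphere (not merely order zero as in \eqref{eq:uniform-DtN-bound}), it maps $H^{-1/2}(\partial B_R)\to H^{1/2}(\partial B_R)$ boundedly, uniformly for $k$ near $k_0$; equivalently, its spherical-harmonic multipliers decay like $\ell^{-1}$. This gives
\begin{equation*}
|\langle (T_{k_\varepsilon}-T_0)u,u\rangle_{\partial B_R}|\le C\|u\|_{L^2(\partial B_R)}\|u\|_{H^{-1/2}(\partial B_R)}\le C\|u\|_{L^2(\partial B_R)}^2 .
\end{equation*}
Next I would apply the interpolated trace inequality on the annulus $B_R\setminus\overline{B_{R'}}$, with $\overline\Omega\Subset B_{R'}$:
\begin{equation*}
\|u\|_{L^2(\partial B_R)}^2\le \eta\|\nabla u\|^2_{L^2(B_R\setminus\Omega)}+C_\eta\|u\|^2_{L^2(B_R\setminus\Omega)} .
\end{equation*}
Since $\|\nabla u\|_{L^2(B_R\setminus\Omega)}\le\|u\|_{\mathcal H_\varepsilon}$ by \eqref{itemcesti1}, choosing $\eta$ small absorbs the gradient part into $\tfrac14\|u\|^2_{\mathcal H_\varepsilon}$.

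\textbf{Fallback for the DtN step.} If the order $-1$ mapping property is inconvenient to cite, I would argue directly mode by mode. In each spherical-harmonic mode, $\mathrm{Re}\,(T_k)_\ell-(T_0)_\ell$ is bounded above by $C$ uniformly in $\ell$ for $k$ in a compact set; this is the standard Hankel-function asymptotics. That bound again yields a $C\|u\|^2_{L^2(\partial B_R)}$ estimate, and the same trace interpolation finishes the step. Verifying this uniform mode-wise bound, that is, the sign and decay of the DtN difference, is where I expect the only real work to lie.

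\textbf{Conclusion.} Collecting the three estimates gives \eqref{garding} with $c=1/2$ for $\varepsilon<\varepsilon_0$, with constants independent of $\varepsilon$ thanks to Lemma~\ref{lemmaHepsprop}.
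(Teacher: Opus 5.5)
Your proposal is correct and follows essentially the same route as the paper. You split off $\|u\|_{\mathcal H_\varepsilon}^2$ and absorb the interior mass term via Lemma~\ref{lemmaHepsprop}(a),(c). You then bound the Dirichlet-to-Neumann difference by $C\|u\|_{L^2(\partial B_R)}^2$ and close with an interpolated (scaled) trace inequality near $\partial B_R$. The one difference is that you invoke the order $-1$ property of $T_{k_\varepsilon}-T_0$, or a mode-wise bound, whereas the paper uses only its $L^2(\partial B_R)$-boundedness as an order-zero operator. That weaker property already suffices for this step.
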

\begin{proof}
    By definition, $\langle Bu,u \rangle_{\mathcal{H}_\varepsilon} = \|u\|_{\mathcal H_\varepsilon}^2 + \mathcal{R}_\varepsilon(k_\varepsilon)(u,u)$. By Lemma \ref{lemmaHepsprop}.(a), one has
    \begin{equation}
        \operatorname{Re}\mathcal{R}_\varepsilon(k_\varepsilon)(u,u) \geq - C\varepsilon^2 \int_\Omega a_\varepsilon |\nabla u|^2\,dx - 
        C\int_{B_R\setminus \Omega} |u|^2\,dx - \operatorname{Re}\langle (T_{k_\varepsilon} - T_0)u,u\rangle_{\partial B_R}.
    \end{equation}
    By Lemma \ref{lemmaHepsprop}.(c), we get
    \begin{equation}
        \operatorname{Re} \langle Bu,u \rangle_{\mathcal{H}_\varepsilon} + C\|u\|_{L^2(B_R\setminus \Omega)}^2 \geq c\|u\|_{\mathcal H_\varepsilon}^2 - \operatorname{Re} \langle (T_{k_\varepsilon} - T_0)u,u\rangle_{\partial B_R}.
    \end{equation}
Since $\partial B_R$ is smooth, $T_{k_\varepsilon}-T_0$ is a pseudodifferential operator of order zero. In particular,
\begin{equation}
    |\langle (T_{k_\varepsilon} - T_0)u,u\rangle_{\partial B_R}| \leq C\int_{\partial B_R}|u|^2\,d\sigma.
\end{equation}
Using the scaled trace inequality \cite[Lemma A.4]{larson2024space} again, for sufficiently small $\rho>0$, we get
\begin{equation}
    \int_{\partial B_R}|u|^2\,d\sigma \leq C\left( \rho^{-1} \int_{B_R\setminus B_{R-\rho}} | u|^2\,dx  +\rho \int_{B_R\setminus B_{R-\rho}} |\nabla u|^2\,dx   \right)
\end{equation}
Consequently, 
\begin{equation}
\begin{aligned}
    |\langle (T_{k_\varepsilon} - T_0)u,u\rangle_{\partial B_R}| & \leq C\left( \rho^{-1} \| u \|_{L^2(B_R\setminus \Omega)}^2  +\rho \| u \|_{H^1(B_R\setminus \Omega)}^2   \right) \\
    & \leq \frac{C}{\rho} \| u \|_{L^2(B_R\setminus \Omega)}^2 + C\rho \| u \|^2_{\mathcal{H}_\varepsilon},
\end{aligned}
\end{equation}
where we used Lemma \ref{lemmaHepsprop}.(c). Choosing $\rho$ sufficiently small, we get the desired conclusion.
\end{proof}

\begin{proposition}\label{prop:invertibilityofB}
There exists a constant $C>0$ such that for sufficiently small $\eps$, we have 
    \begin{equation}
    \| B_\varepsilon(k_\varepsilon)^{-1} \|_{\mathcal{L}(\mathcal{H}_\varepsilon) } \leq C.
\end{equation}
\end{proposition}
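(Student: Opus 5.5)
We argue by contradiction, combining the Gårding inequality \eqref{garding} with a compactness argument and uniqueness for the limiting exterior problem. Since $B$ is uniformly bounded and satisfies \eqref{garding}, it is Fredholm of index zero for each fixed $\varepsilon$. It therefore suffices to prove a uniform a priori bound
\[
\|u\|_{\mathcal H_\varepsilon}\le C\|Bu\|_{\mathcal H_\varepsilon}, \qquad u\in\mathcal H_\varepsilon .
\]
Suppose this fails. Then there are $\varepsilon_j\to0$ and $u_j\in\mathcal H_{\varepsilon_j}$ with $\|u_j\|_{\mathcal H_{\varepsilon_j}}=1$ and $f_j:=B u_j\to0$ in $\mathcal H_{\varepsilon_j}$. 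By \eqref{garding}, it is enough to show $\|u_j\|_{L^2(B_R\setminus\Omega)}\to0$. That would give $c\le \mathrm{Re}\langle f_j,u_j\rangle+C\|u_j\|^2_{L^2(B_R\setminus\Omega)}\to0$, which is a contradiction.

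\textbf{Compactness and the limit problem.} By Lemma~\ref{lemmaHepsprop}(c), $\|u_j\|_{H^1(B_R\setminus\overline\Omega)}\le C$. After extracting a subsequence, $u_j\rightharpoonup u$ weakly in $H^1(B_R\setminus\overline\Omega)$ and strongly in $L^2(B_R\setminus\overline\Omega)$ and $L^2(\partial B_R)$, with traces converging strongly in $L^2(\partial\Omega)$ by compactness of the trace map $H^1\to L^2(\partial\Omega)$. Lemma~\ref{lemmaHepsprop}(b) gives $\|u_j\|_{L^2(\partial\Omega)}\le C\varepsilon_j^{1/2}\to0$, so $u|_{\partial\Omega}=0$.

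Next, take any $v\in H^1(B_R)$ with $v\equiv0$ in a neighbourhood of $\overline\Omega$. Such a $v$ belongs to $\ker Q_\varepsilon=\mathcal H_\varepsilon$. Testing $Bu_j=f_j$ against $v$ gives
\[
\int_{B_R\setminus\Omega}\nabla u_j\cdot\overline{\nabla v}-k_{\varepsilon_j}^2\int_{B_R\setminus\Omega}u_j\overline v-\langle T_{k_{\varepsilon_j}}u_j,v\rangle_{\partial B_R}=\langle f_j,v\rangle_{\mathcal H_{\varepsilon_j}}\to0.
\]
The right-hand side tends to zero because $\|v\|_{\mathcal H_{\varepsilon}}\le C\|v\|_{H^1}$ uniformly. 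Since $k_\varepsilon\to k_0$ and $T_{k_\varepsilon}\to T_{k_0}$ in norm, passing to the limit (and using density, since $\Omega$ is Lipschitz) shows that $u$ solves $\Delta u+k_0^2u=0$ in $B_R\setminus\overline\Omega$, with $u=0$ on $\partial\Omega$ and the outgoing condition $\partial_\nu u=T_{k_0}u$ on $\partial B_R$. By Rellich's lemma and unique continuation (the exterior is connected), the sound-soft exterior problem has only the trivial solution, so $u=0$.

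The main obstacle is to make sure the $\Omega$ part carries no hidden contribution that could prevent the contradiction. This is handled by the structure of \eqref{garding}: the only non-coercive term is the exterior $L^2$ mass, because the interior mass term is $O(\varepsilon^2)\|u\|^2_{\mathcal H_\varepsilon}$ by Lemma~\ref{lemmaHepsprop}(a). Strong convergence $u_j\to0$ in $L^2(B_R\setminus\Omega)$ therefore closes the argument, and the uniform bound holds for all sufficiently small $\varepsilon$.
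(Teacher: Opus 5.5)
Your proof is correct, but it takes a different route from the paper.

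\emph{Common ground.} Both arguments start from the uniform G\aa rding inequality \eqref{garding}, reduce everything to controlling the non-coercive term $\|u\|_{L^2(B_R\setminus\Omega)}$, and rely on Lemma~\ref{lemmaHepsprop}(b) (small trace on $\partial\Omega$) to make the exterior part behave like a sound-soft problem.

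\emph{The paper's route.} It controls this term quantitatively by duality. It solves the dual sound-soft problem \eqref{dualvfproblem} at $k_\varepsilon$ and invokes the Lipschitz-domain bound \eqref{estivf} on $\partial_\nu v_f$ in $L^2(\partial\Omega)$, which rests on layer potentials and nontangential maximal functions. This yields $\|u\|_{L^2(B_R\setminus\overline\Omega)}\le C(\|Bu\|_{\mathcal H_\varepsilon}+\varepsilon^{1/2}\|u\|_{\mathcal H_\varepsilon})$, which is absorbed into \eqref{garding}. Surjectivity then follows from $B=I+K_\varepsilon(k_\varepsilon)$.

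\emph{Your route.} You replace the duality step by a compactness--uniqueness argument. It uses Rellich compactness on the fixed region $B_R\setminus\overline\Omega$, vanishing of the $\partial\Omega$-traces, and uniqueness for the limiting sound-soft problem at $k_0$.

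\emph{What each buys.} Your argument is more elementary for this proposition: it needs only uniqueness of $H^1$ weak solutions and compact embeddings, not $L^2$ Neumann-trace regularity on Lipschitz domains. It is, however, non-constructive. The threshold $\varepsilon_0$ and the constant come out of a contradiction argument, and you do not obtain the explicit $\varepsilon^{1/2}$ estimate. The paper's choice also pays off later: the same dual solution $v_f$ and bound \eqref{estivf} are reused in Section~\ref{secproofofmain} to improve the exterior $L^2$ rate of $r_\varepsilon$ to $O(\varepsilon)$. So the regularity input is needed in the paper anyway.

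Two steps you use tacitly are standard. Your Fredholm claim needs the embedding $\mathcal H_\varepsilon\hookrightarrow L^2(B_R\setminus\overline\Omega)$ to be compact, which follows from Lemma~\ref{lemmaHepsprop}(c) and Rellich. Passing to the limit in the Dirichlet-to-Neumann term needs $T_{k_\varepsilon}\to T_{k_0}$ in operator norm.
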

\begin{proof}
    We use the dual method. Given $f\in L^2(B_R\setminus \overline{\Omega})$, let $v_f$ be the solution of the dual scattering problem for soft obstacle:
\begin{equation}\label{dualvfproblem}
    \left\{
    \begin{aligned}
        &-\Delta v_f- k_\varepsilon^2 v_f =f && \mathrm{in} \ B_R\setminus \overline{\Omega}, \\
        & v_f = 0 && \mathrm{in} \ \overline{\Omega}, \\
        & \left.\frac{\partial v_f}{\partial \nu} \right|_-=T_{k_\varepsilon}^* v_f && \mathrm{on} \ \partial B_R,
    \end{aligned}
    \right.
\end{equation}
where $T_{k_\varepsilon}^*$ denotes the dual operator of $T_{k_\varepsilon}$. Then, we have the estimate
\begin{equation}\label{estivf}
    \| v_f \|_{H^1(B_R\setminus \overline{\Omega})} + \left\| \left.\frac{\partial v_f}{\partial \nu} \right|_+ \right\|_{L^2(\partial \Omega)} \leq C\|f\|_{L^2(B_R\setminus \overline{\Omega})}.
\end{equation}
The above estimate follows from the layer potential representation of $v_f$ and the $L^2$ regularity of non-tangential maximal functions on Lipschitz domains (see, for instance, \cite{verchota1984layer}). In particular,
\begin{equation}\label{estivf1}
    \| v_f \|_{\mathcal{H}_\varepsilon}\leq C\| v_f \|_{\mathcal{K}_\varepsilon} = C\| v_f \|_{H^1(B_R\setminus \overline{\Omega})}
    \leq C\|f\|_{L^2(B_R\setminus \Omega)}.
\end{equation}

For $u\in \mathcal{H}_\varepsilon$, Green's identity gives
\begin{equation}
    \langle Bu,v_f \rangle_{\mathcal{H}_\varepsilon} = \int_{B_R\setminus \Omega} u \overline{f} \,dx - \int_{\partial \Omega}
    u \left.\frac{\partial \overline{v_f} }{\partial \nu} \right|_+  \,d\sigma.
\end{equation}
Using \eqref{estivf}, \eqref{estivf1} and Lemma \ref{lemmaHepsprop}.(b), we get
\begin{equation}
    \left| \int_{B_R\setminus \Omega} u \overline{f} \,dx \right| \leq C\big(\| Bu\|_{\mathcal{H}_\varepsilon} + \varepsilon^{1/2} \| u\|_{\mathcal{H}_\varepsilon} \big)\|f\|_{L^2(B_R\setminus \Omega)} .
\end{equation}
Taking the supremum over $f \in L^2(B_R\setminus \overline{\Omega})$, we conclude that
\begin{equation}
    \| u \|_{L^2(B_R\setminus \overline{\Omega})} \leq C\big(\| Bu\|_{\mathcal{H}_\varepsilon} + \varepsilon^{1/2} \| u\|_{\mathcal{H}_\varepsilon} \big).
\end{equation}
Substituting this into the uniform Gårding inequality \eqref{garding} and using Cauchy-Schwarz, we obtain that, for sufficiently small $\varepsilon$, $\| u\|_{\mathcal{H}_\varepsilon}  \leq C\| Bu\|_{\mathcal{H}_\varepsilon} $. This proves that $B$ is uniformly bounded below.

It suffices to prove that $B$ is surjective, which is a consequence of the estimate $\| u\|_{\mathcal{H}_\varepsilon}  \leq C\| Bu\|_{\mathcal{H}_\varepsilon} $ and the fact that $B$ is a Fredholm operator of index zero. To show that $B$ is a Fredholm operator of index zero, we observe that $\mathcal{A}_\varepsilon(k) = \mathcal{A}_\varepsilon(0) + \mathcal{R}_\varepsilon(k)$. With respect to the inner product induced by $\mathcal{A}_\varepsilon(0)$, the form $\mathcal{A}_\varepsilon(0)$ corresponds to the identity operator on $\mathcal{H}_\varepsilon$, while $\mathcal{R}_\varepsilon(k)$ induces a compact operator $K_\varepsilon(k)$. Consequently, $B=I+K_\varepsilon(k_\varepsilon)$, and hence $B$ is a Fredholm operator of index zero.
The proof is complete.
\end{proof}

\section{Resolvent limits of the operator \texorpdfstring{$A$}{A}}
\label{secresolventlimitofA}

In this section, we establish the limit of the resolvent for the operator $A = A_\varepsilon(k_\varepsilon)$; see Theorem \ref{thm:physical-block-norm-resolvent-convergence}. Define
\begin{equation}
    C^{\mathrm{f}}_{\varepsilon} := C^{\mathrm{f}}_{\varepsilon^2, \mathcal{I}_\varepsilon}, \qquad C^{\mathrm{t}}_{\varepsilon} := C^{\mathrm{t}}_{\varepsilon^2, \mathcal{I}_\varepsilon}.
\end{equation}
By the definition \eqref{defofA} of the operator $A$, we get
\begin{equation}
    A_{mn} = |D|\varepsilon^{d-2} ( S_M^*C^{\mathrm{f}}_\varepsilon S_M)_{mn} + (-1)^{|m|+|n|}\mathcal{R}_\varepsilon(k_\varepsilon)(p^n_\varepsilon,p^m_\varepsilon),
\end{equation}
where $|m|=m_1+\cdots +m_d$ for $m=(m_1,\cdots,m_d)$. This expression motivates us to study the limit associated to $ S_M^*C^{\mathrm{f}}_\varepsilon S_M$. Therefore, we define the rescaled (and re-centered) capacitance matrix as follows:
\begin{equation}
    L^{\mathrm{f}}_\varepsilon := \frac{\widehat{C}^M_{\varepsilon^2} I - S_M^*C^{\mathrm{f}}_{\varepsilon} S_M}{ \varepsilon^2 },
    \qquad L^{\mathrm{t}}_\varepsilon := \frac{\widehat{C}^M_{\varepsilon^2} I - S_M^*C^{\mathrm{t}}_{\varepsilon} S_M}{ \varepsilon^2 }.
\end{equation}
We also define the limiting operator as follows. Let 
\begin{equation}
   L_D := -\nabla \cdot (A_{\mathrm{eff}} \nabla)  
\end{equation}
be the second-order differential operator on $\Omega$ with Dirichlet boundary condition on $\partial \Omega$, where
\begin{equation}\label{defAeff}
    A_{\mathrm{eff}}:= -\frac{1}{2}\nabla_\alpha^2 \widehat{C}^M >0 .
\end{equation}

To pass the limit from the matrix $L^\mathrm{f}_\varepsilon$ acting on vectors to the operator $L_D$ acting on functions defined on $\Omega$, we define the operator $J_\varepsilon : \ell^2(\mathcal{I}_\varepsilon) \rightarrow L^2(\Omega)$ by
\begin{equation}
    (J_\varepsilon \phi) (x):= \left\{
    \begin{aligned}
        & \varepsilon^{-d/2 } \phi(n) ,  && x\in \varepsilon(Y+n),\, n\in \mathcal{I}_\varepsilon , \\
        & 0, && \mathrm{otherwise}.
    \end{aligned}\right.
\end{equation}
The operator norm of $J_\varepsilon$ is $1$. The adjoint operator $J_\varepsilon^*:L^2(\Omega) \rightarrow  \ell^2(\mathcal{I}_\varepsilon) $ is
\begin{equation}
    (J_\varepsilon^* u) (n)= \varepsilon^{-d/2}\int_{\varepsilon(Y+n)} u(x)\,dx, \qquad \forall\, u\in L^2(\Omega), \,n\in \mathcal{I}_\varepsilon.
\end{equation}
It is clear that $J_\varepsilon^* J_\varepsilon = I$ and $\Pi_\varepsilon := J_\varepsilon J_\varepsilon^*$ is the cell-average projection on the lattice $\mathcal{I}_\varepsilon$:
\begin{equation}\label{projecPieps}
    \Pi_\varepsilon u(x)=\left\{
    \begin{aligned}
        & \fint_{\varepsilon(n+Y)} u(y)\,dy, && x\in \varepsilon (n+Y), n \in \mathcal{I}_\varepsilon , \\
        & 0, && \mathrm{otherwise}.
    \end{aligned}\right.
\end{equation}

\subsection{Some useful estimates} In this section, we collect some useful estimates that will be repeatedly used in this section.

The first lemma compares the finite-array capacitance matrix $C^{\mathrm{f}}_{\varepsilon}$ and the truncated capacitance matrix $C^{\mathrm{t}}_{\varepsilon}$.

\begin{lemma}\label{lemlocalbounR}
    We have
    \begin{equation}\label{monotoneofC}
        0\leq C^{\mathrm{f}}_{\varepsilon} \leq C^{\mathrm{t}}_{\varepsilon} \leq CI.
    \end{equation}
    Moreover, let $R_\varepsilon = C^{\mathrm{t}}_\varepsilon - C^{\mathrm{f}}_\varepsilon$, then,
    \begin{equation}\label{estimateRbyM}
    \big|\langle R_\varepsilon \psi , \psi \rangle_{\ell^2(\mathcal{I}_\varepsilon)} \big|\leq C\mathcal{M}_{\partial\Omega,\varepsilon} (\psi  ) , \qquad \forall\,\psi \in \ell^2(\mathcal{I}_\varepsilon).
    \end{equation}
\end{lemma}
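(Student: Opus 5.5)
The plan is to work entirely with the variational (energy) characterizations of the quadratic forms. For $\psi\in\ell^2(\mathcal I_\varepsilon)$, the function $V_\psi:=\sum_{n}\psi(n)\mathscr V^n_{\varepsilon^2,\mathcal I_\varepsilon}$ minimizes $\frac1{|D|}\int a_{\varepsilon^2,\mathcal I_\varepsilon}|\nabla u|^2$ over $u\in\mathcal E$ with prescribed bubble averages $\psi(n)$ on $D+n$, $n\in\mathcal I_\varepsilon$. By the orthogonality of minimizers, $\langle C^{\mathrm f}_\varepsilon\psi,\psi\rangle$ equals this minimal energy, so $C^{\mathrm f}_\varepsilon\ge0$. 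Likewise $\langle C^{\mathrm t}_\varepsilon\psi,\psi\rangle=\langle\mathfrak C_{\varepsilon^2}E\psi,E\psi\rangle$ is the minimal energy of $\frac1{|D|}\int a_{\varepsilon^2,\mathbb Z^d}|\nabla u|^2$ with averages $E\psi$ on every bubble of $\mathbb Z^d$, the extra bubbles having prescribed average zero. This identification follows by Floquet–Bloch decomposition from the quasiperiodic minimization defining $\widehat C^\alpha_\delta$. The bound $C^{\mathrm t}_\varepsilon\le CI$ is immediate from $\sup_\alpha\widehat C^\alpha_{\varepsilon^2}\le C$ (Proposition~\ref{lem:analyticity-capacitance}). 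Alternatively, one can test with $\widetilde V=\sum\psi(n)\chi(\cdot-n)$ as in \eqref{VmsmallQ}.

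For the monotonicity $C^{\mathrm f}\le C^{\mathrm t}$, I take the infinite-lattice minimizer $U$ and check that it is admissible for the finite problem. Its averages on $D+n$, $n\in\mathcal I_\varepsilon$, are $\psi(n)$. Moreover $a_{\varepsilon^2,\mathcal I_\varepsilon}\le a_{\varepsilon^2,\mathbb Z^d}$ pointwise, since the extra bubbles only carry the larger weight. Hence the finite energy is at most the infinite energy, and $C^{\mathrm f}_\varepsilon\le C^{\mathrm t}_\varepsilon$. One technicality must be addressed: $U$ should lie in $\mathcal E$ with the right decay. I would handle this by density, approximating $E\psi$ by itself since it is finitely supported, so that $U$ decays exponentially away from the array by the analyticity of the symbol.

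The real content is \eqref{estimateRbyM}, namely $0\le\langle R_\varepsilon\psi,\psi\rangle\le C\mathcal M_{\partial\Omega,\varepsilon}(\psi)$. I get the upper bound by constructing an admissible competitor for the infinite problem out of the finite minimizer $V=V_\psi$. Inside the array, $V$ is already fine. Outside, however, the bubbles $D+n$ with $n\notin\mathcal I_\varepsilon$ need average zero and finite energy with weight $\varepsilon^{-2}$, which $V$ does not provide. To fix this, I would set $\widetilde U:=V-\sum_{n\notin\mathcal I_\varepsilon}\big(\fint_{D+n}V\big)\chi(\cdot-n)+\text{(correction)}$. A cleaner variant is $\widetilde U=\eta V$, with a cutoff $\eta$ equal to $1$ on the array cells and $0$ on all exterior cells that contain bubbles. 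The averages on interior bubbles are unchanged if $\eta\equiv1$ on $Y_{\mathcal I_\varepsilon}$, and exterior bubbles see zero. Then
\[
\langle C^{\mathrm t}\psi,\psi\rangle-\langle C^{\mathrm f}\psi,\psi\rangle\le \int_{\mathcal O_1}|\nabla(\eta V)|^2-\int_{\mathcal O_1}a|\nabla V|^2\lesssim \int_{\mathcal O_1}|\nabla V|^2+\int_{\mathrm{supp}\nabla\eta}|V|^2 .
\]
The cutoff $\eta$ is not scaled and lives in the first exterior cell layer. I must also note that $\mathbb Z^d$ is not $\varepsilon$-dependent here, because everything is in unscaled lattice coordinates.

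The main obstacle is to bound the boundary-layer energy of $V$ by $C\mathcal M_{\partial\Omega,\varepsilon}(\psi)$. For the $L^2$ term, I use a Poincaré inequality on the exterior cell layer with the outer cells' zero-average-free structure. This should be done relative to neighbouring array bubbles, as in \eqref{eq:poincare-on-boundary-patch}, giving a bound by the gradient energy plus $|\psi(n)|^2$ for boundary-adjacent $n$. For the gradient term, I decompose $\psi=\sum_m Q_m\psi$ by layers as in the proof of Lemma~\ref{lemtraceesti}. Lemma~\ref{lemenergydecayboun} then gives $\int_{\mathcal O_1}a|\nabla V_m|^2\le C\rho^{m}\|Q_m\psi\|^2$, using \eqref{VmsmallQ}, and summing with Cauchy–Schwarz weights $\rho^{m/2}$ yields $\sum_n\rho^{d_\varepsilon(n)/2}|\psi(n)|^2$. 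This matches $\mathcal M_{\partial\Omega,\varepsilon}$ with $\gamma=-\frac12\log\rho$, the same $\gamma$ as in Lemma~\ref{lemtraceesti}.
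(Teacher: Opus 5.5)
Your argument is correct, but it is organized differently from the paper's.

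\textbf{The paper's route.} The paper never builds a competitor for general $\psi$. It first takes data $\xi$ supported at lattice depth $>m$. It cuts off the finite-array minimizer $V$ \emph{inside} the array, with a cutoff vanishing on $\mathcal O_{\lfloor m/3\rfloor}$, equal to one off $\mathcal O_{\lfloor 2m/3\rfloor}$, and constant on bubbles. It then uses the $a$-orthogonality of $V$ to $(1-\chi)V$ to get $|D|\langle R_\varepsilon\xi,\xi\rangle\le\int a|\nabla((1-\chi)V)|^2\le CE(\lfloor 2m/3\rfloor)\le C\rho^{m/3}\|\xi\|^2$, via Lemma~\ref{lemenergydecayboun}. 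General $\psi$ is assembled through $\|R_\varepsilon^{1/2}\psi\|\le\sum_m\|R_\varepsilon^{1/2}Q_m\psi\|$ and a weighted Cauchy--Schwarz, which yields $\gamma=-\frac16\log\rho$.

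\textbf{Your route.} You cut off \emph{outside} the array, in the gap of width $\mathrm{dist}(D,\partial Y)>0$ between $\partial Y_{\mathcal I_\varepsilon}$ and the neighbouring exterior bubbles. Your phrase ``$0$ on all exterior cells that contain bubbles'' must be read as ``$0$ on all exterior bubbles'': every exterior cell contains a bubble, and $\eta$ has to equal one on the adjacent array cells. This gives, for arbitrary $\psi$,
\[
|D|\langle R_\varepsilon\psi,\psi\rangle\le\int_{\mathbb R^d\setminus Y_{\mathcal I_\varepsilon}}|\nabla V|^2+C\int_{\mathrm{supp}\,\nabla\eta}|V|^2 .
\]
The first integral in your display should carry the weight $a_{\varepsilon^2,\mathbb Z^d}$ and a factor $|D|$, but this does not change the outcome. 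You then decompose the field $V=\sum_m V_m$ by layers, exactly as in Lemma~\ref{lemtraceesti} and \eqref{eq:section-six-exterior-gradient-estimate-for-lift}. The $L^2$ term is handled by Poincar\'e on connected blocks of cells, anchored at an adjacent array bubble with $d_\varepsilon(n)=1$.

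\textbf{Trade-offs.} Your route uses a single competitor and needs neither the orthogonality step nor $R_\varepsilon^{1/2}$; you only use $R_\varepsilon\ge0$ for the absolute value. It also gives the sharper weight $\gamma=-\frac12\log\rho$. The paper's deep-data estimate then follows as a corollary, with $\rho^{m/2}$ in place of $\rho^{m/3}$. The paper's route, by contrast, touches only cells whose bubble averages vanish, so all its Poincar\'e inequalities are anchor-free. It also isolates the conceptual statement that finite and truncated capacitance matrices agree exponentially well on data away from the boundary.

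Finally, your density remark about $U\in\mathcal E$ is unnecessary. $U=\sum_n(E\psi)(n)\mathscr V^n_{\varepsilon^2,\mathbb Z^d}$ is a finite combination of elements of $\mathcal E$.
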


\begin{proof}
It is easy to get \eqref{monotoneofC} by the variational definitions of the capacitance matrices. Hence, we only prove \eqref{estimateRbyM}. Let the notations be as in Lemma \ref{lemenergydecayboun}. Namely, for $r\geq0$, let 
\begin{equation}
    Y_{\mathcal{I}_\varepsilon}:=\bigcup_{n\in \mathcal{I}_\varepsilon}(Y+n) , \qquad \mathcal O_{r}:=\mathbb R^d\setminus \bigcup_{n\in \mathcal{I}_\varepsilon \atop d_\varepsilon(n)>r}(Y+n).
\end{equation}
We divide the proof into two steps.

\textit{Step 1. } Fix $m>0$, and let $\xi\in\ell^2(\mathcal{I}_\varepsilon)$ satisfy
\begin{equation}\label{vanishing-boundary-data1}
\mathrm{supp}\,\xi \subset \{n\in \mathcal{I}_\varepsilon : d_\varepsilon(n) >m\}.
\end{equation}
Let 
\begin{equation}
    V:= \sum_{n\in \mathcal{I}_\varepsilon}\xi(n)\mathscr V_{\varepsilon^2,\mathcal{I}_\varepsilon}^n \qquad \mathrm{and} 
    \qquad E(r):=\int_{\mathcal O_{r}}a_{\varepsilon^2,\mathcal{I}_\varepsilon}|\nabla V|^2\,dx.
\end{equation}
By the definition of $C^{\mathrm{f}}_\varepsilon$ and Lemma \ref{lemenergydecayboun}, one has
\begin{equation}
    |D| \langle C^{\mathrm{f}}_{\varepsilon } \xi,\xi \rangle_{\ell^2(\mathcal{I}_\varepsilon)}  = \int_{\mathbb{R}^d} a_{\varepsilon^2,\mathcal{I}_\varepsilon} |\nabla V|^2\,dx,
\end{equation}
and
\begin{equation}\label{estiErbyC}
    E(r) \leq C\rho^{m-r}\int_{\mathbb{R}^d} a_{\varepsilon^2,\mathcal{I}_\varepsilon} |\nabla V|^2\,dx =C\rho^{m-r} |D| \langle C^{\mathrm{f}}_{\varepsilon } \xi,\xi \rangle_{\ell^2(\mathcal{I}_\varepsilon)} .
\end{equation}

Let $r_1 = \lfloor m/3 \rfloor$ and $r_2=\lfloor 2m/3 \rfloor$. Choose a cutoff function $\chi \in C_0^\infty(\mathbb{R}^d)$ satisfying $0\leq \chi \leq 1$ with the following properties:
\begin{equation}\label{propertyofchi4}
    \begin{aligned}
        & \chi \equiv 0 \ \mathrm{on} \  \mathcal{O}_{r_1}, && \qquad  \chi \equiv 1 \ \mathrm{on} \ \mathbb{R}^d\setminus \mathcal{O}_{r_2}, \\
        & \mathrm{supp}\,\nabla \chi \subset \mathcal{O}_{r_2} \setminus \mathcal{O}_{r_1}, && \qquad \|\nabla \chi\|_{L^\infty(\mathbb{R}^d)} \leq \frac{C}{m}.
    \end{aligned}
\end{equation}
Moreover, $\chi$ can be chosen such that
\begin{equation}\label{constantofchi}
    \chi \mathrm{\ is \ a \ constant \ in \ the \ bubble\ }D+n \mathrm{ \ for \ each \ }n\in \mathbb{Z}^d. 
\end{equation}
We decompose $V$ into two parts:
\begin{equation}
    V= V_1 +V_2, \qquad V_1:=\chi V \mathrm{ \ and \ } V_2 :=(1-\chi)V.
\end{equation}

Let $E:\ell^2(\mathcal{I}_\varepsilon) \rightarrow \ell^2(\mathbb{Z}^d)$ be the zero extension. Then, by \eqref{vanishing-boundary-data1}, \eqref{propertyofchi4}, and \eqref{constantofchi}, one has
\begin{equation}
    \fint_{D+n} V_1(x)\,dx=E\xi(n), \qquad \forall\, n\in \mathbb{Z}^d .
\end{equation}
Therefore, 
\begin{equation}\label{estimateCnablaW}
    |D|\langle C^{\mathrm{t}}_{\varepsilon} \xi, \xi \rangle_{\ell^2(\mathcal{I}_\varepsilon)}  \leq  \int_{\mathbb{R}^d} a_{\varepsilon^2,\mathbb{Z}^d} |\nabla V_1|^2\,dx =  \int_{\mathbb{R}^d} a_{\varepsilon^2,\mathcal{I}_\varepsilon} |\nabla V_1|^2\,dx .
\end{equation}

Moreover, $\int_{D+n} V_2(x)\,dx=0$ for every $n \in \mathcal{I}_\varepsilon$. Therefore,
\begin{equation}
    \int_{\mathbb{R}^d} a_{\varepsilon^2,\mathcal{I}_\varepsilon} \nabla V \cdot \overline{\nabla V_2} \,dx =0.
\end{equation}
Consequently,
\begin{equation}\label{estinablaW}
\begin{aligned}
    \int_{\mathbb{R}^d} a_{\varepsilon^2,\mathcal{I}_\varepsilon} |\nabla V_1|^2\,dx &= \int_{\mathbb{R}^d} a_{\varepsilon^2,\mathcal{I}_\varepsilon} |\nabla V|^2\,dx + \int_{\mathbb{R}^d} a_{\varepsilon^2,\mathcal{I}_\varepsilon} |\nabla V_2|^2\,dx \\
    &=|D| \langle C^{\mathrm{f}}_{\varepsilon} \xi, \xi \rangle_{\ell^2(\mathcal{I}_\varepsilon)}  + \int_{\mathbb{R}^d} a_{\varepsilon^2,\mathcal{I}_\varepsilon} |\nabla V_2|^2\,dx .
\end{aligned}
\end{equation}
Substituting \eqref{estinablaW} into \eqref{estimateCnablaW} implies
\begin{equation}\label{decayR1}
    |D|\langle R_{\varepsilon} \xi, \xi \rangle_{\ell^2(\mathcal{I}_\varepsilon)}   = |D|\langle C^{\mathrm{t}}_{\varepsilon} \xi, \xi \rangle_{\ell^2(\mathcal{I}_\varepsilon)}   - |D|\langle C^{\mathrm{f}}_{\varepsilon} \xi, \xi \rangle_{\ell^2(\mathcal{I}_\varepsilon)}  \leq \int_{\mathbb{R}^d} a_{\varepsilon^2,\mathcal{I}_\varepsilon} |\nabla V_2|^2\,dx.
\end{equation}
We now estimate the right-hand side of \eqref{decayR1}.

Using the identity $\nabla V_2=(1-\chi)\nabla V -V\nabla \chi$ and the fact that $\chi$ is a constant in each bubble $D+n$, we obtain
\begin{equation}
    \int_{\mathbb{R}^d} a_{\varepsilon^2,\mathcal{I}_\varepsilon} |\nabla V_2|^2\,dx \leq C\int_{\mathcal{O}_{r_2}}
    a_{\varepsilon^2,\mathcal{I}_\varepsilon} |\nabla V|^2\,dx +C \int_{\mathbb{R}^d} |V|^2|\nabla \chi|^2\,dx.
\end{equation}
By the same reasoning of \eqref{data0poincvf}, the cell-wise Poincar\'{e} inequality gives 
\begin{equation}
    \int_{\mathbb{R}^d} |V|^2|\nabla \chi|^2\,dx \leq CE(r_2).
\end{equation}
Therefore,
\begin{equation}
   \int_{\mathbb{R}^d} a_{\varepsilon^2,\mathcal{I}_\varepsilon} |\nabla V_2|^2\,dx \leq CE(r_2).
\end{equation}
Applying \eqref{estiErbyC} we get
\begin{equation}\label{decayR2}
    \int_{\mathbb{R}^d} a_{\varepsilon^2,\mathcal{I}_\varepsilon} |\nabla V_2|^2\,dx  \leq C\rho^{m/3} \langle C^{\mathrm{f}}_{\varepsilon } \xi,\xi \rangle_{\ell^2(\mathcal{I}_\varepsilon)} \leq C\rho^{m/3} \| \xi\|^2_{\ell^2(\mathcal{I}_\varepsilon)}.
\end{equation}
Here, we used the uniform operator bound $C^{\mathrm{f}}_{\varepsilon } \leq C I$ in \eqref{monotoneofC}. Therefore, since the capacitance matrices are Hermitian and $R_\varepsilon$ is nonnegative, by taking the square root of $R_\varepsilon$, one gets
\begin{equation}\label{R1/2psi}
    \| R_\varepsilon^{1/2} \xi \|^2_{\ell^2(\mathcal{I}_\varepsilon)}    =\langle R_{\varepsilon} \xi, \xi \rangle_{\ell^2(\mathcal{I}_\varepsilon)}   \leq C\rho^{m/3} \| \xi\|^2_{\ell^2(\mathcal{I}_\varepsilon)}.
\end{equation}

\emph{Step 2. }
Let $\psi \in \ell^2(\mathcal{I}_\varepsilon)$ be arbitrary. For $m\geq 0$, let $Q_m:\ell^2(\mathcal{I}_\varepsilon)\rightarrow \ell^2(\mathcal{I}_\varepsilon)$ denote the projection onto the $(m+1)$-th cell layer; see \eqref{eq:definition-of-layer-projection}.
By Cauchy-Schwarz, one has
\begin{equation}
    \begin{aligned}
        \langle R_{\varepsilon} \psi, \psi \rangle_{\ell^2(\mathcal{I}_\varepsilon)}  &= \| R_\varepsilon^{1/2} \psi \|^2_{\ell^2(\mathcal{I}_\varepsilon)}  \leq \left( \sum_{m\in \mathbb{N}} \rho^{m/12} \rho^{-m/12}\| R_\varepsilon^{1/2}  Q_m \psi \|_{\ell^2(\mathcal{I}_\varepsilon)} \right)^2 \\
        & \leq \left( \sum_{m\in \mathbb{N}} \rho^{m/6}  \right) \left( \sum_{m\in \mathbb{N}} \rho^{-m/6}\| R_\varepsilon^{1/2}  Q_m \psi \|^2_{\ell^2(\mathcal{I}_\varepsilon)} \right) \\
        & \leq C\sum_{m\in \mathbb{N}} \rho^{m/6}\| Q_m \psi \|^2_{\ell^2(\mathcal{I}_\varepsilon)} ,
    \end{aligned}
\end{equation}
where we used \eqref{R1/2psi} with $\xi= Q_m \psi $ for the last inequality. Finally, since 
\begin{equation}
    \|Q_m\psi\|_{\ell^2(\mathcal{I}_\varepsilon)}^2 = \sum_{d_{\varepsilon}(n)=m+1} |\psi(n)|^2,
\end{equation}
therefore,
\begin{equation}
    \langle R_{\varepsilon} \psi, \psi \rangle_{\ell^2(\mathcal{I}_\varepsilon)} \leq C \sum_{n\in \mathcal{I}_\varepsilon} \rho^{d_{\varepsilon}(n)/6} |\psi(n)|^2 =C\mathcal{M}_{\partial\Omega,\varepsilon} (\psi  ) ,
\end{equation}
where $\gamma=-\frac{1}{6}\log \rho >0$ in the definition of $\mathcal{M}_{\partial\Omega,\varepsilon} (\psi  ) $. The proof is complete.
\end{proof}

We introduce the sesquilinear forms $\mathfrak{b}^{\mathrm{t}}_\varepsilon$ and $\mathfrak{b}^{\mathrm{f}}_\varepsilon$ by
\begin{equation}\label{defbeps}
    \mathfrak{b}^{j}_\varepsilon(\phi,\psi) := \big\langle (L^{j}_\varepsilon +I ) \phi, \psi \big\rangle_{\ell^2(\mathcal{I}_\varepsilon)},
    \qquad \forall\,j= \mathrm{t}, \mathrm{f}, \ \forall \,\phi, \psi \in \ell^2(\mathcal{I}_\varepsilon).
\end{equation}
For simplicity, we use the short notation $\mathfrak{b}^{j}_\varepsilon(\phi) = \mathfrak{b}^{j}_\varepsilon(\phi,\phi)$.

An immediate corollary of Lemma \ref{lemlocalbounR} is the following equivalence result.

\begin{lemma}\label{lem:bjequivXeps}
    There exists $c=c(\Omega,D),\,C=C(\Omega,D)>0$ such that
    \begin{equation}
    c \| \phi\|_{\mathcal{X}_\varepsilon}^2 \leq \mathfrak{b}^j_\varepsilon(\phi) \leq C  \| \phi\|_{\mathcal{X}_\varepsilon}^2 , \qquad \forall\, j=\mathrm{t},\mathrm{f}, \,\forall\,\phi \in \ell^2(\mathcal{I}_\varepsilon).
\end{equation}
\end{lemma}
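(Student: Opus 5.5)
I would handle the truncated form $\mathfrak b^{\mathrm t}_\varepsilon$ first, through its Fourier symbol, and then pass to $\mathfrak b^{\mathrm f}_\varepsilon$ using Lemma~\ref{lemlocalbounR} together with the discrete Hardy inequality of Lemma~\ref{lem:discreteHardy}.

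\emph{Truncated form.} For $\phi\in\ell^2(\mathcal I_\varepsilon)$, I write $\widehat{E\phi}$ for the Floquet transform of its zero extension. Since $S_M$ is multiplication by $e^{\mi M\cdot n}$, conjugating by it shifts the symbol by $M$. Hence
\begin{equation*}
\langle L^{\mathrm t}_\varepsilon\phi,\phi\rangle_{\ell^2(\mathcal I_\varepsilon)}=\frac{1}{|\mathcal B_1|}\int_{\mathcal B_1}\frac{\widehat C^M_{\varepsilon^2}-\widehat C^{\beta+M}_{\varepsilon^2}}{\varepsilon^2}\,|\widehat{E\phi}(\beta)|^2\,d\beta ,
\end{equation*}
and comparing with \eqref{fourierDN}, it suffices to prove the symbol bound
\begin{equation*}
c\,\theta(\beta)\le \widehat C^M_{\delta}-\widehat C^{\beta+M}_{\delta}\le C\,\theta(\beta),\qquad \beta\in\mathcal B_1,
\end{equation*}
uniformly in small $\delta$, with $\delta=\varepsilon^2$. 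The upper bound has two parts. Near $\beta=0$, it follows from Taylor expansion, using the uniform analyticity in Proposition~\ref{lem:analyticity-capacitance}: the gradient vanishes at $M$, and the Hessian is uniformly bounded. Away from $0$, $\theta$ is bounded below while the symbol is uniformly bounded. The lower bound also has two parts. Near $0$, I use the uniformly negative-definite Hessian \eqref{nondegeneracy_band} together with a uniform $C^3$ bound to control the Taylor remainder. Away from $0$, I need a uniform positive gap. This comes from the strict maximum \eqref{eq:capacity-maximum-M-0} for $\widehat C^\alpha$ on the compact set $|\beta|\ge r$, combined with the $O(\delta)$ closeness of $\widehat C_\delta^\alpha$ to $\widehat C^\alpha$ in $C^0$ given by \eqref{expansionofChat}. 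Adding the identity term then gives $\mathfrak b^{\mathrm t}_\varepsilon(\phi)\simeq\|\phi\|^2_{\ell^2}+\varepsilon^{-2}\mathfrak D_\varepsilon(\phi)=\|\phi\|^2_{\mathcal X_\varepsilon}$.

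\emph{Finite form.} By \eqref{monotoneofC}, $C^{\mathrm f}_\varepsilon\le C^{\mathrm t}_\varepsilon$, so $L^{\mathrm f}_\varepsilon\ge L^{\mathrm t}_\varepsilon$. This gives the lower bound $\mathfrak b^{\mathrm f}_\varepsilon\ge\mathfrak b^{\mathrm t}_\varepsilon\ge c\|\phi\|^2_{\mathcal X_\varepsilon}$ at once. For the upper bound, I write $\mathfrak b^{\mathrm f}_\varepsilon(\phi)=\mathfrak b^{\mathrm t}_\varepsilon(\phi)+\varepsilon^{-2}\langle R_\varepsilon S_M\phi,S_M\phi\rangle$. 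Estimate \eqref{estimateRbyM} bounds the last term by $C\varepsilon^{-2}\mathcal M_{\partial\Omega,\varepsilon}(S_M\phi)$. Since $|S_M\phi|=|\phi|$, this equals $C\varepsilon^{-2}\mathcal M_{\partial\Omega,\varepsilon}(\phi)$, which Lemma~\ref{lem:discreteHardy} bounds by $C\varepsilon^{-2}\mathfrak D_\varepsilon(\phi)\le C\|\phi\|^2_{\mathcal X_\varepsilon}$. One point to check is that $\gamma$ in Lemma~\ref{lem:discreteHardy} can be taken to be the particular $\gamma$ produced in Lemma~\ref{lemlocalbounR}; this holds because Lemma~\ref{lem:discreteHardy} is valid for any $\gamma>0$.

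The main obstacle is the uniformity in $\delta$ of the global lower bound on the symbol. The Hessian part is uniform by \eqref{nondegeneracy_band}, but Lemma~\ref{maximumCalphalem} gives only a strict inequality for each $\delta$, not a uniform gap. The gap must therefore be transferred from the $\delta=0$ symbol through the $C^\gamma$ expansion \eqref{expansionofChat}, with the radius $r$ of the neighbourhood of $0$ fixed before $\delta$ is taken small.
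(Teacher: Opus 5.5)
Your proposal is correct and follows the paper's proof: a Fourier-symbol comparison $\widehat C^M_{\varepsilon^2}-\widehat C^{M+\beta}_{\varepsilon^2}\simeq\theta(\beta)$ for $\mathfrak b^{\mathrm t}_\varepsilon$, then $C^{\mathrm f}_\varepsilon\le C^{\mathrm t}_\varepsilon$ together with \eqref{estimateRbyM} and Lemma~\ref{lem:discreteHardy} for $\mathfrak b^{\mathrm f}_\varepsilon$. The one difference is how uniformity in $\delta=\varepsilon^2$ is obtained: the paper does it in one step, since $\widehat C^\alpha_{\varepsilon^2}-\widehat C^\alpha$ has vanishing gradient at $M$ and $C^2$ norm $O(\varepsilon^2)$, so the symbol differs from the $\delta=0$ one by at most $C\varepsilon^2\theta(\beta)$; your split into a uniform-Hessian bound near $\beta=0$ and $C^0$ closeness away from it reaches the same conclusion.
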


\begin{proof}
    By the definition of $L^{\mathrm{t}}_\varepsilon$ and the Plancherel identity, we have
    \begin{equation}\label{fourierbeps}
\begin{aligned}
    \mathfrak{b}^{\mathrm{t}}_\varepsilon(\phi,\psi) &=\langle L^{\mathrm{t}}_\varepsilon  \phi, \psi \rangle_{\ell^2(\mathcal{I}_\varepsilon)} + \langle \phi, \psi \rangle_{\ell^2(\mathcal{I}_\varepsilon)} \\
    &= \frac{1}{\varepsilon^2} \left\langle \big( \widehat{C}^M_{\varepsilon^2} I- \mathfrak{C}_{\varepsilon^2}\big) E S_M \phi, E S_M \psi \right\rangle_{\ell^2( \mathbb{Z}^d )} + \langle E\phi, E\psi \rangle_{\ell^2( \mathbb{Z}^d )}\\
    &= \frac{1}{|\mathcal{B}_1|} \int_{\mathcal{B}_1} \left(\frac{ g_\varepsilon(\beta) }{\varepsilon^2} +1 \right) \widehat{E\phi}(\beta)  \overline{ \widehat{E \psi}(\beta)  }\,d\beta ,
\end{aligned} 
\end{equation}
where
\begin{equation}
    g_\varepsilon(\beta)=  \widehat{C}^M_{\varepsilon^2} - \widehat{C}^{M+\beta}_{\varepsilon^2}, \qquad \forall\, \beta \in \mathcal{B}_1.
\end{equation}

We further define functions
\begin{equation}\label{defg0}
    g_0(\beta)=\widehat{C}^M - \widehat{C}^{M+\beta}, \quad \forall\, \beta \in \mathcal{B}_1, \qquad \mathrm{and} \qquad  r_\varepsilon(\alpha)= \widehat{C}_{\varepsilon^2}^\alpha - \widehat{C}^\alpha, \quad \forall\, \alpha \in \mathcal{B}_1 .
\end{equation}
By Lemma \ref{maximumCalphalem}, $M$ is the unique common maximal point of $\widehat{C}_{\varepsilon^2}^\alpha $ and $\widehat{C}^\alpha$. It follows that
\begin{equation}
    \nabla_\alpha r_\varepsilon(M) =0.
\end{equation}
Moreover, Proposition \ref{lem:analyticity-capacitance} implies that $\| r_\varepsilon \|_{C^2(\mathcal{B}_1)} \leq C\varepsilon^2$. Then, Taylor's formula yields
\begin{equation}\label{differencegepsg0}
    \begin{aligned}
        |g_\varepsilon(\beta)-g_0(\beta)| &= | r_\varepsilon(M)-r_\varepsilon(M+\beta) | \leq C \| r_\varepsilon \|_{C^2(\mathcal{B}_1)}  |\beta|^2 \leq C\varepsilon^2 \theta(\beta),
    \end{aligned}
\end{equation}
where $\theta(\beta)$, defined in \eqref{fourierDN}, is the Fourier multiplier of the discrete Dirichlet energy $\mathfrak{D}_\varepsilon$.

Again, since $M$ is the unique maximal point of $\widehat{C}^\alpha$ and $\nabla^2_\alpha \widehat{C}^M <0$, it follows that
\begin{equation}\label{g0theta}
    c\theta(\beta) \leq g_0(\beta) \leq C\theta(\beta) , \qquad \forall\,\beta \in \mathcal{B}_1. 
\end{equation}
The function $g_0$ above can be replaced by $g_\varepsilon$ due to \eqref{differencegepsg0}. As a consequence of the representations \eqref{fourierDN} and \eqref{fourierbeps}, and the comparability between $g_\varepsilon$ and $\theta$, we conclude that
\begin{equation}
    c\| \phi\|_{\mathcal{X}_\varepsilon}^2 \leq \mathfrak{b}^{\mathrm{t}}_\varepsilon(\phi) \leq C  \| \phi\|_{\mathcal{X}_\varepsilon}^2, \qquad \forall\, \phi \in \ell^2(\mathcal{I}_\varepsilon).
\end{equation}
Moreover, by Lemma \ref{lemlocalbounR} and Lemma \ref{lem:discreteHardy}, one has
\begin{equation}
    0\leq \mathfrak{b}^{\mathrm{f}}_\varepsilon(\phi) - \mathfrak{b}^{\mathrm{t}}_\varepsilon(\phi) = \frac{1}{\varepsilon^2} \langle R_\varepsilon S_M \phi, S_M\phi \rangle_{\ell^2(\mathcal{I}_\varepsilon)} \leq C\| \phi \|_{\mathcal{X}_\varepsilon}^2.
\end{equation}
The proof is complete.
\end{proof}

We recall some regularity results and boundary layer estimates for elliptic equations on bounded Lipschitz domains. 

For any $h\in \mathbb{R}^d$, let $\tau_h$ be the translation operator defined by $\tau_h u(x) = u(x+h)$. The difference operator $\Delta_h$ is given by
\begin{equation}
    \Delta_h u(x) =(\tau_h - I) u(x) = u(x+h)-u(x).
\end{equation}
We use the following difference characterization of Besov spaces. For $0<s<1$, define
\begin{equation}\label{besov1}
    B^s_{2,\infty}(\mathbb{R}^d):= \left\{ u \in L^2(\mathbb{R}^d) : \sup_{0<|h|<1} \frac{\| \Delta_h u \|_{L^2(\mathbb{R}^d)}}{|h|^s} <\infty \right\},
\end{equation}
equipped with the norm
\begin{equation}
    \| u \|_{B^s_{2,\infty}(\mathbb{R}^d)} := \|u \|_{L^2(\mathbb{R}^d) } + \sup_{0<|h|<1} \frac{\| \Delta_h u \|_{L^2(\mathbb{R}^d)}}{|h|^s}.
\end{equation}
Similarly, we define
\begin{equation}\label{besov2}
    B^{1+s}_{2,\infty}(\mathbb{R}^d):= \left\{ u \in H^1(\mathbb{R}^d) : \frac{\partial u}{\partial x_j} \in B^s_{2,\infty}(\mathbb{R}^d), \,\forall \,j=1,\cdots,d \right\},
\end{equation}
equipped with the norm
\begin{equation}\label{defbesov1s}
    \| u \|_{B^{1+s}_{2,\infty}(\mathbb{R}^d)} := \|u \|_{H^1(\mathbb{R}^d) } + \sup_{0<|h|<1} \frac{\| \Delta_h \nabla u \|_{L^2(\mathbb{R}^d)}}{|h|^s}.
\end{equation}
These difference characterizations are equivalent to the standard definitions of Besov spaces based on dyadic decompositions; see \cite[2.5.12]{Triebel2010Theory} and \cite[(2.34)]{savare2002domain}.

The following endpoint regularity estimate will play a key role in our analysis.

\begin{lemma}\label{lemBesov}
    Let $\Omega\subset \mathbb{R}^d$ be a bounded Lipschitz domain, and let $u_D = (L_D +I)^{-1} f \in H^1_0(\Omega)$ for some $f \in L^2(\Omega)$. Denote by $U_D\in H^1(\mathbb{R}^d)$ the extension of $u_D$ by zero outside $\Omega$. Then there exists $C=C(L_D,\Omega)>0$ such that
    \begin{equation}
        \| U_D \|_{B^{3/2}_{2,\infty}(\mathbb{R}^d)} \leq C \| f \|_{L^2(\Omega)}.
    \end{equation}
\end{lemma}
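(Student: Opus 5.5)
The plan is to reduce the statement to Savaré's endpoint regularity theorem for constant-coefficient (more generally Lipschitz-coefficient) elliptic operators on Lipschitz domains \cite{savare2002domain}. That theorem asserts that if $u\in H^1_0(\Omega)$ solves a uniformly elliptic divergence-form Dirichlet problem with right-hand side in $L^2(\Omega)$, then $u\in B^{3/2}_{2,\infty}(\Omega)$ with a bound by the data. The work consists in placing our problem in this framework and passing from the domain to $\mathbb{R}^d$ via the zero extension.

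\textbf{Step 1: energy estimate.} Since $A_{\mathrm{eff}}$ is a constant symmetric positive definite matrix (Lemma \ref{lem:strict-capacity-hessian}), Lax--Milgram gives $u_D\in H^1_0(\Omega)$ with $\|u_D\|_{H^1(\Omega)}\le C\|f\|_{L^2(\Omega)}$. Then $u_D$ solves $-\nabla\cdot(A_{\mathrm{eff}}\nabla u_D)=f-u_D=:F$ with $\|F\|_{L^2}\le C\|f\|_{L^2}$.

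\textbf{Step 2: removing the anisotropy (optional).} A linear change of variables $x=A_{\mathrm{eff}}^{1/2}y$ turns $L_D$ into $-\Delta$ on the Lipschitz domain $A_{\mathrm{eff}}^{-1/2}\Omega$. Besov norms defined by differences are invariant up to constants under invertible linear maps, so this reduction is harmless. Savaré's theorem in fact covers general elliptic coefficients directly, so the step may be skipped.

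\textbf{Step 3: Besov estimate.} Savaré's result, proved through a Nirenberg-type difference-quotient argument in directions transversal to the Lipschitz graph combined with interpolation, gives $\sup_{0<|h|<1}|h|^{-1/2}\|\Delta_h \nabla U_D\|_{L^2(\mathbb{R}^d)}\le C\|F\|_{L^2}$. In his formulation (see \cite[(2.34)]{savare2002domain}) the estimate is stated precisely for the zero extension of $H^1_0$ solutions.

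\textbf{Main obstacle.} The key point is to make sure the estimate is for the zero extension $U_D$ on all of $\mathbb{R}^d$, not merely an interior or intrinsic Besov norm on $\Omega$. The gradient of $U_D$ is $\mathbbm{1}_\Omega\nabla u_D$, which jumps across $\partial\Omega$. This jump is exactly why the index $3/2$ is sharp and cannot be improved to $H^{3/2}$. If the cited result is phrased intrinsically, I would verify the bound for the difference quotients that cross $\partial\Omega$ by hand:
\[
\|\Delta_h\nabla U_D\|^2_{L^2(\mathbb{R}^d)}\le \|\Delta_h\nabla u_D\|^2_{L^2(\Omega_{|h|})}+C\|\nabla u_D\|^2_{L^2(\{x\in\Omega:\ \mathrm{dist}(x,\partial\Omega)<|h|\})},
\]
where $\Omega_{|h|}:=\{x\in\Omega:\ \mathrm{dist}(x,\partial\Omega)>|h|\}$. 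The boundary-strip term is $O(|h|)$. To see this, I would use that $\nabla u_D$ has an $L^2$ nontangential maximal function on Lipschitz domains (Dahlberg--Kenig/Verchota for the Dirichlet problem, after subtracting a Newtonian potential of $F$, which lies in $H^2_{\mathrm{loc}}$). Integrating that maximal function over the strip gives $\|\nabla u_D\|^2_{L^2(\mathrm{strip}_{|h|})}\le C|h|\,\|f\|^2_{L^2}$.

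Combining the interior term, controlled by Savaré's estimate, with the strip term then yields $\|U_D\|_{B^{3/2}_{2,\infty}(\mathbb{R}^d)}\le C\|f\|_{L^2(\Omega)}$.
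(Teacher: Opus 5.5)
Your proposal is correct and takes the paper's route: the paper proves this lemma in one line by invoking the endpoint $B^{3/2}_{2,\infty}$ regularity result of \cite[Section 6]{savare2002domain}, which it treats as applying directly to the zero extension $U_D$, and your Steps 1--3 do the same. Your fallback for the boundary-crossing differences is a sound safeguard, with three small corrections: the crossing set lies in a boundary strip of width $2|h|$ rather than $|h|$ (harmless, since the strip bound is linear in the width); the bound $N(\nabla v)\in L^2(\partial\Omega)$ comes from Verchota's $L^2$ \emph{regularity} problem with $H^1(\partial\Omega)$ data, not the Dirichlet problem; and the paper cites \cite[(2.34)]{savare2002domain} only for the difference characterization of the Besov norm, not for the zero-extension estimate.
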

\begin{proof}
    This follows from the endpoint regularity result established in \cite[Section 6]{savare2002domain}.
\end{proof}

An immediate corollary of Lemma \ref{lemBesov} is the following translation estimate and the boundary layer estimate.

\begin{lemma}\label{lemboundarylayerestiuD}
Let $\Omega\subset \mathbb{R}^d$ be a bounded Lipschitz domain, and let $u_D = (L_D +I)^{-1} f \in H^1_0(\Omega)$ for some $f \in L^2(\Omega)$. Denote by $U_D\in H^1(\mathbb{R}^d)$ the extension of $u_D$ by zero outside $\Omega$. Then there exists $C=C(L_D,\Omega)>0$ such that
\begin{equation}\label{translaesti}
    \|\Delta_{h}\nabla U_D\|_{L^2(\mathbb R^d)} \leq C|h|^{1/2}\|f\|_{L^2(\Omega)}, \qquad \forall h\in \mathbb{R}^d.
\end{equation}
Moreover, for every $0<t<1$, we have
    \begin{equation}\label{estinablauD}
        \int_{\Omega_t} |\nabla u_D|^2\,dx \leq Ct \int_\Omega |f|^2\,dx, \qquad \int_{\Omega_t} |u_D|^2\,dx \leq Ct^3 \int_\Omega |f|^2\,dx,
    \end{equation}
where $\Omega_t:= \{x\in \Omega: \mathrm{dist}\,(x,\partial \Omega) < t\}$.
\end{lemma}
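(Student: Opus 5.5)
The plan is to read off both parts of Lemma \ref{lemboundarylayerestiuD} from the Besov bound of Lemma \ref{lemBesov}, combined with the fact that $U_D$ vanishes outside $\Omega$.

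\emph{Translation estimate.} By definition \eqref{defbesov1s}, Lemma \ref{lemBesov} gives directly
\[
\|\Delta_h\nabla U_D\|_{L^2(\mathbb R^d)}\le C|h|^{1/2}\|f\|_{L^2(\Omega)}, \qquad 0<|h|<1.
\]
For $|h|\ge1$ we use the trivial bound $\|\Delta_h\nabla U_D\|_{L^2}\le 2\|\nabla U_D\|_{L^2}$. Testing the equation with $u_D$ and using the positivity of $A_{\mathrm{eff}}$ gives $\|\nabla u_D\|_{L^2}\le C\|f\|_{L^2}$, so this case is also at most $C|h|^{1/2}\|f\|$. This proves \eqref{translaesti}.

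\emph{Gradient boundary layer.} Since $\Omega$ is Lipschitz, it is locally the region above a Lipschitz graph. Cover $\partial\Omega$ by finitely many coordinate cylinders, and in each one fix a direction $e$ that is uniformly transversal to the graph, pointing out of $\Omega$. There is a constant $c_0$ such that every $x\in\Omega_t$ in the cylinder satisfies $x+se\notin\Omega$ for some $s\le c_0t$. On the exterior, $\nabla U_D=0$.

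For each such $x$, let $h=se$ with $s$ of order $t$. Then
\[
\nabla U_D(x)=-\Delta_h\nabla U_D(x)
\]
whenever $x+h\notin\overline\Omega$. To make this work with a single $h$ on a subset, slice $\Omega_t\cap(\text{cylinder})$ into pieces indexed by the distance to the graph along $e$. A cleaner route is to average over $s\in[c_0t,2c_0t]$: for $x\in\Omega_t$ and $s$ in this range, $x+se$ lies outside $\Omega$, provided $c_0$ is chosen large relative to the Lipschitz constant. This gives
\[
\int_{\Omega_t\cap\text{cyl}}|\nabla u_D|^2 \le \fint_{c_0t}^{2c_0t}\|\Delta_{se}\nabla U_D\|^2_{L^2}\,ds \le Ct\|f\|^2.
\]
Summing over the finitely many cylinders yields the first estimate in \eqref{estinablauD}.

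\emph{$L^2$ boundary layer.} Use the same transversal direction. Since $U_D(x+se)=0$ for such $s$, the fundamental theorem of calculus gives
\[
u_D(x)=-\int_0^{s}\partial_e U_D(x+\sigma e)\,d\sigma ,
\]
where the integration segment stays within a layer $\Omega_{Ct}$ (plus the exterior, where the integrand vanishes). Cauchy--Schwarz then gives
\[
|u_D(x)|^2\le Ct\int_0^{Ct}|\nabla U_D(x+\sigma e)|^2\,d\sigma .
\]
Integrating over $x\in\Omega_t$ and using Fubini (the translates by $\sigma e$ of $\Omega_t$ stay inside $\Omega_{Ct}\cup\Omega^c$) gives
\[
\int_{\Omega_t}|u_D|^2\le Ct^2\int_{\Omega_{Ct}}|\nabla u_D|^2\le Ct^3\|f\|^2 ,
\]
by the gradient estimate. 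For $Ct\ge1$ the bounds are trivial from the global $H^1$ estimate.

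The main obstacle is the geometric step. One must ensure, uniformly for Lipschitz boundaries, that a fixed transversal translate of size comparable to $t$ moves every point of $\Omega_t$ out of $\overline\Omega$, and that the translated layers stay within a layer of comparable width. Both facts follow from the cone condition in each coordinate cylinder. I would handle this via the graph representation $x_d>\varphi(x')$ with $\|\nabla\varphi\|_\infty\le L$, taking $e=-e_d$ and $c_0\sim\sqrt{1+L^2}$.
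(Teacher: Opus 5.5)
Your proposal is correct and follows essentially the same route as the paper. The translation bound is read off from Lemma~\ref{lemBesov}, the gradient layer estimate comes from $\nabla u_D=-\Delta_h\nabla U_D$ on $\Omega_t\cap Q$ for a transversal translate $h=-c\,t\,e_d$ in each boundary cylinder, and the $L^2$ estimate comes from the fundamental theorem of calculus in the $e_d$ direction, Cauchy--Schwarz, and Fubini. Your treatment of $|h|\ge1$ via $\|\Delta_h\nabla U_D\|\le 2\|\nabla U_D\|$ fills a small point the paper leaves implicit, and your averaging over $s\in[c_0t,2c_0t]$ is harmless but unnecessary, since any single such $s$ already gives the bound.
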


\begin{proof}
    The estimate \eqref{translaesti} directly follows from Lemma \ref{lemBesov} and the definition \eqref{defbesov1s} of Besov spaces for $s= 1/2$. We now prove \eqref{estinablauD}.

    For large $t$, the desired estimates follow from the elementary energy estimate. Thus, it suffices to assume that $t$ is sufficiently small such that $\Omega_t$ is covered by a finite number of boundary cylinders. In each boundary cylinder $Q$, after suitable rotations and translations, one has
    \begin{equation}\label{inclusionVt}
        V_t\subset \Omega_t \cap Q \subset V_{Mt},
    \end{equation}
    where $M$ depends only on the Lipschitz character of $\Omega$, and $V_t$ denotes the vertical strip
    \begin{equation}
        V_t:= \{(x',x_d) \in \mathbb{R}^{d-1} \times \mathbb{R}: |x'|<R,\,g(x')+t>x_d > g (x')\}.
    \end{equation}
    Then, for every $x\in \Omega_t \cap Q$, $\tau_{- M t e_d} \nabla U_D(x)=0$. By \eqref{translaesti}, one gets
    \begin{equation}\label{estimatefirstcover}
        \int_{\Omega_t \cap Q} |\nabla u_D|^2\,dx = \int_{\Omega_t \cap Q} |\Delta_{-Mte_d}\nabla U_D|^2\,dx \leq Ct\int_\Omega |f|^2\,dx.
    \end{equation}
    Summing over the finite covering gives the first estimate of \eqref{estinablauD}. 

    By the Newton-Leibniz formula for $H^1$ functions, and the zero trace property of $u_D$, we have
    \begin{equation}
        u_D(x',g(x')+s) = \int_0^s \frac{\partial}{\partial x_d} u_D(x',g(x')+r) \,dr, \qquad \forall \,|x'|<R,\,0<s<Mt.
    \end{equation}
    By \eqref{inclusionVt}, Fubini, and Cauchy-Schwarz, one has
    \begin{equation}
        \begin{aligned}
            \int_{\Omega_t \cap Q} |u_D|^2\,dx  & \leq \int_{V_{Mt}} |u_D|^2\,dx = \int_{|x'|<R} \int_0^{Mt}|u_D(x',g(x')+s)|^2\,dsdx'
            \\
            & \leq \int_{|x'|<R} \int_0^{Mt} s \int_0^s \left|\frac{\partial}{\partial x_d}u_D(x',g(x')+r) \right|^2\,drdsdx' \\
            & \leq \frac{M^2 t^2}{2} \int_{|x'|<R} \int_0^{Mt} \left|\frac{\partial}{\partial x_d}u_D(x',g(x')+r) \right|^2\,drdx' \\
            & \leq \frac{M^2 t^2}{2} \int_{\Omega_{Mt}\cap Q} |\nabla u_D(x)|^2\,dx
        \end{aligned}
    \end{equation}
    After summing over the finite covering, and combined with \eqref{estimatefirstcover}, this gives the second estimate of \eqref{estinablauD}.
\end{proof}

Recall that $J_\varepsilon:\ell^2(\mathcal I_\varepsilon)\rightarrow L^2(\Omega)$ is the reconstruction operator. We identify $L^2(\Omega)$ with the closed subspace of $L^2(\mathbb R^d)$ consisting of functions that vanish almost everywhere outside $\Omega$, and we use the same notation $J_\varepsilon$ for the zero extension of the reconstruction operator:
\begin{equation}
    J_\varepsilon:\ell^2(\mathcal I_\varepsilon)\rightarrow L^2(\mathbb R^d).
\end{equation}
Under this identification, its adjoint is the operator $J_\varepsilon^*:L^2(\mathbb R^d)\rightarrow\ell^2(\mathcal I_\varepsilon)$ given by
\begin{equation}\label{eq:adjoint-of-reconstruction-on-full-space}
    (J_\varepsilon^*u)(n)=\varepsilon^{-d/2}\int_{\varepsilon(n+Y)}u(x)\,dx, \qquad \forall u \in L^2(\mathbb R^d),\,\forall\,n\in\mathcal I_\varepsilon.
\end{equation}
The composition $\Pi_\varepsilon=J_\varepsilon J_\varepsilon^*$ can therefore be regarded as the finite-lattice projection, which maps a function on $\mathbb{R}^d$ to its average on every cell $\varepsilon(n+Y)$, $n\in\mathcal I_\varepsilon$, and vanishes on all other cells.

By contrast, let $\widehat\Pi_\varepsilon:L^2(\mathbb R^d)\rightarrow L^2(\mathbb R^d)$ denote the full-lattice cell-average projection, defined by
\begin{equation}\label{eq:full-lattice-cell-average-projection}
    (\widehat\Pi_\varepsilon u)(x):=\fint_{\varepsilon(n+Y)}u(y)\,dy, \qquad \forall\,x\in\varepsilon(n+Y), \,\forall n\in\mathbb Z^d.
\end{equation}
It is easy to see that
\begin{equation}
    J_\varepsilon^*\widehat\Pi_\varepsilon=J_\varepsilon^*, \qquad \widehat\Pi_\varepsilon J_\varepsilon= J_\varepsilon.
\end{equation}

The following estimate is elementary.

\begin{lemma}
    We have
\begin{equation}\label{eq:cell-variance-estimate-for-full-lattice-projection}
    \|(I-\widehat{\Pi}_\varepsilon)H\|_{L^2(\mathbb R^d)}^2\leq\frac{1}{2\varepsilon^d}\int_{[-\varepsilon,\varepsilon]^d}\|\Delta_hH\|_{L^2(\mathbb R^d)}^2\,dh, \qquad \forall\, H\in L^2(\mathbb R^d).
\end{equation}
\end{lemma}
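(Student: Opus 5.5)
The plan is to localize to a single cell, bound the cell variance by an averaged difference, and then sum over the lattice. Since $\widehat\Pi_\varepsilon$ acts independently on each cell $\varepsilon(n+Y)$, $n\in\mathbb Z^d$, we have
\[
\|(I-\widehat\Pi_\varepsilon)H\|_{L^2(\mathbb R^d)}^2=\sum_{n\in\mathbb Z^d}\int_{Q_n}\Bigl|H(x)-\fint_{Q_n}H(y)\,dy\Bigr|^2dx,\qquad Q_n:=\varepsilon(n+Y).
\]
It therefore suffices to bound each cell variance by a difference integral and then sum over $n$.

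On a fixed cell $Q=Q_n$ we use the standard variance identity
\[
\int_Q\Bigl|H-\fint_Q H\Bigr|^2dx=\frac{1}{2|Q|}\int_Q\int_Q|H(x)-H(y)|^2\,dy\,dx .
\]
This follows by expanding the square: the double integral equals $2|Q|\int_Q|H|^2-2|\int_Q H|^2$. Here $|Q|=\varepsilon^d$. We then substitute $y=x+h$. For $x,y\in Q$ the displacement $h=y-x$ lies in $(-\varepsilon,\varepsilon)^d\subset[-\varepsilon,\varepsilon]^d$, because $Q$ is an open cube of side $\varepsilon$. Dropping the constraint $x+h\in Q$ (the integrand is nonnegative) gives
\[
\int_Q\Bigl|H-\fint_Q H\Bigr|^2dx\le\frac{1}{2\varepsilon^d}\int_{[-\varepsilon,\varepsilon]^d}\int_{Q}|H(x+h)-H(x)|^2\,dx\,dh .
\]

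Summing over $n\in\mathbb Z^d$ and using Tonelli to exchange the sum with the $h$-integral, the inner sum becomes $\sum_n\int_{Q_n}|\Delta_hH|^2dx=\|\Delta_hH\|_{L^2(\mathbb R^d)}^2$, because the cells tile $\mathbb R^d$ up to a null set. This yields \eqref{eq:cell-variance-estimate-for-full-lattice-projection} with the stated constant $\frac{1}{2\varepsilon^d}$. There is no real obstacle. The only points to check are the factor $\tfrac12$ in the variance identity and that Tonelli applies, which it does since every integrand is nonnegative and measurable for $H\in L^2$.
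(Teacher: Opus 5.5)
Your proof is correct and follows essentially the same route as the paper: the cellwise variance identity with $|Q|=\varepsilon^d$, the substitution $y=x+h$ with $h\in[-\varepsilon,\varepsilon]^d$, and summation over the lattice via Tonelli. The only cosmetic difference is bookkeeping. The paper keeps the constraint $x\in\varepsilon(m+Y)\cap(\varepsilon(m+Y)-h)$ after summing and uses that these sets are disjoint in $m$, whereas you drop the constraint cell by cell and use disjointness of the cells themselves; both give the same bound.
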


\begin{proof}
Note that $\widehat{\Pi}_\varepsilon H$ is the average of $H$ on each cell $\varepsilon(m+Y)$. The variance identity gives
\begin{equation}\label{eq:exact-cell-variance-identity}
    \int_{\varepsilon(m+Y)}|H(x)-\widehat{\Pi}_\varepsilon H|^2\,dx=\frac{1}{2\varepsilon^d}\int_{\varepsilon(m+Y)}\int_{\varepsilon(m+Y)}|H(y)-H(x)|^2\,dydx.
\end{equation}
Summing \eqref{eq:exact-cell-variance-identity} over $m\in\mathbb Z^d$, we obtain
\begin{equation}\label{eq:summed-cell-variance-identity}
    \|(I-\widehat{\Pi}_\varepsilon)H\|_{L^2(\mathbb R^d)}^2=\frac{1}{2\varepsilon^d}\sum_{m\in\mathbb Z^d}\int_{\varepsilon(m+Y)}\int_{\varepsilon(m+Y)}|H(y)-H(x)|^2\,dydx.
\end{equation}
For two points $x,y$ belonging to the same cell $\varepsilon(m+Y)$, their difference $h=y-x$ belongs to $[-\varepsilon,\varepsilon]^d$. Making the change of variables $y=x+h$ in \eqref{eq:summed-cell-variance-identity} therefore gives
\begin{equation}\label{eq:cell-variance-with-restricted-translations}
    \|(I-\widehat{\Pi}_\varepsilon)H\|_{L^2(\mathbb R^d)}^2=\frac{1}{2\varepsilon^d}\int_{[-\varepsilon,\varepsilon]^d}\sum_{m\in\mathbb Z^d}\int_{\varepsilon(m+Y)\cap(\varepsilon(m+Y)-h)}|\Delta_hH(x)|^2\,dxdh.
\end{equation}
For each fixed $h$, the sets $\varepsilon(m+Y)\cap(\varepsilon(m+Y)-h)$ are mutually disjoint for different $m\in \mathbb{Z}^d$. Consequently,
\begin{equation} \label{est564}
    \sum_{m\in\mathbb Z^d}\int_{\varepsilon(m+Y)\cap(\varepsilon(m+Y)-h)}|\Delta_hH(x)|^2\,dx\leq\int_{\mathbb R^d}|\Delta_hH(x)|^2\,dx.
\end{equation}
Substitution of \eqref{est564} into \eqref{eq:cell-variance-with-restricted-translations} proves \eqref{eq:cell-variance-estimate-for-full-lattice-projection}.
\end{proof}

\begin{lemma}
\label{lem:projection-and-boundary-depth}
Let $u_D = (L_D +I)^{-1} f \in H^1_0(\Omega)$ for some $f \in L^2(\Omega)$. Denote by $U_D\in H^1(\mathbb{R}^d)$ the extension of $u_D$ by zero outside $\Omega$. Then, we have
\begin{equation}\label{eq:projection-and-discrete-energy-estimates}
    \left\|\widehat{\Pi}_\varepsilon U_D-\Pi_\varepsilon u_D\right\|_{L^2(\mathbb{R}^d)}\leq
        C\varepsilon^{3/2}\|f\|_{L^2(\Omega)}, \qquad 
        \left\|\Pi_\varepsilon u_D-U_D\right\|_{L^2(\mathbb{R}^d)}\leq C\varepsilon\|f\|_{L^2(\Omega)}.
\end{equation}
Moreover, $J^*_\varepsilon u_D$ admits the estimates
\begin{equation}\label{eq:weighted-boundary-depth-estimate}
    \|J_\varepsilon^*u_D\|_{\mathcal X_\varepsilon}\leq C\|f\|_{L^2(\Omega)}, \qquad  \mathcal{M}_{\partial \Omega,\varepsilon} (J_\varepsilon^*u_D )\leq C\varepsilon^3\|f\|_{L^2(\Omega)}^2.
\end{equation}
\end{lemma}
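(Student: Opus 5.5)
The plan is to compare the two projections cell by cell, using the boundary-layer estimates of Lemma \ref{lemboundarylayerestiuD} and the translation estimate \eqref{translaesti}.

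\emph{First estimate in \eqref{eq:projection-and-discrete-energy-estimates}.} The functions $\widehat\Pi_\varepsilon U_D$ and $\Pi_\varepsilon u_D$ coincide on every cell $\varepsilon(n+Y)$ with $n\in\mathcal I_\varepsilon$. They also both vanish on cells disjoint from $\Omega$. So the difference is supported on the cells that meet $\Omega$ but are not compactly contained in it. All such cells lie in $\Omega_{C\varepsilon}\cup(\mathbb R^d\setminus\Omega)$, with $C=\sqrt d$. Since $U_D=0$ outside $\Omega$, Jensen's inequality on each such cell gives
\[
\|\widehat\Pi_\varepsilon U_D-\Pi_\varepsilon u_D\|_{L^2}^2\le \int_{\Omega_{C\varepsilon}}|u_D|^2\,dx\le C\varepsilon^3\|f\|^2,
\]
by \eqref{estinablauD}.

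\emph{Second estimate in \eqref{eq:projection-and-discrete-energy-estimates}.} Write
\[
\Pi_\varepsilon u_D-U_D=(\Pi_\varepsilon u_D-\widehat\Pi_\varepsilon U_D)+(\widehat\Pi_\varepsilon-I)U_D .
\]
The first term is $O(\varepsilon^{3/2})$ by the previous step. For the second term, apply \eqref{eq:cell-variance-estimate-for-full-lattice-projection} together with $\|\Delta_hU_D\|_{L^2}\le|h|\,\|\nabla U_D\|_{L^2}\le C|h|\,\|f\|$. Averaging over $h\in[-\varepsilon,\varepsilon]^d$ then yields $C\varepsilon\|f\|$.

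\emph{Discrete $H^1$ bound in \eqref{eq:weighted-boundary-depth-estimate}.} First, $\|J_\varepsilon^*u_D\|_{\ell^2}\le\|u_D\|_{L^2}\le C\|f\|$. For the Dirichlet part, set $\phi=J_\varepsilon^*u_D$ and observe that $E\phi=J_\varepsilon^*$ applied on the full lattice to $U_D$, up to the boundary cells already handled. So it is enough to bound
\[
\varepsilon^{-2}\sum_n|\widehat\phi(n+e_i)-\widehat\phi(n)|^2,\qquad \widehat\phi(n):=\varepsilon^{-d/2}\int_{\varepsilon(n+Y)}U_D\,dx .
\]
The difference $\widehat\phi(n+e_i)-\widehat\phi(n)$ equals $\varepsilon^{-d/2}\int_{\varepsilon(n+Y)}\Delta_{\varepsilon e_i}U_D\,dx$. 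Summing the squares over $n$ gives at most $\|\Delta_{\varepsilon e_i}U_D\|_{L^2}^2\le\varepsilon^2\|\nabla U_D\|_{L^2}^2$, hence a bound by $C\|f\|^2$.

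The vectors $E\phi$ and $\widehat\phi$ differ only on boundary cells $n\notin\mathcal I_\varepsilon$. There $|\widehat\phi(n)|^2\le\int_{\varepsilon(n+Y)\cap\Omega}|u_D|^2$. Summing over these cells gives $\le C\varepsilon^3\|f\|^2$, and multiplying by $\varepsilon^{-2}$ still leaves a bounded contribution. This proves $\|J_\varepsilon^*u_D\|_{\mathcal X_\varepsilon}\le C\|f\|$.

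\emph{Weighted boundary mass in \eqref{eq:weighted-boundary-depth-estimate}.} This is the main point. The crude route through Lemma \ref{lem:discreteHardy} only gives $O(1)$, so I would use the depth estimates directly. A cell with $d_\varepsilon(n)=k$ lies in $\Omega_{Ck\varepsilon}$. Decompose into layers and use Jensen:
\[
\mathcal M_{\partial\Omega,\varepsilon}(J_\varepsilon^*u_D)\le\sum_{k\ge1}e^{-\gamma k}\int_{\Omega_{Ck\varepsilon}}|u_D|^2\,dx .
\]
By the second bound in \eqref{estinablauD} with $t=Ck\varepsilon$ (for $Ck\varepsilon\ge1$, just use $\|u_D\|^2\le C\|f\|^2\le Ck^3\varepsilon^3\|f\|^2$), each term is at most $Ck^3\varepsilon^3\|f\|^2$. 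Since $\sum_k k^3e^{-\gamma k}<\infty$, this gives $C\varepsilon^3\|f\|^2$.

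The only delicate input is the $t^3$ boundary-layer bound. That bound rests on the Besov endpoint regularity of Lemma \ref{lemBesov}, and it is already available from Lemma \ref{lemboundarylayerestiuD}.
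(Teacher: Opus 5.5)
Your proposal is correct and follows the paper's proof essentially step by step. It uses the agreement of $\widehat\Pi_\varepsilon U_D$ and $\Pi_\varepsilon u_D$ on $\mathcal I_\varepsilon$ together with the $t^3$ boundary-layer bound, the cell-projection error (via \eqref{eq:cell-variance-estimate-for-full-lattice-projection} instead of the paper's cellwise Poincar\'e inequality, which amounts to the same thing), difference quotients compared against $\widehat\Pi_\varepsilon U_D$, and the layer-by-layer Jensen bound for $\mathcal M_{\partial\Omega,\varepsilon}$. One side remark needs correcting: the route through Lemma \ref{lem:discreteHardy} gives $\mathcal M_{\partial\Omega,\varepsilon}(J_\varepsilon^*u_D)\le C\mathfrak D_\varepsilon(J_\varepsilon^*u_D)\le C\varepsilon^2\|f\|_{L^2(\Omega)}^2$, not merely $O(1)$, although you are right that this still falls short of $\varepsilon^3$.
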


\begin{proof}
The functions $\widehat{\Pi}_\varepsilon U_D$ and $\Pi_\varepsilon u_D$ agree on every cell $\varepsilon(n+Y)$ with $n\in\mathcal{I}_\varepsilon$. Therefore,
\begin{equation}\label{eq:unselected-cell-projection-estimate}
    \begin{aligned}
        \left\| \widehat{\Pi}_\varepsilon U_D-\Pi_\varepsilon u_D \right\|_{L^2(\mathbb{R}^d)}^2 &\leq \sum_{\varepsilon(n+\overline{Y})\cap\partial \Omega\neq\emptyset} \int_{\varepsilon(n+Y)}|U_D(x)|^2\,dx\\
        &\leq\int_{\Omega_{\sqrt{d} \varepsilon}}|u_D(x)|^2\,dx.
    \end{aligned}
\end{equation}
This, combined with the boundary layer estimate \eqref{estinablauD} for $u_D$, yields 
\begin{equation}\label{eq:boundary-cell-error-final}
    \left\|\widehat{\Pi}_\varepsilon U_D-\Pi_\varepsilon u_D \right\|_{L^2(\mathbb{R}^d)}^2\leq C\varepsilon^3\|f\|_{L^2(\Omega)}^2,
\end{equation}
which is the first estimate in \eqref{eq:projection-and-discrete-energy-estimates}. The rescaled Poincar\'e inequality and the standard energy estimate for $u_D$ give
\begin{equation}\label{eq:full-lattice-cell-poincare}
    \left\| \widehat{\Pi}_\varepsilon U_D-U_D \right\|_{L^2(\mathbb{R}^d)} \leq C\varepsilon \|f\|_{L^2(\Omega)}.
\end{equation}
Combining \eqref{eq:boundary-cell-error-final} and \eqref{eq:full-lattice-cell-poincare}, we obtain the second estimate in \eqref{eq:projection-and-discrete-energy-estimates}.

We next prove \eqref{eq:weighted-boundary-depth-estimate}. By the isometric property of $J_\varepsilon$, one has
\begin{equation}\label{eq:discrete-energy-difference-quotient-identity}
    \varepsilon^{-2} \mathfrak{D}_\varepsilon (J_\varepsilon^*u_D)= \sum_{i=1}^d\left\|
    \frac{\Delta_{\varepsilon e_i} }{\varepsilon} \Pi_\varepsilon u_D\right\|_{L^2(\mathbb{R}^d)}^2.
\end{equation}
Since $\widehat{\Pi}_\varepsilon$ commutes with $\Delta_{\varepsilon e_i}$, 
\begin{equation}\label{eq:full-lattice-difference-quotient-estimate}
    \left\|\frac{\Delta_{\varepsilon e_i} }{\varepsilon} \widehat{\Pi}_\varepsilon U_D \right\|_{L^2(\mathbb{R}^d)}^2
    =\left\| \widehat{\Pi}_\varepsilon \frac{\Delta_{\varepsilon e_i} }{\varepsilon} U_D \right\|_{L^2(\mathbb{R}^d)}^2
    \leq \|\nabla U_D\|_{L^2(\mathbb{R}^d)}^2 \leq C \|f\|_{L^2(\Omega)}^2.
\end{equation}
Thus, \eqref{eq:boundary-cell-error-final}, \eqref{eq:discrete-energy-difference-quotient-identity}, and \eqref{eq:full-lattice-difference-quotient-estimate} give
\begin{equation}\label{eq:selected-difference-quotient-estimate}
    \begin{aligned}
        \varepsilon^{-2} \mathfrak{D}_\varepsilon (J_\varepsilon^*u_D)
        &\leq C\sum_{i=1}^d \left\|\frac{\Delta_{\varepsilon e_i}}{\varepsilon}\widehat{\Pi}_\varepsilon U_D \right\|_{L^2(\mathbb{R}^d)}^2 +
        \frac{C}{\varepsilon^2} \left\|\widehat{\Pi}_\varepsilon U_D-\Pi_\varepsilon u_D\right\|^2_{L^2(\mathbb{R}^d)}\\
        &\leq C\|f\|^2_{L^2(\Omega)} + C\varepsilon \|f\|^2_{L^2(\Omega)}\leq C\|f\|^2_{L^2(\Omega)}.
    \end{aligned}
\end{equation}

Moreover, since $\Pi_\varepsilon$ is a projection, we have 
\begin{equation}\label{eq:ell2-part-of-Xepsilon-estimate}
    \left\|J_\varepsilon^*u_D \right\|_{\ell^2(\mathcal{I}_\varepsilon)}=\|\Pi_\varepsilon u_D\|_{L^2(\mathbb{R}^d)}
    \leq\|u_D\|_{L^2(\Omega)} \leq C\|f\|_{L^2(\Omega)}.
\end{equation}
Therefore, \eqref{eq:selected-difference-quotient-estimate} and \eqref{eq:ell2-part-of-Xepsilon-estimate} prove the first estimate in \eqref{eq:weighted-boundary-depth-estimate}. 

It remains to prove the second estimate in \eqref{eq:weighted-boundary-depth-estimate}. Suppose that $d_\varepsilon(n)=k$. It is clear that
\begin{equation}\label{eq:cell-contained-in-geometric-boundary-layer}
    \varepsilon(n+Y)\subset \Omega_{\sqrt{d}(k+1)\varepsilon}.
\end{equation}
Applying the boundary-layer estimate \eqref{estinablauD} for $u_D$, we obtain
\begin{equation}\label{eq:weighted-boundary-mass-calculation}
    \begin{aligned}
        \mathcal{M}_{\partial \Omega,\varepsilon} (J_\varepsilon^*u_D) 
        &= \sum_{k\geq1}e^{-\gamma k} \sum_{\substack{n\in\mathcal{I}_\varepsilon\atop  d_\varepsilon(n)=k}}
        \left|(J_\varepsilon^*u_D)(n)\right|^2
        \leq\sum_{k\geq1}e^{-\gamma k}\int_{\Omega_{\sqrt{d}(k+1)\varepsilon}}|u_D(x)|^2\,dx\\
        &\leq C\varepsilon^3\|f\|_{L^2(\Omega)}^2\sum_{k\geq1}e^{-\gamma k}(k+1)^3
        \leq C\varepsilon^3\|f\|_{L^2(\Omega)}^2.
    \end{aligned}
\end{equation}
This completes the proof.
\end{proof}

The piecewise-constant reconstruction $J_\varepsilon\varphi$ contains the exact discrete information of $\varphi$ at each bubble.
However, $J_\varepsilon\varphi$ generally has jumps across the cell interfaces and therefore belongs only to $L^2(\Omega)$. To prove the continuous limit of the capacitance matrix, we need a finer reconstruction that preserves the cell averages and lies in the energy space $H_0^1(\Omega)$. This reconstruction is given in the next lemma.

\begin{lemma}\label{lemKeps}
    There exist $C=C(\Omega)>0$ and a linear map $K_\varepsilon:\ell^2(\mathcal{I}_\varepsilon) \rightarrow H_0^1(\Omega)$ such that 
    \begin{equation}\label{estiK}
    \widehat  \Pi_\varepsilon  K_\varepsilon =  J_\varepsilon,\qquad \| K_\varepsilon\|_{\mathcal{X}_\varepsilon \rightarrow H^1(\Omega)} \leq C, \qquad  \| K_\varepsilon - J_\varepsilon \|_{\mathcal{X}_\varepsilon \rightarrow L^2(\Omega)} \leq C\varepsilon,
    \end{equation}
    where we identify $K_\varepsilon$ with its zero extension. In particular, $J_\varepsilon^* K_\varepsilon  = I$.
\end{lemma}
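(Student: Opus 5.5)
We construct $K_\varepsilon$ by the standard "piecewise multilinear interpolation plus cell-average correction" recipe. For $\varphi\in\ell^2(\mathcal I_\varepsilon)$, let $E\varphi\in\ell^2(\mathbb Z^d)$ be its zero extension. Define the continuous, piecewise $Q_1$ (multilinear) interpolant $I_\varepsilon\varphi$ on the dual grid whose nodes are the cell centres $\varepsilon n$, with nodal values $(E\varphi)(n)\varepsilon^{-d/2}$.

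This function lies in $H^1(\mathbb R^d)$. Its support is contained in the union of cells $\varepsilon(n+\overline{Y})$ adjacent to $\mathcal I_\varepsilon$. Since $\varepsilon(n+Y)\Subset\Omega$ for $n\in\mathcal I_\varepsilon$, the interpolant is supported in the convex hull of centres of cells in $\Omega$, hence in $\Omega$. This gives $I_\varepsilon\varphi\in H^1_0(\Omega)$.

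The standard nodal estimates give
\[
\|\nabla I_\varepsilon\varphi\|_{L^2}^2\le C\varepsilon^{-2}\mathfrak D_\varepsilon(\varphi),\qquad \|I_\varepsilon\varphi-J_\varepsilon\varphi\|_{L^2}^2\le C\mathfrak D_\varepsilon(\varphi).
\]
Both follow because on each dual cube the deviation from any nodal value is controlled by nearest-neighbour differences, and the scaling $\varepsilon^{-d/2}$ times volume $\varepsilon^d$ cancels. In particular $\|I_\varepsilon\varphi-J_\varepsilon\varphi\|_{L^2}\le C\varepsilon\|\varphi\|_{\mathcal X_\varepsilon}$ and $\|I_\varepsilon\varphi\|_{H^1}\le C\|\varphi\|_{\mathcal X_\varepsilon}$.

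Next we correct the cell averages. Fix $\eta\in C_0^\infty(Y)$ with $\int_Y\eta=1$, and set
\[
K_\varepsilon\varphi:=I_\varepsilon\varphi+\sum_{n\in\mathbb Z^d}c_n\,\eta\big(\tfrac{x}{\varepsilon}-n\big),\qquad c_n:=\varepsilon^{-d}\!\int_{\varepsilon(n+Y)}(J_\varepsilon\varphi-I_\varepsilon\varphi)\,dx .
\]
Then $\widehat\Pi_\varepsilon K_\varepsilon\varphi=\widehat\Pi_\varepsilon J_\varepsilon\varphi=J_\varepsilon\varphi$ exactly.

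This requires that $c_n=0$ for cells not in $\mathcal I_\varepsilon$ whose bumps would leave $\Omega$. For $n\notin\mathcal I_\varepsilon$ we have $J_\varepsilon\varphi=0$ on $\varepsilon(n+Y)$, so $c_n=-\fint I_\varepsilon\varphi$, which need not vanish. This is the \textbf{main obstacle}.

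We handle it in one of two ways. The first option is to use a shrunken interpolation: take nodal values on $\mathcal I_\varepsilon$ but interpolate only on dual cubes all of whose $2^d$ vertices lie in $\mathcal I_\varepsilon$, setting $I_\varepsilon\varphi:=0$ elsewhere via a cutoff. The second option is simpler: define $c_n$ only for $n\in\mathcal I_\varepsilon$, and replace $I_\varepsilon\varphi$ by $\chi_\varepsilon I_\varepsilon\varphi$ with $\chi_\varepsilon$ a cutoff vanishing on non-$\mathcal I_\varepsilon$ cells and equal to one on cells at lattice distance $\ge2$. The second option is cleaner. Outside $\mathcal I_\varepsilon$ the function is then identically zero, so the averages vanish, matching $J_\varepsilon\varphi=0$ there.

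The cost of the cutoff is $\|\nabla\chi_\varepsilon\|_\infty\lesssim\varepsilon^{-1}$ times $\|I_\varepsilon\varphi\|_{L^2}$ on the first layer. That quantity is bounded by
\[
C\varepsilon^{-1}\Big(\sum_{d_\varepsilon(n)\le 2}|\varphi(n)|^2\Big)^{1/2}\le C\varepsilon^{-1}\mathcal M_{\partial\Omega,\varepsilon}(\varphi)^{1/2}\le C\varepsilon^{-1}\mathfrak D_\varepsilon(\varphi)^{1/2}\le C\|\varphi\|_{\mathcal X_\varepsilon},
\]
using Lemma \ref{lem:discreteHardy}. The $L^2$ cost of the cutoff is at most $C\mathfrak D_\varepsilon^{1/2}\le C\varepsilon\|\varphi\|_{\mathcal X_\varepsilon}$ by the same chain.

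Finally we bound the corrector. By Cauchy–Schwarz, $\sum_n|c_n|^2\varepsilon^d\le\|J_\varepsilon\varphi-\chi_\varepsilon I_\varepsilon\varphi\|_{L^2}^2\le C\varepsilon^2\|\varphi\|_{\mathcal X_\varepsilon}^2$. The bumps have disjoint supports, so their $L^2$ norm is at most $C\varepsilon\|\varphi\|_{\mathcal X_\varepsilon}$ and their gradient norm is at most $C\varepsilon^{-1}\cdot\varepsilon\|\varphi\|_{\mathcal X_\varepsilon}$. The bumps lie in cells of $\mathcal I_\varepsilon$, hence compactly in $\Omega$.

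Collecting the estimates gives the three properties in \eqref{estiK}. The identity $J_\varepsilon^*K_\varepsilon=J_\varepsilon^*\widehat\Pi_\varepsilon K_\varepsilon=J_\varepsilon^*J_\varepsilon=I$ follows from $J_\varepsilon^*\widehat\Pi_\varepsilon=J_\varepsilon^*$.
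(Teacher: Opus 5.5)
Your construction is correct and is essentially the paper's. The paper uses the same three ingredients: a continuous $\varepsilon$-scale reconstruction of $J_\varepsilon\varphi$ (you use $Q_1$ nodal interpolation where the paper mollifies $J_\varepsilon\varphi$ at scale $\varepsilon$); a boundary cutoff with $|\nabla\chi_\varepsilon|\lesssim\varepsilon^{-1}$, whose cost is absorbed by the discrete Hardy/layer estimate of Lemma~\ref{lem:discreteHardy}; and bump corrections $c_n\eta(x/\varepsilon-n)$ on the cells of $\mathcal I_\varepsilon$ that restore the cell averages.

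One intermediate claim is false but harmless. The uncut interpolant is not in $H_0^1(\Omega)$, because its support extends into neighbouring cells $m\notin\mathcal I_\varepsilon$ (up to their centres $\varepsilon m$), and those cells need not lie in $\Omega$. It is your cutoff $\chi_\varepsilon$, which vanishes on every non-$\mathcal I_\varepsilon$ cell, that actually places $K_\varepsilon\varphi$ in $H_0^1(\Omega)$.
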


\begin{proof}
    Given $\varphi\in \ell^2(\mathcal{I}_\varepsilon)$, let $u_\varepsilon = J_\varepsilon \varphi $. Recall that $\Omega_t=\{x\in\Omega: \mathrm{dist}\,(x,\partial \Omega)<t\}$ is the $t$-boundary layer of $\Omega$. By \eqref{hardyLayerEstimate}, for any $L>0$, there exists $C_L>0$ such that 
    \begin{equation}
    \sum_{ \varepsilon(n+Y)\cap\Omega_{L\varepsilon}\neq \emptyset }
    |\varphi(n)|^2\leq C_L\mathfrak{D}_\varepsilon(\varphi).
\end{equation}
On the other hand, the definition of $u_\varepsilon$ gives $\int_{\varepsilon(n+Y)}|u_\varepsilon|^2\,dx=|\varphi(n)|^2$ for $n\in \mathcal{I}_\varepsilon$. Consequently,
\begin{equation}\label{boundarystripestimateueps}
    \int_{\Omega_{L\varepsilon}}|u_\varepsilon|^2\,dx\leq \sum_{ \varepsilon(n+Y)\cap\Omega_{L\varepsilon}\neq\varnothing }|\varphi(n)|^2\leq C_L\mathfrak{D}_\varepsilon(\varphi).
\end{equation}
We divide the remainder of the proof into several steps.

    \textit{Step 1. Mollification and cutoff near the boundary.} For $0 \leq r\leq  \varepsilon$, translating $u_\varepsilon$ by $re_i$ only changes it in slabs of thickness $r$ adjacent to cell faces orthogonal to $e_i$. Therefore,
    \begin{equation}
        \| \tau_{re_i} u_\varepsilon - u_\varepsilon \|_{L^2(\mathbb{R}^d)}^2 \leq C r\varepsilon^{d-1} \varepsilon^{-d} \sum_{n\in \mathbb{Z}^d} |E\varphi(n+e_i)- E\varphi(n)|^2.
    \end{equation}
    For a vector $h\in \mathbb{R}^d$ such that $|h|\leq \varepsilon$, it follows that
    \begin{equation}\label{translaDirich}
        \| \tau_{h} u_\varepsilon - u_\varepsilon \|_{L^2(\mathbb{R}^d)}^2 \leq C \mathfrak{D}_\varepsilon
        (\varphi).
    \end{equation}
Let $\rho\in C_0^\infty(\mathbb{R}^d;[0,1])$ such that $\int_{\mathbb{R}^d} \rho\,dx=1$, and $\rho_\varepsilon$ be the standard smooth mollifier:
\begin{equation}
    \rho_\varepsilon (x) = \varepsilon^{-d} \rho(x/\varepsilon), \qquad \forall\,x\in \mathbb{R}^d.
\end{equation}
Define the mollification
\begin{equation}
    v_\varepsilon := \rho_\varepsilon * u_\varepsilon \in C^\infty_0(\mathbb{R}^d).
\end{equation}
It is clear that
\begin{equation}\label{estimateofvepsphi}
    \| v_\varepsilon \|_{L^2(\mathbb{R}^d)} \leq \| u_\varepsilon \|_{L^2(\mathbb{R}^d)} \leq C\| \varphi \|_{\ell^2(\mathcal{I}_\varepsilon)}.
\end{equation}
Moreover, using \eqref{translaDirich} and
\begin{equation}
    v_\varepsilon - u_\varepsilon = \int_{\mathbb{R}^d} \rho_\varepsilon(y) ( \tau_{-y}u_\varepsilon - u_\varepsilon )\,dy,
\end{equation}
we get
\begin{equation}\label{translationesti}
   \| \nabla v_\varepsilon \|_{L^2(\mathbb{R}^d)}^2 \leq \frac{C}{\varepsilon^2} \mathfrak{D}_\varepsilon  (\varphi) \qquad \mathrm{and} \qquad \| v_\varepsilon - u_\varepsilon \|_{L^2(\mathbb{R}^d)}^2 \leq C\mathfrak{D}_\varepsilon  (\varphi).
\end{equation}

Choose $K>\sqrt{d}$, and $\chi \in C^\infty (\mathbb{R})$ such that $\chi=0$ on $(-\infty,K)$, $\chi=1$ on $(2K,\infty)$. Define
\begin{equation}
    \zeta_\varepsilon(x):=\chi\left( \frac{\operatorname{dist}(x,\mathbb{R}^d\setminus \Omega)}{\varepsilon}\right) \quad
    \mathrm{and} \quad 
    h:=\zeta_\varepsilon v_\varepsilon .
\end{equation}
Since $K>\sqrt{d}$, $h$ is supported on the union of cells $\bigcup_{n\in \mathcal{I}_\varepsilon}\varepsilon(n+Y)$.
Since the distance function is Lipschitz, we have
\begin{equation}\label{gradientofzeta}
    |\nabla\zeta_\varepsilon| \leq C\varepsilon^{-1} \qquad\text{ in }\mathbb{R}^d.
\end{equation}

To estimate the error introduced by the cutoff, we write
\begin{equation}
    h-u_\varepsilon = (v_\varepsilon-u_\varepsilon) + (\zeta_\varepsilon-1)v_\varepsilon.
\end{equation}
Since $v_\varepsilon=\rho_\varepsilon*u_\varepsilon$, and $\int_{\mathbb{R}^d}\rho_\varepsilon\,dx=1$, Jensen's inequality gives
\begin{equation}
    |v_\varepsilon(x)|^2\leq \int_{\mathbb{R}^d} \rho_\varepsilon(y)|u_\varepsilon(x-y)|^2\,dy.
\end{equation}
Integrating this inequality over $\operatorname{supp}\,(\zeta_\varepsilon-1)$ and applying \eqref{boundarystripestimateueps} and \eqref{translationesti}, we obtain
\begin{equation}\label{diffehueps}
    \|h-u_\varepsilon\|^2_{L^2(\mathbb{R}^d)} \leq C\mathfrak{D}_\varepsilon(\varphi).
\end{equation}
Furthermore, since $\nabla h= \zeta_\varepsilon\nabla v_\varepsilon + v_\varepsilon\nabla\zeta_\varepsilon$ and $\mathrm{supp}\,\nabla \zeta_\varepsilon \subset \Omega_{2K\varepsilon}$, we have
\begin{equation}
    \begin{aligned}
         \|\nabla h\|_{L^2(\mathbb{R}^d)} &\leq C\| \nabla v_\varepsilon  \|_{L^2(\mathbb{R}^d)} + \| v_\varepsilon\nabla\zeta_\varepsilon \|_{L^2(\Omega_{2K\varepsilon})} \\
         & \leq \frac{C}{\varepsilon} \sqrt{ \mathfrak{D}_\varepsilon  (\varphi) } + \frac{1}{\varepsilon}\| v_\varepsilon \|_{L^2(\Omega_{2K\varepsilon})} \\
         & \leq \frac{C}{\varepsilon} \sqrt{ \mathfrak{D}_\varepsilon  (\varphi) } + \frac{1}{\varepsilon} \big( \| v_\varepsilon - u_\varepsilon \|_{L^2(\mathbb{R}^d)} + \| u_\varepsilon \|_{L^2(\Omega_{2K\varepsilon})} \big) \\
         & \leq C\| \varphi \|_{\mathcal{X}_\varepsilon},
    \end{aligned}
\end{equation}
where we used \eqref{translationesti} and \eqref{gradientofzeta} in the second inequality, and \eqref{translationesti} and \eqref{boundarystripestimateueps} in the last inequality.

\textit{Step 2. Construction of $K_\varepsilon$.} Define the average defect by
\begin{equation}
    r:=\varphi- J_\varepsilon^*h \in \ell^2(\mathcal{I}_\varepsilon).
\end{equation}
Since $u_\varepsilon= J_\varepsilon\varphi$ and $ J^*_\varepsilon   J_\varepsilon =I$, we have $r =  J_\varepsilon^*(u_\varepsilon-h)$. It follows from \eqref{diffehueps} that
\begin{equation}
    \|r\|^2_{\ell^2(\mathcal I_\varepsilon)} \leq \|u_\varepsilon-h\|^2_{L^2(\mathbb{R}^d)}\leq C\mathfrak{D}_\varepsilon(\varphi) .
\end{equation}

Choose $b\in C_0^\infty(\mathbb{R}^d)$ such that $\operatorname{supp}b\Subset Y$ and $\int_Y b(y)\,dy=1$. We now define
\begin{equation}
    (K_\varepsilon\varphi)(x):= h(x) + \varepsilon^{-d/2}\sum_{n\in \mathcal I_\varepsilon}r(n) b\left(\frac{x}{\varepsilon}-n\right).
\end{equation}
It follows that
\begin{equation}
    \begin{aligned}
        J_\varepsilon^*K_\varepsilon\varphi=J_\varepsilon^*h+r =\varphi.
    \end{aligned}
\end{equation}
On every cell $\varepsilon(n+Y)$ with $n\notin \mathcal{I}_\varepsilon$, $K_\varepsilon \varphi$ vanishes. Therefore,
$\widehat \Pi_\varepsilon  K_\varepsilon\varphi= J_\varepsilon\varphi$.

Finally, the estimates for $h$ and $r$ give
\begin{equation}
    \|K_\varepsilon\varphi\|_{H^1(\Omega)} \leq C\left( \|\varphi\|_{\ell^2(\mathcal{I}_\varepsilon)} +
 \varepsilon^{-1} \mathfrak{D}_\varepsilon(\varphi)^{1/2}\right)  \leq
    C\|\varphi\|_{\mathcal{X}_\varepsilon}.
\end{equation}
Similarly,
\begin{equation}
    \begin{aligned}
        \|K_\varepsilon\varphi-J_\varepsilon\varphi\|_{L^2(\Omega)}
        &\leq
        \|h-u_\varepsilon\|_{L^2(\mathbb{R}^d)}
        +
        C\|r\|_{\ell^2(\mathcal{I}_\varepsilon)} \\
        &\leq
        C\mathfrak{D}_\varepsilon(\varphi)^{1/2} \\
        &\leq
        C\varepsilon\|\varphi\|_{\mathcal{X}_\varepsilon}.
    \end{aligned}
\end{equation}
The proof is complete.
\end{proof}

\subsection{Rescaled limit of the capacitance matrices}

Define the limiting sesquilinear form
\begin{equation}
    \mathfrak{b}_D (u,v) := \int_\Omega A_{\mathrm{eff}} \nabla u \cdot \overline{\nabla v}\,dx + \int_\Omega u \overline{v}\,dx, \qquad
    \forall\, u,v\in H_0^1(\Omega).
\end{equation}

We first prove the convergence of the sesquilinear forms.

\begin{lemma}
\label{lem:two-quantitative-consistency-estimates}
Let $f,g\in L^2(\Omega)$, and define
\begin{equation}\label{eq:definition-of-two-continuum-resolvent-solutions}
    u_D:=(L_D+I)^{-1}f, \qquad v_D:=(L_D+I)^{-1}g.
\end{equation}
There exists $C=C(L_D,\Omega)>0$ such that for $j\in\{\mathrm{t},\mathrm{f}\}$, and every $\psi\in\ell^2(\mathcal I_\varepsilon)$, 
\begin{equation}\label{eq:mixed-quantitative-consistency-estimate}
    \left| \mathfrak b_\varepsilon^j(J_\varepsilon^*u_D,\psi) -\mathfrak b_D(u_D,K_\varepsilon\psi) \right| \leq C\varepsilon^{1/2}\|f\|_{L^2(\Omega)}\|\psi\|_{\mathcal X_\varepsilon}.
\end{equation}
Moreover,
\begin{equation}\label{eq:two-resolvent-quantitative-consistency-estimate}
    \left| \mathfrak b_\varepsilon^j(J_\varepsilon^*u_D,J_\varepsilon^*v_D) -\mathfrak b_D(u_D,v_D) \right| \leq C\varepsilon\|f\|_{L^2(\Omega)}\|g\|_{L^2(\Omega)}.
\end{equation}
\end{lemma}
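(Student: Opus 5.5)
The plan is to first reduce the case $j=\mathrm{f}$ to $j=\mathrm{t}$, and then prove the truncated case on the Fourier side, using the Besov regularity of Lemma~\ref{lemBesov}.

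\textbf{Step 1: reduction from $j=\mathrm f$ to $j=\mathrm t$.} By definition,
\[
\mathfrak b^{\mathrm f}_\varepsilon(\phi,\psi)-\mathfrak b^{\mathrm t}_\varepsilon(\phi,\psi)=\varepsilon^{-2}\langle R_\varepsilon S_M\phi,S_M\psi\rangle,
\]
and $R_\varepsilon\ge 0$. Cauchy--Schwarz for the nonnegative form $R_\varepsilon$, together with \eqref{estimateRbyM} (note that $\mathcal M_{\partial\Omega,\varepsilon}$ is invariant under $S_M$), bounds this by $C\varepsilon^{-2}\mathcal M_{\partial\Omega,\varepsilon}(\phi)^{1/2}\mathcal M_{\partial\Omega,\varepsilon}(\psi)^{1/2}$.
\begin{itemize}
\item For $\phi=J_\varepsilon^*u_D$, Lemma~\ref{lem:projection-and-boundary-depth} gives $\mathcal M_{\partial\Omega,\varepsilon}(\phi)\le C\varepsilon^3\|f\|^2$.
\item For an arbitrary $\psi$, Lemma~\ref{lem:discreteHardy} gives $\mathcal M_{\partial\Omega,\varepsilon}(\psi)\le C\mathfrak D_\varepsilon(\psi)\le C\varepsilon^2\|\psi\|_{\mathcal X_\varepsilon}^2$.
\end{itemize}
Hence the difference is $O(\varepsilon^{1/2})$ in \eqref{eq:mixed-quantitative-consistency-estimate}. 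When both arguments are resolvent samples, the same argument gives $\varepsilon^{-2}\varepsilon^{3/2}\varepsilon^{3/2}=\varepsilon$ in \eqref{eq:two-resolvent-quantitative-consistency-estimate}. It therefore suffices to treat $j=\mathrm t$.

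\textbf{Step 2: setting up the truncated case.}
\begin{itemize}
\item \emph{Symbol replacement.} In \eqref{fourierbeps}, replace $g_\varepsilon$ by $g_0$. By \eqref{differencegepsg0} the error is at most $C\,\mathfrak D_\varepsilon(\phi)^{1/2}\mathfrak D_\varepsilon(\psi)^{1/2}\le C\varepsilon^2\|\phi\|_{\mathcal X_\varepsilon}\|\psi\|_{\mathcal X_\varepsilon}$, and $\|J_\varepsilon^*u_D\|_{\mathcal X_\varepsilon}\le C\|f\|$ by \eqref{eq:weighted-boundary-depth-estimate}.
\item \emph{Real-space form of $g_0$.} By the reflection symmetry and analyticity, $g_0(\beta)=\sum_{k\in\mathbb Z^d}c_k(1-\cos k\cdot\beta)$ with $c_k=c_{-k}$ exponentially decaying, and $\tfrac12\sum_k c_k\,k\otimes k=A_{\mathrm{eff}}$.
\item \emph{Lifting to $L^2$.} Since $J_\varepsilon$ is an isometry intertwining lattice shifts with the translations $\tau_{\varepsilon k}$ applied to piecewise constants, the main term becomes
\[
\tfrac{1}{2}\varepsilon^{-2}\sum_k c_k\langle \Delta_{\varepsilon k}\Pi_\varepsilon U_D,\Delta_{\varepsilon k}J_\varepsilon\psi\rangle .
\]
\item \emph{The mass term.} It is exact up to $O(\varepsilon)$: $\langle J_\varepsilon^*u_D,\psi\rangle=\langle u_D,J_\varepsilon\psi\rangle$, and $\|K_\varepsilon-J_\varepsilon\|\le C\varepsilon$ by Lemma~\ref{lemKeps}.
\end{itemize}
Next I make three successive replacements.
\begin{enumerate}
\item Replace $\Pi_\varepsilon U_D$ by $\widehat\Pi_\varepsilon U_D$. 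The $\ell^2$ error is $\varepsilon^{3/2}$ by Lemma~\ref{lem:projection-and-boundary-depth}, and $\|\Delta_{\varepsilon k}J_\varepsilon\psi\|\le C|k|\,\mathfrak D_\varepsilon(\psi)^{1/2}\le C|k|\varepsilon\|\psi\|_{\mathcal X_\varepsilon}$. The cost is therefore $\varepsilon^{-2}\varepsilon^{3/2}\varepsilon=\varepsilon^{1/2}$; in the symmetric case it is $\varepsilon$.
\item Use $J_\varepsilon\psi=\widehat\Pi_\varepsilon K_\varepsilon\psi$ and the fact that $\widehat\Pi_\varepsilon$ is a self-adjoint projection commuting with $\Delta_{\varepsilon k}$. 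This moves $\widehat\Pi_\varepsilon$ onto one side, so that I pair $\Delta_{\varepsilon k}U_D$ against $\Delta_{\varepsilon k}K_\varepsilon\psi$ (or against $\Delta_{\varepsilon k}V_D$).
\item Control the remaining $(I-\widehat\Pi_\varepsilon)$ term. Estimate \eqref{eq:cell-variance-estimate-for-full-lattice-projection}, combined with the translation estimate \eqref{translaesti}, gives $\|(I-\widehat\Pi_\varepsilon)\nabla U_D\|\le C\varepsilon^{1/2}\|f\|$, which is the required order.
\end{enumerate}

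\textbf{Step 3: the difference-quotient comparison (main obstacle).} Writing $h=\varepsilon k$, I have $\Delta_hU=\int_0^1 h\cdot\nabla U(\cdot+sh)\,ds$. Translation invariance then gives
\[
\varepsilon^{-2}\langle\Delta_hU,\Delta_hW\rangle=\int_0^1\!\!\int_0^1\langle k\cdot\nabla U,\tau_{(t-s)h}\,k\cdot\nabla W\rangle\,ds\,dt .
\]
Subtracting $\langle k\cdot\nabla U,k\cdot\nabla W\rangle$ leaves the terms $\langle k\cdot\nabla U,\Delta_{(t-s)h}\,k\cdot\nabla W\rangle$.
\begin{itemize}
\item \emph{Mixed estimate.} With $W=K_\varepsilon\psi\in H^1$ only, I move the difference onto $U_D$ and use \eqref{translaesti}. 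This bounds the error by $C|k|^{5/2}\varepsilon^{1/2}\|f\|\,\|K_\varepsilon\psi\|_{H^1}$, which is summable against $c_k$ and, by Lemma~\ref{lemKeps}, gives \eqref{eq:mixed-quantitative-consistency-estimate}.
\item \emph{Symmetric estimate.} With $W=V_D$, I symmetrize in $a=(t-s)h\leftrightarrow -a$. This turns the error into
\[
\tfrac12\langle\nabla U_D,(\tau_a+\tau_{-a}-2)\nabla V_D\rangle=-\tfrac12\langle\Delta_a\nabla U_D,\Delta_a\nabla V_D\rangle ,
\]
and applying \eqref{translaesti} to both factors gives $|a|\,\|f\|\,\|g\|\le C|k|\varepsilon\|f\|\,\|g\|$.
\end{itemize}
Making this symmetrization compatible with the earlier projection steps, so that the full $O(\varepsilon)$ survives in \eqref{eq:two-resolvent-quantitative-consistency-estimate}, is the step I expect to be most delicate.
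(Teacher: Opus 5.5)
Your route is the paper's: you reduce $\mathrm f$ to $\mathrm t$ through $R_\varepsilon$ and the boundary mass, swap $g_\varepsilon$ for $g_0$, expand $g_0$ in $1-\cos(r\cdot\beta)$, and lift with $J_\varepsilon$. You then pass from $\Pi_\varepsilon$ to $\widehat\Pi_\varepsilon$, control $I-\widehat\Pi_\varepsilon$ by the cell-variance bound, and compare difference quotients with derivatives via Newton--Leibniz and the $a\leftrightarrow -a$ symmetrization. Step 1, the symbol swap, the mass term and the whole mixed estimate \eqref{eq:mixed-quantitative-consistency-estimate} are fine. The gap is the $O(\varepsilon)$ rate in \eqref{eq:two-resolvent-quantitative-consistency-estimate}, which you assert but do not prove. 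This rate matters: it is exactly what makes Proposition~\ref{prop:capacitance-norm-resolvent-convergence} give $O(\varepsilon)$ rather than $O(\varepsilon^{1/2})$. With the bounds you state, two places lose a factor $\varepsilon^{1/2}$.

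\emph{First, replacement (1) with $\psi=J_\varepsilon^*v_D$.} Here the other factor only satisfies $\|\Delta_{\varepsilon k}\Pi_\varepsilon v_D\|\le C|k|\varepsilon\|g\|$. This cannot be improved globally, since $\varepsilon^{-2}\mathfrak D_\varepsilon(J_\varepsilon^*v_D)$ is of order one in general. Your count therefore gives $\varepsilon^{-2}\varepsilon^{3/2}\varepsilon=\varepsilon^{1/2}$, not $\varepsilon$. The missing idea is localization. The error $e_u:=\widehat\Pi_\varepsilon U_D-\Pi_\varepsilon u_D$ lives on the cells meeting $\partial\Omega$, so $\Delta_{\varepsilon k}e_u$ is supported in a layer of width $C(1+|k|)\varepsilon$. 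On that layer, the boundary-layer bound \eqref{estinablauD} for $\nabla v_D$ gives $\varepsilon^{-1}\|\Delta_{\varepsilon k}\widehat\Pi_\varepsilon V_D\|\le C|k|(1+|k|)^{1/2}\varepsilon^{1/2}\|g\|$. Each cross term is then $\varepsilon^{1/2}\cdot\varepsilon^{1/2}$, and the $e_u$--$e_v$ term is also $O(\varepsilon)$.

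\emph{Second, item (3) with $W=V_D$.} Bounding $\langle(I-\widehat\Pi_\varepsilon)\Delta_{\varepsilon k}U_D,\Delta_{\varepsilon k}V_D\rangle$ by $\|(I-\widehat\Pi_\varepsilon)\Delta_{\varepsilon k}U_D\|\,\|\Delta_{\varepsilon k}V_D\|$ again yields only $\varepsilon^{1/2}$ after dividing by $\varepsilon^2$. You must use that $I-\widehat\Pi_\varepsilon$ is an orthogonal projection to place it on both factors. Then apply \eqref{eq:cell-variance-estimate-for-full-lattice-projection} and \eqref{translaesti} to each factor, working with the translates $\tau_a\nabla U_D$ since $\widehat\Pi_\varepsilon$ does not commute with $\tau_{t\varepsilon k}$ for non-integer $t$. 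This gives $C|k|^2\varepsilon^{3}\|f\|\|g\|$ before division.

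This is how the paper proceeds. It first proves $|\mathfrak b_\varepsilon^{(0)}(\widehat\Pi_\varepsilon U_D,\widehat\Pi_\varepsilon V_D)-\mathfrak b_D(u_D,v_D)|\le C\varepsilon\|f\|\|g\|$ via the two-sided projection identity and the symmetrized Newton--Leibniz formula. Only then does it replace $\widehat\Pi_\varepsilon$ by $\Pi_\varepsilon$, using the localized bounds. So the delicacy you anticipate lies in these projection steps, not in the symmetrization, which already delivers $O(\varepsilon)$ as you showed.
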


\begin{proof}
Let $U_D,V_D\in H^1(\mathbb R^d)$ be the zero extensions of $u_D$ and $v_D$, respectively. We will repeatedly use the following Newton-Leibniz formula: for every $n\in \mathbb{R}^d$,
\begin{equation}\label{eq:averaged-derivative-formula-for-resolvent-solutions}
    \frac{\Delta_{\varepsilon n}U_D}{\varepsilon}=\int_0^1\tau_{t\varepsilon n}(n\cdot\nabla U_D)\,dt, \qquad \frac{\Delta_{\varepsilon n}V_D}{\varepsilon}=\int_0^1\tau_{t\varepsilon n}(n\cdot\nabla V_D)\,dt.
\end{equation}
We divide the proof into three steps.

\textit{Step 1. Reduction to the desired estimates for the full-lattice reference form.} Recall that
\begin{equation}\label{eq:definition-of-reference-symbol-in-consistency-proof}
    g_0(\beta)=\widehat C^M-\widehat C^{M+\beta}.
\end{equation}
The function $g_0$ is even, real-valued, analytic, and satisfies $g_0(0)=0$. It therefore admits the Fourier representation
\begin{equation}\label{eq:fourier-representation-of-reference-symbol}
    g_0(\beta)=\sum_{r\in\mathbb Z^d}\kappa_r\big(1-\cos(r\cdot\beta)\big), \qquad \kappa_{-r}=\kappa_r\in\mathbb R.
\end{equation}
The analyticity of $g_0$ gives exponential decay of the Fourier coefficients $\kappa_r$ as $|r|\rightarrow \infty$. Moreover, the definition of $A_{\mathrm{eff}}$ gives
\begin{equation}\label{eq:effective-matrix-from-fourier-coefficients}
    A_{\mathrm{eff}}=\frac12\nabla^2g_0(0)=\frac12\sum_{r\in\mathbb Z^d}\kappa_r r\otimes r.
\end{equation}

Let $\mathscr C_\varepsilon$ denote the space of functions in $L^2(\mathbb R^d)$ that are constant on every cell $\varepsilon(m+Y)$, $m\in\mathbb Z^d$. Every $W,Z\in\mathscr C_\varepsilon$ can be written uniquely as
\begin{equation}\label{eq:cellwise-coefficient-representation}
    W(x)=\varepsilon^{-d/2}w(m), \qquad Z(x)=\varepsilon^{-d/2}\zeta(m), \qquad \forall\,x\in\varepsilon(m+Y),
\end{equation}
for some $w,\zeta\in\ell^2(\mathbb Z^d)$. The normalization $\varepsilon^{-d/2}$ makes this identification isometric:
\begin{equation}\label{eq:cellwise-isometric-identification}
    \langle W,Z\rangle_{L^2(\mathbb R^d)}=\langle w,\zeta\rangle_{\ell^2(\mathbb Z^d)}.
\end{equation}
Consequently,
\begin{equation}\label{eq:physical-and-discrete-difference-inner-products}
    \left\langle\frac{\Delta_{\varepsilon r}W}{\varepsilon},\frac{\Delta_{\varepsilon r}Z}{\varepsilon}\right\rangle_{L^2(\mathbb R^d)}
    =\frac{1}{\varepsilon^2}\sum_{m\in\mathbb Z^d}\bigl(w(m+r)-w(m)\bigr)\overline{\bigl(\zeta(m+r)-\zeta(m)\bigr)}.
\end{equation}

We define the full-lattice reference form by
\begin{equation}\label{eq:full-lattice-reference-form}
    \mathfrak b_\varepsilon^{(0)}(W,Z):=\frac12\sum_{r\in\mathbb Z^d}\kappa_r\left\langle\frac{\Delta_{\varepsilon r}W}{\varepsilon},\frac{\Delta_{\varepsilon r}Z}{\varepsilon}\right\rangle_{L^2(\mathbb R^d)}+\langle W,Z\rangle_{L^2(\mathbb R^d)}.
\end{equation}
This series is absolutely convergent due to the exponential decay of $\kappa_r$. 

Next, we verify that \eqref{eq:full-lattice-reference-form} has the Fourier symbol $g_0(\beta)/\varepsilon^2+1$. Let $\varphi,\psi\in\ell^2(\mathcal I_\varepsilon)$, let $E\varphi,E\psi\in\ell^2(\mathbb Z^d)$ denote their zero extensions, and set
\begin{equation}
    W= J_\varepsilon\varphi, \qquad Z=  J_\varepsilon\psi.
\end{equation}
The coefficient sequences associated with $W$ and $Z$ in \eqref{eq:cellwise-coefficient-representation} are therefore $E\varphi$ and $E\psi$, respectively.

By the definition \eqref{defFourier} of the discrete Fourier transform, the difference $\xi(m+r)-\xi(m)$ has the Fourier multiplier $e^{-\mi  r\cdot\beta}-1$. Hence, by the Plancherel identity,
\begin{equation}\label{eq:Fourier-representation-of-one-lattice-difference}
    \left\langle\frac{\Delta_{\varepsilon r}W}{\varepsilon},\frac{\Delta_{\varepsilon r}Z}{\varepsilon}\right\rangle_{L^2(\mathbb R^d)}
    =\frac{1}{|\mathcal B_1|}\int_{\mathcal B_1}\frac{|e^{-\mi r\cdot\beta}-1|^2}{\varepsilon^2}\widehat{E\varphi}(\beta)\overline{\widehat{E\psi}(\beta)}\,d\beta.
\end{equation}
Since
\begin{equation}\label{eq:phase-difference-and-cosine}
    |e^{- \mi r\cdot\beta}-1|^2=2\big(1-\cos(r\cdot\beta)\big),
\end{equation}
substituting into \eqref{eq:full-lattice-reference-form}, followed by the Fourier representation of $g_0$, gives the exact identity
\begin{equation}\label{eq:Fourier-symbol-of-full-lattice-reference-form}
    \mathfrak b_\varepsilon^{(0)}\big( J_\varepsilon\varphi, J_\varepsilon\psi\big)
    =\frac{1}{|\mathcal B_1|}\int_{\mathcal B_1}\left(\frac{g_0(\beta)}{\varepsilon^2}+1\right)\widehat{E\varphi}(\beta)\overline{\widehat{E\psi}(\beta)}\,d\beta.
\end{equation}

Comparing this with the Fourier representation \eqref{fourierbeps} of $\mathfrak{b}_\varepsilon^{\mathrm{t}}$, we get
\begin{equation}\label{comparebtandb0}
\begin{aligned}
    & \left| \mathfrak b_\varepsilon^{\mathrm{t}}(J_\varepsilon^*u_D,\psi) -\mathfrak b_\varepsilon^{(0)}(\Pi_\varepsilon u_D,J_\varepsilon\psi) \right|^2 \\
    & = \left| \frac{1}{|\mathcal B_1|}\int_{\mathcal B_1}\left(\frac{ g_\varepsilon(\beta)-g_0(\beta)}{\varepsilon^2}\right)\widehat{EJ_\varepsilon^*u_D}(\beta)\overline{\widehat{E\psi}(\beta)}\,d\beta \right|^2 \\
    & \leq C\left( \int_{\mathcal B_1} \theta(\beta) |\widehat{EJ_\varepsilon^*u_D}(\beta) |^2\,d\beta  \right) \left( \int_{\mathcal B_1} \theta(\beta) | \widehat{E\psi}(\beta) |^2 \,d\beta \right) \\
    & = C \mathfrak{D}_\varepsilon (J_\varepsilon^* u_D) \mathfrak{D}_\varepsilon (\psi). 
\end{aligned}
\end{equation}
where we used \eqref{differencegepsg0} for the third line and \eqref{fourierDN} for the last line. By Lemma \ref{lem:projection-and-boundary-depth}, the above is bounded by 
\begin{equation}
    C\varepsilon^4 \| f \|^2_{L^2(\Omega)} \| \psi \|^2_{\mathcal{X}_\varepsilon}.
\end{equation}
Moreover, we have 
\begin{equation}
\begin{aligned}
    \left| \mathfrak b_\varepsilon^{\mathrm{f}}(J_\varepsilon^*u_D,\psi) - \mathfrak b_\varepsilon^{\mathrm{t}}(J_\varepsilon^*u_D,\psi) \right| &=\varepsilon^{-2} \left| \langle R_\varepsilon S_M J_\varepsilon^*u_D, S_M \psi \rangle_{\ell^2(\mathcal{I}_\varepsilon)} \right| \\ 
    &\leq \varepsilon^{-2} \langle R_\varepsilon S_M J_\varepsilon^*u_D, S_M J_\varepsilon^*u_D \rangle_{\ell^2(\mathcal{I}_\varepsilon)}^{1/2} \langle R_\varepsilon S_M \psi, S_M \psi \rangle_{\ell^2(\mathcal{I}_\varepsilon)}^{1/2} \\
    & \leq C\varepsilon^{1/2} \| f\|_{L^2(\Omega)} \| \psi \|_{\mathcal{X}_\varepsilon},
\end{aligned}
\end{equation}
where we used Lemma \ref{lemlocalbounR}, Lemma \ref{lem:discreteHardy}, and Lemma \ref{lem:projection-and-boundary-depth} in the last line. 

Similarly, we have
\begin{equation}
    \left| \mathfrak b_\varepsilon^{\mathrm{t}}(J_\varepsilon^*u_D,J_\varepsilon^*v_D) -\mathfrak b_\varepsilon^{(0)}(\Pi_\varepsilon u_D,\Pi_\varepsilon v_D) \right|^2 \leq C\varepsilon^4 \| f \|^2_{L^2(\Omega)} \|g \|^2_{L^2(\Omega)} ,
\end{equation}
and
\begin{equation}\label{comparebtandbo2}
    \left| \mathfrak b_\varepsilon^{\mathrm{f}}(J_\varepsilon^*u_D,J_\varepsilon^*v_D) - \mathfrak b_\varepsilon^{\mathrm{t}}(J_\varepsilon^*u_D,J_\varepsilon^*v_D) \right| \leq C \varepsilon \| f \|_{L^2(\Omega)} \|g \|_{L^2(\Omega)}.
\end{equation}

The estimates \eqref{comparebtandb0}--\eqref{comparebtandbo2} imply that we can replace $\mathfrak{b}_\varepsilon^j$ in the desired estimates \eqref{eq:mixed-quantitative-consistency-estimate}--\eqref{eq:two-resolvent-quantitative-consistency-estimate} by $\mathfrak{b}^{(0)}_\varepsilon$. Namely, it suffices to prove that
\begin{equation}\label{eq:mixed-quantitative-consistency-estimateb0}
    \left| \mathfrak b_\varepsilon^{(0)}(\Pi_\varepsilon u_D,J_\varepsilon\psi)  -\mathfrak b_D(u_D,K_\varepsilon\psi) \right| \leq C\varepsilon^{1/2}\|f\|_{L^2(\Omega)}\|\psi\|_{\mathcal X_\varepsilon},
\end{equation}
and
\begin{equation}\label{eq:two-resolvent-quantitative-consistency-estimateb0}
    \left| \mathfrak b_\varepsilon^{(0)}(\Pi_\varepsilon u_D,\Pi_\varepsilon v_D)  -\mathfrak b_D(u_D,v_D) \right| \leq C\varepsilon\|f\|_{L^2(\Omega)}\|g\|_{L^2(\Omega)}.
\end{equation}
In the next steps, our aim is to prove \eqref{eq:mixed-quantitative-consistency-estimateb0}--\eqref{eq:two-resolvent-quantitative-consistency-estimateb0}.

\textit{Step 2. Convergence of the full-lattice reference form.} 
The Newton--Leibniz formula \eqref{eq:averaged-derivative-formula-for-resolvent-solutions} gives the exact identity
\begin{equation}\label{eq:symmetric-two-translation-identity}
    \begin{aligned}
        &\left\langle\frac{\Delta_{\varepsilon n}U_D}{\varepsilon},\frac{\Delta_{\varepsilon n}V_D}{\varepsilon}\right\rangle_{L^2(\mathbb R^d)}-\langle n\cdot\nabla U_D,n\cdot\nabla V_D\rangle_{L^2(\mathbb R^d)}\\
        &\qquad=-\frac12\int_0^1\int_0^1\left\langle\Delta_{(t-s)\varepsilon n} (n\cdot\nabla U_D),\Delta_{(t-s)\varepsilon n}(n\cdot\nabla V_D)\right\rangle_{L^2(\mathbb R^d)}\,dtds.
    \end{aligned}
\end{equation}
Using the translation estimate \eqref{translaesti} for both $U_D$ and $V_D$, we deduce that
\begin{equation}\label{eq:two-regular-finite-difference-error}
    \left|\left\langle\frac{\Delta_{\varepsilon n}U_D}{\varepsilon},\frac{\Delta_{\varepsilon n}V_D}{\varepsilon}\right\rangle_{L^2(\mathbb R^d)}-\langle n\cdot\nabla U_D,n\cdot\nabla V_D\rangle_{L^2(\mathbb R^d)}\right|\leq C\varepsilon|n|^3\|f\|_{L^2(\Omega)}\|g\|_{L^2(\Omega)}.
\end{equation}
This identity measures the difference between the discrete directional derivative $\Delta_{\varepsilon n}/\varepsilon$ and the continuous directional derivative $n\cdot \nabla$.

Since $\widehat{\Pi}_\varepsilon$ is an orthogonal projection and commutes with every translation $\tau_{\varepsilon n}$, $n\in \mathbb{Z}^d$, one has
\begin{equation}\label{eq:orthogonality-identity-for-projected-differences}
    \begin{aligned}
        &\left\langle\frac{\Delta_{\varepsilon n}\widehat{\Pi}_\varepsilon U_D}{\varepsilon},\frac{\Delta_{\varepsilon n}\widehat{\Pi}_\varepsilon V_D}{\varepsilon}\right\rangle_{L^2(\mathbb R^d)}-\left\langle\frac{\Delta_{\varepsilon n}U_D}{\varepsilon},\frac{\Delta_{\varepsilon n}V_D}{\varepsilon}\right\rangle_{L^2(\mathbb R^d)}\\
        &\qquad=-\left\langle(I-\widehat{\Pi}_\varepsilon)\frac{\Delta_{\varepsilon n}U_D}{\varepsilon},(I-\widehat{\Pi}_\varepsilon)\frac{\Delta_{\varepsilon n}V_D}{\varepsilon}\right\rangle_{L^2(\mathbb R^d)}.
    \end{aligned}
\end{equation}
Applying \eqref{eq:cell-variance-estimate-for-full-lattice-projection} to $H=\tau_a(n\cdot\nabla U_D)$ and using \eqref{translaesti}, we obtain
\begin{equation}\label{eq:projection-error-for-translated-gradient}
    \|(I-\widehat{\Pi}_\varepsilon)\tau_a(n\cdot\nabla U_D)\|_{L^2(\mathbb R^d)}\leq C\varepsilon^{1/2}|n|\|f\|_{L^2(\Omega)}, \qquad \forall\, a\in\mathbb R^d.
\end{equation}
The analogous estimate holds with $U_D,f$ replaced by $V_D,g$. It follows from \eqref{eq:averaged-derivative-formula-for-resolvent-solutions} and \eqref{eq:projection-error-for-translated-gradient} that the absolute value of \eqref{eq:orthogonality-identity-for-projected-differences} is bounded by
\begin{equation}\label{eq:projection-error-for-two-regular-differences}
    C\varepsilon|n|^2\|f\|_{L^2(\Omega)}\|g\|_{L^2(\Omega)}.
\end{equation}
On the other hand, the Poincar\'e inequality and the elementary energy estimate give
\begin{equation}\label{eq:zero-order-projection-identity}
\begin{aligned}
    |\langle\widehat{\Pi}_\varepsilon U_D,\widehat{\Pi}_\varepsilon V_D\rangle_{L^2(\mathbb R^d)}-\langle U_D,V_D\rangle_{L^2(\mathbb R^d)} |& =|\langle(I-\widehat{\Pi}_\varepsilon)U_D,(I-\widehat{\Pi}_\varepsilon)V_D\rangle_{L^2(\mathbb R^d)} | \\
    & \leq C\varepsilon^2\|f\|_{L^2(\Omega)}\|g\|_{L^2(\Omega)}.
\end{aligned}
\end{equation}
Combining \eqref{eq:effective-matrix-from-fourier-coefficients}, \eqref{eq:full-lattice-reference-form}, and \eqref{eq:two-regular-finite-difference-error}--\eqref{eq:projection-error-for-two-regular-differences}, and then summing with \eqref{eq:zero-order-projection-identity}, gives
\begin{equation}\label{eq:full-lattice-two-regular-consistency}
    \left|\mathfrak b_\varepsilon^{(0)}(\widehat{\Pi}_\varepsilon U_D,\widehat{\Pi}_\varepsilon V_D)-\mathfrak b_D(u_D,v_D)\right|\leq C\varepsilon\|f\|_{L^2(\Omega)}\|g\|_{L^2(\Omega)}.
\end{equation}

\textit{Step 3. Proof of \eqref{eq:mixed-quantitative-consistency-estimateb0}--\eqref{eq:two-resolvent-quantitative-consistency-estimateb0}.} It follows from Lemma~\ref{lem:projection-and-boundary-depth} that
\begin{equation}\label{eq:difference-quotients-of-boundary-projection-errors}
    \begin{aligned}
        \left\|\frac{\Delta_{\varepsilon n}(\widehat{\Pi}_\varepsilon U_D-\Pi_\varepsilon u_D)}{\varepsilon}\right\|_{L^2(\mathbb R^d)} &\leq C\varepsilon^{1/2}\|f\|_{L^2(\Omega)},\\
        \left\|\frac{\Delta_{\varepsilon n}(\widehat{\Pi}_\varepsilon V_D-\Pi_\varepsilon v_D)}{\varepsilon}\right\|_{L^2(\mathbb R^d)} &\leq C\varepsilon^{1/2}\|g\|_{L^2(\Omega)}.
    \end{aligned}
\end{equation}

Since $\Delta_{\varepsilon n}(\widehat{\Pi}_\varepsilon U_D-\Pi_\varepsilon u_D)$ is supported in $\{x\in \mathbb{R}^d:\mathrm{dist}\,(x,\partial\Omega)<\sqrt{d}(1+|n|)\varepsilon\}$, by the formula \eqref{eq:averaged-derivative-formula-for-resolvent-solutions} and the boundary layer estimates for $\nabla u_D$ and $\nabla v_D$ in \eqref{estinablauD}, we get
\begin{equation}\label{eq:localized-projected-difference-estimates}
    \begin{aligned}
    \left\|\frac{\Delta_{\varepsilon n}\widehat{\Pi}_\varepsilon U_D}{\varepsilon}\right\|_{L^2(\operatorname{supp}\Delta_{\varepsilon n}(\widehat{\Pi}_\varepsilon V_D-\Pi_\varepsilon v_D))}&\leq C\varepsilon^{1/2}|n|(1+|n|)^{1/2}\|f\|_{L^2(\Omega)}, \\
        \left\|\frac{\Delta_{\varepsilon n}\widehat{\Pi}_\varepsilon V_D}{\varepsilon}\right\|_{L^2(\operatorname{supp}\Delta_{\varepsilon n}(\widehat{\Pi}_\varepsilon U_D-\Pi_\varepsilon u_D))}&\leq C\varepsilon^{1/2}|n|(1+|n|)^{1/2}\|g\|_{L^2(\Omega)}.
    \end{aligned}
\end{equation}

Using \eqref{eq:difference-quotients-of-boundary-projection-errors} and \eqref{eq:localized-projected-difference-estimates}, we obtain
\begin{equation}\label{eq:removal-of-unselected-cells-for-gradient-terms}
    \begin{aligned}
        &\left|\left\langle\frac{\Delta_{\varepsilon n}\Pi_\varepsilon u_D}{\varepsilon},\frac{\Delta_{\varepsilon n}\Pi_\varepsilon v_D}{\varepsilon}\right\rangle_{L^2(\mathbb R^d)}-\left\langle\frac{\Delta_{\varepsilon n}\widehat{\Pi}_\varepsilon U_D}{\varepsilon},\frac{\Delta_{\varepsilon n}\widehat{\Pi}_\varepsilon V_D}{\varepsilon}\right\rangle_{L^2(\mathbb R^d)}\right|\\
        &\qquad\leq C\varepsilon(1+|n|)^2\|f\|_{L^2(\Omega)}\|g\|_{L^2(\Omega)}.
    \end{aligned}
\end{equation}
The boundary-layer estimates for $u_D$ and $v_D$ in \eqref{estinablauD} also yield
\begin{equation}\label{eq:localized-zero-order-projection-estimates}
    \begin{aligned}
        \|\widehat{\Pi}_\varepsilon V_D\|_{L^2(\operatorname{supp}(\widehat{\Pi}_\varepsilon U_D-\Pi_\varepsilon u_D))} &\leq C\varepsilon^{3/2}\|g\|_{L^2(\Omega)},\\
        \|\widehat{\Pi}_\varepsilon U_D\|_{L^2(\operatorname{supp}(\widehat{\Pi}_\varepsilon V_D-\Pi_\varepsilon v_D))} &\leq C\varepsilon^{3/2}\|f\|_{L^2(\Omega)}.
    \end{aligned}
\end{equation}
It follows from Lemma~\ref{lem:projection-and-boundary-depth} and \eqref{eq:localized-zero-order-projection-estimates} that
\begin{equation}\label{eq:removal-of-unselected-cells-for-zero-order-term}
    \left|\langle\Pi_\varepsilon u_D,\Pi_\varepsilon v_D\rangle_{L^2(\mathbb R^d)}-\langle\widehat{\Pi}_\varepsilon U_D,\widehat{\Pi}_\varepsilon V_D\rangle_{L^2(\mathbb R^d)}\right|\leq C\varepsilon^3\|f\|_{L^2(\Omega)}\|g\|_{L^2(\Omega)}.
\end{equation}
Combining \eqref{eq:full-lattice-two-regular-consistency}, \eqref{eq:removal-of-unselected-cells-for-gradient-terms}, and \eqref{eq:removal-of-unselected-cells-for-zero-order-term}, we conclude that \eqref{eq:two-resolvent-quantitative-consistency-estimateb0} holds:
\begin{equation} 
    \left|\mathfrak b_\varepsilon^{(0)}(\Pi_\varepsilon u_D,\Pi_\varepsilon v_D)-\mathfrak b_D(u_D,v_D)\right|\leq C\varepsilon\|f\|_{L^2(\Omega)}\|g\|_{L^2(\Omega)}.
\end{equation}

We next prove \eqref{eq:mixed-quantitative-consistency-estimateb0}. Fix $\psi\in\ell^2(\mathcal I_\varepsilon)$ and let
\begin{equation}\label{eq:zero-extension-of-reconstructed-test-function}
    W:=  K_\varepsilon \psi\in H^1(\mathbb R^d).
\end{equation}
Lemma \ref{lemKeps} gives
\begin{equation}\label{eq:reconstruction-properties-used-in-consistency-proof}
    \widehat{\Pi}_\varepsilon W=  J_\varepsilon \psi, \qquad J_\varepsilon^*K_\varepsilon \psi= \psi, \qquad \|W\|_{H^1(\mathbb R^d)}\leq C\| \psi\|_{\mathcal X_\varepsilon}.
\end{equation}

For the zero-order term, orthogonality and the cellwise Poincar\'e inequality give
\begin{equation}\label{eq:mixed-zero-order-full-lattice-error}
    \begin{aligned}
        \left|\langle\widehat{\Pi}_\varepsilon U_D,\widehat{\Pi}_\varepsilon W\rangle_{L^2(\mathbb R^d)}-\langle U_D,W\rangle_{L^2(\mathbb R^d)}\right|&=\left|\langle(I-\widehat{\Pi}_\varepsilon)U_D,(I-\widehat{\Pi}_\varepsilon)W\rangle_{L^2(\mathbb R^d)}\right|\\
        &\leq C\varepsilon^2\|f\|_{L^2(\Omega)}\| \psi\|_{\mathcal X_\varepsilon}.
    \end{aligned}
\end{equation}
For the gradient part, orthogonality of $\widehat{\Pi}_\varepsilon$ and the formula \eqref{eq:averaged-derivative-formula-for-resolvent-solutions} give the exact identity
\begin{equation}\label{eq:mixed-projected-difference-identity}
    \begin{aligned}
        \left\langle\frac{\Delta_{\varepsilon n}\widehat{\Pi}_\varepsilon U_D}{\varepsilon},\frac{\Delta_{\varepsilon n}\widehat{\Pi}_\varepsilon W}{\varepsilon}\right\rangle_{L^2(\mathbb R^d)}
        &=\left\langle\int_0^1\tau_{-s\varepsilon n}\widehat{\Pi}_\varepsilon\frac{\Delta_{\varepsilon n}U_D}{\varepsilon}\,ds,n\cdot\nabla W\right\rangle_{L^2(\mathbb R^d)}.
    \end{aligned}
\end{equation}
Again by \eqref{eq:averaged-derivative-formula-for-resolvent-solutions}, we split the difference between the first argument on the right-hand side and $n\cdot\nabla U_D$ into a cell-projection error and a translation error. \eqref{eq:projection-error-for-translated-gradient} and Lemma \ref{lemboundarylayerestiuD} then imply
\begin{equation}\label{eq:mixed-first-argument-error}
    \left\|\int_0^1\tau_{-s\varepsilon n}\widehat{\Pi}_\varepsilon\frac{\Delta_{\varepsilon n}U_D}{\varepsilon}\,ds-n\cdot\nabla U_D\right\|_{L^2(\mathbb R^d)}\leq C\varepsilon^{1/2}\bigl(|n|+|n|^{3/2}\bigr)\|f\|_{L^2(\Omega)}.
\end{equation}
Pairing \eqref{eq:mixed-first-argument-error} with $n\cdot\nabla W$, and using \eqref{eq:reconstruction-properties-used-in-consistency-proof}--\eqref{eq:mixed-zero-order-full-lattice-error}, we obtain 
\begin{equation}\label{eq:full-lattice-mixed-reference-consistency}
    \left|\mathfrak b_\varepsilon^{(0)}(\widehat{\Pi}_\varepsilon U_D,\widehat{\Pi}_\varepsilon W)-\mathfrak b_D(u_D,K_\varepsilon \psi)\right|\leq C\varepsilon^{1/2}\|f\|_{L^2(\Omega)}\| \psi\|_{\mathcal X_\varepsilon}.
\end{equation}

It remains to replace $\widehat{\Pi}_\varepsilon U_D$ by $\Pi_\varepsilon u_D$. By \eqref{eq:difference-quotients-of-boundary-projection-errors} and the $H^1$ estimate for $W$,
\begin{equation}\label{eq:mixed-removal-of-unselected-cells}
    \begin{aligned}
        \left\|\frac{\Delta_{\varepsilon n}(\widehat{\Pi}_\varepsilon U_D-\Pi_\varepsilon u_D)}{\varepsilon}\right\|_{L^2(\mathbb R^d)} &\leq C\varepsilon^{1/2}\|f\|_{L^2(\Omega)},\\
        \left\|\frac{\Delta_{\varepsilon n}\widehat{\Pi}_\varepsilon W}{\varepsilon}\right\|_{L^2(\mathbb R^d)} &\leq |n|\|\nabla W\|_{L^2(\mathbb R^d)}\leq C|n|\| \psi\|_{\mathcal X_\varepsilon}.
    \end{aligned}
\end{equation}
The zero-order replacement is bounded by
\begin{equation}\label{eq:mixed-zero-order-removal-of-unselected-cells}
    \|\widehat{\Pi}_\varepsilon U_D-\Pi_\varepsilon u_D\|_{L^2(\mathbb R^d)}\|\widehat{\Pi}_\varepsilon W\|_{L^2(\mathbb R^d)}\leq C\varepsilon^{3/2}\|f\|_{L^2(\Omega)}\| \psi\|_{\mathcal X_\varepsilon}.
\end{equation}
After summing over $n \in \mathbb{Z}^d$, estimates \eqref{eq:full-lattice-mixed-reference-consistency}--\eqref{eq:mixed-zero-order-removal-of-unselected-cells} yield the desired estimate \eqref{eq:mixed-quantitative-consistency-estimateb0}. The proof is complete.
\end{proof}

\begin{proposition}
\label{prop:capacitance-norm-resolvent-convergence}
There exists $C=C(L_D,\Omega)>0$ such that, for $j\in\{\mathrm{t},\mathrm{f}\}$, 
\begin{equation}\label{eq:capacitance-norm-resolvent-convergence}
    \left\|J_\varepsilon(L_\varepsilon^j+I)^{-1}J_\varepsilon^*-(L_D+I)^{-1}\right\|_{\mathcal L(L^2(\Omega))}\leq C\varepsilon,
\end{equation}
and
\begin{equation}\label{eq:energy-norm-resolvent-corrector-estimate}
    \left\| (L_\varepsilon^j+I)^{-1}J_\varepsilon^*-J_\varepsilon^*(L_D+I)^{-1}\right\|_{L^2(\Omega) \rightarrow \mathcal X_\varepsilon}\leq C\varepsilon^{1/2} .
\end{equation}
\end{proposition}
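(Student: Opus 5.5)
Both estimates follow a standard duality scheme that combines the coercivity in Lemma \ref{lem:bjequivXeps} with the two consistency estimates of Lemma \ref{lem:two-quantitative-consistency-estimates}. Fix $j\in\{\mathrm t,\mathrm f\}$ and $f\in L^2(\Omega)$. Set $u_D:=(L_D+I)^{-1}f$ and $\phi_\varepsilon:=(L^j_\varepsilon+I)^{-1}J_\varepsilon^*f$, so that $\mathfrak b^j_\varepsilon(\phi_\varepsilon,\psi)=\langle J_\varepsilon^*f,\psi\rangle_{\ell^2}$ for all $\psi$. Since $L^j_\varepsilon$ is Hermitian, $\mathfrak b^j_\varepsilon$ is a Hermitian form equivalent to $\|\cdot\|^2_{\mathcal X_\varepsilon}$ by Lemma \ref{lem:bjequivXeps}.

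\emph{Energy estimate \eqref{eq:energy-norm-resolvent-corrector-estimate}.} Let $e:=\phi_\varepsilon-J_\varepsilon^*u_D$. For any $\psi\in\ell^2(\mathcal I_\varepsilon)$, use $J_\varepsilon^*K_\varepsilon=I$ and the weak form of $u_D$ tested with $K_\varepsilon\psi\in H^1_0(\Omega)$:
\[
\mathfrak b^j_\varepsilon(e,\psi)=\langle f,J_\varepsilon\psi\rangle_{L^2}-\mathfrak b^j_\varepsilon(J_\varepsilon^*u_D,\psi)
=\langle f,(J_\varepsilon-K_\varepsilon)\psi\rangle_{L^2}+\bigl[\mathfrak b_D(u_D,K_\varepsilon\psi)-\mathfrak b^j_\varepsilon(J_\varepsilon^*u_D,\psi)\bigr].
\]
Lemma \ref{lemKeps} bounds the first term by $C\varepsilon\|f\|\|\psi\|_{\mathcal X_\varepsilon}$, and \eqref{eq:mixed-quantitative-consistency-estimate} bounds the second by $C\varepsilon^{1/2}\|f\|\|\psi\|_{\mathcal X_\varepsilon}$. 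Choosing $\psi=e$ and using coercivity gives $\|e\|_{\mathcal X_\varepsilon}\le C\varepsilon^{1/2}\|f\|$, which is \eqref{eq:energy-norm-resolvent-corrector-estimate}.

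\emph{$L^2$ estimate \eqref{eq:capacitance-norm-resolvent-convergence}.} Test against $g\in L^2(\Omega)$ and set $v_D:=(L_D+I)^{-1}g$, $\chi_\varepsilon:=(L^j_\varepsilon+I)^{-1}J_\varepsilon^*g$. Then
\[
\langle J_\varepsilon\phi_\varepsilon,g\rangle=\langle\phi_\varepsilon,J_\varepsilon^*g\rangle=\mathfrak b^j_\varepsilon(\phi_\varepsilon,\chi_\varepsilon),\qquad \langle u_D,g\rangle=\mathfrak b_D(u_D,v_D).
\]
Write $\phi_\varepsilon=J_\varepsilon^*u_D+e$ and $\chi_\varepsilon=J_\varepsilon^*v_D+e'$, and expand $\mathfrak b^j_\varepsilon(\phi_\varepsilon,\chi_\varepsilon)$ into four pieces:
\begin{itemize}
\item The main piece $\mathfrak b^j_\varepsilon(J_\varepsilon^*u_D,J_\varepsilon^*v_D)$ differs from $\mathfrak b_D(u_D,v_D)$ by $C\varepsilon\|f\|\|g\|$, by \eqref{eq:two-resolvent-quantitative-consistency-estimate}.
\item The quadratic piece $\mathfrak b^j_\varepsilon(e,e')$ is $O(\varepsilon)\|f\|\|g\|$ by the energy estimate applied to both $e$ and $e'$.
\item The cross pieces $\mathfrak b^j_\varepsilon(J_\varepsilon^*u_D,e')$ and $\mathfrak b^j_\varepsilon(e,J_\varepsilon^*v_D)$ need separate treatment, described below.
\end{itemize}
Finally, $\langle J_\varepsilon J_\varepsilon^*u_D-u_D,g\rangle=\langle\Pi_\varepsilon u_D-u_D,g\rangle=O(\varepsilon)\|f\|\|g\|$ by Lemma \ref{lem:projection-and-boundary-depth}.

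The main obstacle is the cross pieces, since the naive bound is only $\varepsilon^{1/2}\cdot\varepsilon^{1/2}$ times a factor carrying an extra $\varepsilon^{-1/2}$. Rather than estimate them directly, regroup using the equations. We have
\[
\mathfrak b^j_\varepsilon(\phi_\varepsilon,\chi_\varepsilon)=\langle\phi_\varepsilon,J_\varepsilon^*g\rangle=\langle J_\varepsilon^*u_D,J_\varepsilon^*g\rangle+\langle e,J_\varepsilon^*g\rangle,
\]
and $\langle e,J_\varepsilon^*g\rangle=\mathfrak b^j_\varepsilon(e,\chi_\varepsilon)=\mathfrak b^j_\varepsilon(e,J_\varepsilon^*v_D)+\mathfrak b^j_\varepsilon(e,e')$. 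The last term is already $O(\varepsilon)$. The remaining term $\mathfrak b^j_\varepsilon(e,J_\varepsilon^*v_D)$ is the conjugate of $\mathfrak b^j_\varepsilon(J_\varepsilon^*v_D,e)$, to which the residual identity above applies with $\psi=J_\varepsilon^*v_D$ (roles swapped). It equals
\[
\langle f,J_\varepsilon J_\varepsilon^*v_D\rangle-\mathfrak b^j_\varepsilon(J_\varepsilon^*u_D,J_\varepsilon^*v_D)
=\langle f,\Pi_\varepsilon v_D\rangle-\mathfrak b_D(u_D,v_D)+O(\varepsilon)=\langle f,\Pi_\varepsilon v_D-v_D\rangle+O(\varepsilon),
\]
using \eqref{eq:two-resolvent-quantitative-consistency-estimate} in the middle step. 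The remainder is $O(\varepsilon)\|f\|\|g\|$ by Lemma \ref{lem:projection-and-boundary-depth}. Here the sharp two-resolvent estimate replaces the mixed one, which explains why the $L^2$ operator norm gains the full power $\varepsilon$. Taking the supremum over $g$ yields \eqref{eq:capacitance-norm-resolvent-convergence}.
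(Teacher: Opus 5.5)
Your proof is correct and is essentially the paper's: the energy estimate is obtained exactly as there, and your regrouped identity $\langle J_\varepsilon\phi_\varepsilon-u_D,g\rangle=\langle\Pi_\varepsilon u_D-u_D,g\rangle+\mathfrak b^j_\varepsilon(e,J_\varepsilon^*v_D)+\mathfrak b^j_\varepsilon(e,e')$ is the polarized form of the exact quadratic identity that the paper writes with $g=f$. The only difference is that the paper bounds the diagonal form and then uses self-adjointness of the resolvent difference to get the operator norm, whereas you bound the sesquilinear form for arbitrary $f,g$ directly, so your preliminary four-piece expansion is not actually needed.
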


\begin{proof}
Fix $j\in\{\mathrm{t},\mathrm{f}\}$ and $f\in L^2(\Omega)$. Let $u_D = (L_D+I)^{-1} f$ and define
\begin{equation}\label{eq:definition-of-energy-error-in-resolvent-proof}
    e_\varepsilon^j:= (L_\varepsilon^j+I)^{-1}J_\varepsilon^* f -J_\varepsilon^*(L_D+I)^{-1} f.
\end{equation}
By the definition, one has
\begin{equation}\label{eq:error-equation-for-discrete-resolvent}
    \mathfrak b_\varepsilon^j(e_\varepsilon^j,\psi)=\left\langle f,J_\varepsilon\psi-K_\varepsilon\psi\right\rangle_{L^2(\Omega)}+\mathfrak b_D(u_D,K_\varepsilon\psi)-\mathfrak b_\varepsilon^j(J_\varepsilon^*u_D,\psi), \qquad \forall \psi\in\ell^2(\mathcal I_\varepsilon).
\end{equation}
The reconstruction estimate in Lemma~\ref{lemKeps} implies
\begin{equation}\label{eq:reconstruction-term-in-resolvent-error}
    \left|\left\langle f,J_\varepsilon\psi-K_\varepsilon\psi\right\rangle_{L^2(\Omega)}\right|\leq C\varepsilon\|f\|_{L^2(\Omega)}\|\psi\|_{\mathcal X_\varepsilon}.
\end{equation}
On the other hand, Lemma~\ref{lem:two-quantitative-consistency-estimates} gives
\begin{equation}\label{eq:consistency-term-in-resolvent-error}
    \left|\mathfrak b_D(u_D,K_\varepsilon\psi)-\mathfrak b_\varepsilon^j(J_\varepsilon^*u_D,\psi)\right|\leq C\varepsilon^{1/2}\|f\|_{L^2(\Omega)}\|\psi\|_{\mathcal X_\varepsilon}.
\end{equation}
Since $0<\varepsilon\leq1$, equations \eqref{eq:error-equation-for-discrete-resolvent}--\eqref{eq:consistency-term-in-resolvent-error} imply
\begin{equation}\label{eq:dual-error-bound-for-discrete-resolvent}
    \left|\mathfrak b_\varepsilon^j(e_\varepsilon^j,\psi)\right|\leq C\varepsilon^{1/2}\|f\|_{L^2(\Omega)}\|\psi\|_{\mathcal X_\varepsilon}.
\end{equation}
Taking $\psi=e_\varepsilon^j$ and using Lemma \ref{lem:bjequivXeps}, we obtain
\begin{equation}
    \|e_\varepsilon^j\|_{\mathcal X_\varepsilon}^2\leq C\varepsilon^{1/2}\|f\|_{L^2(\Omega)}\|e_\varepsilon^j\|_{\mathcal X_\varepsilon}.
\end{equation}
This proves \eqref{eq:energy-norm-resolvent-corrector-estimate}.

It remains to prove \eqref{eq:capacitance-norm-resolvent-convergence}. Define
\begin{equation}\label{eq:definition-of-embedded-resolvent-difference}
    \mathscr R_\varepsilon^j:=J_\varepsilon(L_\varepsilon^j+I)^{-1}J_\varepsilon^*-(L_D+I)^{-1}\in\mathcal L(L^2(\Omega)).
\end{equation}
By a direct computation, one has the identity
\begin{equation}\label{eq:exact-quadratic-resolvent-error-identity}
\begin{aligned}
    &\left\langle\mathscr R_\varepsilon^j f,f\right\rangle_{L^2(\Omega)}\\
    &=\mathfrak b_\varepsilon^j(e_\varepsilon^j,e_\varepsilon^j)-\left[\mathfrak b_\varepsilon^j(J_\varepsilon^*u_D,J_\varepsilon^*u_D)-\mathfrak b_D(u_D,u_D)\right]+2\operatorname{Re}\left\langle f,\Pi_\varepsilon u_D-u_D\right\rangle_{L^2(\Omega)}.
\end{aligned}
\end{equation}

We now estimate the three terms on the right-hand side of \eqref{eq:exact-quadratic-resolvent-error-identity}. The uniform boundedness of $\mathfrak b_\varepsilon^j$ with respect to the $\mathcal X_\varepsilon$ norm and \eqref{eq:energy-norm-resolvent-corrector-estimate} give
\begin{equation}\label{eq:quadratic-energy-error-estimate}
    \mathfrak b_\varepsilon^j(e_\varepsilon^j,e_\varepsilon^j)\leq C\|e_\varepsilon^j\|_{\mathcal X_\varepsilon}^2\leq C\varepsilon\|f\|_{L^2(\Omega)}^2.
\end{equation}
Applying the estimate in Lemma~\ref{lem:two-quantitative-consistency-estimates} with $g=f$, so that $v_D=u_D$, gives
\begin{equation}\label{eq:diagonal-two-resolvent-consistency-bound}
    \left|\mathfrak b_\varepsilon^j(J_\varepsilon^*u_D,J_\varepsilon^*u_D)-\mathfrak b_D(u_D,u_D)\right|\leq C\varepsilon\|f\|_{L^2(\Omega)}^2.
\end{equation}
Finally, the projection estimate in Lemma~\ref{lem:projection-and-boundary-depth} gives
\begin{equation}\label{eq:projected-limit-solution-error-in-quadratic-identity}
    \left|\left\langle f,\Pi_\varepsilon u_D-u_D\right\rangle_{L^2(\Omega)}\right|\leq\|f\|_{L^2(\Omega)}\|\Pi_\varepsilon u_D-u_D\|_{L^2(\Omega)}\leq C\varepsilon\|f\|_{L^2(\Omega)}^2.
\end{equation}
Combining \eqref{eq:exact-quadratic-resolvent-error-identity}--\eqref{eq:projected-limit-solution-error-in-quadratic-identity}, we conclude that
\begin{equation}\label{eq:quadratic-form-bound-for-embedded-resolvent-difference}
    \left|\left\langle\mathscr R_\varepsilon^j f,f\right\rangle_{L^2(\Omega)}\right|\leq C\varepsilon\|f\|_{L^2(\Omega)}^2, \qquad \forall\,f\in L^2(\Omega).
\end{equation}
Both $(L_\varepsilon^j+I)^{-1}$ and $(L_D+I)^{-1}$ are self-adjoint. Consequently, $J_\varepsilon(L_\varepsilon^j+I)^{-1}J_\varepsilon^*$ and hence $\mathscr R_\varepsilon^j$ are bounded self-adjoint operators on $L^2(\Omega)$. Therefore, the desired estimate \eqref{eq:capacitance-norm-resolvent-convergence} now follows from \eqref{eq:quadratic-form-bound-for-embedded-resolvent-difference}. The proof is complete.
\end{proof}

\subsection{Asymptotic expansion and resolvent limit of the operator \texorpdfstring{$A$}{A}}

In this section, we study the limit of the resolvent of the operator $A=A_\varepsilon(k_\varepsilon)$ defined in \eqref{defofA}. 

By Proposition \ref{propexpansionofband}, the maximum $\omega_{*,\varepsilon}$ of the subwavelength band has the limit
\begin{equation}
    \omega_0:= \lim_{\varepsilon \rightarrow 0} \omega_{*,\varepsilon}=v_{\rm b}\sqrt{\widehat C^M}.
\end{equation}
With the incident frequency $\omega_\varepsilon=\omega_{*,\varepsilon}-\tau\varepsilon^2+O(\varepsilon^4)$, set
\begin{equation}\label{eq:definition-of-rescaled-physical-block}
    \mathcal T_\varepsilon:=-\frac{1}{|D|\varepsilon^d}A, \qquad \sigma:=\frac{2\omega_0\tau}{v_{\rm b}^2}.
\end{equation}

\begin{lemma}
\label{lem:reduction-of-the-physical-block}
$\mathcal T_\varepsilon$ has the expansion
\begin{equation}\label{eq:reduction-of-rescaled-physical-block}
    \mathcal T_\varepsilon=L_\varepsilon^{\mathrm f}-\sigma I+\mathcal E_\varepsilon.
\end{equation}
Here, $\mathcal E_\varepsilon:\mathcal X_\varepsilon\rightarrow\mathcal X_\varepsilon^*$ satisfies $\|\mathcal E_\varepsilon\|_{\mathcal L(\mathcal X_\varepsilon,\mathcal X_\varepsilon^*)}\leq C\varepsilon$, where $\mathcal X_\varepsilon^*$ is the dual of $\mathcal X_\varepsilon$ with respect to the $\ell^2(\mathcal I_\varepsilon)$ pairing.
\end{lemma}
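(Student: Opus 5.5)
Starting from the matrix formula $A_{mn}=|D|\varepsilon^{d-2}(S_M^*C^{\mathrm f}_\varepsilon S_M)_{mn}+(-1)^{|m|+|n|}\mathcal R_\varepsilon(k_\varepsilon)(p^n_\varepsilon,p^m_\varepsilon)$ and dividing by $-|D|\varepsilon^d$, we obtain the exact identity
\begin{equation*}
\mathcal T_\varepsilon=L^{\mathrm f}_\varepsilon-\varepsilon^{-2}\widehat C^M_{\varepsilon^2}I-\frac{1}{|D|\varepsilon^d}\mathcal R^M_\varepsilon ,
\end{equation*}
where $\langle \mathcal R^M_\varepsilon\phi,\psi\rangle_{\ell^2(\mathcal I_\varepsilon)}=\mathcal R_\varepsilon(k_\varepsilon)(\Phi^M_\varepsilon\phi,\Phi^M_\varepsilon\psi)$. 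The whole task is therefore to show
\begin{equation*}
\Big|\,\frac{-1}{|D|\varepsilon^{d}}\mathcal R_\varepsilon(k_\varepsilon)(\Phi^M_\varepsilon\phi,\Phi^M_\varepsilon\psi)-\big(\varepsilon^{-2}\widehat C^M_{\varepsilon^2}-\sigma\big)\langle\phi,\psi\rangle\Big|\le C\varepsilon\|\phi\|_{\mathcal X_\varepsilon}\|\psi\|_{\mathcal X_\varepsilon}.
\end{equation*}
Since $\mathcal R_\varepsilon(k)=-k^2\int_{B_R}b_\varepsilon\,\cdot\,\overline{\cdot}-\langle(T_k-T_0)\cdot,\cdot\rangle_{\partial B_R}$, I will split it into three pieces: the bubble mass $k_\varepsilon^2\varepsilon^{-2}(v/v_{\rm b})^2\int_{D_\varepsilon}W\overline Z$, the matrix mass in $\Omega\setminus D_\varepsilon$, and the exterior/DtN contribution. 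Here $W=\Phi^M_\varepsilon\phi$ and $Z=\Phi^M_\varepsilon\psi$.

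The exterior terms are easy. By \eqref{PhiMoutside} and \eqref{eq:uniform-DtN-bound}, they are $O(\varepsilon^{d+1})\|\phi\|_{\mathcal X_\varepsilon}\|\psi\|_{\mathcal X_\varepsilon}$, hence $O(\varepsilon)$ after dividing by $\varepsilon^d$.

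For the bubble mass, I write $W=(S_M\phi)(n)+(W-(S_M\phi)(n))$ on each $\varepsilon(D+n)$. The product of the means gives $k_\varepsilon^2(v/v_{\rm b})^2\varepsilon^{-2}\varepsilon^d|D|\langle\phi,\psi\rangle$. The cross terms vanish, because the fluctuation has zero mean. The fluctuation–fluctuation term is bounded by Poincaré in the bubbles, $\varepsilon^{-2}\cdot\varepsilon^2\|\nabla W\|_{L^2(D_\varepsilon)}\|\nabla Z\|_{L^2(D_\varepsilon)}$. With $a_\varepsilon=\varepsilon^{-2}$ there and \eqref{eq:energy-estimate-W-epsilon}, this is $O(\varepsilon^{2}\varepsilon^{d-2})=O(\varepsilon^d)$. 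That is too crude: we need $O(\varepsilon^{d+1})$ up to an identified $O(\varepsilon^d)$ correction. So I will compute this term more precisely. Inside the bubbles $W$ is close to the periodic profile $1+\varepsilon^2 w^M$ (corrector of \eqref{correctorcapaci}), so the fluctuation energy contributes $\varepsilon^d\cdot\varepsilon^2\cdot O(1)$ after multiplying by $\varepsilon^{-2}$, i.e. $O(\varepsilon^{d+2})$ relative to the main term. The crude bound is lossy because it only uses $\|\psi\|_{\mathcal X_\varepsilon}$; a refined bound uses $\int_{D_\varepsilon}|\nabla W|^2\le C\varepsilon^{d+2}\|\phi\|^2$, which follows from the $\delta$-expansion in Lemma \ref{holoofV}(b) applied cellwise. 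Via \eqref{formulaofVdelta} I expect $\nabla V_\delta=O(\delta)$ in $D$.

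For the matrix mass, I approximate $W$ on each full interior cell by $(S_M\phi)(n)V^M(x/\varepsilon-n)$, using the $M$-quasiperiodic capacitary profile of Lemma \ref{maximumCalphalem}, which lies in $H^1_0(Y)$. Then $\int_{\Omega\setminus D_\varepsilon}W\overline Z\approx\varepsilon^d\int_{Y\setminus D}|V^M|^2\langle\phi,\psi\rangle$. Multiplied by $k_\varepsilon^2/(|D|\varepsilon^d)$, this gives exactly the exterior-cell-mass term $\frac{v_{\rm b}^2\widehat C^M}{v^2|D|}\int_{Y\setminus D}|V^M|^2$ (after using $k_0^2=\omega_0^2/v^2$). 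The error is controlled by the discrete gradient, i.e. $\varepsilon^{-2}\mathfrak D_\varepsilon$ times $\varepsilon^{2}$, which is $O(\varepsilon)$ in $\mathcal X_\varepsilon$ together with the boundary localisation of Lemma \ref{lemtraceesti}.

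Finally, I collect the constant multiple of the identity. Using \eqref{eq:omega-edge-expansion}, $\omega_\varepsilon^2=\omega_{*,\varepsilon}^2-2\omega_0\tau\varepsilon^2+O(\varepsilon^4)$, and $\omega_{*,\varepsilon}^2/v_{\rm b}^2=\lambda_{1,\varepsilon^2}^M$, which is given by \eqref{eq:omega-square-expansion}. Combined with \eqref{expansionofChat}, the identity coefficient becomes $\varepsilon^{-2}\widehat C^M_{\varepsilon^2}-\varepsilon^{-2}\omega_\varepsilon^2/v_{\rm b}^2-(\text{cell mass})+O(\varepsilon)$. The $\fint|\nabla w^M|^2$ terms cancel between \eqref{expansionofChat} and \eqref{eq:omega-square-expansion}, and the cell mass cancels the matrix-mass term above, leaving $\sigma=2\omega_0\tau/v_{\rm b}^2$.

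The main obstacle is the precise cellwise approximation of $\Phi^M_\varepsilon\phi$ by the Bloch profile, with an $O(\varepsilon)$ error in the $\mathcal X_\varepsilon\to\mathcal X_\varepsilon^*$ norm, including near $\partial\Omega$, where the finite-array functions deviate from the periodic ones. I would attempt this with the layer decomposition and exponential decay of Lemmas \ref{lemenergydecayboun} and \ref{lemtraceesti}, comparing with the truncated periodic lift via a Fourier representation in $\beta$ near $M$.
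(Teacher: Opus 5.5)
Your reduction matches the paper's proof. This covers the identity $\mathcal T_\varepsilon=L^{\mathrm f}_\varepsilon-\varepsilon^{-2}\widehat C^M_{\varepsilon^2}I-(|D|\varepsilon^d)^{-1}\mathcal R^M_\varepsilon$, the split into bubble mass, matrix mass and exterior/DtN terms, and the bookkeeping of the identity coefficient. In that bookkeeping the $\fint_D|\nabla w^M|^2$ terms cancel, and $k_\varepsilon^2\mu^M/|D|$ cancels the cell-mass term of \eqref{eq:omega-square-expansion} since $k_\varepsilon^2=v_{\rm b}^2\widehat C^M/v^2+O(\varepsilon^2)$; this is exactly the paper's $q_\varepsilon=\sigma+O(\varepsilon^2)$.

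\textbf{The gap.} The step that carries the lemma is only asserted ("controlled by the discrete gradient") and then deferred as the main obstacle:
\[
\|\Phi^M_\varepsilon\varphi-Z_\varepsilon\varphi\|_{L^2(\Omega)}\le C\varepsilon^{d/2+1}\|\varphi\|_{\mathcal X_\varepsilon},\qquad (Z_\varepsilon\varphi)(x)=(-1)^{n_1+\cdots+n_d}(E\varphi)(n)V^M_{\varepsilon^2}(x/\varepsilon-n)\ \text{ on }\varepsilon(n+Y).
\]
Without it the matrix-mass term, which is of order one in the identity coefficient, is not identified up to $O(\varepsilon)$. Your direction (compare with a periodic lift, Fourier in $\beta$) is right, but the argument is missing. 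The paper routes it through the full-lattice lift $\mathcal U_\varepsilon\varphi=\sum_{n\in\mathbb Z^d}(ES_M\varphi)(n)\mathscr V^n_{\varepsilon^2,\mathbb Z^d}(\cdot/\varepsilon)$ in two steps.

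First, $\mathcal U_\varepsilon\varphi$ is an admissible competitor in the finite-array minimization: it has the same means on the $\mathcal I_\varepsilon$-bubbles, and $a_{\varepsilon^2,\mathcal I_\varepsilon}\le a_{\varepsilon^2,\mathbb Z^d}$. By orthogonality, $\int a_\varepsilon|\nabla(\mathcal U_\varepsilon\varphi-\Phi^M_\varepsilon\varphi)|^2\le|D|\varepsilon^{d-2}\langle R_\varepsilon S_M\varphi,S_M\varphi\rangle$. Lemma~\ref{lemlocalbounR} and Lemma~\ref{lem:discreteHardy} bound this by $C\varepsilon^{d-2}\mathcal M_{\partial\Omega,\varepsilon}(\varphi)\le C\varepsilon^{d}\|\varphi\|_{\mathcal X_\varepsilon}^2$. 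A cellwise and boundary-patch Poincar\'e inequality then gives $O(\varepsilon^{d/2+1})$ in $L^2(\Omega)$. The layer decay you propose to redo is already packaged in Lemma~\ref{lemlocalbounR}.

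Second, use the Floquet identity $\widehat{\varphi_y}(\alpha)=V^{-\alpha}_{\varepsilon^2}(y)\widehat{ES_M\varphi}(\alpha)$, where $\varphi_y(n)=\mathcal U_\varepsilon\varphi(\varepsilon(y+n))$. Combine it with $\|V^{M+\beta}_{\varepsilon^2}-V^M_{\varepsilon^2}\|_{L^2(Y)}\le C|\beta|\le C\theta(\beta)^{1/2}$ from Lemma~\ref{holoofV}. This gives $\|\mathcal U_\varepsilon\varphi-Z_\varepsilon\varphi\|^2_{L^2(\mathbb R^d)}\le C\varepsilon^d\mathfrak D_\varepsilon(\varphi)$. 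Because $Z_\varepsilon$ is built from zero-extended data on all of $\mathbb Z^d$, the partial boundary cells need no separate treatment: $\mathfrak D_\varepsilon$ already sees the jumps at the edge of $\mathcal I_\varepsilon$.

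\textbf{The bubble fluctuation.} Here you take a genuinely different route from the paper, and a good one, but the justification needs repair. Lemma~\ref{holoofV}(b) concerns the quasi-periodic cell problem and does not apply "cellwise" to the finite-array lift. Your bound $\int_{D_\varepsilon}|\nabla\Phi^M_\varepsilon\varphi|^2\le C\varepsilon^{d+2}\|\varphi\|^2_{\ell^2(\mathcal I_\varepsilon)}$ is nevertheless true by a local argument. Take $V=\sum_n\xi(n)\mathscr V^n_{\delta,\mathcal I_\varepsilon}$ and test its Euler--Lagrange relation with an $H^1$ extension of $V|_{D+n}-\xi(n)$ supported in $Y+n$. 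This test function has zero mean on every bubble, so it is admissible. The result is $\delta^{-1}\|\nabla V\|^2_{L^2(D+n)}\le C\|\nabla V\|_{L^2((Y\setminus\overline D)+n)}\|\nabla V\|_{L^2(D+n)}$, that is, $\|\nabla V\|_{L^2(D+n)}\le C\delta\|\nabla V\|_{L^2((Y\setminus\overline D)+n)}$. Summing over $n$, using $C^{\mathrm f}_\varepsilon\le CI$, rescaling, and applying Poincar\'e in each bubble yields $\|\zeta_\varepsilon[\varphi]\|_{L^2(D_\varepsilon)}\le C\varepsilon^{d/2+2}\|\varphi\|_{\ell^2(\mathcal I_\varepsilon)}$. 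This is more elementary than the paper's route, which combines the Fourier bound $\|V^{-\alpha}_{\varepsilon^2}-1\|_{L^2(D)}\le C\varepsilon^2$ with the finite--infinite comparison, and it needs only the $\ell^2$ norm. Using $V^M$ rather than $V^M_{\varepsilon^2}$ in the cell mass is harmless, since $\mu^M_{\varepsilon^2}=\mu^M+O(\varepsilon^2)$.
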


\begin{proof}
Given $\varphi, \psi \in \ell^2(\mathcal{I}_\varepsilon)$, let 
\begin{equation}
    \zeta_\varepsilon [\varphi](x):= \Phi^M_\varepsilon \varphi(x) - (S_M\varphi)(n), \qquad \forall \,x\in \varepsilon(n+D), \,n\in \mathcal{I}_\varepsilon.
\end{equation}
Then, one has
\begin{equation}\label{eq:exact-bubble-mass-decomposition}
    \int_{D_\varepsilon}\Phi_\varepsilon^M\varphi\,\overline{\Phi_\varepsilon^M\psi}\,dx=
    |D|\varepsilon^d\left\langle\varphi,\psi\right\rangle_{\ell^2(\mathcal I_\varepsilon)}+\int_{D_\varepsilon}\zeta_\varepsilon[\varphi]\,\overline{\zeta_\varepsilon[\psi]}\,dx.
\end{equation}

Moreover, from 
\begin{equation}\label{eq:static-physical-block-identity}
    \mathcal A_\varepsilon(0)(\Phi_\varepsilon^M\varphi,\Phi_\varepsilon^M\psi)=|D|\varepsilon^{d-2}\left\langle S_M^*C_\varepsilon^{\mathrm f}S_M\varphi,\psi\right\rangle_{\ell^2(\mathcal I_\varepsilon)},
\end{equation}
the definition
\begin{equation}\label{eq:definition-of-finite-capacitance-operator-recalled}
    L_\varepsilon^{\mathrm f}=\frac{\widehat C_{\varepsilon^2}^M I-S_M^*C_\varepsilon^{\mathrm f}S_M}{\varepsilon^2},
\end{equation}
and the decomposition \eqref{eq:exact-bubble-mass-decomposition}, we get
\begin{equation}\label{eq:exact-rescaled-physical-block-identity}
    \begin{aligned}
        \left\langle\mathcal T_\varepsilon\varphi,\psi\right\rangle_{\ell^2(\mathcal I_\varepsilon)}={}&\left\langle L_\varepsilon^{\mathrm f}\varphi,\psi\right\rangle_{\ell^2(\mathcal I_\varepsilon)}-\frac{\widehat C_{\varepsilon^2}^M-\omega_\varepsilon^2/v_{\rm b}^2}{\varepsilon^2}\left\langle\varphi,\psi\right\rangle_{\ell^2(\mathcal I_\varepsilon)}\\
        &+\frac{k_\varepsilon^2}{|D|\varepsilon^d}\int_{B_R\setminus\overline{D_\varepsilon}}\Phi_\varepsilon^M\varphi\,\overline{\Phi_\varepsilon^M\psi}\,dx+\frac{(v/v_{\rm b})^2k_\varepsilon^2}{|D|\varepsilon^{d+2}}\int_{D_\varepsilon}\zeta_\varepsilon[\varphi]\,\overline{\zeta_\varepsilon[\psi]}\,dx\\
        &+\frac{1}{|D|\varepsilon^d}\left\langle(T_{k_\varepsilon}-T_0)\Phi_\varepsilon^M\varphi,\Phi_\varepsilon^M\psi\right\rangle_{\partial B_R}.
    \end{aligned}
\end{equation}
Here we used $(v/v_{\rm b})^2k_\varepsilon^2=\omega_\varepsilon^2/v_{\rm b}^2$. We divide the remainder of the proof into two steps. 

\textit{Step 1. A good approximation of $\Phi_\varepsilon^M \varphi$.} Recall that $E:\ell^2(\mathcal I_\varepsilon)\rightarrow\ell^2(\mathbb Z^d)$ is the zero-extension operator. Let $\mathcal U_\varepsilon\varphi$ be the full-lattice lift of the data $ES_M\varphi$, \emph{i.e.},
\begin{equation}\label{eq:definition-of-infinite-array-lift-in-block-reduction}
    \mathcal U_\varepsilon\varphi(x):=\sum_{n\in\mathbb Z^d}(ES_M\varphi)(n) \mathscr{V}_{\varepsilon^2,\mathbb Z^d}^n(x/\varepsilon).
\end{equation}
The functions $\mathcal U_\varepsilon\varphi$ and $\Phi_\varepsilon^M\varphi$ have the same average on every bubble $\varepsilon(n+D)$, $n\in\mathcal I_\varepsilon$. Hence,
\begin{equation}\label{eq:finite-infinite-lift-energy-comparison}
    \begin{aligned}
        \int_{\mathbb R^d}a_\varepsilon\left|\nabla\bigl(\mathcal U_\varepsilon\varphi-\Phi_\varepsilon^M\varphi\bigr)\right|^2\,dx &=\int_{\mathbb R^d}a_\varepsilon|\nabla\mathcal U_\varepsilon\varphi|^2\,dx-\int_{\mathbb R^d}a_\varepsilon|\nabla\Phi_\varepsilon^M\varphi|^2\,dx\\
        &\leq |D|\varepsilon^{d-2}\left\langle R_\varepsilon S_M\varphi,S_M\varphi\right\rangle_{\ell^2(\mathcal I_\varepsilon)} \\
        &\leq C\varepsilon^d\|\varphi\|_{\mathcal X_\varepsilon}^2,
    \end{aligned}
\end{equation}
where $R_\varepsilon=C_\varepsilon^{\mathrm t}-C_\varepsilon^{\mathrm f}$. Therefore, the Poincar\'{e} inequality and \eqref{eq:finite-infinite-lift-energy-comparison} yield
\begin{equation}\label{eq:finite-infinite-lift-L2-comparison}
    \|\mathcal U_\varepsilon\varphi-\Phi_\varepsilon^M\varphi\|_{L^2(\Omega)}\leq C\varepsilon^{d/2+1}\|\varphi\|_{\mathcal X_\varepsilon}, \qquad \|\mathcal U_\varepsilon\varphi-\Phi_\varepsilon^M\varphi\|_{L^2(D_\varepsilon)}\leq C\varepsilon^{d/2+2}\|\varphi\|_{\mathcal X_\varepsilon}.
\end{equation}

Define the cellwise comparison function $Z_\varepsilon\varphi$ by
\begin{equation}\label{eq:definition-of-frozen-cell-mode}
    (Z_\varepsilon\varphi)(x):=(-1)^{n_1+\cdots+n_d}(E\varphi)(n)V_{\varepsilon^2}^M(x/\varepsilon-n), \qquad \forall\,x\in\varepsilon(n+Y), \, n\in\mathbb Z^d.
\end{equation}
We next compare $\mathcal U_\varepsilon\varphi$ with $Z_\varepsilon\varphi$. Let
\begin{equation}
    W(y):= (\mathcal U_\varepsilon\varphi)(\varepsilon y),
\end{equation}
and consider the sequence
\begin{equation}
    \varphi_y := \big\{W(y+n) \big\}_{n\in \mathbb{Z}^d}.
\end{equation}
It is clear that $\varphi_y \in \ell^2(\mathbb{Z}^d)$ for almost every $y \in \mathbb{R}^d$. We claim that
\begin{equation}\label{eq:discrete-Fourier-identity-for-infinite-array-lift}
    \widehat{\varphi_y} (\alpha)=V_{\varepsilon^2}^{-\alpha}(y)  \widehat{ES_M\varphi}(\alpha) .
\end{equation}
Here and below, the identities involving the discrete Fourier transform are understood in $L^2(\mathcal B_1\times Y)$.

To justify \eqref{eq:discrete-Fourier-identity-for-infinite-array-lift}, we observe that $y\mapsto \widehat{\varphi_y} (\alpha)$ is $(-\alpha)$-quasiperiodic. Moreover, since the average of $\mathcal U_\varepsilon\varphi$ over $\varepsilon(n+D)$ is $(ES_M\varphi)(n)$, one has
\begin{equation}
    \fint_D \widehat{\varphi_y} (\alpha) \,dy=\sum_{n\in\mathbb Z^d}(ES_M\varphi)(n)e^{\mi n\cdot\alpha}
    =\widehat{ES_M\varphi}(\alpha).
\end{equation}
Applying the discrete Fourier transform to the variational equation satisfied by $W$ shows that $\widehat{\varphi_y} (\alpha)$ is orthogonal to every $(-\alpha)$-quasiperiodic function having zero average over $D$. Uniqueness of the minimizer therefore gives
\eqref{eq:discrete-Fourier-identity-for-infinite-array-lift}.

Evaluating \eqref{eq:discrete-Fourier-identity-for-infinite-array-lift} at $\alpha=M-\beta$ gives
\begin{equation}\label{eq:Fourier-identity-at-M-for-infinite-lift}
    \widehat{\varphi_y}(M-\beta) = \widehat{E\varphi}(-\beta)V_{\varepsilon^2}^{M+\beta}(y).
\end{equation}
On the other hand, the definition of $Z_\varepsilon\varphi$ gives, for $y\in Y$,
\begin{equation}
    (Z_\varepsilon\varphi)\bigl(\varepsilon(y+n)\bigr)
    =
    e^{\mi M\cdot n}(E\varphi)(n)V_{\varepsilon^2}^{M}(y).
\end{equation}
Applying the discrete Fourier transform and evaluating it at $\alpha=M-\beta$, we obtain
\begin{equation}\label{eq:Fourier-identity-at-M-for-frozen-lift}
    \mathcal F\left\{ n\longmapsto(Z_\varepsilon\varphi)\bigl(\varepsilon(y+n)\bigr)\right\} (M-\beta)
    = \widehat{E\varphi}(-\beta)V_{\varepsilon^2}^{M}(y).
\end{equation}
Subtracting \eqref{eq:Fourier-identity-at-M-for-frozen-lift} from \eqref{eq:Fourier-identity-at-M-for-infinite-lift} yields
\begin{equation}\label{eq:Fourier-identity-for-lift-difference}
    \mathcal F\left\{ n\mapsto
        \bigl(\mathcal U_\varepsilon\varphi-Z_\varepsilon\varphi\bigr)
        \bigl(\varepsilon(y+n)\bigr)\right\} (M-\beta) =
        \widehat{E\varphi}(-\beta)
        \left(V_{\varepsilon^2}^{M+\beta}(y)-V_{\varepsilon^2}^{M}(y)\right).
\end{equation}

Applying the Plancherel identity, integrating over $y\in Y$, and then using the change of variables $x=\varepsilon y$, we obtain the identity
\begin{equation}\label{eq:infinite-array-cellwise-comparison}
    \left\|\mathcal U_\varepsilon\varphi-Z_\varepsilon\varphi\right\|_{L^2(\mathbb R^d)}^2
    =\frac{\varepsilon^d}{|\mathcal B_1|} \int_{\mathcal B_1}
    \left|\widehat{E\varphi}(-\beta)\right|^2 \left\|V_{\varepsilon^2}^{M+\beta}-V_{\varepsilon^2}^{M}\right\|_{L^2(Y)}^2\,d\beta.
\end{equation}
Lemma \ref{holoofV} gives
\begin{equation}
    \left\|V_{\varepsilon^2}^{M+\beta}-V_{\varepsilon^2}^{M}\right\|^2_{L^2(Y)}\leq C|\beta|^2\leq C\theta(\beta).
\end{equation}
It follows that
\begin{equation}\label{eqUepsandZeps}
    \left\|\mathcal U_\varepsilon\varphi-Z_\varepsilon\varphi\right\|_{L^2(\mathbb R^d)}^2
    \leq C\varepsilon^d\mathfrak D_\varepsilon(\varphi ) \leq C\varepsilon^{d+2}\|\varphi\|_{\mathcal X_\varepsilon}^2.
\end{equation}

Combining \eqref{eq:finite-infinite-lift-L2-comparison} and \eqref{eqUepsandZeps}, and using the uniform $L^2(Y)$ bound for $V_{\varepsilon^2}^M$, we get 
\begin{equation}\label{eq:finite-lift-frozen-mode-comparison}
    \|\Phi_\varepsilon^M\varphi-Z_\varepsilon\varphi\|_{L^2(\Omega)}\leq C\varepsilon^{d/2+1}\|\varphi\|_{\mathcal X_\varepsilon}, \qquad \|Z_\varepsilon\varphi\|_{L^2(\mathbb R^d)}\leq C\varepsilon^{d/2}\|\varphi\|_{\ell^2(\mathcal I_\varepsilon)}, 
\end{equation}
and hence,
\begin{equation}\label{Phivarestimate}
    \|\Phi_\varepsilon^M\varphi\|_{L^2(\Omega)}\leq C\varepsilon^{d/2}\|\varphi\|_{\mathcal X_\varepsilon} .
\end{equation}

\textit{Step 2. The expansion of $\mathcal{T}_\varepsilon$.} We now estimate the right-hand side of \eqref{eq:exact-rescaled-physical-block-identity} term by term. 

Define
\begin{equation}\label{eq:definition-of-cell-mass-constants-in-block-reduction}
    \mu^M:=\int_{Y\setminus\overline D}|V^M(y)|^2\,dy, \qquad \mu_\delta^M:=\int_{Y\setminus\overline D}|V_\delta^M(y)|^2\,dy.
\end{equation}
Lemma \ref{holoofV} implies
\begin{equation}\label{eq:convergence-of-cell-mass-constant}
    \mu_\delta^M=\mu^M+O(\delta).
\end{equation}

By the cellwise definition of $Z_\varepsilon$ and the unitarity of $S_M$, one has the identity
\begin{equation}\label{eq:exact-matrix-mass-of-frozen-mode}
    \int_{\Omega\setminus\overline{D_\varepsilon}}Z_\varepsilon\varphi\,\overline{Z_\varepsilon\psi}\,dx=\varepsilon^d\mu_{\varepsilon^2}^M\left\langle\varphi,\psi\right\rangle_{\ell^2(\mathcal I_\varepsilon)}.
\end{equation}
This, combined with \eqref{eq:finite-lift-frozen-mode-comparison}, \eqref{Phivarestimate}, and \eqref{eq:convergence-of-cell-mass-constant}, gives
\begin{equation}\label{eq:matrix-part-of-mass-expansion}
    \left|\varepsilon^{-d}\int_{\Omega\setminus\overline{D_\varepsilon}}\Phi_\varepsilon^M\varphi\,\overline{\Phi_\varepsilon^M\psi}\,dx-\mu^M\left\langle\varphi,\psi\right\rangle_{\ell^2(\mathcal I_\varepsilon)}\right|\leq C\varepsilon\|\varphi\|_{\mathcal X_\varepsilon}\|\psi\|_{\mathcal X_\varepsilon}.
\end{equation}

The estimate \eqref{PhiMoutside} gives
\begin{equation}\label{eq:exterior-lift-estimates-used-in-block-reduction}
    \|\Phi_\varepsilon^M\varphi\|_{L^2(B_R\setminus\overline\Omega)}+\|\Phi_\varepsilon^M\varphi\|_{L^2(\partial B_R)}\leq C\varepsilon^{(d+1)/2}\|\varphi\|_{\mathcal X_\varepsilon}.
\end{equation}
It follows that
\begin{equation}\label{eq:exterior-mass-estimate-in-block-reduction}
    \left|\int_{B_R\setminus\overline\Omega}\Phi_\varepsilon^M\varphi\,\overline{\Phi_\varepsilon^M\psi}\,dx\right|\leq C\varepsilon^{d+1}\|\varphi\|_{\mathcal X_\varepsilon}\|\psi\|_{\mathcal X_\varepsilon}.
\end{equation}
Since $\partial B_R$ is smooth, the operator $T_{k_\varepsilon}-T_0$ is uniformly bounded on $L^2(\partial B_R)$. Combining this with \eqref{eq:exterior-lift-estimates-used-in-block-reduction} yields
\begin{equation}\label{eq:radiation-term-estimate-in-block-reduction}
    \left|\left\langle(T_{k_\varepsilon}-T_0)\Phi_\varepsilon^M\varphi,\Phi_\varepsilon^M\psi\right\rangle_{\partial B_R}\right|\leq C\varepsilon^{d+1}\|\varphi\|_{\mathcal X_\varepsilon}\|\psi\|_{\mathcal X_\varepsilon}.
\end{equation}
 
We now estimate $\zeta_\varepsilon[\varphi]$. For $y\in D$, define the sequence
\begin{equation}
    q_y (n):=W(y+n)-ES_M\varphi(n).
\end{equation}
It follows from \eqref{eq:discrete-Fourier-identity-for-infinite-array-lift} that
\begin{equation}
    \widehat{q_y}(\alpha) = \big(V^{-\alpha}_{\varepsilon^2}(y)-1 \big) \widehat{ES_M\varphi} (\alpha).
\end{equation}
Applying the Plancherel identity and integrating over $D$, we obtain
\begin{equation}
    \sum_{n\in\mathbb Z^d}\int_D|W(y+n)- ES_M\varphi(n)|^2\,dy=\frac{1}{|\mathcal B_1|}\int_{\mathcal B_1}|\widehat{ES_M\varphi}(\alpha)|^2\|V_{\varepsilon^2}^{-\alpha}-1\|_{L^2(D)}^2\,d\alpha.
\end{equation}
Section \ref{secpropofcapa} gives the uniform estimate
\begin{equation}
    \|V_{\varepsilon^2}^{-\alpha}-1\|_{L^2(D)}^2\leq C\varepsilon^4, \qquad \forall \alpha \in \mathcal{B}_1.
\end{equation}
Consequently,
\begin{equation}
    \begin{aligned}
        \sum_{n\in\mathbb Z^d}\int_D|W(y+n)- ES_M\varphi(n)|^2\,dy &\leq C\varepsilon^4\frac{1}{|\mathcal B_1|}\int_{\mathcal B_1}|\widehat{ES_M\varphi}(\alpha)|^2\,d\alpha \\
        &=C\varepsilon^4\|\varphi\|_{\ell^2(\mathcal{I}_\varepsilon)}^2.
    \end{aligned}
\end{equation}
Returning to the variable $x=\varepsilon y$ introduces the factor $\varepsilon^d$. Therefore, by \eqref{eq:finite-infinite-lift-L2-comparison},
\begin{equation}\label{finalzetaepseis}
    \|\zeta_\varepsilon[\varphi]\|_{L^2(D_\varepsilon)}\leq C\varepsilon^{d/2+2}\|\varphi\|_{\mathcal X_\varepsilon}.
\end{equation}

Finally, \eqref{eq:exact-rescaled-physical-block-identity}, \eqref{eq:matrix-part-of-mass-expansion}, \eqref{eq:exterior-mass-estimate-in-block-reduction}, \eqref{eq:radiation-term-estimate-in-block-reduction}, and \eqref{finalzetaepseis} yield
\begin{equation}\label{eq:physical-block-reduction-before-band-edge-cancellation}
    \mathcal T_\varepsilon=L_\varepsilon^{\mathrm f}-q_\varepsilon I+\mathcal E_\varepsilon^{(0)}, \qquad q_\varepsilon:=\frac{\widehat C_{\varepsilon^2}^M-\omega_\varepsilon^2/v_{\rm b}^2}{\varepsilon^2}-\frac{k_\varepsilon^2\mu^M}{|D|},
\end{equation}
where
\begin{equation}\label{eq:preliminary-physical-block-error-bound}
    \left|\left\langle\mathcal E_\varepsilon^{(0)}\varphi,\psi\right\rangle_{\ell^2(\mathcal I_\varepsilon)}\right|\leq C\varepsilon\|\varphi\|_{\mathcal X_\varepsilon}\|\psi\|_{\mathcal X_\varepsilon}.
\end{equation}
Substitution of the expansion \eqref{expansionofChat} of $\widehat{C}^M_{\varepsilon^2}$ and the expansion \eqref{eq:omega-edge-expansion} of $\omega_{*,\varepsilon}$ give
\begin{equation}\label{eq:cell-mass-shift-cancellation}
    q_\varepsilon = \frac{2\omega_0\tau}{v_{\rm b}^2}+O(\varepsilon^2)=\sigma+O(\varepsilon^2).
\end{equation}
This completes the proof.
\end{proof}

\begin{theorem}
\label{thm:physical-block-norm-resolvent-convergence}
For every compact set $K\Subset\mathbb C\setminus\operatorname{spec}(L_D-\sigma I)$, there exist $\varepsilon_K>0$ and $C_K>0$ such that $\mathcal T_\varepsilon-z$ is invertible for every $0<\varepsilon<\varepsilon_K$ and every $z\in K$, and
\begin{equation}\label{eq:physical-block-norm-resolvent-convergence}
    \sup_{z\in K}\left\|J_\varepsilon(\mathcal T_\varepsilon-z)^{-1}J_\varepsilon^*-(L_D-\sigma -z)^{-1}\right\|_{\mathcal L(L^2(\Omega))}\leq C_K\varepsilon.
\end{equation}
In particular, if $\sigma\notin\operatorname{spec}(L_D)$, then $A$ is invertible for all sufficiently small $\varepsilon$ and
\begin{equation}\label{eq:physical-block-inverse-convergence}
    \left\||D|\varepsilon^dJ_\varepsilon A^{-1}J_\varepsilon^*-(\sigma -L_D)^{-1}\right\|_{\mathcal L(L^2(\Omega))}\leq C\varepsilon.
\end{equation}
\end{theorem}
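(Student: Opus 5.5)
We combine the norm-resolvent convergence of Proposition \ref{prop:capacitance-norm-resolvent-convergence} for $L_\varepsilon^{\mathrm f}$ with the perturbation expansion of Lemma \ref{lem:reduction-of-the-physical-block}. The perturbation $\mathcal E_\varepsilon$ is only small from $\mathcal X_\varepsilon$ to $\mathcal X_\varepsilon^*$, so the argument is carried out in the energy scale and only at the end transferred to $L^2(\Omega)$. Set $\mathcal L_\varepsilon := L_\varepsilon^{\mathrm f}+I$ and write, for $z\in K$,
\[
\mathcal T_\varepsilon - z = \mathcal L_\varepsilon - (1+\sigma+z) I + \mathcal E_\varepsilon .
\]
By Lemma \ref{lem:bjequivXeps}, $\mathcal L_\varepsilon:\mathcal X_\varepsilon\to\mathcal X_\varepsilon^*$ is an isomorphism with uniformly bounded inverse, and $\mathcal L_\varepsilon^{-1}$ is bounded from $\ell^2$ to $\mathcal X_\varepsilon$ as well.

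\textbf{Step 1: resolvent of $L_\varepsilon^{\mathrm f}$ on $K$.} Put $\mu = 1+\sigma+z$. Since $K$ avoids $\operatorname{spec}(L_D-\sigma)$, the point $\mu^{-1}$ avoids $\operatorname{spec}((L_D+I)^{-1})$ uniformly on $K$. We use
\[
(\mathcal L_\varepsilon-\mu)^{-1} = \mu^{-1}\mathcal L_\varepsilon^{-1}\bigl(\mu^{-1}-\mathcal L_\varepsilon^{-1}\bigr)^{-1}.
\]
Proposition \ref{prop:capacitance-norm-resolvent-convergence} gives $\|J_\varepsilon\mathcal L_\varepsilon^{-1}J_\varepsilon^*-(L_D+I)^{-1}\|\le C\varepsilon$. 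Since $J_\varepsilon$ is an isometry, $\operatorname{spec}(\mathcal L_\varepsilon^{-1})\subset\operatorname{spec}(J_\varepsilon\mathcal L_\varepsilon^{-1}J_\varepsilon^*)\cup\{0\}$. For self-adjoint operators, norm-close operators have nearby spectra, so $\mu^{-1}$ stays at distance $\ge c_K$ from $\operatorname{spec}(\mathcal L_\varepsilon^{-1})$.

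It follows that $(\mathcal L_\varepsilon-\mu)^{-1}$ is bounded on $\ell^2$ uniformly in $z\in K$. The same identity shows it is bounded $\ell^2\to\mathcal X_\varepsilon$. Upgrading to a bound $\mathcal X_\varepsilon^*\to\mathcal X_\varepsilon$ uses the identity $(\mathcal L_\varepsilon-\mu)^{-1}=\mathcal L_\varepsilon^{-1}+\mu(\mathcal L_\varepsilon-\mu)^{-1}\mathcal L_\varepsilon^{-1}$ applied twice. The $L^2$ error estimate follows by writing both resolvents through $\mathcal L_\varepsilon^{-1}$ and $(L_D+I)^{-1}$ and using $J_\varepsilon^*J_\varepsilon=I$:
\[
J_\varepsilon(\mathcal L_\varepsilon-\mu)^{-1}J_\varepsilon^* = F_\mu\bigl(J_\varepsilon\mathcal L_\varepsilon^{-1}J_\varepsilon^*\bigr) \quad\text{on } \operatorname{ran}J_\varepsilon ,
\]
where $F_\mu(t)=t(1-\mu t)^{-1}$. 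Since $F_\mu$ is holomorphic near both spectra, its Riesz–Dunford representation yields an $O(\varepsilon)$ difference. The functional calculus must treat the piece $(I-\Pi_\varepsilon)$ separately, where $F_\mu(0)=0$ makes it consistent.

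\textbf{Step 2: adding $\mathcal E_\varepsilon$.} The Neumann series
\[
(\mathcal T_\varepsilon - z)^{-1} = (\mathcal L_\varepsilon-\mu)^{-1}\bigl(I+\mathcal E_\varepsilon(\mathcal L_\varepsilon-\mu)^{-1}\bigr)^{-1}
\]
converges in $\mathcal L(\mathcal X_\varepsilon^*)$ for $\varepsilon<\varepsilon_K$, because $\|\mathcal E_\varepsilon\|_{\mathcal X_\varepsilon\to\mathcal X_\varepsilon^*}\le C\varepsilon$. This gives invertibility of $\mathcal T_\varepsilon - z$. The difference
\[
(\mathcal T_\varepsilon-z)^{-1}-(\mathcal L_\varepsilon-\mu)^{-1} = -(\mathcal T_\varepsilon-z)^{-1}\mathcal E_\varepsilon(\mathcal L_\varepsilon-\mu)^{-1}
\]
has norm $\le C\varepsilon$ from $\ell^2$ to $\ell^2$, via $\ell^2\to\mathcal X_\varepsilon\to\mathcal X_\varepsilon^*\to\mathcal X_\varepsilon\hookrightarrow\ell^2$. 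Conjugating with $J_\varepsilon$ and combining with Step 1 yields \eqref{eq:physical-block-norm-resolvent-convergence}.

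\textbf{Step 3: the special case.} Take $K=\{0\}$ when $\sigma\notin\operatorname{spec}(L_D)$. Since $A=-|D|\varepsilon^d\mathcal T_\varepsilon$, we have $|D|\varepsilon^d A^{-1}=-\mathcal T_\varepsilon^{-1}$ and $-(L_D-\sigma)^{-1}=(\sigma-L_D)^{-1}$, which gives \eqref{eq:physical-block-inverse-convergence}.

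The main obstacle is Step 1. It requires the uniform bound of $(\mathcal L_\varepsilon-\mu)^{-1}$ in $\mathcal L(\mathcal X_\varepsilon^*,\mathcal X_\varepsilon)$ near possibly non-real $\mu$. It also requires handling the non-invertible compression $J_\varepsilon\cdot J_\varepsilon^*$, whose kernel $\ker J_\varepsilon^*$ must be treated through $F_\mu(0)=0$.
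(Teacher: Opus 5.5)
Your argument is correct, but it is organized differently from the paper's.

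\emph{The paper's route.} The paper absorbs $\mathcal E_\varepsilon$ only once, at the coercive shift $z=-1-\sigma$. Writing $\mathcal T_\varepsilon+\sigma+1=G_\varepsilon^{1/2}(I+\widetilde{\mathcal E}_\varepsilon)G_\varepsilon^{1/2}$ with $\|\widetilde{\mathcal E}_\varepsilon\|_{\mathcal L(\ell^2)}\le C\varepsilon$, it shows that $R_\varepsilon:=J_\varepsilon(\mathcal T_\varepsilon+\sigma+1)^{-1}J_\varepsilon^*$ is $C\varepsilon$-close to $R_D=(L_D+I)^{-1}$. It then reaches every $z\in K$ through two exact identities, with $t=\sigma+z+1$:
\begin{itemize}
\item $J_\varepsilon(\mathcal T_\varepsilon-z)^{-1}J_\varepsilon^*=R_\varepsilon(I-tR_\varepsilon)^{-1}$, where $\ker J_\varepsilon^*$ is handled exactly as in your $F_\mu(0)=0$ remark;
\item the difference equals $(I-tR_\varepsilon)^{-1}(R_\varepsilon-R_D)(I-tR_D)^{-1}$.
\end{itemize}
This needs only invertibility of $I-tR_D$ and a Neumann series at the $L^2(\Omega)$ level. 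It uses no spectral stability, no functional calculus, and no self-adjointness of $R_\varepsilon$. That last point matters, because $R_\varepsilon$ is not self-adjoint: $\mathcal E_\varepsilon$ contains the non-Hermitian term coming from $T_{k_\varepsilon}-T_0$.

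\emph{Your route.} You perform the $z$-transfer first, for the self-adjoint $L_\varepsilon^{\mathrm f}$, and only then add $\mathcal E_\varepsilon$ separately at each $z$. Your $F_\mu$ step is the same algebraic identity with $\mathcal E_\varepsilon=0$. The explicit formula $F_\mu(A)-F_\mu(B)=(I-\mu A)^{-1}(A-B)(I-\mu B)^{-1}$ is simpler than a Riesz--Dunford contour, which would have to avoid $\mu^{-1}$ when it sits in a gap of the spectrum inside $(0,1]$.

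\emph{What each approach buys.} Your ordering forces you to prove a uniform bound for $(\mathcal L_\varepsilon-\mu)^{-1}$ in $\mathcal L(\mathcal X_\varepsilon^*,\mathcal X_\varepsilon)$ at possibly non-real $\mu$, and you do this correctly. The payoff is the by-product $\sup_{z\in K}\|(\mathcal T_\varepsilon-z)^{-1}\|_{\mathcal X_\varepsilon^*\to\mathcal X_\varepsilon}\le C_K$. The paper needs this bound at $z=0$ in Lemma~\ref{lem:section-six-block-component-estimates} and proves it separately there.

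\emph{A cosmetic point.} Write $(\mathcal L_\varepsilon-\mu)^{-1}=\mathcal L_\varepsilon^{-1}(I-\mu\mathcal L_\varepsilon^{-1})^{-1}$ rather than dividing by $\mu$. The value $\mu=0$, i.e.\ $z=-1-\sigma$, is not in $\operatorname{spec}(L_D-\sigma I)$ and may therefore lie in $K$.
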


\begin{proof}
\textit{Step 1.} Set $G_\varepsilon:=L_\varepsilon^{\mathrm{f}}+I$. Let $\mathcal E_\varepsilon$ be the remainder in Lemma~\ref{lem:reduction-of-the-physical-block}, we have
\begin{equation}
    \mathcal T_\varepsilon+\sigma+1 = G_{\varepsilon}^{1/2} ( I + \widetilde{\mathcal E}_\varepsilon)G_{\varepsilon}^{1/2}  , \qquad \widetilde{\mathcal E}_\varepsilon := G_{\varepsilon}^{-1/2} \mathcal E_\varepsilon G_{\varepsilon}^{-1/2}.
\end{equation}
Since the form of $G_\varepsilon$ is $\mathfrak b_\varepsilon^{\mathrm f}$, the uniform equivalence between $\mathfrak b_\varepsilon^{\mathrm f}(\varphi)$ and $\|\varphi\|_{\mathcal X_\varepsilon}^2$ yields
\begin{equation}\label{eq:square-root-and-discrete-energy-norm-equivalence}
    c\|\varphi\|_{\mathcal X_\varepsilon}^2\leq\|G_\varepsilon^{1/2}\varphi\|_{\ell^2(\mathcal I_\varepsilon)}^2=\mathfrak b_\varepsilon^{\mathrm f}(\varphi)\leq C\|\varphi\|_{\mathcal X_\varepsilon}^2.
\end{equation}
In particular,
\begin{equation}\label{eq:inverse-square-root-estimates}
    \|G_\varepsilon^{-1/2}\|_{\ell^2(\mathcal I_\varepsilon) \rightarrow \mathcal X_\varepsilon}\leq C.
\end{equation}
The estimate in \eqref{eq:reduction-of-rescaled-physical-block}, together with \eqref{eq:inverse-square-root-estimates}, gives
\begin{equation}\label{eq:sandwiched-physical-block-error-bound}
    \|\widetilde{\mathcal E}_\varepsilon\|_{\mathcal L(\ell^2(\mathcal I_\varepsilon))}\leq C\varepsilon.
\end{equation}
Consequently, for sufficiently small $\varepsilon$, the Neumann series gives the invertibility of $I+\widetilde{\mathcal E}_\varepsilon$ and
\begin{equation}\label{inverofTeps}
    (\mathcal T_\varepsilon+\sigma+1)^{-1}=G_\varepsilon^{-1/2}\bigl(I+\widetilde{\mathcal E}_\varepsilon\bigr)^{-1}G_\varepsilon^{-1/2}.
\end{equation}
Define
\begin{equation}
    R_\varepsilon:=J_\varepsilon(\mathcal T_\varepsilon+\sigma+1)^{-1}J_\varepsilon^*\in\mathcal L(L^2(\Omega)).
\end{equation}
The factorization \eqref{inverofTeps} gives
\begin{equation}
    R_\varepsilon-J_\varepsilon G_\varepsilon^{-1}J_\varepsilon^*=J_\varepsilon G_\varepsilon^{-1/2}\left[\bigl(I+\widetilde{\mathcal E}_\varepsilon\bigr)^{-1}-I\right]G_\varepsilon^{-1/2}J_\varepsilon^*,
\end{equation}
and hence
\begin{equation}
    \| R_\varepsilon-J_\varepsilon G_\varepsilon^{-1}J_\varepsilon^*\|_{\mathcal L(L^2(\Omega))}\leq C\varepsilon.
\end{equation}
Combining this estimate with Proposition~\ref{prop:capacitance-norm-resolvent-convergence}, we obtain
\begin{equation}\label{eq:full-reference-resolvent-comparison}
    \| R_\varepsilon-R_D\|_{\mathcal L(L^2(\Omega))}\leq C\varepsilon, \qquad R_D:=(L_D+I)^{-1}.
\end{equation}

For $z\in K$, set $t(z):=\sigma+z+1$. Since $I-t(z)R_D=(L_D-\sigma -z)R_D$,
the compactness of $K\Subset\mathbb C\setminus \mathrm{spec}\,(L_D-\sigma I)$ implies
\begin{equation}\label{I-tzRdinverse}
    \sup_{z\in K} \left\|\big( I-t(z)R_D\big)^{-1} \right\|_{\mathcal L(L^2(\Omega))}\leq C_K.
\end{equation}
This, together with the estimate \eqref{eq:full-reference-resolvent-comparison}, implies
\begin{equation}\label{eq:uniform-inverse-Q-epsilon}
    \sup_{z\in K} \left\| \big( I - t(z) R_\varepsilon \big)^{-1} \right\|_{\mathcal L(L^2(\Omega))}\leq C_K.
\end{equation}

\textit{Step 2.} Write
\begin{equation}
    I-t(z)R_\varepsilon=J_\varepsilon\left\{ I-t(z)(\mathcal T_\varepsilon+\sigma+1)^{-1}\right\} J_\varepsilon^*+(I-\Pi_\varepsilon).
\end{equation}
Consequently,
\begin{equation}
    \big( I - t(z) R_\varepsilon \big)^{-1} =J_\varepsilon\left\{ I-t(z)(\mathcal T_\varepsilon+\sigma+1)^{-1}\right\}^{-1} J_\varepsilon^*+(I-\Pi_\varepsilon).
\end{equation}
This implies that
\begin{equation}
    \begin{aligned}
        &J_\varepsilon(\mathcal T_\varepsilon-z)^{-1}J_\varepsilon^*\\
        &= J_\varepsilon (\mathcal T_\varepsilon+\sigma+1)^{-1}\left\{
        I - t(z) (\mathcal T_\varepsilon+\sigma+1)^{-1}\right\}^{-1} J_\varepsilon^*  \\
        &=J_\varepsilon (\mathcal T_\varepsilon+\sigma+1)^{-1} J_\varepsilon^* \Big\{ J_\varepsilon\left\{
        I - t(z) (\mathcal T_\varepsilon+\sigma+1)^{-1}\right\}^{-1} J_\varepsilon^*  + (I- \Pi_\varepsilon) \Big\} \\
        & = R_\varepsilon \big( I - t(z) R_\varepsilon \big)^{-1} .
    \end{aligned}
\end{equation}
Similarly, 
\begin{equation}
    (L_D-\sigma -z)^{-1}=R_D \big( I-t(z)R_D\big)^{-1} .
\end{equation}
As a consequence,
\begin{equation}\label{eq:exact-embedded-resolvent-identity}
    J_\varepsilon(\mathcal T_\varepsilon-z)^{-1}J_\varepsilon^*-(L_D-\sigma -z)^{-1}= \big( I - t(z) R_\varepsilon \big)^{-1}(R_\varepsilon-R_D) \big( I-t(z)R_D\big)^{-1}.
\end{equation}
Combining \eqref{eq:full-reference-resolvent-comparison}, \eqref{I-tzRdinverse}, \eqref{eq:uniform-inverse-Q-epsilon}, and \eqref{eq:exact-embedded-resolvent-identity}, we conclude that
\begin{equation}
    \sup_{z\in K}\left\|J_\varepsilon(\mathcal T_\varepsilon-z)^{-1}J_\varepsilon^*-(L_D-\sigma I-z)^{-1}\right\|_{\mathcal L(L^2(\Omega))}\leq C_K\varepsilon.
\end{equation}

Finally, suppose that $\sigma\notin\operatorname{spec}(L_D)$ and take $K=\{0\}$. Then $\mathcal T_\varepsilon$ and hence $A$ are invertible for all sufficiently small $\varepsilon$. Since
\begin{equation}
    \mathcal T_\varepsilon^{-1}=-|D|\varepsilon^dA^{-1},\qquad -(L_D-\sigma I)^{-1}=(\sigma I-L_D)^{-1},
\end{equation}
the estimate \eqref{eq:physical-block-inverse-convergence} follows from \eqref{eq:physical-block-norm-resolvent-convergence} with $z=0$.
\end{proof}

\section{Proof of the main theorem}\label{secproofofmain}

Recall that
\begin{equation}\label{eq:section-six-discrepancy-and-Neumann-datum}
    w_\varepsilon=u_\varepsilon-\widehat u_\varepsilon
\end{equation}
is the difference between the original total field $u_\varepsilon$ and the intermediate comparison field $\widehat u_\varepsilon$.

For the incident field, define
\begin{equation}\label{eq:section-six-incident-data-norm}
    \mathcal N_\varepsilon^{\mathrm{in}}:=\|u_\varepsilon^{\mathrm{in}}\|_{H^1(\partial\Omega)}+\left\|\left.\frac{\partial u_\varepsilon^{\mathrm{in}}}{\partial\nu}\right|_{\partial\Omega}\right\|_{L^2(\partial\Omega)}.
\end{equation}
Since $|k_\varepsilon - k_0|<C\varepsilon^2$, we have
\begin{equation}
    | \mathcal N_\varepsilon^{\mathrm{in}} - \mathcal N^{\mathrm{in}} | <C\varepsilon^2.
\end{equation}
Moreover, it follows from \eqref{difference_intermediate} that
the main result (Theorem \ref{themaintheorem}) of this paper is a direct implication of the following theorem.

\begin{theorem}\label{thm:main-scattering-convergence-rate}
Assume that
    \begin{equation}
        \sigma \notin \mathrm{spec}\,(L_D).
    \end{equation}
    For every fixed ball $B_R$ such that $\overline\Omega\Subset B_R$, there exists $C>0$, depending only on $D,\Omega,R,\rho,\rho_{\rm b},\kappa,\kappa_{\rm b},|\tau|$, and $\mathrm{dist}\,(\sigma,\mathrm{spec}\,(L_D))$, such that
    \begin{equation}\label{eq:section-six-main-interior-and-exterior-L2-rates}
    \|w_\varepsilon\|_{L^2(\Omega)}\leq C\varepsilon^{1/2}\mathcal N_\varepsilon^{\mathrm{in}}, \qquad \|w_\varepsilon\|_{L^2(B_R\setminus\overline\Omega)}\leq C\varepsilon\mathcal N_\varepsilon^{\mathrm{in}},
    \end{equation}
        and
    \begin{equation}\label{eq:section-six-main-global-L2-and-exterior-H1-rates}
    \|w_\varepsilon\|_{H^1(B_R\setminus\overline\Omega)}\leq C\varepsilon^{1/2}\mathcal N_\varepsilon^{\mathrm{in}}.
    \end{equation}
    Moreover, for every compact set $K\Subset\mathbb R^d\setminus\overline\Omega$, there exists $C>0$, additionally depending on $K$, such that
\begin{equation}\label{eq:section-six-main-local-H1-rate}
    \|w_\varepsilon\|_{H^1(K)}\leq C\varepsilon\mathcal N_\varepsilon^{\mathrm{in}}.
\end{equation}
\end{theorem}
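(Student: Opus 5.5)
We solve the block system \eqref{operatoreq} by a Schur complement on the bubble-average component. By Proposition~\ref{prop:invertibilityofB}, $B$ is uniformly invertible on $\mathcal H_\varepsilon$, so $r_\varepsilon=B^{-1}(h_\varepsilon-R_2\phi_\varepsilon)$. Substituting this into the first row gives
\begin{equation*}
    (A-R_1B^{-1}R_2)\phi_\varepsilon=g_\varepsilon-R_1B^{-1}h_\varepsilon .
\end{equation*}
We then rescale by $-(|D|\varepsilon^d)^{-1}$. The perturbation $\varepsilon^{-d}R_1B^{-1}R_2$ maps $\mathcal X_\varepsilon\to\mathcal X_\varepsilon^*$ with norm at most $C\varepsilon^{-d}\varepsilon^{d+1}=C\varepsilon$ by Proposition~\ref{lem:off-diagonal-blocks}. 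Hence it can be absorbed into the remainder $\mathcal E_\varepsilon$ of Lemma~\ref{lem:reduction-of-the-physical-block}. The proof of Theorem~\ref{thm:physical-block-norm-resolvent-convergence} uses only this $\mathcal X_\varepsilon\to\mathcal X_\varepsilon^*$ bound, so it applies unchanged. We conclude that the rescaled Schur complement is invertible, with inverse bounded $\mathcal X_\varepsilon^*\to\mathcal X_\varepsilon$ uniformly; the constant depends on $\mathrm{dist}(\sigma,\mathrm{spec}(L_D))$. This gives
\begin{equation*}
    \|\phi_\varepsilon\|_{\mathcal X_\varepsilon}\le C\varepsilon^{-d}\bigl(\|g_\varepsilon\|_{\mathcal X_\varepsilon^*}+\varepsilon^{(d+1)/2}\|h_\varepsilon\|_{\mathcal H_\varepsilon}\bigr).
\end{equation*}

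Next we bound the data. For $g_\varepsilon$, we pair $\partial_\nu\widehat u_\varepsilon|_{\partial\Omega+}$ with the trace of $\Phi^M_\varepsilon\psi$. Lemma~\ref{lemtraceesti} together with Lemma~\ref{lem:discreteHardy} bounds that trace by $C\varepsilon^{(d+1)/2}\|\psi\|_{\mathcal X_\varepsilon}$ in $L^2(\partial\Omega)$. Rellich/nontangential estimates on the Lipschitz domain control $\|\partial_\nu\widehat u_\varepsilon\|_{L^2(\partial\Omega)}$ by $C\mathcal N^{\mathrm{in}}_\varepsilon$. Together these give $\|g_\varepsilon\|_{\mathcal X_\varepsilon^*}\le C\varepsilon^{(d+1)/2}\mathcal N^{\mathrm{in}}_\varepsilon$. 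For $h_\varepsilon$, Lemma~\ref{lemmaHepsprop}(b) only gives $\|h_\varepsilon\|_{\mathcal H_\varepsilon}\le C\varepsilon^{1/2}\mathcal N^{\mathrm{in}}_\varepsilon$. Hence $\|\phi_\varepsilon\|_{\mathcal X_\varepsilon}\le C\varepsilon^{(1-d)/2}\mathcal N^{\mathrm{in}}_\varepsilon$ and $\|r_\varepsilon\|_{\mathcal H_\varepsilon}\le C\varepsilon^{1/2}\mathcal N^{\mathrm{in}}_\varepsilon$.

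We assemble the field estimates from these bounds. In $\Omega$, \eqref{Phivarestimate} gives $\|\Phi^M_\varepsilon\phi_\varepsilon\|_{L^2(\Omega)}\le C\varepsilon^{d/2}\|\phi_\varepsilon\|_{\mathcal X_\varepsilon}\le C\varepsilon^{1/2}\mathcal N^{\mathrm{in}}_\varepsilon$. By Lemma~\ref{lemmaHepsprop}(a), $\|r_\varepsilon\|_{L^2(\Omega)}\le C\varepsilon\|r_\varepsilon\|_{\mathcal H_\varepsilon}$. Together these give the interior bound. Outside $\Omega$, \eqref{PhiMoutside} gives $\|\Phi^M_\varepsilon\phi_\varepsilon\|_{L^2(B_R\setminus\overline\Omega)}\le C\varepsilon\mathcal N^{\mathrm{in}}_\varepsilon$. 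For the exterior $H^1$ bound, the lift is harmonic outside $\Omega$, so its $H^1$ norm is controlled by the $H^{1/2}$ trace. We interpolate between the $L^2$ trace $O(\varepsilon^{(d+1)/2})$ and the energy bound \eqref{eq:energy-estimate-W-epsilon}; for $r_\varepsilon$ we use Lemma~\ref{lemmaHepsprop}(c). This yields the $O(\varepsilon^{1/2})$ exterior $H^1$ rate.

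The main obstacle is the exterior $L^2$ rate $O(\varepsilon)$ for $r_\varepsilon$, since the energy bound only gives $\varepsilon^{1/2}$. We argue by duality, exactly as in the proof of Proposition~\ref{prop:invertibilityofB}. Test $w_\varepsilon$ against the sound-soft dual solution $v_f$ with $f\in L^2(B_R\setminus\overline\Omega)$. The source term $\langle\partial_\nu\widehat u_\varepsilon,v_f\rangle_{\partial\Omega}$ vanishes because $v_f=0$ on $\partial\Omega$. What remains is the boundary term $\int_{\partial\Omega}w_\varepsilon\,\partial_\nu\overline{v_f}$, together with interior couplings from $v_f$ having zero extension. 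The trace of $r_\varepsilon$ is $O(\varepsilon^{1/2})\|r_\varepsilon\|_{\mathcal H_\varepsilon}=O(\varepsilon)$ by Lemma~\ref{lemmaHepsprop}(b). The trace of $\Phi^M_\varepsilon\phi_\varepsilon$ is $O(\varepsilon^{(d+1)/2}\|\phi_\varepsilon\|_{\mathcal X_\varepsilon})=O(\varepsilon)$. Hence $\|w_\varepsilon\|_{L^2(B_R\setminus\overline\Omega)}\le C\varepsilon\mathcal N^{\mathrm{in}}_\varepsilon$. The local $H^1$ bound on $K$ then follows from interior elliptic (Caccioppoli) estimates for the Helmholtz equation satisfied by $w_\varepsilon$ away from $\overline\Omega$, which upgrade the exterior $L^2$ rate.
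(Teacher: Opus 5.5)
Your Schur-complement reduction, the bounds on $g_\varepsilon$ and $h_\varepsilon$, the interior $L^2$ estimate, the exterior $L^2$ duality argument and the Caccioppoli step are sound. Two remarks on these parts:
\begin{itemize}
\item Applying the duality directly to $w_\varepsilon$ is slightly more direct than the paper, which treats the lift via \eqref{PhiMoutside} and $r_\varepsilon$ via $B$ and $R_2$. There are in fact no ``interior couplings'': $v_f\equiv0$ in $\Omega$ and there are no bubbles outside $\Omega$, so only $\int_{\partial\Omega}w_\varepsilon\overline{\partial_\nu v_f}$ survives.
\item For the local $H^1$ bound you should enlarge $R$ so that $K\Subset B_R$. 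Also, Theorem~\ref{thm:physical-block-norm-resolvent-convergence} by itself only gives $\ell^2$ bounds. The $\mathcal X_\varepsilon^*\to\mathcal X_\varepsilon$ bound needs the factorization through $G_\varepsilon^{1/2}$ used in Lemma~\ref{lem:section-six-block-component-estimates}; this is routine.
\end{itemize}

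The genuine gap is the exterior $H^1$ estimate for the lift $W_\varepsilon=\Phi^M_\varepsilon\varphi_\varepsilon$. The energy bound \eqref{eq:energy-estimate-W-epsilon} is global. It only gives
\[
\|\nabla W_\varepsilon\|_{L^2(\mathbb R^d)}\le C\varepsilon^{(d-2)/2}\|\varphi_\varepsilon\|_{\mathcal X_\varepsilon}\le C\varepsilon^{-1/2}\mathcal N_\varepsilon^{\mathrm{in}},
\]
and this is the true size of the gradient in the bulk of $\Omega$, where the lift varies on the cell scale. Consequently it controls $\|W_\varepsilon\|_{H^{1/2}(\partial\Omega)}$ only at order $\varepsilon^{-1/2}$. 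Since $H^{1/2}$ is exactly the endpoint you are interpolating towards, combining this with the $O(\varepsilon)$ bound in $L^2(\partial\Omega)$ gives no gain. Even a formal geometric mean would give only $\varepsilon^{1/4}$, not $\varepsilon^{1/2}$.

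The missing idea is that the lift's energy near and outside $\partial\Omega$ is small, because the capacitary fields decay exponentially away from the layer where their data sit. Decompose $\varphi_\varepsilon=\sum_m Q_m\varphi_\varepsilon$ into cell layers. Apply the rescaled boundary-layer estimate \eqref{eq:physical-boundary-layer-energy} (from Lemma~\ref{lemenergydecayboun}) to each $W_{\varepsilon,m}=\Phi^M_\varepsilon Q_m\varphi_\varepsilon$. Then sum with Cauchy--Schwarz and invoke the discrete Hardy inequality, Lemma~\ref{lem:discreteHardy}:
\[
\|\nabla W_\varepsilon\|_{L^2(\mathbb R^d\setminus\overline\Omega)}\le C\varepsilon^{(d-2)/2}\mathcal M_{\partial\Omega,\varepsilon}(\varphi_\varepsilon)^{1/2}\le C\varepsilon^{(d-2)/2}\mathfrak D_\varepsilon(\varphi_\varepsilon)^{1/2}\le C\varepsilon^{d/2}\|\varphi_\varepsilon\|_{\mathcal X_\varepsilon}\le C\varepsilon^{1/2}\mathcal N_\varepsilon^{\mathrm{in}}.
\]
The extra factor $\varepsilon$ over the global energy bound expresses that the slowly varying envelope is small near $\partial\Omega$, since $\mathcal M_{\partial\Omega,\varepsilon}\lesssim\mathfrak D_\varepsilon\lesssim\varepsilon^2\|\cdot\|_{\mathcal X_\varepsilon}^2$. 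A trace interpolation based on the global energy cannot see this localization.
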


Hereafter, we prove Theorem \ref{thm:main-scattering-convergence-rate}. We first solve the block system quantitatively.

\begin{lemma}\label{lem:section-six-block-component-estimates}
Let
\begin{equation}\label{eq:section-six-block-decomposition}
    w_\varepsilon=\Phi_\varepsilon^M\varphi_\varepsilon+r_\varepsilon, \qquad \varphi_\varepsilon\in\mathcal X_\varepsilon, \qquad r_\varepsilon\in\mathcal H_\varepsilon,
\end{equation}
be the decomposition in \eqref{decompofdiscre}. Assume that
    \begin{equation}
        \sigma \notin \mathrm{spec}\,(L_D).
    \end{equation}
    For every fixed ball $B_R$ such that $\overline\Omega\Subset B_R$, there exists $C>0$, depending only on $D,\Omega,R,\rho,\rho_{\rm b},\kappa,\kappa_{\rm b},|\tau|$, and $\mathrm{dist}\,(\sigma,\mathrm{spec}\,(L_D))$, such that
\begin{equation}\label{eq:section-six-component-rates}
    \|\varphi_\varepsilon\|_{\mathcal X_\varepsilon}\leq C\varepsilon^{(1-d)/2}\mathcal N_\varepsilon^{\mathrm{in}}, \qquad \|r_\varepsilon\|_{\mathcal H_\varepsilon}\leq C\varepsilon^{1/2}\mathcal N_\varepsilon^{\mathrm{in}} .
\end{equation}
\end{lemma}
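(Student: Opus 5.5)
We solve the block system \eqref{operatoreq} by a Schur complement with respect to the uniformly invertible block $B$. By Proposition~\ref{prop:invertibilityofB}, $r_\varepsilon=B^{-1}(h_\varepsilon-R_2\varphi_\varepsilon)$. Substituting this into the first row gives
\begin{equation*}
\bigl(A-R_1B^{-1}R_2\bigr)\varphi_\varepsilon=g_\varepsilon-R_1B^{-1}h_\varepsilon .
\end{equation*}
After dividing by $-|D|\varepsilon^d$, this becomes
\begin{equation*}
\bigl(\mathcal T_\varepsilon+\varepsilon^{-d}|D|^{-1}R_1B^{-1}R_2\bigr)\varphi_\varepsilon=-|D|^{-1}\varepsilon^{-d}\bigl(g_\varepsilon-R_1B^{-1}h_\varepsilon\bigr).
\end{equation*}
By Proposition~\ref{lem:off-diagonal-blocks}, the correction satisfies $\|\varepsilon^{-d}R_1B^{-1}R_2\|_{\mathcal X_\varepsilon\to\mathcal X_\varepsilon^*}\le C\varepsilon$. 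It can therefore be absorbed into the remainder $\mathcal E_\varepsilon$ of Lemma~\ref{lem:reduction-of-the-physical-block}, giving an operator of the form $L_\varepsilon^{\mathrm f}-\sigma+\mathcal E'_\varepsilon$ with $\|\mathcal E'_\varepsilon\|_{\mathcal X_\varepsilon\to\mathcal X_\varepsilon^*}\le C\varepsilon$.

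The key step is an \emph{a priori} bound of the form
\begin{equation*}
\|(L_\varepsilon^{\mathrm f}-\sigma+\mathcal E'_\varepsilon)^{-1}\|_{\mathcal X_\varepsilon^*\to\mathcal X_\varepsilon}\le C .
\end{equation*}
Theorem~\ref{thm:physical-block-norm-resolvent-convergence} only controls $\ell^2\to\ell^2$ behaviour. To upgrade it, we write $L^{\mathrm f}_\varepsilon-\sigma=G_\varepsilon-(\sigma+1)$ with $G_\varepsilon=L^{\mathrm f}_\varepsilon+I$. Since $\sigma\notin\operatorname{spec}(L_D)$, Theorem~\ref{thm:physical-block-norm-resolvent-convergence} (with $z=0$), together with the unitary equivalence $\|J_\varepsilon XJ_\varepsilon^*\|=\|X\|$, gives $\|(L^{\mathrm f}_\varepsilon-\sigma)^{-1}\|_{\ell^2\to\ell^2}\le C$ for small $\varepsilon$. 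We then use
\begin{equation*}
(G_\varepsilon-t)^{-1}=G_\varepsilon^{-1}+tG_\varepsilon^{-1/2}\,(I-tG_\varepsilon^{-1})^{-1}G_\varepsilon^{-1}\,G_\varepsilon^{-1/2}.
\end{equation*}
Combined with \eqref{eq:square-root-and-discrete-energy-norm-equivalence}, which makes $G_\varepsilon^{-1/2}$ an isomorphism $\ell^2\to\mathcal X_\varepsilon$ and $\mathcal X_\varepsilon^*\to\ell^2$, this yields the $\mathcal X_\varepsilon^*\to\mathcal X_\varepsilon$ bound. The $O(\varepsilon)$ perturbation $\mathcal E'_\varepsilon$ is then handled by a Neumann series.

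Next we estimate the data. For $\psi\in\mathcal X_\varepsilon$, Lemma~\ref{lemtraceesti} and Lemma~\ref{lem:discreteHardy} give
\begin{equation*}
|\langle g_\varepsilon,\psi\rangle|\le\|\partial_\nu\widehat u_\varepsilon\|_{L^2(\partial\Omega)}\|\Phi^M_\varepsilon\psi\|_{L^2(\partial\Omega)}\le C\varepsilon^{(d+1)/2}\mathcal N^{\mathrm{in}}_\varepsilon\|\psi\|_{\mathcal X_\varepsilon}.
\end{equation*}
Here we use the standard Lipschitz-domain bound $\|\partial_\nu\widehat u_\varepsilon|_{\partial\Omega+}\|_{L^2(\partial\Omega)}\le C\mathcal N^{\mathrm{in}}_\varepsilon$ (Rellich/Verchota estimates for the sound-soft scattered field with $H^1(\partial\Omega)$ Dirichlet data). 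Similarly, Lemma~\ref{lemmaHepsprop}(b) gives $\|h_\varepsilon\|_{\mathcal H_\varepsilon}\le C\varepsilon^{1/2}\mathcal N^{\mathrm{in}}_\varepsilon$. Hence $\|R_1B^{-1}h_\varepsilon\|_{\mathcal X^*_\varepsilon}\le C\varepsilon^{(d+2)/2}\mathcal N^{\mathrm{in}}_\varepsilon$, and the right-hand side of the reduced equation has $\mathcal X_\varepsilon^*$ norm at most $C\varepsilon^{(1-d)/2}\mathcal N^{\mathrm{in}}_\varepsilon$. This gives the first bound in \eqref{eq:section-six-component-rates}.

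Finally,
\begin{equation*}
\|r_\varepsilon\|_{\mathcal H_\varepsilon}\le C\bigl(\|h_\varepsilon\|_{\mathcal H_\varepsilon}+\|R_2\|\,\|\varphi_\varepsilon\|_{\mathcal X_\varepsilon}\bigr)\le C\bigl(\varepsilon^{1/2}+\varepsilon^{(d+1)/2}\varepsilon^{(1-d)/2}\bigr)\mathcal N^{\mathrm{in}}_\varepsilon\le C\varepsilon^{1/2}\mathcal N^{\mathrm{in}}_\varepsilon .
\end{equation*}

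I expect the main obstacle to be the upgrade of the $\ell^2$ resolvent bound to the $\mathcal X_\varepsilon^*\to\mathcal X_\varepsilon$ bound uniformly in $\varepsilon$, including the perturbation $\mathcal E'_\varepsilon$. The factorization above is the route I would try first. A secondary technical point is justifying the $L^2(\partial\Omega)$ Neumann-trace bound for $\widehat u_\varepsilon$ uniformly in $k_\varepsilon$ near $k_0$.
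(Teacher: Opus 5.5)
Your proposal is correct and follows essentially the paper's route. It uses a Schur complement with respect to the uniformly invertible $B$, an $\mathcal X_\varepsilon^*\to\mathcal X_\varepsilon$ inverse bound obtained by sandwiching with $G_\varepsilon^{\pm1/2}$ together with the uniform $\ell^2$-invertibility of $I-(\sigma+1)G_\varepsilon^{-1}$, and the same source and off-diagonal estimates. The only real difference is that you absorb $\varepsilon^{-d}R_1B^{-1}R_2$ into the $O(\varepsilon)$ remainder before inverting, whereas the paper first bounds $A^{-1}$ and then treats $A^{-1}R_1B^{-1}R_2$ as a small perturbation. One small correction: take the $\ell^2$ bound on $(L^{\mathrm f}_\varepsilon-\sigma)^{-1}$ from Proposition~\ref{prop:capacitance-norm-resolvent-convergence}, as the paper does, because Theorem~\ref{thm:physical-block-norm-resolvent-convergence} is stated for $\mathcal T_\varepsilon$ rather than for $L^{\mathrm f}_\varepsilon-\sigma$.
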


\begin{proof}
We regard the operator $A$ as an operator from $\mathcal X_\varepsilon$ to $\mathcal X_\varepsilon^*$. We first prove that
\begin{equation}\label{eq:section-six-energy-space-inverse-of-A}
    \|A^{-1}\|_{\mathcal{L}( \mathcal{X}_\varepsilon^*, \mathcal{X}_\varepsilon )}\leq C\varepsilon^{-d}.
\end{equation}
We use the notation from the proof of Theorem \ref{thm:physical-block-norm-resolvent-convergence}. The estimate \eqref{eq:square-root-and-discrete-energy-norm-equivalence} and its dual give
\begin{equation}\label{eq:section-six-square-root-norm-equivalences}
    \|G_\varepsilon^{-1/2}\|_{\mathcal{L}(\ell^2(\mathcal I_\varepsilon),\mathcal{X}_\varepsilon )} +  \|G_\varepsilon^{-1/2}\|_{\mathcal{L}(\mathcal{X}^*_\varepsilon ,\ell^2(\mathcal I_\varepsilon))}\leq C.
\end{equation}
Moreover, one has
\begin{equation}
    \mathcal{T}_\varepsilon = G_\varepsilon^{1/2} \big[ I - (\sigma+1) G_\varepsilon^{-1} + \widetilde{\mathcal{E}}_\varepsilon \big] G_\varepsilon^{1/2}.
\end{equation}
To prove \eqref{eq:section-six-energy-space-inverse-of-A}, it remains to prove that
\begin{equation}\label{Isigma1uniformbound}
    \big\{ I - (\sigma+1) G_\varepsilon^{-1} \big\}^{-1} =J_\varepsilon^* \{ I - (\sigma+1) J_\varepsilon G_\varepsilon^{-1} J_\varepsilon^* \big\}^{-1} J_\varepsilon
\end{equation}
is uniformly bounded on $\ell^2(\mathcal{I}_\varepsilon)$. Since, by Proposition \ref{prop:capacitance-norm-resolvent-convergence},
\begin{equation}
     I - (\sigma+1) J_\varepsilon G_\varepsilon^{-1} J_\varepsilon^* 
\end{equation}
converges to $I-(\sigma+1) R_D$ in the $\mathcal{L}(L^2(\Omega))$ operator norm, and $I-(\sigma+1) R_D$ is invertible, we find that \eqref{Isigma1uniformbound} is uniformly bounded on $\ell^2(\mathcal{I}_\varepsilon)$. Hence, \eqref{eq:section-six-energy-space-inverse-of-A} holds.

Since the scattered field $\widehat u_\varepsilon-u_\varepsilon^{\mathrm{in}}$ has Dirichlet datum $-u_\varepsilon^{\mathrm{in}}|_{\partial\Omega}$, the $L^2$ regularity estimate for the exterior Dirichlet problem on a Lipschitz domain gives
\begin{equation}\label{eq:section-six-sound-soft-Neumann-estimate}
    \left\|\left.\frac{\partial\widehat u_\varepsilon}{\partial\nu}\right|_{\partial\Omega+} \right\|_{L^2(\partial\Omega)}\leq C\mathcal N_\varepsilon^{\mathrm{in}}.
\end{equation}

By the definitions in \eqref{defgandh}, Lemmas~\ref{lemmaHepsprop} and \ref{lemtraceesti}, and \eqref{eq:section-six-sound-soft-Neumann-estimate}, for every $\psi\in\mathcal X_\varepsilon$ and $v\in\mathcal H_\varepsilon$ we have
\begin{equation}\label{eq:section-six-source-estimates-before-duality}
    \left|\langle g_\varepsilon,\psi\rangle_{\ell^2(\mathcal I_\varepsilon)}\right|\leq C\varepsilon^{(d+1)/2}\mathcal N_\varepsilon^{\mathrm{in}}\|\psi\|_{\mathcal X_\varepsilon}, \qquad \left|\langle h_\varepsilon,v\rangle_{\mathcal H_\varepsilon}\right|\leq C\varepsilon^{1/2}\mathcal N_\varepsilon^{\mathrm{in}}\|v\|_{\mathcal H_\varepsilon}.
\end{equation}
Consequently,
\begin{equation}\label{eq:section-six-source-norm-estimates}
    \|g_\varepsilon\|_{\mathcal X_\varepsilon^*}\leq C\varepsilon^{(d+1)/2}\mathcal N_\varepsilon^{\mathrm{in}}, \qquad \|h_\varepsilon\|_{\mathcal H_\varepsilon}\leq C\varepsilon^{1/2}\mathcal N_\varepsilon^{\mathrm{in}}.
\end{equation}
Propositions~\ref{lem:off-diagonal-blocks} and \ref{prop:invertibilityofB} give
\begin{equation}\label{eq:section-six-off-diagonal-and-background-estimates}
    \|R_1\|_{\mathcal L(\mathcal H_\varepsilon,\mathcal X_\varepsilon^*)}+\|R_2\|_{\mathcal L(\mathcal X_\varepsilon,\mathcal H_\varepsilon)}\leq C\varepsilon^{(d+1)/2}, \qquad \|B^{-1}\|_{\mathcal L(\mathcal H_\varepsilon)}\leq C.
\end{equation}
It follows from \eqref{eq:section-six-energy-space-inverse-of-A} and \eqref{eq:section-six-off-diagonal-and-background-estimates} that
\begin{equation}\label{eq:section-six-small-Schur-perturbation}
    \|A^{-1}R_1B^{-1}R_2\|_{\mathcal L(\mathcal X_\varepsilon)}\leq C\varepsilon^{-d}\varepsilon^{(d+1)/2}\varepsilon^{(d+1)/2}=C\varepsilon.
\end{equation}
After decreasing $\varepsilon$, the operator $I-A^{-1}R_1B^{-1}R_2$ is invertible and its inverse is bounded uniformly in $\varepsilon$. Eliminating $r_\varepsilon$ from the block equation \eqref{operatoreq} gives
\begin{equation}\label{eq:section-six-Schur-complement-equation}
    \bigl(A-R_1B^{-1}R_2\bigr)\varphi_\varepsilon=g_\varepsilon-R_1B^{-1}h_\varepsilon.
\end{equation}

It follows from \eqref{eq:section-six-energy-space-inverse-of-A} and \eqref{eq:section-six-small-Schur-perturbation} that
\begin{equation}\label{eq:section-six-Schur-complement-inverse}
    \|\bigl(A-R_1B^{-1}R_2\bigr)^{-1}\|_{\mathcal L(\mathcal X_\varepsilon^*,\mathcal X_\varepsilon)}\leq C\varepsilon^{-d},
\end{equation}
whereas \eqref{eq:section-six-source-norm-estimates} and \eqref{eq:section-six-off-diagonal-and-background-estimates} imply
\begin{equation}\label{eq:section-six-effective-source-estimate}
    \|g_\varepsilon-R_1B^{-1}h_\varepsilon\|_{\mathcal X_\varepsilon^*}\leq C\left(\varepsilon^{(d+1)/2}+\varepsilon^{(d+2)/2}\right)\mathcal N_\varepsilon^{\mathrm{in}}\leq C\varepsilon^{(d+1)/2}\mathcal N_\varepsilon^{\mathrm{in}}.
\end{equation}
Applying \eqref{eq:section-six-Schur-complement-inverse} to \eqref{eq:section-six-Schur-complement-equation} proves the estimate for $\varphi_\varepsilon$. Since
\begin{equation}\label{eq:section-six-background-component-formula}
    r_\varepsilon=B^{-1}\bigl(h_\varepsilon-R_2\varphi_\varepsilon\bigr),
\end{equation}
we have
\begin{equation}
    \|r_\varepsilon\|_{\mathcal H_\varepsilon}\leq C\left(\varepsilon^{1/2}+\varepsilon^{(d+1)/2}\varepsilon^{(1-d)/2}\right)\mathcal N_\varepsilon^{\mathrm{in}}\leq C\varepsilon^{1/2}\mathcal N_\varepsilon^{\mathrm{in}}.
\end{equation}
This completes the proof.
\end{proof}

We now prove Theorem \ref{thm:main-scattering-convergence-rate}.

\begin{proof}
Set $W_\varepsilon:=\Phi_\varepsilon^M\varphi_\varepsilon$, so that $w_\varepsilon=W_\varepsilon+r_\varepsilon$. We first estimate the two terms in $\Omega$. The estimate \eqref{eq:matrix-L2-estimate-W} gives
\begin{equation}\label{eq:section-six-matrix-L2-estimate-for-lift}
    \|W_\varepsilon\|_{L^2(\Omega\setminus\overline{D_\varepsilon})}\leq C\varepsilon^{d/2}\|\varphi_\varepsilon\|_{\mathcal X_\varepsilon}.
\end{equation}
Inside a bubble $\varepsilon(n+D)$, the average of $W_\varepsilon$ is $(S_M\varphi_\varepsilon)(n)$. The rescaled Poincar\'e inequality, the fact that $S_M$ is unitary on $\ell^2(\mathcal I_\varepsilon)$, and the estimate \eqref{eq:energy-estimate-W-epsilon} therefore give
\begin{equation}\label{eq:section-six-bubble-L2-estimate-for-lift}
    \begin{aligned}
        \|W_\varepsilon\|_{L^2(D_\varepsilon)}^2 &\leq C\varepsilon^d\|\varphi_\varepsilon\|_{\ell^2(\mathcal I_\varepsilon)}^2+C\varepsilon^2\|\nabla W_\varepsilon\|_{L^2(D_\varepsilon)}^2 \\
        &\leq C\varepsilon^d\|\varphi_\varepsilon\|_{\mathcal X_\varepsilon}^2+C\varepsilon^4\int_{D_\varepsilon}a_\varepsilon|\nabla W_\varepsilon|^2\,dx\leq C\varepsilon^d\|\varphi_\varepsilon\|_{\mathcal X_\varepsilon}^2.
    \end{aligned}
\end{equation}
Combining \eqref{eq:section-six-matrix-L2-estimate-for-lift} and \eqref{eq:section-six-bubble-L2-estimate-for-lift}, we obtain
\begin{equation}\label{eq:section-six-interior-L2-estimate-for-lift}
    \|W_\varepsilon\|_{L^2(\Omega)}\leq C\varepsilon^{d/2}\|\varphi_\varepsilon\|_{\mathcal X_\varepsilon}\leq C\varepsilon^{1/2}\mathcal N_\varepsilon^{\mathrm{in}}.
\end{equation}
For the zero-average component, Lemma~\ref{lemmaHepsprop}(a) and Lemma~\ref{lemmaHepsprop}(c) imply
\begin{equation}\label{eq:section-six-interior-L2-estimate-for-background}
    \|r_\varepsilon\|_{L^2(\Omega)}\leq C\varepsilon\|r_\varepsilon\|_{\mathcal H_\varepsilon}\leq C\varepsilon^{3/2}\mathcal N_\varepsilon^{\mathrm{in}}.
\end{equation}
Estimates \eqref{eq:section-six-interior-L2-estimate-for-lift} and \eqref{eq:section-six-interior-L2-estimate-for-background} prove the first estimate in \eqref{eq:section-six-main-interior-and-exterior-L2-rates}.

We next establish the exterior $H^1$ estimate. Let $Q_m$ and $W_{\varepsilon,m}:=\Phi_\varepsilon^M Q_m\varphi_\varepsilon$ be the layer projections and layer lifts introduced in the proof of Lemma~\ref{lemtraceesti}. The boundary-layer energy estimate \eqref{eq:physical-boundary-layer-energy}, followed by Cauchy--Schwarz in $m$, and Lemma~\ref{lem:discreteHardy}, gives
\begin{equation}\label{eq:section-six-exterior-gradient-estimate-for-lift}
    \begin{aligned}
        \|\nabla W_\varepsilon\|_{L^2(\mathbb R^d\setminus\overline\Omega)} &\leq \sum_{m\geq0}\|\nabla W_{\varepsilon,m}\|_{L^2(\mathbb R^d\setminus\overline\Omega)} \\
        &\leq C\varepsilon^{(d-2)/2}\sum_{m\geq0}\rho^{m/2}\|Q_m\varphi_\varepsilon\|_{\ell^2(\mathcal I_\varepsilon)} \\
        &\leq C\varepsilon^{(d-2)/2}\left(\sum_{m\geq0}\rho^{m/2}\right)^{1/2}\left(\sum_{m\geq0}\rho^{m/2}\|Q_m\varphi_\varepsilon\|_{\ell^2(\mathcal I_\varepsilon)}^2\right)^{1/2} \\
        &\leq C\varepsilon^{(d-2)/2}\mathcal M_{\partial\Omega,\varepsilon}(\varphi_\varepsilon)^{1/2}\leq C\varepsilon^{(d-2)/2}\mathfrak D_\varepsilon(\varphi_\varepsilon)^{1/2} \\
        &\leq C\varepsilon^{d/2}\|\varphi_\varepsilon\|_{\mathcal X_\varepsilon}.
    \end{aligned}
\end{equation}
Here, the exponential weight in $\mathcal M_{\partial\Omega,\varepsilon}$ is chosen with $\gamma=-\frac12\log\rho$, up to the harmless shift between the indices $m$ and $d_\varepsilon(n)=m+1$. Together with Lemma~\ref{lemPhiML2}, this implies
\begin{equation}\label{eq:section-six-exterior-H1-estimate-for-lift}
    \|W_\varepsilon\|_{H^1(B_R\setminus\overline\Omega)}\leq C\varepsilon^{d/2}\|\varphi_\varepsilon\|_{\mathcal X_\varepsilon}\leq C\varepsilon^{1/2}\mathcal N_\varepsilon^{\mathrm{in}}.
\end{equation}
Moreover, we have
\begin{equation}\label{eq:section-six-exterior-H1-estimate-for-background}
    \|r_\varepsilon\|_{H^1(B_R\setminus\overline\Omega)}\leq C\|r_\varepsilon\|_{\mathcal H_\varepsilon}\leq C\varepsilon^{1/2}\mathcal N_\varepsilon^{\mathrm{in}}.
\end{equation}
Estimates \eqref{eq:section-six-exterior-H1-estimate-for-lift} and \eqref{eq:section-six-exterior-H1-estimate-for-background} prove \eqref{eq:section-six-main-global-L2-and-exterior-H1-rates}.

It remains to improve the exterior $L^2$ rate. Lemma~\ref{lemPhiML2} and Lemma~\ref{lem:section-six-block-component-estimates} give
\begin{equation}\label{eq:section-six-exterior-L2-estimate-for-lift}
    \|W_\varepsilon\|_{L^2(B_R\setminus\overline\Omega)}\leq C\varepsilon^{(d+1)/2}\|\varphi_\varepsilon\|_{\mathcal X_\varepsilon}\leq C\varepsilon\mathcal N_\varepsilon^{\mathrm{in}}.
\end{equation}
To obtain the same rate for $r_\varepsilon$, take $f\in L^2(B_R\setminus\overline\Omega)$ and let $v_f$ solve the dual sound-soft problem used in \eqref{dualvfproblem}. It is clear that $v_f$ belongs to $\mathcal H_\varepsilon$, because $v_f$ has zero trace on $\partial\Omega$ and vanishes in $\Omega$. The estimate \eqref{estivf}, together with Lemma~\ref{lemmaHepsprop}(c), yields
\begin{equation}\label{eq:section-six-dual-sound-soft-estimate}
    \|v_f\|_{\mathcal H_\varepsilon}+\left\|\left.\frac{\partial v_f}{\partial\nu}\right|_{\partial\Omega+}\right\|_{L^2(\partial\Omega)}\leq C\|f\|_{L^2(B_R\setminus\overline\Omega)}.
\end{equation}
Green's identity and the definition of $B$ give
\begin{equation}\label{eq:section-six-duality-identity-for-background}
    \int_{B_R\setminus\overline\Omega}r_\varepsilon\overline f\,dx=\langle Br_\varepsilon,v_f\rangle_{\mathcal H_\varepsilon}+\int_{\partial\Omega}r_\varepsilon\overline{\left.\dfrac{\partial v_f}{\partial\nu}\right|_+}\,d\sigma.
\end{equation}
Since $v_f=0$ on $\partial\Omega$, the definition of $h_\varepsilon$ gives $\langle h_\varepsilon,v_f\rangle_{\mathcal H_\varepsilon}=0$. Consequently,
\begin{equation}\label{eq:section-six-background-equation-on-dual-test}
    \langle Br_\varepsilon,v_f\rangle_{\mathcal H_\varepsilon}=-\langle R_2\varphi_\varepsilon,v_f\rangle_{\mathcal H_\varepsilon}.
\end{equation}
Moreover, Proposition~\ref{lem:off-diagonal-blocks} and Lemma~\ref{lem:section-six-block-component-estimates} give
\begin{equation}\label{eq:section-six-R2-component-rate}
    \|R_2\varphi_\varepsilon\|_{\mathcal H_\varepsilon}\leq C\varepsilon^{(d+1)/2}\|\varphi_\varepsilon\|_{\mathcal X_\varepsilon}\leq C\varepsilon\mathcal N_\varepsilon^{\mathrm{in}},
\end{equation}
while Lemma~\ref{lemmaHepsprop}(b) gives
\begin{equation}\label{eq:section-six-background-boundary-trace-rate}
    \|r_\varepsilon\|_{L^2(\partial\Omega)}\leq C\varepsilon^{1/2}\|r_\varepsilon\|_{\mathcal H_\varepsilon}\leq C\varepsilon\mathcal N_\varepsilon^{\mathrm{in}}.
\end{equation}
Combining \eqref{eq:section-six-dual-sound-soft-estimate}--\eqref{eq:section-six-background-boundary-trace-rate}, we obtain
\begin{equation}
    \left|\int_{B_R\setminus\overline\Omega}r_\varepsilon\overline f\,dx\right|\leq C\varepsilon\mathcal N_\varepsilon^{\mathrm{in}}\|f\|_{L^2(B_R\setminus\overline\Omega)}.
\end{equation}
Taking the supremum over $f$ yields
\begin{equation}\label{eq:section-six-exterior-L2-estimate-for-background}
    \|r_\varepsilon\|_{L^2(B_R\setminus\overline\Omega)}\leq C\varepsilon\mathcal N_\varepsilon^{\mathrm{in}}.
\end{equation}
Equations \eqref{eq:section-six-exterior-L2-estimate-for-lift} and \eqref{eq:section-six-exterior-L2-estimate-for-background} prove the second estimate in \eqref{eq:section-six-main-interior-and-exterior-L2-rates}. 

Finally, $w_\varepsilon=u_\varepsilon-\widehat u_\varepsilon$ satisfies the homogeneous Helmholtz equation in $\mathbb R^d\setminus\overline\Omega$ and the outgoing radiation condition. Choose a sphere $\partial B_{R_1}$ that is strictly between $\overline\Omega$ and $\partial B_R$. The exterior $L^2$ estimate and interior elliptic estimates on a fixed annulus containing $\partial B_{R_1}$ give Cauchy data of size $O(\varepsilon\mathcal N_\varepsilon^{\mathrm{in}})$ on that sphere. Interior estimates between $\partial\Omega$ and $\partial B_{R_1}$, followed outside $B_{R_1}$ by the standard outgoing representation, prove \eqref{eq:section-six-main-local-H1-rate} for every compact $K\Subset\mathbb R^d\setminus\overline\Omega$. Applying the bounded far-field map to the same Cauchy data gives \eqref{eq:section-six-main-far-field-rate1}. This completes the proof.
\end{proof}

\section{Numerical experiments}
\label{sec:numerical-experiments}

We investigate the quantitative sound-soft limit of Theorem~\ref{themaintheorem} for a two-dimensional array of circular bubbles contained in a disk. The experiments address two complementary questions: the decay of the far-field discrepancy at a fixed nonresonant detuning, and the relation between enhanced scattering discrepancies and the Dirichlet spectrum of the effective operator $L_D$. The latter comparison includes the spatial structure of the bubble-averaged field.

\subsection{Configuration and numerical method}

We take $\Omega=B_1$, $Y=(-\frac{1}{2},\frac{1}{2})^2$, and $D=B_{1/5}$. For $\varepsilon=1/n$, the bubbles are $D_{\varepsilon,m}=\varepsilon(m+D)$, with centers $\varepsilon m$ indexed by
\begin{equation}\label{eq:num-index-set} \mathcal I_\varepsilon=\left\{m\in\mathbb Z^2:\left(|\varepsilon m_1|+\frac{\varepsilon}{2}\right)^2+\left(|\varepsilon m_2|+\frac{\varepsilon}{2}\right)^2<1\right\}. \end{equation}
Thus, only bubbles whose entire centered lattice cell lies in $\Omega$ are retained, according to the geometric convention of the paper. We set $\rho=\kappa=1$ and $\rho_{\rm b}=\kappa_{\rm b}=\varepsilon^2$, so that $\delta=\varepsilon^2$ and $v=v_{\rm b}=1$. The incident field and the frequency are chosen as
\begin{equation}\label{eq:num-frequency} u_\varepsilon^{\mathrm{in}}(x)=e^{\mi k_\varepsilon x_1},\qquad k_\varepsilon=\omega_\varepsilon=\omega_{*,\varepsilon}-\tau\varepsilon^2. \end{equation}
Here, $\omega_{*,\varepsilon}$ is computed from the periodic cell problem at the corner of the Brillouin zone for each $\varepsilon$. Retaining this finite-$\varepsilon$ band edge is essential because an $O(\varepsilon^2)$ error in the reference frequency produces an $O(1)$ error in $\tau$.

The transmission problem is solved by a multiple-scattering expansion. Outside the bubbles, its angular truncation has the form
\begin{equation}\label{eq:num-multipole} u_\varepsilon^{(P)}(x)=e^{\mi k_\varepsilon x_1}+\sum_{m\in\mathcal I_\varepsilon}\sum_{p=-P}^{P}a_{m,p}H_p^{(1)}(k_\varepsilon|x-\varepsilon m|)e^{\mi p\arg(x-\varepsilon m)}. \end{equation}
Regular Bessel expansions are used inside the bubbles. Continuity of the field and of the density-weighted normal derivative determines the coefficients. All pairwise cylindrical-wave interactions are retained; zero-padded FFT convolutions accelerate the matrix-vector products. The nonresonant computations use $P=4$, with a true relative linear-system residual below $10^{-10}$. The detuning scan uses $P=4$ and a relative linear-system residual tolerance of $10^{-12}$. All six detected maxima are subsequently refined with $P=6$. The bubble-averaged envelope comparisons are performed with $P=6$ at the first two maxima.

The reference $\widehat u_\varepsilon$ is the sound-soft total field for the unit disk at the same wavenumber $k_\varepsilon$, extended by zero inside $\Omega$. In polar coordinates, its exterior representation is
\begin{equation}\label{eq:num-soft-reference} \widehat u_\varepsilon(r,\theta)=e^{\mi k_\varepsilon r\cos\theta}-\sum_{p\in\mathbb Z}\mi^p\frac{J_p(k_\varepsilon)}{H_p^{(1)}(k_\varepsilon)}H_p^{(1)}(k_\varepsilon r)e^{\mi p\theta},\qquad r>1. \end{equation}
We retain $|p|\leq40$ in this reference expansion. Our principal observable is the relative discrepancy between the complex far-field patterns,
\begin{equation}\label{eq:num-error} E_\varepsilon(\tau)=\frac{\|u_\varepsilon^{s,\infty}-\widehat u_\varepsilon^{s,\infty}\|_{L^2(\mathbb S^1)}}{\|\widehat u_\varepsilon^{s,\infty}\|_{L^2(\mathbb S^1)}}. \end{equation}
The angular integrals are evaluated by the periodic trapezoidal rule, using $512$ equally spaced directions for the convergence study and $360$ for the detuning and peak computations. This observable compares both amplitude and phase; it is not a difference of scattering intensities.

\subsection{Nonresonant convergence to the sound-soft disk}

We fix $\tau_{\mathrm{nr}}=0.3248382263$, which lies below the first limiting spectral detuning computed in the next subsection. For this incident plane wave, the incident-field norms in Theorem~\ref{themaintheorem} remain bounded as $\varepsilon\to0$. Since the reference far-field norm has a nonzero limit, the far-field estimate in that theorem yields $E_\varepsilon(\tau_{\mathrm{nr}})=O(\varepsilon)$.

Table~\ref{tab:num-convergence} and Figure~\ref{fig:num-convergence} report the results for eight lattice spacings. The discrepancy decreases from $0.5260$ at $\varepsilon=1/8$ to $0.04153$ at $\varepsilon=1/128$. Throughout this range, $E_\varepsilon/\varepsilon$ remains between $4.21$ and $5.49$, consistent with first-order decay. 

\begin{table}[tb]
\centering
\caption{Nonresonant far-field discrepancy at $\tau=\tau_{\mathrm{nr}}$. The bubble radius is $\varepsilon/5$ and the angular truncation is $P=4$.}
\label{tab:num-convergence}
\begin{tabular}{rrrr}
\hline
$\varepsilon^{-1}$ & $|\mathcal I_\varepsilon|$ & $E_\varepsilon$ & $E_\varepsilon/\varepsilon$ \\
\hline
8   & 177   & 0.526033 & 4.2083 \\
16  & 749   & 0.292385 & 4.6782 \\
24  & 1709  & 0.228754 & 5.4901 \\
32  & 3101  & 0.151237 & 4.8396 \\
48  & 7041  & 0.113296 & 5.4382 \\
64  & 12637 & 0.075701 & 4.8448 \\
96  & 28585 & 0.052913 & 5.0797 \\
128 & 50957 & 0.041534 & 5.3163 \\
\hline
\end{tabular}
\end{table}

\begin{figure}[tb]
\centering
\includegraphics[width=\linewidth]{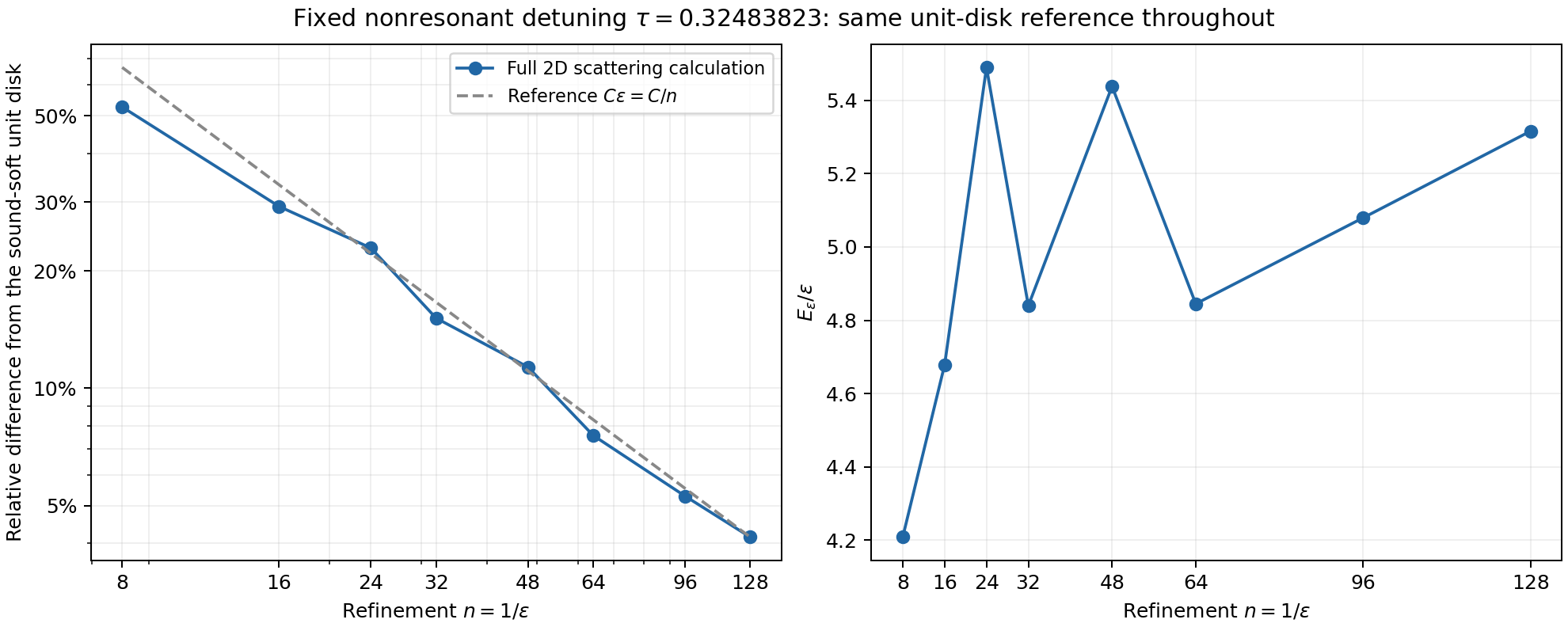}
\caption{Far-field convergence at the fixed nonresonant detuning $\tau_{\mathrm{nr}}$. Left: the computed discrepancy, shown as a percentage, and a reference line $C\varepsilon$, with $C\approx5.3163$ chosen to match the finest-lattice data point. Right: the scaled discrepancy $E_\varepsilon/\varepsilon$. Both horizontal axes use $n=\varepsilon^{-1}$. The reference line illustrates first-order scaling.}
\label{fig:num-convergence}
\end{figure}

To assess angular truncation, the cases $\varepsilon=1/24$ and $1/64$ were repeated with $P=6$. The changes in the complex far field, normalized by the sound-soft far-field norm, were $1.22\times10^{-8}$ and $4.47\times10^{-9}$, respectively. The largest measured linear-system residual in Table~\ref{tab:num-convergence} was $9.67\times10^{-11}$. These checks support the numerical resolution of the reported discrepancies; they are not rigorous forward-error bounds.

Figure~\ref{fig:num-total-field} compares the real parts of the total fields for $\varepsilon=1/128$, where $k_\varepsilon\approx7.09905585$. The exterior interference patterns agree closely, while the sampled field inside the array is small on the scale of the incident wave. The difference is displayed separately to resolve its spatial structure. This field plot is a qualitative comparison; the quantitative convergence test above concerns the far-field estimate and does not measure all the volume and Sobolev norms in Theorem~\ref{themaintheorem}.

\begin{figure}[tb]
\centering
\includegraphics[width=\linewidth]{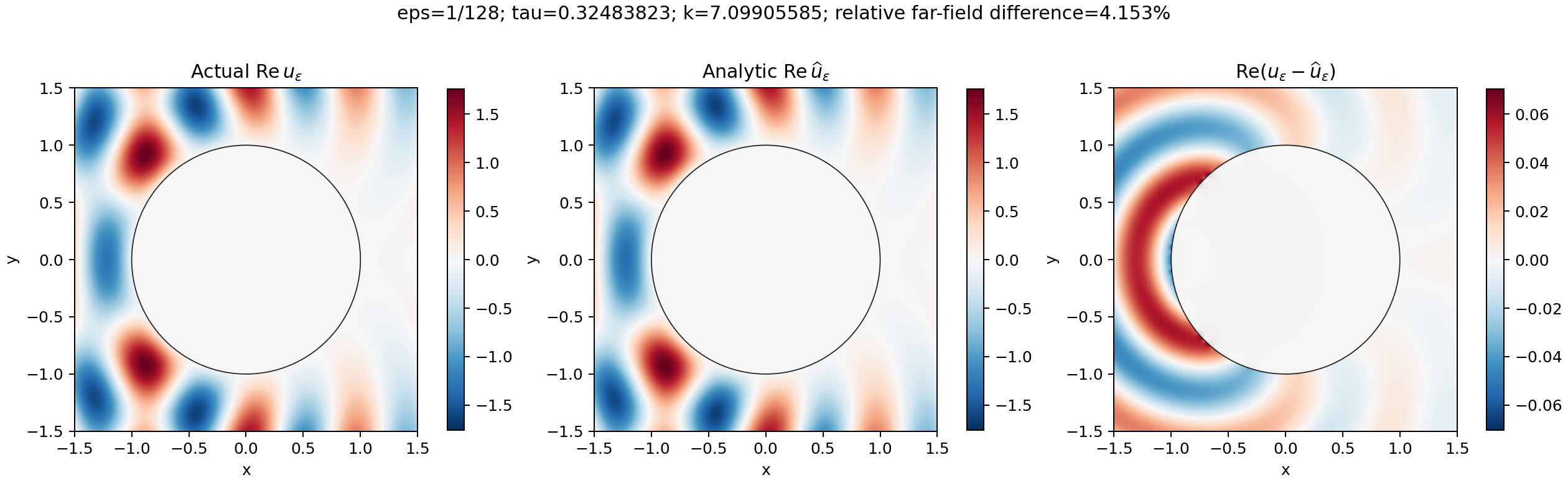}
\caption{Total-field comparison at $\varepsilon=1/128$ and $\tau=\tau_{\mathrm{nr}}$: the bubbly-medium field, the analytic sound-soft reference extended by zero inside the disk, and their difference. The first two panels share a color scale; the difference has its own scale. The circle marks $\partial\Omega$. The fields are sampled on $[-1.5,1.5]^2$ with spacing $\varepsilon/4$.}
\label{fig:num-total-field}
\end{figure}

\subsection{Detuning peaks and the effective Dirichlet spectrum}

For the square cell with a centered circular bubble, symmetry gives $A_{\mathrm{eff}}=a_{\mathrm{eff}}I$. With the normalization of \eqref{defLDintro}--\eqref{defAeffintro}, an independent quasiperiodic cell calculation gives
\begin{equation}\label{eq:num-effective-constants} a_{\mathrm{eff}}\approx3.19014805,\qquad \widehat C^{M}\approx50.40130208,\qquad \omega_0=v_{\rm b}\sqrt{\widehat C^{M}}\approx7.09938744. \end{equation}
The capacitance symbol is evaluated using harmonic expansions in $Y\setminus\overline D$, with quasiperiodic matching on opposite cell faces; its curvature at $M$ determines $A_{\mathrm{eff}}=-\tfrac12\nabla_\alpha^2\widehat C^{M}$. 

Consequently, $L_D=-a_{\mathrm{eff}}\Delta$ on the unit disk with homogeneous Dirichlet boundary conditions. If $j_{m,\ell}$ denotes the $\ell$th positive zero of $J_m$, its eigenvalues and the corresponding limiting spectral detunings are
\begin{equation}\label{eq:num-spectral-detuning} \lambda_{m,\ell}=a_{\mathrm{eff}}j_{m,\ell}^2,\qquad \tau^D_{m,\ell}=\frac{v_{\rm b}^2\lambda_{m,\ell}}{2\omega_0}. \end{equation}
This conversion follows from $\sigma=2\omega_0\tau/v_{\rm b}^2$ in Section~\ref{secproofofmain}. In particular, the first two distinct spectral detunings are $\tau^D_{0,1}\approx1.299353$ and $\tau^D_{1,1}\approx3.298711$.

For $\varepsilon=1/16$, Figure~\ref{fig:num-detuning} shows the relative far-field discrepancy over the extended detuning interval $0\leq\tau\leq12$. The initial scan uses $601$ equally spaced points, corresponding to a spacing $\Delta\tau=0.02$, supplemented by local sampling and peak refinement. Six local maxima are detected and subsequently refined with $P=6$. Their locations and heights are reported in Table~\ref{tab:num-detected-peaks}. Since narrow maxima can be missed by a finite sampling grid, this scan is not asserted to provide an exhaustive list of scattering resonances.

The six maxima are stable under angular refinement: increasing the truncation order from $P=4$ to $P=6$ changes their locations by less than $2\times10^{-5}$ and their heights by less than $3.3\times10^{-8}$. The relative linear-system residual at each of the six refined peaks is below $10^{-12}$. These checks support the numerical resolution of the detected maxima.

\begin{figure}[tb]
\centering
\includegraphics[width=\linewidth]{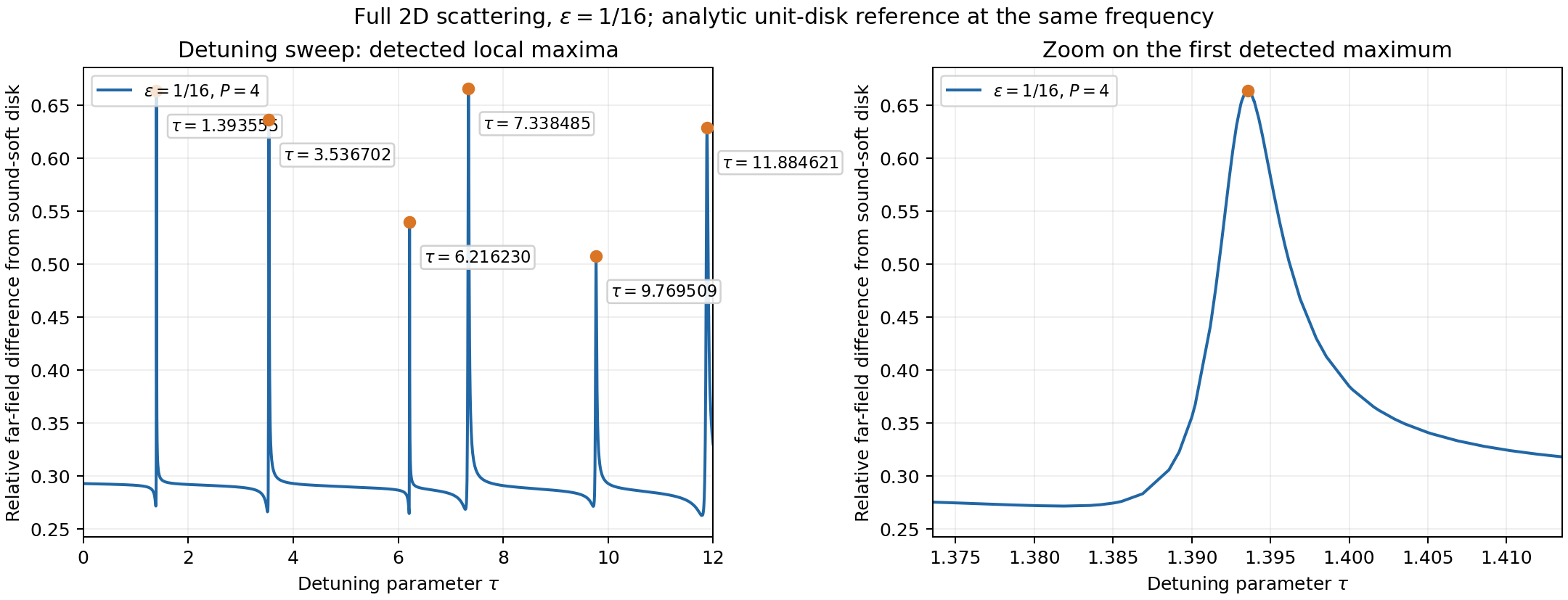}
\caption{Relative far-field discrepancy as a function of detuning for $\varepsilon=1/16$. Left: the extended scan over $0\leq\tau\leq12$, with six detected maxima marked in orange. Right: a magnified view of the first maximum. The curves, markers, and annotations show the original $P=4$ computation. Independently refined $P=6$ locations and heights are reported in Table~\ref{tab:num-detected-peaks}.}
\label{fig:num-detuning}
\end{figure}

\begin{table}[tb]
\centering
\caption{Detected maxima of the relative far-field discrepancy for $\varepsilon=1/16$ in the interval $0\leq\tau\leq12$. The two location columns compare angular truncations $P=4$ and $P=6$; the final column gives the discrepancy evaluated at the refined $P=6$ maximum.}
\label{tab:num-detected-peaks}
\begin{tabular}{rrrr}
\hline
Peak & $\tau_{\mathrm{peak}}^{(4)}$ & $\tau_{\mathrm{peak}}^{(6)}$ & $E_{1/16}^{(6)}$ \\
\hline
1 & 1.393555  & 1.393553  & 0.663078 \\
2 & 3.536702  & 3.536696  & 0.635901 \\
3 & 6.216230  & 6.216219  & 0.539318 \\
4 & 7.338485  & 7.338473  & 0.665508 \\
5 & 9.769509  & 9.769494  & 0.507107 \\
6 & 11.884621 & 11.884602 & 0.628728 \\
\hline
\end{tabular}
\end{table}

We examine the spectral association of the first two maxima in greater detail. Table~\ref{tab:num-peaks} and Figure~\ref{fig:num-peak-comparison} compare their refined locations with the limiting spectral detunings $\tau^D_{0,1}$ and $\tau^D_{1,1}$. Their relative upward shifts are approximately $7.25\%$ and $7.21\%$, respectively. The corresponding envelope comparisons are presented below. The remaining four maxima are reported as resolved scattering-discrepancy peaks; their association with particular eigenspaces of $L_D$ is not established by the present envelope data.

\begin{table}[tb]
\centering
\caption{Limiting spectral detunings and the first two refined discrepancy peaks at $\varepsilon=1/16$, using $P=6$. The shift is $100(\tau_{\mathrm{peak}}/\tau^D_{m,1}-1)\%$. The correlation $\mathcal C$ and shape error $\eta$ are defined in \eqref{eq:num-shape-metrics}.}
\label{tab:num-peaks}
\small
\setlength{\tabcolsep}{4pt}
\begin{tabular}{lrrrrrr}
\hline
Mode & $\tau^D_{m,1}$ & $\tau_{\mathrm{peak}}$ & $E_{1/16}$ & Shift & $\mathcal C$ & $\eta$ \\
\hline
Radial $(0,1)$ & 1.299353 & 1.393553 & 0.663078 & $7.25\%$ & 0.999048 & 0.043624 \\
Dipole $(1,1)$ & 3.298711 & 3.536696 & 0.635901 & $7.21\%$ & 0.997922 & 0.064433 \\
\hline
\end{tabular}
\end{table}

\begin{figure}[tb]
\centering
\includegraphics[width=0.85\linewidth]{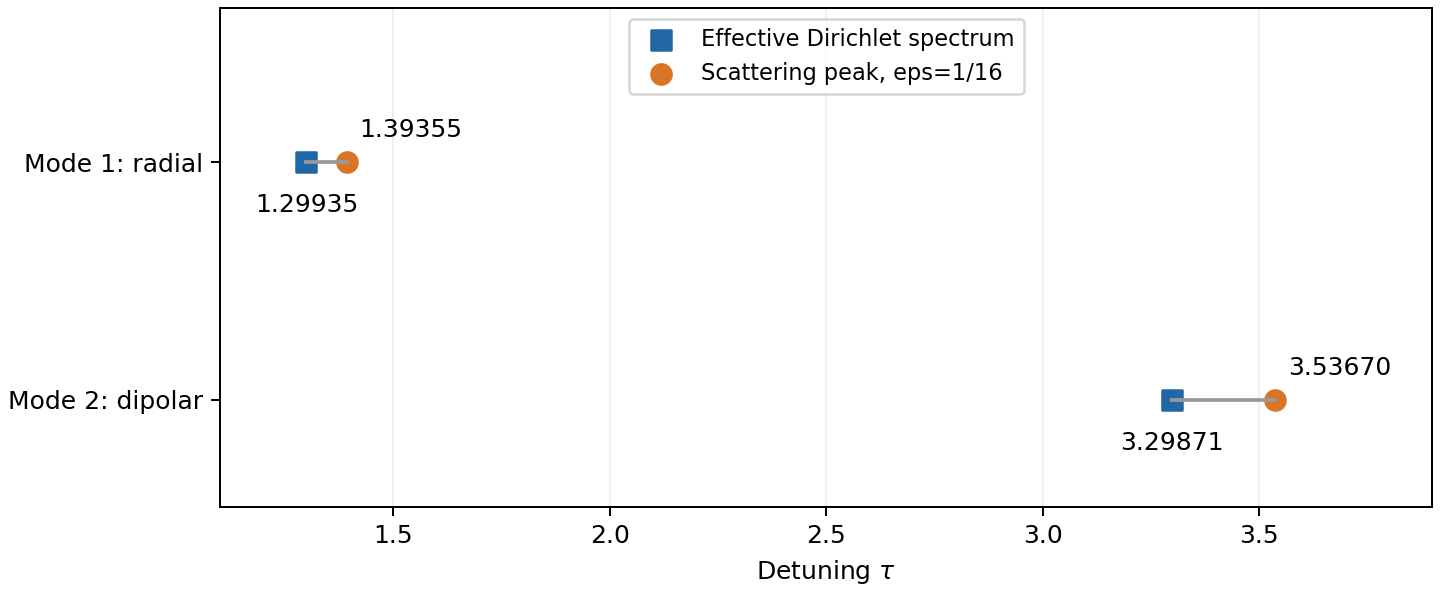}
\caption{Comparison of the first two limiting spectral detunings (blue squares) with the corresponding refined scattering-discrepancy peaks for $\varepsilon=1/16$ and $P=6$ (orange circles). The radial and dipole pairs correspond to the two rows of Table~\ref{tab:num-peaks}.}
\label{fig:num-peak-comparison}
\end{figure}

The detected maxima are finite-array scattering features. Theorem~\ref{themaintheorem} assumes a fixed $\sigma\notin\operatorname{spec}(L_D)$ and establishes convergence as $\varepsilon\to0$; it does not assert that finite-$\varepsilon$ discrepancy maxima occur exactly at the limiting spectral detunings. The shifts of the first two maxima in Table~\ref{tab:num-peaks} are therefore compatible with the theorem. Establishing convergence of peak locations would require tracking the maxima as $\varepsilon$ decreases. The extended scan documents additional frequency-dependent structure, while the envelope comparisons below provide spatial evidence for the spectral association of the first two maxima.

\subsection{Bubble-averaged envelopes at the first two peaks}

To extract the slow component from the oscillatory field, we average over each bubble and remove the alternating Bloch carrier:
\begin{equation}\label{eq:num-envelope} U_m=\frac{1}{|D_{\varepsilon,m}|}\int_{D_{\varepsilon,m}}u_\varepsilon(x)\,dx,\qquad W_m=(-1)^{m_1+m_2}U_m,\qquad m\in\mathcal I_\varepsilon. \end{equation}
The integrals are evaluated analytically from the interior Bessel expansions. This construction corresponds to applying the demodulation $S_M$ to the bubble-average operator $Q_\varepsilon$ used in the paper. In particular, it compares a slow envelope with the effective eigenfunctions rather than comparing the rapidly alternating pressure directly with them.

For the first two peaks, the candidate eigenfunctions are
\begin{equation}\label{eq:num-target-modes} \Phi_{0,1}(r,\theta)=J_0(j_{0,1}r),\qquad \Phi_{1,1}(r,\theta)=J_1(j_{1,1}r)\cos\theta. \end{equation}
The eigenvalue associated with the second function has multiplicity two. The incident field and the array are even under $x_2\mapsto-x_2$, selecting the cosine component rather than its sine counterpart.

For each candidate, let $\phi_m=\Phi_{m_0,1}(\varepsilon m)$, where $m_0=0$ or $1$ labels the angular order, and let $W$ and $\phi$ denote the vectors of sampled values. We allow one global complex amplitude and phase by setting $c=(\sum_m\overline{\phi_m}W_m)/(\sum_m|\phi_m|^2)$. The comparison metrics are
\begin{equation}\label{eq:num-shape-metrics} \mathcal C=\frac{\left|\sum_m\overline{\phi_m}W_m\right|}{\|\phi\|_2\|W\|_2},\qquad \eta=\frac{\|W-c\phi\|_2}{\|W\|_2}=\sqrt{1-\mathcal C^2}. \end{equation}
The equal bubble areas make these normalized quantities identical to their bubble-area-weighted versions. No spatial deformation or position-dependent rescaling is fitted.

Figure~\ref{fig:num-envelopes} shows the aligned envelopes and the analytic eigenfunctions. The radial and dipole correlations are $0.999048$ and $0.997922$, with relative shape errors of $4.36\%$ and $6.44\%$, respectively. Together with the detuning comparison, these spatial agreements support the association of the two maxima with the first two distinct eigenspaces of $L_D$. The nonresonant refinement study provides numerical evidence for the far-field conclusion of Theorem~\ref{themaintheorem}, while the peak and envelope comparisons illustrate the spectral structure associated with its nonresonance condition.

\begin{figure}[tb]
\centering
\includegraphics[width=0.83\linewidth]{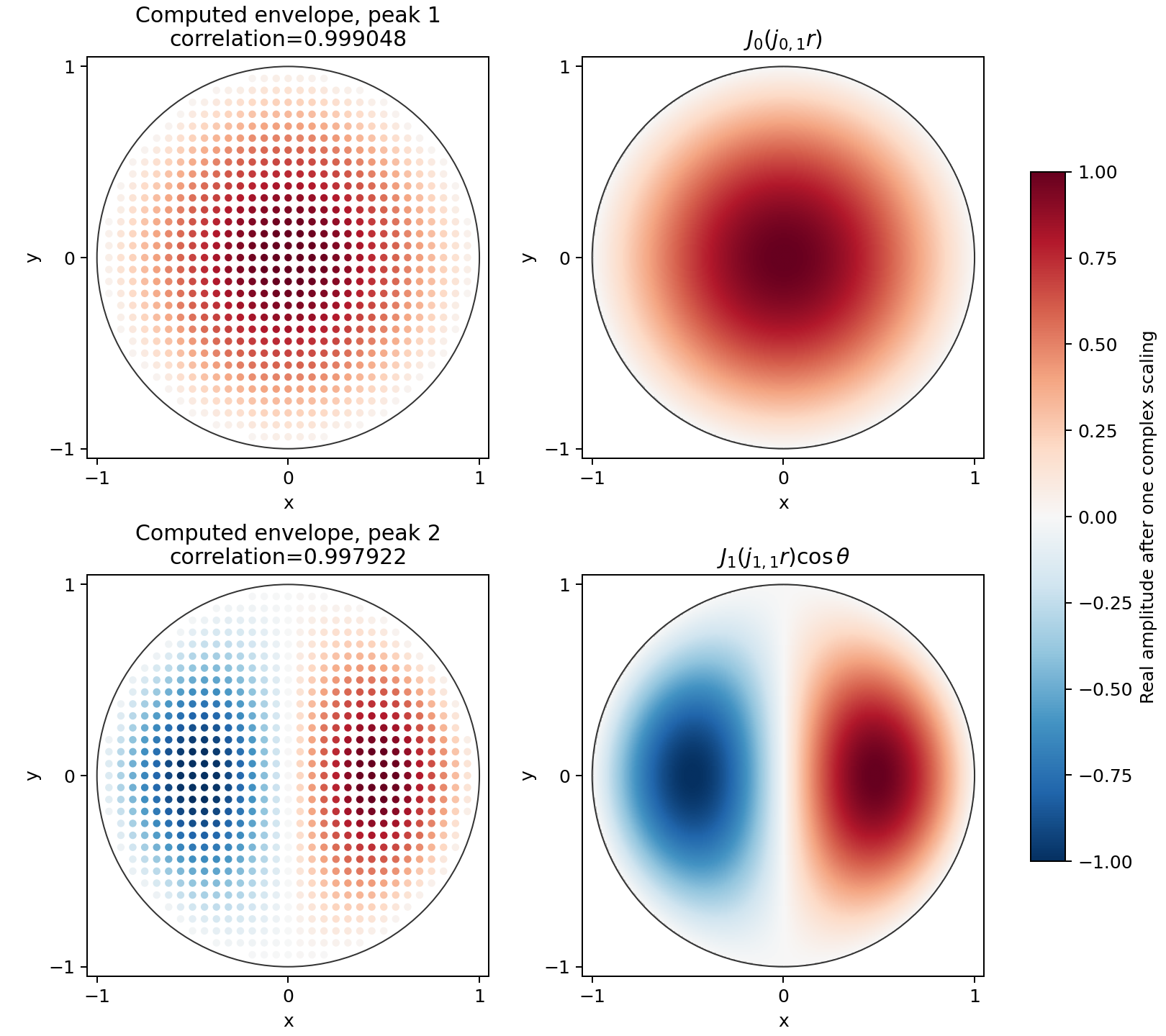}
\caption{Demodulated bubble-average envelopes at the first two refined peaks for $\varepsilon=1/16$. Left: $\operatorname{Re}(W_m/c)$ at the bubble centers. Right: the corresponding analytic Dirichlet eigenfunctions. Each row is divided by $\max_m|\phi_m|$, using the same normalization for the computed and analytic fields. The complex-valued correlations and errors in Table~\ref{tab:num-peaks} are computed before taking real parts.}
\label{fig:num-envelopes}
\end{figure}

\section{Concluding remarks}
We have derived a quantitative sound-soft limit for dense periodic arrays of high-contrast inclusions at frequencies within $O(\eps^2)$ of the first Bloch-band edge. Two operators enter the effective description. The interior Dirichlet operator determines the spectral set excluded by the uniform approximation, while the exterior sound-soft problem gives the nonresonant scattering limit. For positive detunings, the operating frequency lies just below the upper edge of the first band of the infinite crystal. Thus, the existence of propagating Bloch modes in the infinite medium does not determine by itself the transmission of a finite crystal. In one dimension, the exceptional detunings lead to finer Fabry--P\'erot windows, within which nonvanishing transmission and a large interior standing wave persist. The variational capacitance formulation makes it possible to connect them while controlling the boundary created by truncating the periodic medium. In particular, the exponentially localized difference between the finite-array and truncated infinite-array operators leads, together with boundary regularity estimates, to an $O(\eps)$ norm-resolvent approximation.   


The interior-band calculations also clarify the limits of the band-edge description. The exact one-dimensional example shows that a fixed frequency strictly inside the first band can produce different scattering limits along different sequences of periods. At a regular Bloch point in higher dimensions, Appendix \ref{append:B} formally derives a first-order bulk transport equation, in agreement with traveling-wave homogenization \cite{milton}. This local equation does not determine scattering by a finite crystal: the necessary boundary matching and uniform resolvent estimates remain to be established.

Several questions remain open. The first is to generalize our analysis beyond the subwavelength regime; see \cite{fabryperot1,fabryperot3}. Another direction is to consider arrays with several inclusions per cell, in particular, honeycomb structures. Multiple inclusions would naturally lead to matrix-valued Bloch symbols and potentially coupled envelope equations near Dirac degeneracies; see \cite{arma-honeycomb1}. A third challenging question is to extend the interior-band analysis to higher dimensions and rigorously derive the effective behavior of scattered fields at a frequency that is strictly inside the first Bloch band of the corresponding infinitely periodic structure. To simplify the problem, we can consider symmetry-plane slabs and attempt to establish the corresponding boundary matching and uniform resolvent estimates. All of these questions based on the operator framework developed in this work while requiring information beyond the nonresonant convergence theorem.

\section*{Acknowledgments} This work was initiated while H.\,A., X.\,F., and W.\,J. were visiting the Hong Kong Institute for Advanced Study in April 2025. We are very grateful to HKIAS for their hospitality. The work of Y.\,D.\,and W.\,J.\,is partially supported by the NSFC Grant No.\,12571220 and by the New Cornerstone Investigator Program 100001127.

\appendix

\section{Explicit computations in one dimension: a transfer matrix approach}
\label{app:one-dimensional-transfer-matrix}

This appendix gives a self-contained one-dimensional calculation illustrating both the sound-soft limit proved in the main part of the paper and the exceptional Fabry--P\'erot regime excluded by the non-resonance assumption. The model is the matched-wave-speed specialization of the acoustic problem: the density and bulk modulus have the same contrast, so the wave number is $k$ in both phases. The calculation also fixes the phase conventions in the Bloch, transmission, and reflection coefficients.

\subsection{The cell problem and its transfer matrix}

Let $0<a<1$, set $\ell:=(1-a)/2$, and suppose that $L/\varepsilon=:N_\varepsilon\in\mathbb N$. The periodic slab occupies $(0,L)$, the $j$-th cell is $\varepsilon(j+(0,1))$, and its bubble is
\begin{equation}\label{eq:one-dimensional-bubble-array}
    D_{\varepsilon,j}:=\varepsilon\bigl(j+(\ell,a+\ell)\bigr), \qquad j=0,\ldots,N_\varepsilon-1.
\end{equation}
Also, define $D_\varepsilon := \bigcup_{j=0}^{N_\varepsilon -1} D_{\varepsilon,j}$ as the union of all bubbles.

For a real wave number $k>0$, define the coefficient $\mu_\varepsilon$ to equal $\varepsilon^{-2}$ in the bubbles and $1$ in the matrix and exterior. After normalization, the total field satisfies
\begin{equation}\label{eq:one-dimensional-divergence-form-equation}
    (\mu_\varepsilon u_\varepsilon' )'+k^2\mu_\varepsilon u_\varepsilon=0 \qquad\text{in }\mathbb R,
\end{equation}
and its scattered part satisfies the one-dimensional outgoing radiation condition. Thus, $u_\varepsilon''+k^2u_\varepsilon=0$ away from the interfaces, while $u_\varepsilon$ and $\mu_\varepsilon u_\varepsilon'$ are continuous at every interface. Equivalently, at the two interfaces of every cell one has
\begin{equation}\label{eq:one-dimensional-interface-conditions}
    u_\varepsilon'(\varepsilon(j+\ell)-)=\varepsilon^{-2}u_\varepsilon'(\varepsilon(j+\ell)+), \qquad \varepsilon^{-2}u_\varepsilon'(\varepsilon(j+a+\ell)-)=u_\varepsilon'(\varepsilon(j+a+\ell)+).
\end{equation}
We use the state vector $X=(u,J)^{\mathsf T}$, where $J=u'$ in the matrix and $J=\varepsilon^{-2}u'$ in a bubble. Thus, both components of $X$ are continuous across every interface.

The propagation through a matrix interval of length $t$ and through the bubble of length $a\varepsilon$ is described, respectively, by
\begin{equation}\label{eq:one-dimensional-elementary-transfer-matrices}
    \mathsf P_{\mathrm m}(t):=\begin{pmatrix}\cos(kt)&k^{-1}\sin(kt)\\-k\sin(kt)&\cos(kt)\end{pmatrix}, \qquad \mathsf P_{\mathrm b}(a\varepsilon):=\begin{pmatrix}\cos(ka\varepsilon)&\varepsilon^2k^{-1}\sin(ka\varepsilon)\\-\varepsilon^{-2}k\sin(ka\varepsilon)&\cos(ka\varepsilon)\end{pmatrix}.
\end{equation}
Consequently, the transfer matrix of one symmetric cell is
\begin{equation}\label{eq:one-dimensional-cell-transfer-matrix}
    \mathsf M_\varepsilon(k):=\mathsf P_{\mathrm m}(\ell\varepsilon)\mathsf P_{\mathrm b}(a\varepsilon)\mathsf P_{\mathrm m}(\ell\varepsilon)=\begin{pmatrix}A_\varepsilon(k)&B_\varepsilon(k)\\C_\varepsilon(k)&A_\varepsilon(k)\end{pmatrix}.
\end{equation}
Direct multiplication gives
\begin{equation}\label{eq:one-dimensional-transfer-coefficients}
    \begin{aligned}
        A_\varepsilon(k)&=\cos(ka\varepsilon)\cos(k(1-a)\varepsilon)-\frac{\varepsilon^2+\varepsilon^{-2}}{2}\sin(ka\varepsilon)\sin(k(1-a)\varepsilon),\\
        B_\varepsilon(k)&=\frac{1}{k}\left[2\cos(k\ell\varepsilon)\sin(k\ell\varepsilon)\cos(ka\varepsilon)+\sin(ka\varepsilon)\left(\varepsilon^2\cos^2(k\ell\varepsilon)-\varepsilon^{-2}\sin^2(k\ell\varepsilon)\right)\right],\\
        C_\varepsilon(k)&=-k\left[2\cos(k\ell\varepsilon)\sin(k\ell\varepsilon)\cos(ka\varepsilon)+\sin(ka\varepsilon)\left(\varepsilon^{-2}\cos^2(k\ell\varepsilon)-\varepsilon^2\sin^2(k\ell\varepsilon)\right)\right].
    \end{aligned}
\end{equation}
Since both matrices in \eqref{eq:one-dimensional-elementary-transfer-matrices} have determinant one, one has
\begin{equation}\label{eq:one-dimensional-unimodularity}
    A_\varepsilon(k)^2-B_\varepsilon(k)C_\varepsilon(k)=1.
\end{equation}

Let $T_N$ and $U_N$ denote the Chebyshev polynomials of the first and second kind, respectively. The Cayley--Hamilton identity and \eqref{eq:one-dimensional-unimodularity} imply the branch-independent identity
\begin{equation}\label{eq:one-dimensional-power-of-cell-matrix}
    \mathsf M_\varepsilon(k)^{N_\varepsilon}=\begin{pmatrix}T_{N_\varepsilon}(A_\varepsilon)&B_\varepsilon U_{N_\varepsilon-1}(A_\varepsilon)\\C_\varepsilon U_{N_\varepsilon-1}(A_\varepsilon)&T_{N_\varepsilon}(A_\varepsilon)\end{pmatrix}.
\end{equation}
If \begin{equation} \label{def:thetaeps} A_\varepsilon(k)=\cos\theta_\varepsilon(k),
\end{equation}
then
\begin{equation}\label{eq:one-dimensional-Chebyshev-phase-representation}
    T_{N_\varepsilon}(A_\varepsilon)=\cos(N_\varepsilon\theta_\varepsilon), \qquad U_{N_\varepsilon-1}(A_\varepsilon)=\frac{\sin(N_\varepsilon\theta_\varepsilon)}{\sin\theta_\varepsilon},
\end{equation}
where the quotient is understood by continuity when $\sin\theta_\varepsilon=0$. Its limiting value at such a point is generally nonzero.

For a unit-amplitude wave incident from the left, write
\begin{equation}\label{eq:one-dimensional-scattering-ansatz}
    u_\varepsilon(x)=e^{\mi  kx}+R_\varepsilon e^{-\mi  kx} \quad\text{for }x<0, \qquad u_\varepsilon(x)=T_\varepsilon e^{\mi  k(x-L)} \quad\text{for }x>L.
\end{equation}
The normalization on the right makes $T_\varepsilon=u_\varepsilon(L)$. Substitution of the endpoint states into \eqref{eq:one-dimensional-power-of-cell-matrix} yields
\begin{equation}\label{eq:one-dimensional-exact-scattering-coefficients}
    \begin{aligned}
        T_\varepsilon&=\frac{2\mi  k}{2\mi  k\,T_{N_\varepsilon}(A_\varepsilon)+(k^2B_\varepsilon-C_\varepsilon)U_{N_\varepsilon-1}(A_\varepsilon)},\\
        R_\varepsilon&=\frac{(k^2B_\varepsilon+C_\varepsilon)U_{N_\varepsilon-1}(A_\varepsilon)}{2\mi  k\,T_{N_\varepsilon}(A_\varepsilon)+(k^2B_\varepsilon-C_\varepsilon)U_{N_\varepsilon-1}(A_\varepsilon)}.
    \end{aligned}
\end{equation}
For real $k$, the quantity $\operatorname{Im}(\overline{u_\varepsilon}\,\mu_\varepsilon u_\varepsilon')$ is constant. Evaluating it on the two sides of the sample gives
\begin{equation}\label{eq:one-dimensional-flux-conservation}
    |T_\varepsilon|^2+|R_\varepsilon|^2=1.
\end{equation}

\subsection{Dispersion relation and the upper edge of the first band}

A Bloch wave with dimensionless phase $\theta\in[-\pi,\pi]$ satisfies $X(\varepsilon)=e^{ \mi \theta}X(0)$. Therefore, $e^{ \mi \theta}$ is an eigenvalue of $\mathsf M_\varepsilon(k)$, and \eqref{eq:one-dimensional-unimodularity} gives
\begin{equation}\label{eq:one-dimensional-dispersion-relation}
    A_\varepsilon(k)=\cos\theta.
\end{equation}
For a propagating band, we take the principal phase $\theta_\varepsilon(k)\in[0,\pi]$. The corresponding physical quasi-momentum is $\alpha_\varepsilon=\theta_\varepsilon/\varepsilon$. Introduce
\begin{equation}\label{eq:one-dimensional-p-and-r}
    p:=a(1-a), \qquad r:=a^2+(1-a)^2.
\end{equation}
Taylor expansion of \eqref{eq:one-dimensional-transfer-coefficients}, uniformly for $k$ in compact subsets of $(0,\infty)$, gives
\begin{equation}\label{eq:one-dimensional-expansion-of-A}
    A_\varepsilon(k)=1-\frac{p}{2}k^2-\frac{r}{12}k^2\bigl(6-pk^2\bigr)\varepsilon^2+O(\varepsilon^4).
\end{equation}
Let $k_{*,\varepsilon}$ denote the solution near $k_0$ of $A_\varepsilon(k_{*,\varepsilon})=-1$. Since the derivative of the leading term in \eqref{eq:one-dimensional-expansion-of-A} with respect to $k^2$ equals $-p/2\neq0$, the implicit function theorem gives
\begin{equation}\label{eq:one-dimensional-upper-edge-expansion}
    k_{*,\varepsilon}^2=k_0^2-\frac{4r}{3p^2}\varepsilon^2+O(\varepsilon^4), \qquad k_0^2:=\frac{4}{p}.
\end{equation}
This is the upper edge of the first Bloch band.

We first tune the frequency below this edge according to
\begin{equation}\label{eq:one-dimensional-first-detuning}
    k_\varepsilon^2=k_{*,\varepsilon}^2-\tau\varepsilon^2+O(\varepsilon^4), \qquad \tau>0,
\end{equation}
and set
\begin{equation}\label{eq:one-dimensional-effective-wave-number}
    q:=\sqrt{p\tau}.
\end{equation}
Equations \eqref{eq:one-dimensional-dispersion-relation} and \eqref{eq:one-dimensional-expansion-of-A} imply
\begin{equation}\label{eq:one-dimensional-Bloch-phase-expansion}
    \theta_\varepsilon(k_\varepsilon)=\pi-q\varepsilon+O(\varepsilon^3), \qquad \alpha_\varepsilon=\frac{\pi}{\varepsilon}-q+O(\varepsilon^2).
\end{equation}

Direct expansion of the remaining matrix entries gives
\begin{equation}\label{eq:one-dimensional-expansions-of-B-and-C}
    B_\varepsilon(k_\varepsilon)=\frac{a(1-a)^2\tau}{4}\varepsilon^3+O(\varepsilon^5)=\frac{(1-a)q^2}{4}\varepsilon^3+O(\varepsilon^5), \qquad C_\varepsilon(k_\varepsilon)=-\frac{ak_0^2}{\varepsilon}+O(\varepsilon).
\end{equation}
In particular, the nominal $O(\varepsilon)$ coefficient in the expansion of $B_\varepsilon(k)$ vanishes at $k=k_0$. This cancellation must be retained in the band-edge calculation.

All asymptotic statements below are taken as $\varepsilon\to0$ along values for which $N_\varepsilon=L/\varepsilon$ is an integer. Their remainder estimates are uniform with respect to the cell index.

\subsection{The non-resonant regime and the sound-soft limit}

In this section, we consider the non-resonant regime as in the main part of this paper.

\begin{proposition}\label{prop:one-dimensional-non-resonant-limit}
Assume \eqref{eq:one-dimensional-first-detuning} and suppose that
\begin{equation}\label{eq:one-dimensional-non-resonance-condition}
    \sin(qL)\neq0, \qquad\text{equivalently,}\qquad \frac{qL}{\pi}\notin\mathbb Z.
\end{equation}
Then
\begin{equation}\label{eq:one-dimensional-non-resonant-scattering-expansions}
    T_\varepsilon=(-1)^{N_\varepsilon+1}\frac{2\mi  q}{ak_0\sin(qL)}\varepsilon^2+O(\varepsilon^4), \qquad R_\varepsilon=-1-\frac{2\mi  q}{ak_0}\cot(qL)\varepsilon^2+O(\varepsilon^4).
\end{equation}
Thus $T_\varepsilon\to0$ and $R_\varepsilon\to-1$. Outside the slab, the limiting total field is
\begin{equation}\label{eq:one-dimensional-sound-soft-limit-field}
    u_0(x)=e^{\mi  k_0x}-e^{-\mi  k_0x}=2\mi \sin(k_0x) \quad\text{for }x<0, \qquad u_0(x)=0 \quad\text{for }x>L.
\end{equation}
This is precisely the one-dimensional sound-soft scattering field for the obstacle $[0,L]$.

Define the cell profile $\beta_M:[0,1]\to\mathbb R$ by
\begin{equation}\label{eq:one-dimensional-cell-profile}
    \beta_M(y):=\begin{cases}y,&0\leq y\leq\ell,\\\ell,&\ell\leq y\leq a+\ell,\\1-y,&a+\ell\leq y\leq1,\end{cases}
\end{equation}
and extend it antiperiodically by $\beta_M(y+j)=(-1)^j\beta_M(y)$ for $y\in[0,1]$ and $j\in\mathbb Z$. Let
\begin{equation}\label{eq:one-dimensional-non-resonant-envelope}
    \mathcal A(x):=2\mi  k_0\frac{\sin(q(L-x))}{\sin(qL)}.
\end{equation}
Then $\mathcal A$ is the unique solution of
\begin{equation}\label{eq:one-dimensional-non-resonant-envelope-equation}
    \mathcal A''+q^2\mathcal A=0 \quad\text{in }(0,L), \qquad \mathcal A(0)=2 \mi  k_0, \qquad \mathcal A(L)=0.
\end{equation}
Moreover, there exists a constant $C$, independent of $\varepsilon$, such that
\begin{equation}\label{eq:one-dimensional-non-resonant-two-scale-estimate}
    \left\|u_\varepsilon-\varepsilon\mathcal A(\cdot)\beta_M(\cdot/\varepsilon)\right\|_{L^\infty(0,L)}\leq C\varepsilon^2, \qquad \left\|u_\varepsilon-\varepsilon\mathcal A(\cdot)\beta_M(\cdot/\varepsilon)\right\|_{W^{1,\infty}(0,L)}\leq C\varepsilon.
\end{equation}
\end{proposition}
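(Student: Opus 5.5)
The proof is a direct asymptotic evaluation of the exact formulas \eqref{eq:one-dimensional-exact-scattering-coefficients}, followed by a cell-by-cell analysis of the field inside the slab.

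\textbf{Scattering coefficients.} By \eqref{eq:one-dimensional-Bloch-phase-expansion}, $N_\varepsilon\theta_\varepsilon=N_\varepsilon\pi-qL+O(\varepsilon^2)$, since $N_\varepsilon\varepsilon=L$ and $N_\varepsilon\cdot O(\varepsilon^3)=O(\varepsilon^2)$. Combined with \eqref{eq:one-dimensional-Chebyshev-phase-representation} this gives
\[
T_{N_\varepsilon}(A_\varepsilon)=(-1)^{N_\varepsilon}\cos(qL)+O(\varepsilon^2),
\]
and, using $\sin\theta_\varepsilon=q\varepsilon+O(\varepsilon^3)$,
\[
U_{N_\varepsilon-1}(A_\varepsilon)=(-1)^{N_\varepsilon+1}\frac{\sin(qL)}{q\varepsilon}\bigl(1+O(\varepsilon^2)\bigr).
\]
Relative errors must be tracked to $O(\varepsilon^2)$, which requires the $O(\varepsilon^3)$ remainder in the phase expansion; I expect that remainder to follow from \eqref{eq:one-dimensional-expansion-of-A} together with evenness in $\varepsilon$. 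By \eqref{eq:one-dimensional-expansions-of-B-and-C}, $k^2B_\varepsilon\mp C_\varepsilon=\pm ak_0^2\varepsilon^{-1}+O(\varepsilon)$, so the $B$ term is negligible. The denominator is therefore dominated by
\[
(-1)^{N_\varepsilon+1}\frac{ak_0^2\sin(qL)}{q\varepsilon^2},
\]
which is nonzero precisely under \eqref{eq:one-dimensional-non-resonance-condition}, while the $2\mi k\,T_{N_\varepsilon}$ term enters at relative order $\varepsilon^2$. Dividing out yields the stated $T_\varepsilon$, with $k_\varepsilon=k_0+O(\varepsilon^2)$ absorbed into the error. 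For $R_\varepsilon$, write
\[
R_\varepsilon=\frac{-X+O(\varepsilon)}{X+2\mi k\,T_{N_\varepsilon}},
\]
where $X$ denotes the dominant term of the denominator. Expanding $1/(1+2\mi k\,T_{N_\varepsilon}/X)$ to first order produces $-1-\tfrac{2\mi q}{ak_0}\cot(qL)\varepsilon^2$. The step I regard as the main obstacle is checking that the $O(\varepsilon)$ corrections in $k^2B_\varepsilon\pm C_\varepsilon$ contribute only $O(\varepsilon^4)$ to $R_\varepsilon$. This needs the next-order terms of $C_\varepsilon$, which I would compute explicitly, expecting $C_\varepsilon$ to carry the same factor in the numerator and the denominator. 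Flux conservation \eqref{eq:one-dimensional-flux-conservation} serves as a consistency check. The limit field \eqref{eq:one-dimensional-sound-soft-limit-field} then follows directly from \eqref{eq:one-dimensional-scattering-ansatz}.

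\textbf{Interior field.} The state at $x=\varepsilon j$ is $X_j=\mathsf M_\varepsilon^{j-N_\varepsilon}X(L)$ with $X(L)=T_\varepsilon(1,\mi k_\varepsilon)^{\mathsf T}$. By \eqref{eq:one-dimensional-power-of-cell-matrix} and the Chebyshev formulas with $N_\varepsilon$ replaced by $N_\varepsilon-j$, one obtains
\[
u(\varepsilon j)=(-1)^{N_\varepsilon-j}T_\varepsilon\cos(q(L-\varepsilon j))+O(\varepsilon^4)=O(\varepsilon^2)
\]
and
\[
J(\varepsilon j)=C_\varepsilon U\,u(L)+\dots=(-1)^j\mathcal A(\varepsilon j)+O(\varepsilon^2),
\]
using the expressions for $T_\varepsilon$ and $C_\varepsilon$. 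Inside a cell the matrix propagators give $u(\varepsilon(j+y))=u(\varepsilon j)+\varepsilon J(\varepsilon j)\,y+O(\varepsilon^3)$ on $[0,\ell]$. In the bubble, the flux $J=\varepsilon^{-2}u'$ is continuous and bounded, so $u'=O(\varepsilon^2)$ there and $u$ is nearly constant, equal to $\varepsilon\ell J+O(\varepsilon^2)$. Symmetry of the cell then produces the decreasing profile on the final segment. This reproduces $\varepsilon\mathcal A\beta_M$ with $L^\infty$ error $O(\varepsilon^2)$, while replacing $\mathcal A(\varepsilon j)$ by $\mathcal A(x)$ costs $O(\varepsilon^2)$ because $\mathcal A$ is Lipschitz. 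Differentiating, the derivative error is $O(\varepsilon)$, since the derivatives of $\beta_M(\cdot/\varepsilon)$ are $O(\varepsilon^{-1})$ and $\mathcal A'\beta_M$ contributes $\varepsilon\cdot O(1)$. Finally, the envelope identity \eqref{eq:one-dimensional-non-resonant-envelope-equation} is immediate from the explicit formula \eqref{eq:one-dimensional-non-resonant-envelope}, and uniqueness holds because $\sin(qL)\neq0$.
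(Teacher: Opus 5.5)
Your route is the paper's: expand the exact formulas \eqref{eq:one-dimensional-exact-scattering-coefficients} using \eqref{eq:one-dimensional-Bloch-phase-expansion} and \eqref{eq:one-dimensional-expansions-of-B-and-C}, derive uniform nodal asymptotics, then propagate through each cell with the exact elementary transfer matrices. You recover the nodes backward from $X(L)=T_\varepsilon(1,\mi k_\varepsilon)^{\mathsf T}$ via $(\mathsf M_\varepsilon^{-1})^{N_\varepsilon-j}$, whereas the paper goes forward from $(1+R_\varepsilon,\mi k_\varepsilon(1-R_\varepsilon))^{\mathsf T}$. This variation is fine and slightly cleaner, because it needs only $T_\varepsilon$ to $O(\varepsilon^4)$ rather than the cancelling quantity $1+R_\varepsilon$. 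Note that the lower-left entry of $(\mathsf M_\varepsilon^{-1})^{n}$ is $-C_\varepsilon U_{n-1}(A_\varepsilon)$; this sign is what gives $J_j=+(-1)^j\mathcal A(x_j)+O(\varepsilon^2)$.

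Your ``main obstacle'' for $R_\varepsilon$ is only bookkeeping. Set $X:=(k_\varepsilon^2B_\varepsilon-C_\varepsilon)U_{N_\varepsilon-1}(A_\varepsilon)$ exactly, not its leading term. The numerator is then $-X+2k_\varepsilon^2B_\varepsilon U_{N_\varepsilon-1}(A_\varepsilon)=-X+O(\varepsilon^2)$ with $X\asymp\varepsilon^{-2}$. Hence $R_\varepsilon=-1+2\mi k_\varepsilon T_{N_\varepsilon}(A_\varepsilon)/X+O(\varepsilon^4)$. Only the relative $O(\varepsilon^2)$ accuracy of $X$ enters, so no next-order coefficient of $C_\varepsilon$ is needed. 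If $X$ were only the leading term, the numerator discrepancy would be $O(1)$, not $O(\varepsilon)$, and it would contaminate the $\varepsilon^2$ coefficient.

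The genuine gap is the last matrix segment $a+\ell\le y\le 1$. Reflection symmetry of the cell does not by itself produce the decreasing branch of $\beta_M$. Nor can the $W^{1,\infty}$ bound be obtained by differentiating an $L^\infty$ estimate. Across the bubble, $J'=-\varepsilon^{-2}k_\varepsilon^2u$ with $u=\varepsilon\ell d_j+O(\varepsilon^2)$ on an interval of length $a\varepsilon$, so the exit flux is $(1-k_0^2a\ell)\,d_j+O(\varepsilon)$. It equals $-d_j$ only because of the band-edge identity $k_0^2a\ell=2$ in \eqref{eq:one-dimensional-critical-flux-reversal}. The paper invokes this identity at exactly this step; your sketch never does.

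Within your own framework you can instead propagate backward from node $j+1$ across the last segment, using your uniform formulas $u_{j+1}=O(\varepsilon^2)$ and $J_{j+1}=(-1)^{j+1}\mathcal A(x_{j+1})+O(\varepsilon^2)$. This gives $u=\varepsilon(-1)^j\mathcal A(x_j)(1-y)+O(\varepsilon^2)$ and $u'=-(-1)^j\mathcal A(x_j)+O(\varepsilon)$ there. The $W^{1,\infty}$ estimate then follows from these explicit flux values, together with $u'=d_j+O(\varepsilon^2)$ for $0\le y\le\ell$ and $u'=O(\varepsilon^2)$ in the bubble.
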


\begin{proof}
Since $N_\varepsilon=L/\varepsilon$, the phase expansion \eqref{eq:one-dimensional-Bloch-phase-expansion} gives
\begin{equation}\label{eq:one-dimensional-non-resonant-trigonometric-expansions}
    \cos(N_\varepsilon\theta_\varepsilon)=(-1)^{N_\varepsilon}\cos(qL)+O(\varepsilon^2), \qquad U_{N_\varepsilon-1}(A_\varepsilon)=(-1)^{N_\varepsilon+1}\frac{\sin(qL)}{q\varepsilon}+O(\varepsilon).
\end{equation}
Inserting \eqref{eq:one-dimensional-expansions-of-B-and-C} and \eqref{eq:one-dimensional-non-resonant-trigonometric-expansions} into \eqref{eq:one-dimensional-exact-scattering-coefficients} proves \eqref{eq:one-dimensional-non-resonant-scattering-expansions}. The exterior conclusion follows from \eqref{eq:one-dimensional-scattering-ansatz} and $k_\varepsilon\to k_0$.

It remains to prove the two-scale estimate. Write $x_j=j\varepsilon$, $u_j=u_\varepsilon(x_j)$, and $d_j=u_\varepsilon'(x_j+)$. The cell transfer formula and the left scattering data give
\begin{equation}\label{eq:one-dimensional-nodal-state-formulas}
    \begin{aligned}
        u_j&=\cos(j\theta_\varepsilon)(1+R_\varepsilon)+ \mi  k_\varepsilon B_\varepsilon\frac{\sin(j\theta_\varepsilon)}{\sin\theta_\varepsilon}(1-R_\varepsilon),\\
        d_j&=C_\varepsilon\frac{\sin(j\theta_\varepsilon)}{\sin\theta_\varepsilon}(1+R_\varepsilon)+ \mi  k_\varepsilon\cos(j\theta_\varepsilon)(1-R_\varepsilon).
    \end{aligned}
\end{equation}
Uniformly for $0\leq j\leq N_\varepsilon$,
\begin{equation}\label{eq:one-dimensional-intermediate-phase-expansions}
    \cos(j\theta_\varepsilon)=(-1)^j\cos(qx_j)+O(\varepsilon^2), \qquad \frac{\sin(j\theta_\varepsilon)}{\sin\theta_\varepsilon}=(-1)^{j+1}\frac{\sin(qx_j)}{q\varepsilon}+O(\varepsilon).
\end{equation}
Because
\begin{equation}\label{eq:one-dimensional-left-boundary-expansion}
    1+R_\varepsilon=-\frac{2\mi  q}{ak_0}\cot(qL)\varepsilon^2+O(\varepsilon^4), \qquad 1-R_\varepsilon=2+O(\varepsilon^2),
\end{equation}
substitution into \eqref{eq:one-dimensional-nodal-state-formulas} yields
\begin{equation}\label{eq:one-dimensional-nodal-state-expansions}
    u_j=O(\varepsilon^2), \qquad d_j=(-1)^j\mathcal A(x_j)+O(\varepsilon^2).
\end{equation}
Indeed,
\begin{equation}\label{eq:one-dimensional-expanded-nodal-derivative}
    d_j=(-1)^j2 \mi  k_0\big(\cos(qx_j)-\sin(qx_j)\cot(qL)\big)+O(\varepsilon^2)=(-1)^j\mathcal A(x_j)+O(\varepsilon^2).
\end{equation}

For $x=\varepsilon(j+y)$ with $0\leq y\leq1$, define the state at the left bubble endpoint by
\begin{equation}\label{eq:one-dimensional-state-at-left-bubble-endpoint}
    u_{j,\ell}:=u_j\cos(k_\varepsilon\ell\varepsilon)+\frac{d_j}{k_\varepsilon}\sin(k_\varepsilon\ell\varepsilon), \qquad d_{j,\ell}:=-k_\varepsilon u_j\sin(k_\varepsilon\ell\varepsilon)+d_j\cos(k_\varepsilon\ell\varepsilon).
\end{equation}
The exact solution in the left matrix part and in the bubble is
\begin{equation}\label{eq:one-dimensional-exact-field-in-left-and-bubble-parts}
    u_\varepsilon(\varepsilon(j+y))=\begin{cases}u_j\cos(k_\varepsilon\varepsilon y)+\dfrac{d_j}{k_\varepsilon}\sin(k_\varepsilon\varepsilon y),&0\leq y\leq\ell,\\u_{j,\ell}\cos(k_\varepsilon\varepsilon(y-\ell))+\dfrac{\varepsilon^2d_{j,\ell}}{k_\varepsilon}\sin(k_\varepsilon\varepsilon(y-\ell)),&\ell\leq y\leq a+\ell.\end{cases}
\end{equation}
At the right bubble endpoint, set
\begin{equation}\label{eq:one-dimensional-state-at-right-bubble-endpoint}
    u_{j,a+\ell}:=u_{j,\ell}\cos(k_\varepsilon a\varepsilon)+\frac{\varepsilon^2d_{j,\ell}}{k_\varepsilon}\sin(k_\varepsilon a\varepsilon), \qquad d_{j,a+\ell}:=-\frac{k_\varepsilon}{\varepsilon^2}u_{j,\ell}\sin(k_\varepsilon a\varepsilon)+d_{j,\ell}\cos(k_\varepsilon a\varepsilon).
\end{equation}
The exact solution in the right matrix part is
\begin{equation}\label{eq:one-dimensional-exact-field-in-right-part}
    u_\varepsilon(\varepsilon(j+y))=u_{j,a+\ell}\cos(k_\varepsilon\varepsilon(y-a-\ell))+\frac{d_{j,a+\ell}}{k_\varepsilon}\sin(k_\varepsilon\varepsilon(y-a-\ell)), \qquad a+\ell\leq y\leq1.
\end{equation}
Using \eqref{eq:one-dimensional-nodal-state-expansions} in these formulas, together with
\begin{equation}\label{eq:one-dimensional-critical-flux-reversal}
    k_0^2a\ell=2,
\end{equation}
gives
\begin{equation}\label{eq:one-dimensional-cellwise-non-resonant-expansion}
    u_\varepsilon(\varepsilon(j+y))=\varepsilon\mathcal A(x_j)\beta_M(j+y)+O(\varepsilon^2), \qquad u_\varepsilon'(\varepsilon(j+y))=\mathcal A(x_j)\beta_M'(j+y)+O(\varepsilon).
\end{equation}
The errors are uniform in $j$ and $y$. Since $|\mathcal A(x)-\mathcal A(x_j)|\leq C\varepsilon$ on the $j$-th cell, \eqref{eq:one-dimensional-non-resonant-two-scale-estimate} follows. 
\end{proof}

\subsection{Fabry--P\'erot resonances and the next detuning scale}

The exact formulas already show what is special about a Fabry--P\'erot resonance. If
\begin{equation}\label{eq:one-dimensional-exact-Fabry-Perot-condition}
    U_{N_\varepsilon-1}(A_\varepsilon)=0,
\end{equation}
or equivalently if $\theta_\varepsilon\in(0,\pi)$ and $N_\varepsilon\theta_\varepsilon=s\pi$ for some $s\in\mathbb Z$, then
\begin{equation}\label{eq:one-dimensional-exact-Fabry-Perot-matrix}
    \mathsf M_\varepsilon^{N_\varepsilon}=(-1)^sI.
\end{equation}
Consequently,
\begin{equation}\label{eq:one-dimensional-exact-Fabry-Perot-transparency}
    R_\varepsilon=0, \qquad T_\varepsilon=(-1)^s.
\end{equation}
Thus the slab is perfectly transmitting in modulus, although the transmitted wave may acquire a phase $\pi$. The restriction $\theta_\varepsilon\in(0,\pi)$ is important: at a cell band edge, $\sin(N_\varepsilon\theta_\varepsilon)=0$, but the continuously extended quotient in \eqref{eq:one-dimensional-Chebyshev-phase-representation} need not vanish.

At the first detuning scale \eqref{eq:one-dimensional-first-detuning}, the limiting Fabry--P\'erot condition is
\begin{equation}\label{eq:one-dimensional-limiting-Fabry-Perot-condition}
    qL=m\pi \quad\text{for some integer }m\geq1.
\end{equation}
This is only a leading-order condition and does not imply exact transparency at a fixed $\varepsilon$. The $O(\varepsilon^2)$ correction to $N_\varepsilon\theta_\varepsilon$ contributes at leading order to scattering, so the $O(\varepsilon^4)$ frequency detuning must be retained.

\begin{proposition}[The Fabry--P\'erot window and resonant field enhancement]\label{prop:one-dimensional-Fabry-Perot-window}
Assume \eqref{eq:one-dimensional-limiting-Fabry-Perot-condition} and let
\begin{equation}\label{eq:one-dimensional-second-detuning}
    k_\varepsilon^2=k_{*,\varepsilon}^2-\tau\varepsilon^2+\sigma\varepsilon^4+O(\varepsilon^6),
\end{equation}
where $\sigma\in\mathbb R$ is fixed. Define
\begin{equation}\label{eq:one-dimensional-resonant-parameters}
    \eta:=\frac{p\sigma/2+r\tau/6-p^2\tau^2/24}{q}, \qquad \Gamma:=\frac{ak_0\eta L}{q}, \qquad t_*:=\frac{2 \mi }{2 \mi +\Gamma}.
\end{equation}
Then
\begin{equation}\label{eq:one-dimensional-resonant-scattering-law}
    T_\varepsilon=(-1)^{N_\varepsilon+m}\frac{2 \mi }{2 \mi +\Gamma}+O(\varepsilon^2), \qquad R_\varepsilon=-\frac{\Gamma}{2\mi +\Gamma}+O(\varepsilon^2).
\end{equation}
In particular,
\begin{equation}\label{eq:one-dimensional-resonant-energy-law}
    |T_\varepsilon|^2\rightarrow\frac{4}{4+\Gamma^2}, \qquad |R_\varepsilon|^2\rightarrow\frac{\Gamma^2}{4+\Gamma^2}.
\end{equation}
The phase-corrected quantity $(-1)^{N_\varepsilon+m}T_\varepsilon$ has a limit, whereas $T_\varepsilon$ itself need not converge unless the parity of $N_\varepsilon$ is fixed. The transparent center of the resonant window is determined by $\eta=0$, or equivalently by
\begin{equation}\label{eq:one-dimensional-transparent-second-detuning}
    \sigma=\sigma_{\mathrm{tr}}:=-\frac{r\tau}{3p}+\frac{p\tau^2}{12}.
\end{equation}
At this value,
\begin{equation}\label{eq:one-dimensional-transparent-coefficients}
    R_\varepsilon=O(\varepsilon^2), \qquad T_\varepsilon=(-1)^{N_\varepsilon+m}+O(\varepsilon^2).
\end{equation}

The interior field is nevertheless resonantly amplified. More precisely, define
\begin{equation}\label{eq:one-dimensional-resonant-envelope}
    \mathcal A_{\mathrm{res}}(x):=\frac{ak_0^2t_*}{q}\sin(qx).
\end{equation}
Then $\mathcal A_{\mathrm{res}}''+q^2\mathcal A_{\mathrm{res}}=0$ in $(0,L)$ and $\mathcal A_{\mathrm{res}}(0)=\mathcal A_{\mathrm{res}}(L)=0$, while
\begin{equation}\label{eq:one-dimensional-resonant-two-scale-estimate}
    \left\|\varepsilon u_\varepsilon-\mathcal A_{\mathrm{res}}(\cdot)\beta_M(\cdot/\varepsilon)\right\|_{L^\infty(0,L)}+\left\|\varepsilon^2u_\varepsilon'-\mathcal A_{\mathrm{res}}(\cdot)\beta_M'(\cdot/\varepsilon)\right\|_{L^\infty(0,L)}\leq C\varepsilon.
\end{equation}
Consequently,
\begin{equation}\label{eq:one-dimensional-resonant-L2-amplification}
    \begin{aligned}
        \varepsilon\|u_\varepsilon\|_{L^2(0,L)}&\rightarrow\left(\int_0^L|\mathcal A_{\mathrm{res}}(x)|^2\,dx\right)^{1/2}\left(\int_0^1|\beta_M(y)|^2\,dy\right)^{1/2},\\
        \int_0^1|\beta_M(y)|^2\,dy&=\frac{(1-a)^2(1+2a)}{12}.
    \end{aligned}
\end{equation}
Thus $\|u_\varepsilon\|_{L^2(0,L)}$ and $\|u_\varepsilon\|_{L^\infty(0,L)}$ are of order $\varepsilon^{-1}$. In particular, near perfect exterior transmission at $\eta=0$ coexists with a giant interior standing wave.
\end{proposition}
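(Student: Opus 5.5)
The plan is to refine the phase expansion of the Bloch phase to the next order, then substitute it into the exact Chebyshev formulas \eqref{eq:one-dimensional-exact-scattering-coefficients}. With the detuning \eqref{eq:one-dimensional-second-detuning}, I would insert $k_\varepsilon^2$ into \eqref{eq:one-dimensional-expansion-of-A}, carried to $O(\varepsilon^6)$ (this needs the $\varepsilon^4$ Taylor term of $A_\varepsilon$, which I would compute from \eqref{eq:one-dimensional-transfer-coefficients}). Writing $A_\varepsilon=-\cos(\pi-\theta_\varepsilon)$ and $1+A_\varepsilon=\tfrac12(\pi-\theta_\varepsilon)^2+O((\pi-\theta_\varepsilon)^4)$, this should give
\[
\theta_\varepsilon=\pi-q\varepsilon+\eta\varepsilon^3+O(\varepsilon^5),
\]
where the combination $p\sigma/2+r\tau/6-p^2\tau^2/24$ in $\eta$ arises from the $\varepsilon^4$ coefficient of $1+A_\varepsilon$ after using \eqref{eq:one-dimensional-upper-edge-expansion}. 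Then $N_\varepsilon\theta_\varepsilon=N_\varepsilon\pi-qL+\eta L\varepsilon^2+O(\varepsilon^4)$. With $qL=m\pi$, this yields $\cos(N_\varepsilon\theta_\varepsilon)=(-1)^{N_\varepsilon+m}+O(\varepsilon^4)$ and $\sin(N_\varepsilon\theta_\varepsilon)=(-1)^{N_\varepsilon+m+1}\eta L\varepsilon^2+O(\varepsilon^4)$. Since $\sin\theta_\varepsilon=q\varepsilon+O(\varepsilon^3)$, it follows that $U_{N_\varepsilon-1}=(-1)^{N_\varepsilon+m+1}\eta L\varepsilon/q+O(\varepsilon^3)$.

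Next I would insert these into \eqref{eq:one-dimensional-exact-scattering-coefficients}, using \eqref{eq:one-dimensional-expansions-of-B-and-C}. Since $k^2B_\varepsilon=O(\varepsilon^3)$ and $-C_\varepsilon=ak_0^2/\varepsilon$, the denominator becomes $(-1)^{N_\varepsilon+m}\bigl(2\mi k_0-ak_0^2\eta L/q\bigr)+O(\varepsilon^2)$. Dividing by $k_0$ gives the factor $2\mi-\Gamma$ up to the sign convention for $\Gamma$; I would check that sign carefully against the stated $2\mi+\Gamma$. The numerator of $R_\varepsilon$ is $C_\varepsilon U_{N_\varepsilon-1}+O(\varepsilon^2)$, which produces $-\Gamma/(2\mi+\Gamma)$. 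The energy law \eqref{eq:one-dimensional-resonant-energy-law} and the transparent value \eqref{eq:one-dimensional-transparent-second-detuning} (from $\eta=0$) then follow directly.

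For the interior field I would reuse the nodal formulas \eqref{eq:one-dimensional-nodal-state-formulas}. Now $1+R_\varepsilon=2\mi/(2\mi+\Gamma)=t_*$ is $O(1)$, and $1-R_\varepsilon=O(1)$. The dominant term is $d_j\approx C_\varepsilon\frac{\sin(j\theta_\varepsilon)}{\sin\theta_\varepsilon}t_*=(-1)^j\frac{ak_0^2}{q\varepsilon^2}t_*\sin(qx_j)+O(\varepsilon^{-1})$, so $\varepsilon^2 d_j=(-1)^j\mathcal A_{\mathrm{res}}(x_j)+O(\varepsilon)$. The nodal values satisfy $u_j=O(1)$ because $B_\varepsilon=O(\varepsilon^3)$. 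Propagating through the cell with \eqref{eq:one-dimensional-exact-field-in-left-and-bubble-parts}–\eqref{eq:one-dimensional-exact-field-in-right-part} and using $k_0^2a\ell=2$ (the flux reversal across the bubble) reproduces the profile $\beta_M$. This gives \eqref{eq:one-dimensional-resonant-two-scale-estimate}.

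The $L^2$ limit \eqref{eq:one-dimensional-resonant-L2-amplification} would follow by a Riemann-sum/two-scale averaging argument, $\int|\mathcal A_{\mathrm{res}}(x)\beta_M(x/\varepsilon)|^2\to\int|\mathcal A_{\mathrm{res}}|^2\int_0^1\beta_M^2$, followed by an elementary computation of $\int_0^1\beta_M^2=2\ell^3/3+a\ell^2$.

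The main obstacle is the first step: obtaining the $\varepsilon^4$ coefficient of $A_\varepsilon(k)$ exactly and tracking every $O(\varepsilon^2)$ correction in $N_\varepsilon\theta_\varepsilon$. This correction enters $U_{N_\varepsilon-1}$ at leading order, so a single error changes $\eta$.
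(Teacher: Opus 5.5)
Your architecture is the paper's: refine the Bloch phase, insert it into \eqref{eq:one-dimensional-exact-scattering-coefficients}, push the nodal states \eqref{eq:one-dimensional-nodal-state-formulas} through one cell using $k_0^2a\ell=2$, and finish with a Riemann sum. The interior-field and $L^2$ steps go through as you describe. (The error in $d_j$ is in fact $O(1)$, but your $O(\varepsilon^{-1})$ suffices.)

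The phase step, which you rightly call the crux, would not yield the stated $\eta$ as you set it up. There are two problems.
\begin{itemize}
\item In $1+A_\varepsilon=1-\cos(\pi-\theta_\varepsilon)=\tfrac12(\pi-\theta_\varepsilon)^2-\tfrac1{24}(\pi-\theta_\varepsilon)^4+O(\varepsilon^6)$, the quartic term equals $q^4\varepsilon^4/24+O(\varepsilon^6)$. This is exactly the order that fixes $\eta$, so it cannot go into the remainder. It is the sole source of $-p^2\tau^2/24$. The $\varepsilon^4$ coefficient of $1+A_\varepsilon(k_\varepsilon)$ is $-(p\sigma/2+r\tau/6)$ and has no $\tau^2$ term, so your attribution of the whole combination to it is wrong.
\item Substituting $k_\varepsilon^2$ as an explicit $\varepsilon$-series into $A_\varepsilon$ requires $k_{*,\varepsilon}^2$ to $O(\varepsilon^6)$. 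The expansion \eqref{eq:one-dimensional-upper-edge-expansion} is only accurate to $O(\varepsilon^4)$, and that error enters $1+A_\varepsilon(k_\varepsilon)$ at order $\varepsilon^4$. The missing fourth-order coefficient of $k_{*,\varepsilon}^2$ and the fourth-order coefficient of $A_\varepsilon$ cancel each other, so the computation you call the main obstacle is unnecessary.
\end{itemize}
The paper expands in $k^2$ about the exact root, where $A_\varepsilon(k_{*,\varepsilon})=-1$. It needs only two facts: $\partial_{k^2}A_\varepsilon(k_{*,\varepsilon})=-p/2+r\varepsilon^2/6+O(\varepsilon^4)$, read off from \eqref{eq:one-dimensional-expansion-of-A}, and $\partial_{k^2}^2A_\varepsilon=O(\varepsilon^2)$, which makes the quadratic remainder $O(\varepsilon^6)$. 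This gives $1+A_\varepsilon(k_\varepsilon)=\frac{p\tau}{2}\varepsilon^2-\bigl(\frac{p\sigma}{2}+\frac{r\tau}{6}\bigr)\varepsilon^4+O(\varepsilon^6)$. Matching against the cosine expansion with its quartic term kept then gives the stated $\eta$.

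There is also a sign error in the resonant phase. Since $N_\varepsilon\theta_\varepsilon=(N_\varepsilon-m)\pi+\eta L\varepsilon^2+O(\varepsilon^4)$, one has $\sin(N_\varepsilon\theta_\varepsilon)=(-1)^{N_\varepsilon+m}\eta L\varepsilon^2+O(\varepsilon^4)$, not $(-1)^{N_\varepsilon+m+1}\eta L\varepsilon^2$. Hence $U_{N_\varepsilon-1}(A_\varepsilon)=(-1)^{N_\varepsilon+m}\eta L\varepsilon/q+O(\varepsilon^3)$, and the denominator is $(-1)^{N_\varepsilon+m}k_0(2\mi+\Gamma)+O(\varepsilon^2)$ directly. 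There is no sign convention to choose, because $\Gamma$ is defined in the statement.

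With your sign you would get $T_\varepsilon\to(-1)^{N_\varepsilon+m}\,2\mi/(2\mi-\Gamma)$ and $R_\varepsilon\to\Gamma/(2\mi-\Gamma)$. These are the complex conjugates of the correct limits, and they would in turn conjugate $t_*$ and $\mathcal A_{\mathrm{res}}$. Your claim that the $R_\varepsilon$ numerator gives $-\Gamma/(2\mi+\Gamma)$ also contradicts your own sign. Only $|T_\varepsilon|^2$, $|R_\varepsilon|^2$, and the transparent center $\eta=0$ are unaffected by this error.
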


\begin{proof}
Taylor expansion with respect to $k^2$ around the exact root $k_{*,\varepsilon}^2$ gives
\begin{equation}\label{eq:one-dimensional-A-at-second-detuning}
    A_\varepsilon(k_\varepsilon)+1=\frac{p\tau}{2}\varepsilon^2+\left(-\frac{p\sigma}{2}-\frac{r\tau}{6}\right)\varepsilon^4+O(\varepsilon^6).
\end{equation}
Indeed,
\begin{equation}\label{eq:one-dimensional-derivative-at-edge}
    \partial_{k^2}A_\varepsilon(k_{*,\varepsilon})=-\frac{p}{2}+\frac{r}{6}\varepsilon^2+O(\varepsilon^4),
\end{equation}
and the quadratic Taylor remainder is $O(\varepsilon^6)$ because the leading term in \eqref{eq:one-dimensional-expansion-of-A} is affine in $k^2$. On the other hand,
\begin{equation}\label{eq:one-dimensional-cosine-at-second-detuning}
    \cos\bigl(\pi-q\varepsilon+\eta\varepsilon^3\bigr)=-1+\frac{q^2}{2}\varepsilon^2+\left(-q\eta-\frac{q^4}{24}\right)\varepsilon^4+O(\varepsilon^6).
\end{equation}
Using $q^2=p\tau$ and comparing the last two expansions gives
\begin{equation}\label{eq:one-dimensional-phase-at-second-detuning}
    \theta_\varepsilon=\pi-q\varepsilon+\eta\varepsilon^3+O(\varepsilon^5).
\end{equation}
Since $N_\varepsilon\varepsilon=L$ and $qL=m\pi$, it follows that
\begin{equation}\label{eq:one-dimensional-resonant-phase-ratio}
    \cos(N_\varepsilon\theta_\varepsilon)=(-1)^{N_\varepsilon+m}+O(\varepsilon^4), \qquad U_{N_\varepsilon-1}(A_\varepsilon)=(-1)^{N_\varepsilon+m}\frac{\eta L}{q}\varepsilon+O(\varepsilon^3).
\end{equation}
Using \eqref{eq:one-dimensional-expansions-of-B-and-C}, the denominator and reflection numerator in \eqref{eq:one-dimensional-exact-scattering-coefficients} satisfy
\begin{equation}\label{eq:one-dimensional-resonant-denominator-and-numerator}
    \begin{aligned}
        2 \mi  k_\varepsilon T_{N_\varepsilon}(A_\varepsilon)+(k_\varepsilon^2B_\varepsilon-C_\varepsilon)U_{N_\varepsilon-1}(A_\varepsilon)&=(-1)^{N_\varepsilon+m}k_0(2 \mi +\Gamma)+O(\varepsilon^2),\\
        (k_\varepsilon^2B_\varepsilon+C_\varepsilon)U_{N_\varepsilon-1}(A_\varepsilon)&=-(-1)^{N_\varepsilon+m}k_0\Gamma+O(\varepsilon^2).
    \end{aligned}
\end{equation}
This proves \eqref{eq:one-dimensional-resonant-scattering-law}; \eqref{eq:one-dimensional-resonant-energy-law} and \eqref{eq:one-dimensional-transparent-second-detuning} follow immediately.

It remains to prove the field estimate. Equations \eqref{eq:one-dimensional-nodal-state-formulas}, \eqref{eq:one-dimensional-phase-at-second-detuning}, and \eqref{eq:one-dimensional-resonant-scattering-law} imply, uniformly for $0\leq j\leq N_\varepsilon$,
\begin{equation}\label{eq:one-dimensional-resonant-nodal-state}
    u_j=(-1)^jt_*\cos(qx_j)+O(\varepsilon^2), \qquad d_j=\varepsilon^{-2}(-1)^j\mathcal A_{\mathrm{res}}(x_j)+O(1).
\end{equation}
Substituting \eqref{eq:one-dimensional-resonant-nodal-state} into the exact cellwise formulas \eqref{eq:one-dimensional-state-at-left-bubble-endpoint}--\eqref{eq:one-dimensional-exact-field-in-right-part}, and using \eqref{eq:one-dimensional-critical-flux-reversal}, gives, uniformly for $x=\varepsilon(j+y)$ and $0\leq y\leq1$,
\begin{equation}\label{eq:one-dimensional-resonant-cellwise-expansion}
    \varepsilon u_\varepsilon(x)=\mathcal A_{\mathrm{res}}(x_j)\beta_M(j+y)+O(\varepsilon), \qquad \varepsilon^2u_\varepsilon'(x)=\mathcal A_{\mathrm{res}}(x_j)\beta_M'(j+y)+O(\varepsilon).
\end{equation}
The Lipschitz continuity of $\mathcal A_{\mathrm{res}}$ proves \eqref{eq:one-dimensional-resonant-two-scale-estimate}. Finally, $|\beta_M|^2$ is one-periodic, so the uniform field estimate and a cellwise Riemann-sum argument yield \eqref{eq:one-dimensional-resonant-L2-amplification}. Direct integration over the three parts of the cell gives the stated value of $\int_0^1|\beta_M|^2$. This completes the proof.
\end{proof}

Propositions~\ref{prop:one-dimensional-non-resonant-limit} and \ref{prop:one-dimensional-Fabry-Perot-window} distinguish the two possible limits. Away from $qL\in\pi\mathbb Z$, the exterior coupling forces an $O(\varepsilon)$ interior field and the slab converges to a sound-soft obstacle. When $qL\in\pi\mathbb Z$, a fourth-order frequency correction survives in the scattering limit and the microscopic field is amplified to order $\varepsilon^{-1}$. The non-resonance condition \eqref{eq:one-dimensional-non-resonance-condition} is therefore the one-dimensional analogue of excluding the Dirichlet spectrum of the effective operator in the main theorem.

\subsection{Frequencies strictly inside the first Bloch band}

In this subsection, we examine frequencies strictly inside the first Bloch band and show that at regular Bloch points the leading bulk modulation is governed by a \emph{first-order transport equation} and explicitly show that a fixed interior frequency does not need to have a unique scattering limit as the period $\eps$ tends to zero.

Fix a compact interval
\begin{equation}
 K\Subset(0,k_0),\qquad k_0=\frac2{\sqrt{p}}, 
 \label{eq:Ainterval}
\end{equation}
where $p$ is defined in \eqref{eq:one-dimensional-p-and-r}
and let 
\begin{equation}
    \theta_0(k)=\arccos\left(1-\frac{pk^2}{2}\right),\qquad
 \theta_0'(k)=\frac{pk}{\sin\theta_0(k)},\qquad
 \theta_2(k)=\frac{rk^2(6-pk^2)}{12\sin\theta_0(k)}.
\end{equation}
Introduce
\begin{equation}
 \mathcal{Z}_\eps:=\frac{\sin\theta_\eps}{B_\eps},\qquad
 \eta_\eps:=\frac{\mathcal{Z}_\eps}{k},\qquad
 \Phi_\eps:=N_\eps\theta_\eps,
\end{equation}
with $\theta_\eps \in [0,\pi]$ being defined in \eqref{def:thetaeps}.
For $k \in K$, we have
\begin{align}
 \mathcal{Z}_\eps&=\frac{ak^2}{\eps\sin\theta_0}\bigl(1+O(\eps^2)\bigr),\qquad
 \eta_\eps=\frac{ak}{\eps\sin\theta_0}\bigl(1+O(\eps^2)\bigr),\label{eq:AZ}\\
 \Phi_\eps&=N_\eps\theta_0+L\eps\theta_2+O(\eps^3).\label{eq:Aphase}
\end{align}
The $O(\eps)$ correction to the total phase in \eqref{eq:Aphase} is of the same order as the resonance phase width. 

\subsubsection{Exact scattering formulas and nonresonant convergence}
The determinant identity gives $C_\eps=- \mathcal{Z}_\eps\sin\theta_\eps$, and hence
\begin{equation}
 \mathsf M_\eps^{N_\eps}=
 \begin{pmatrix}
 \cos\Phi_\eps& \mathcal{Z}_\eps^{-1}\sin\Phi_\eps\\
 -\mathcal{Z}_\eps\sin\Phi_\eps&\cos\Phi_\eps
 \end{pmatrix}.
\end{equation}
Substitution of the endpoint states $(1+R_\eps, \mi k(1-R_\eps))^{\mathsf T}$ and $(T_\eps, \mi kT_\eps)^{\mathsf T}$ yields the exact identities
\begin{align}
 T_\eps&=\frac{2\mi}{2\mi\cos\Phi_\eps+
 (\eta_\eps+\eta_\eps^{-1})\sin\Phi_\eps},\label{eq:AT}\\
 R_\eps&=\frac{(\eta_\eps^{-1}-\eta_\eps)\sin\Phi_\eps}
 {2\mi\cos\Phi_\eps+(\eta_\eps+\eta_\eps^{-1})\sin\Phi_\eps},\label{eq:AR}\\
 |T_\eps|^2&=\frac1{1+\tfrac14(\eta_\eps-\eta_\eps^{-1})^2\sin^2\Phi_\eps},
 \qquad |R_\eps|^2+|T_\eps|^2=1.\label{eq:Apower}
\end{align}

It can be easily seen that the following result holds. 
\begin{proposition}[Interior-band sound-soft limit]\label{prop:Asoft}
For $k\in K$,
\begin{equation}
 |T_\eps|+|R_\eps+1|
 \le \frac{C\eps}{\eps+|\sin\Phi_\eps|}.\label{eq:Auniform}
\end{equation}
If $|\sin\Phi_\eps|\ge d_0>0$, then
\begin{align}
 T_\eps&=\frac{2\mi\eps\sin\theta_0}{ak\sin\Phi_\eps}+O(\eps^2),\\
 R_\eps&=-1+\frac{2\mi\eps\sin\theta_0}{ak}\cot\Phi_\eps+O(\eps^2).
\end{align}
More generally, $|\sin\Phi_\eps|/\eps\to\infty$ suffices for $T_\eps\to0$ and $R_\eps\to-1$.
\end{proposition}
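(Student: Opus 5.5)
The plan is to work entirely from the exact identities \eqref{eq:AT}--\eqref{eq:AR} together with the asymptotics \eqref{eq:AZ}. Write $s:=\sin\Phi_\eps$, $c:=\cos\Phi_\eps$, and let $\Delta:=2\mi c+(\eta_\eps+\eta_\eps^{-1})s$ denote the common denominator. Since $\sin\theta_0$ is bounded below on the compact set $K$, \eqref{eq:AZ} gives $\eta_\eps\asymp\eps^{-1}$ uniformly for $k\in K$, so $\eta_\eps^{-1}=O(\eps)$ and $\eta_\eps+\eta_\eps^{-1}=\eta_\eps(1+O(\eps^2))$.

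\emph{Step 1: lower bound on $|\Delta|$.} Because $\eta_\eps+\eta_\eps^{-1}$ is real and $c,s$ are real (here $k$ is real and $\Phi_\eps$ is real), we have $|\Delta|^2=4c^2+(\eta_\eps+\eta_\eps^{-1})^2s^2$. This gives $|\Delta|\ge c_0(|c|+\eps^{-1}|s|)$. Using $c^2+s^2=1$, we get $|c|+\eps^{-1}|s|\ge c_1(1+\eps^{-1}|s|)=c_1\eps^{-1}(\eps+|s|)$, and hence
\begin{equation*}
|\Delta|\ge c\,\eps^{-1}(\eps+|s|).
\end{equation*}

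\emph{Step 2: the uniform estimate \eqref{eq:Auniform}.} Directly, $|T_\eps|=2/|\Delta|\le C\eps/(\eps+|s|)$. For $R_\eps+1$, the numerator of $R_\eps+1$ is
\begin{equation*}
(\eta_\eps^{-1}-\eta_\eps)s+\Delta=2\mi c+2\eta_\eps^{-1}s,
\end{equation*}
whose modulus is at most $C$. Dividing by $|\Delta|$ gives the same bound, which proves \eqref{eq:Auniform}. The same estimate shows that $T_\eps\to0$ and $R_\eps\to-1$ whenever $|s|/\eps\to\infty$, which is the last claim.

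\emph{Step 3: the expansions when $|s|\ge d_0$.} In this regime $\Delta=\eta_\eps s\,(1+2\mi c/(\eta_\eps s)+O(\eps^2))$, and the correction $2\mi c/(\eta_\eps s)$ is $O(\eps)$. Therefore
\begin{equation*}
T_\eps=\frac{2\mi}{\eta_\eps s}\bigl(1+O(\eps)\bigr).
\end{equation*}
Inserting $\eta_\eps^{-1}=\eps\sin\theta_0/(ak)\,(1+O(\eps^2))$ gives the stated leading term with an $O(\eps^2)$ remainder. Similarly,
\begin{equation*}
R_\eps+1=\frac{2\mi c+2\eta_\eps^{-1}s}{\Delta}=\frac{2\mi c}{\eta_\eps s}+O(\eps^2),
\end{equation*}
which equals $\dfrac{2\mi\eps\sin\theta_0}{ak}\cot\Phi_\eps+O(\eps^2)$.

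The only delicate point is Step 1: the denominator must be controlled uniformly near $s=0$. This is exactly where reality of $\eta_\eps$ is used, so that the real and imaginary parts of $\Delta$ cannot cancel. All remaining steps are routine bookkeeping of the $O(\eps)$ factors.
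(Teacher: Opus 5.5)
Your proof is correct. It follows essentially the route the paper intends: the paper gives no written argument beyond saying the result ``can be easily seen'' from the exact identities \eqref{eq:AT}--\eqref{eq:AR} and the asymptotics \eqref{eq:AZ}, and your direct computation is exactly that. Both steps that carry the argument are verified correctly: the lower bound $|\Delta|\gtrsim\eps^{-1}(\eps+|\sin\Phi_\eps|)$, which follows because $\Delta$ has real part $(\eta_\eps+\eta_\eps^{-1})\sin\Phi_\eps$ and imaginary part $2\cos\Phi_\eps$, and the simplification of the numerator of $R_\eps+1$ to $2\mi\cos\Phi_\eps+2\eta_\eps^{-1}\sin\Phi_\eps$.
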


\subsubsection{Interior fields and the transport description}
At $x_j=j\eps$, the backward propagation from $x=L$ gives
\begin{align}
 u_j&=T_\eps\left[\cos((N_\eps-j)\theta_\eps)
 - \mi\eta_\eps^{-1}\sin((N_\eps-j)\theta_\eps)\right],\label{eq:Anodes}\\
 J_j&=T_\eps\left[\mathcal{Z}_\eps\sin((N_\eps-j)\theta_\eps)
 + \mi k\cos((N_\eps-j)\theta_\eps)\right].
\end{align}
Propagation through each matrix segment and bubble gives
\begin{equation}
 \|{u_\eps}\|_{L^\infty(0,L)}\le C|T_\eps|.
 \label{eq:Ainterior}
\end{equation}

Moreover, we have
\begin{equation}
 \|{u_\eps'}\|_{L^\infty((0,L)\setminus D_\eps)} \le C\eps^{-1}|T_\eps|,
 \quad \|{u_\eps'}\|_{L^\infty(D_\eps)}\le C\eps|T_\eps|.\label{eq:Ainteriorprime}
\end{equation}

To express the resonance criterion in terms of the accumulated Bloch phase, we let
\begin{equation*}
 d_\eps(k):=\mathrm{dist}\, \left(\frac{N_\eps\theta_\eps(k)}\pi,\Z\right).
\end{equation*}
Equations \eqref{eq:AZ}, \eqref{eq:Apower}, and \eqref{eq:Ainterior}, together with $u_\eps(L)=T_\eps$, imply uniformly for $k\in K$ that
\begin{equation*}
 \|{u_\eps}\|_{L^\infty(0,L)} = O(|T_\eps|)= O(\frac{\eps}{\eps+d_\eps(k)}).
\end{equation*}
Consequently, the interior field tends uniformly to zero when $d_\eps(k)/\eps\to\infty$, whereas its amplitude remains of order one when $d_\eps(k)=O(\eps)$. For a fixed wave number $k_c$, setting $\theta_c=\theta_0(k_c)$, these two alternatives can equivalently be expressed using
\begin{equation*}
 d_{N_\eps}:=\mathrm{dist}\, \left(\frac{N_\eps\theta_c}\pi,\Z\right),
\end{equation*}
since \eqref{eq:Aphase} gives $d_\eps(k_c)=d_{N_\eps}+O(\eps)$. However, the phase correction in \eqref{eq:Aphase} must be retained to determine the limiting response within a resonance window. This order $O(1)$ interior-band response differs from the order $O(\eps^{-1})$ field enhancement in the band-edge resonance regime.

Furthermore, the corresponding bulk capacitance symbol is
\begin{equation}
 \widehat{C}^\theta=\frac{2-2\cos\theta}{p}.
\end{equation}
Therefore, for a slowly modulated carrier $q_j=e^{\mi  j\theta_c}A(\eps j)$ and
$k_\eps=k_c+\eps\nu+O(\eps^2)$, where $\widehat{C}^{\theta_c}=k_c^2$ and $k_c \in (0,k_0)$, Taylor expansion of the discrete operator gives immediately the leading envelope equation
\begin{equation}
 - \mi\partial_\theta\widehat{C}^{\theta_c} A'(x)-2k_c\nu A(x)=0.\label{eq:Atransport}
\end{equation}
There are two independent bulk carriers at $\pm\theta_c$. Their coefficients are determined by matching at both ends of the sample. At an interior regular point, \eqref{eq:Atransport} is of first order. Note that the obtained second-order equation near the band-edge arises because the first derivative of the band vanishes there.

For the two carriers $\pm\theta_c$, the transport equations take the following form:
\begin{equation*}
 - \mi v_c A_+'=\nu A_+,\qquad \mi v_c A_-'=\nu A_-,\qquad
 v_c:=\frac{\partial_\theta\widehat{C}^{\theta_c}}{2k_c}
 =\frac{\sin\theta_c}{pk_c}>0.
\end{equation*}
Thus, $A_\pm(x)=a_\pm e^{\pm \mi\nu x/v_c}$, with coefficients determined by matching at the two endpoints. For fixed $k_c$, the transport velocities $\pm v_c$ are independent of the sequence $\eps=L/N_\eps$. The sequence dependence enters through the accumulated phase, the selected detuning, and the envelope coefficients. Indeed, if $k_\eps=k_c+\eps\nu+O(\eps^2)$, then
\begin{equation*}
 N_\eps\theta_\eps(k_\eps)
 =N_\eps\theta_c+L\theta_0'(k_c)\nu+O(\eps).
\end{equation*}
Therefore, an order $O(1)$ interior amplitude requires this phase to lie within $O(\eps)$ of $\pi\Z$. Equivalently, $k_\eps$ must lie within $O(\eps^2)$ of an exact resonance center. In fact, arbitrary $O(\eps)$ detuning does not need to preserve the resonant response.

\subsubsection{Resonance centers, spacing, and linewidths}
Let $k_{m,\eps}$ be defined by
\begin{equation}
 N_\eps\theta_\eps(k_{m,\eps})=m\pi,
\end{equation}
for integers $m$ such that the corresponding wave numbers remain in a fixed compact subinterval of $(0,k_0)$. At these exact centers $R_\eps=0$ and $T_\eps=(-1)^m$. Put
\begin{equation}
 k_{m,0}=\frac2{\sqrt p}\sin\frac{m\pi}{2N_\eps}.
\end{equation}
The implicit function theorem and 
\begin{equation}
    \theta_\eps(k) =\theta_0(k)+\eps^2\theta_2(k)+O(\eps^4),\qquad
 \theta_2(k)=\frac{rk^2(6-pk^2)}{12\sin\theta_0(k)}
\end{equation}
give
\begin{align}
 k_{m,\eps}&=k_{m,0}
 -\frac{r k_{m,0}(6-pk_{m,0}^2)}{12p}\eps^2+O(\eps^4),\label{eq:Acenters}\\
 k_{m+1,\eps}-k_{m,\eps}
 &=\frac{\pi\eps\sin\theta_0(k_{m,\eps})}{Lp k_{m,\eps}}+O(\eps^2).
\end{align}
We immediately obtain the following result.
\begin{proposition}[Interior-band resonance window]\label{prop:Ares}
Suppose $k_{m,\eps}\to k_c\in(0,k_0)$ and
$k=k_{m,\eps}+\eps^2 z$, with $z$ in a bounded set. Define
\begin{equation}
 G_c(z)=\frac{La}{1-pk_c^2/4}\,z.
\end{equation}
Then
\begin{equation}
 T_\eps=(-1)^m\frac{2\mi}{2\mi+G_c(z)}+o(1),\quad
 R_\eps=-\frac{G_c(z)}{2 \mi+G_c(z)}+o(1),\quad
 |T_\eps|^2\rightarrow\frac4{4+G_c(z)^2}.\label{eq:Alorentz}
\end{equation}
The half width at half maximum in the variable $k$ is 
\begin{equation}
\frac{2(1-pk_c^2/4)}{La}\eps^2,\label{eq:Awidth}
\end{equation}
up to $o(\eps^2)$. If $k_{m,\eps}-k_c=O(\eps)$, the $o(1)$ errors in \eqref{eq:Alorentz} are $O(\eps)$.
\end{proposition}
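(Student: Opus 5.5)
The argument reduces to the exact formulas \eqref{eq:AT}--\eqref{eq:Apower}. Everything then follows from expanding the total phase $\Phi_\varepsilon=N_\varepsilon\theta_\varepsilon(k)$ to first order around the exact center $k_{m,\varepsilon}$, and evaluating $\eta_\varepsilon$ there.

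\textbf{Step 1: expansion of the phase.} By definition, $\Phi_\varepsilon(k_{m,\varepsilon})=m\pi$. Since $\theta_\varepsilon$ is uniformly smooth in $k$ on the compact interval $K$, Taylor expansion at $k=k_{m,\varepsilon}+\varepsilon^2z$ gives
\begin{equation*}
\Phi_\varepsilon(k)=m\pi+N_\varepsilon\theta_\varepsilon'(k_{m,\varepsilon})\varepsilon^2z+O(N_\varepsilon\varepsilon^4)=m\pi+L\varepsilon\,\theta_0'(k_c)z+o(\varepsilon).
\end{equation*}
Here I use $N_\varepsilon\varepsilon=L$, $\theta_\varepsilon'=\theta_0'+O(\varepsilon^2)$, and $k_{m,\varepsilon}\to k_c$. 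The error is $O(\varepsilon^2)$ when $k_{m,\varepsilon}-k_c=O(\varepsilon)$. Consequently
\begin{equation*}
\cos\Phi_\varepsilon=(-1)^m+O(\varepsilon^2),\qquad \sin\Phi_\varepsilon=(-1)^mL\theta_0'(k_c)z\,\varepsilon+o(\varepsilon).
\end{equation*}

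\textbf{Step 2: the product $\eta_\varepsilon\sin\Phi_\varepsilon$.} By \eqref{eq:AZ}, $\eta_\varepsilon=\frac{ak}{\varepsilon\sin\theta_0}(1+O(\varepsilon^2))$, so $\eta_\varepsilon^{-1}=O(\varepsilon)$. Hence
\begin{equation*}
(\eta_\varepsilon+\eta_\varepsilon^{-1})\sin\Phi_\varepsilon=(-1)^m\frac{ak_c}{\sin\theta_0(k_c)}L\theta_0'(k_c)z+o(1).
\end{equation*}
With $\theta_0'=pk/\sin\theta_0$ this equals $(-1)^m\frac{Lapk_c^2}{\sin^2\theta_0(k_c)}z$. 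To identify it with $(-1)^m G_c(z)$, I use $\cos\theta_0=1-pk^2/2$, which gives
\begin{equation*}
\sin^2\theta_0=pk^2\bigl(1-\tfrac{pk^2}{4}\bigr).
\end{equation*}
Therefore $\frac{Lapk_c^2}{\sin^2\theta_0}=\frac{La}{1-pk_c^2/4}$, which is exactly $G_c(z)$.

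\textbf{Step 3: inserting into the exact formulas.} Substituting into \eqref{eq:AT} gives
\begin{equation*}
T_\varepsilon=\frac{2\mi}{(-1)^m(2\mi+G_c)}+o(1)=(-1)^m\frac{2\mi}{2\mi+G_c(z)}+o(1).
\end{equation*}
The denominator is bounded away from zero because $|2\mi+G_c|\ge2$. In \eqref{eq:AR} the numerator is $(\eta_\varepsilon^{-1}-\eta_\varepsilon)\sin\Phi_\varepsilon=-(-1)^mG_c+o(1)$, which gives the stated formula for $R_\varepsilon$. The modulus formula follows directly, or from \eqref{eq:Apower}. Setting $|T|^2=\tfrac12$ means $G_c(z)^2=4$, that is $|z|=2(1-pk_c^2/4)/(La)$. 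Multiplying by $\varepsilon^2$ gives the half width \eqref{eq:Awidth}.

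The only delicate point is bookkeeping the error sizes in Step 1. The remainder must be $o(\varepsilon)$ in $\Phi_\varepsilon$, because it is multiplied by $\eta_\varepsilon\sim\varepsilon^{-1}$. That bound follows from $N_\varepsilon\varepsilon^4|z|^2\sup|\theta_\varepsilon''|=O(\varepsilon^3)$ together with the convergence of $\theta_\varepsilon'(k_{m,\varepsilon})$ to $\theta_0'(k_c)$. When $k_{m,\varepsilon}-k_c=O(\varepsilon)$, the latter difference is $O(\varepsilon)$. Combined with the $O(\varepsilon^2)$ corrections in \eqref{eq:AZ}, this makes all errors $O(\varepsilon)$.
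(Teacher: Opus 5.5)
Your proposal is correct and follows the route the paper intends. The paper asserts the result as immediate from the exact formulas \eqref{eq:AT}--\eqref{eq:Apower}, the expansion \eqref{eq:AZ} of $\eta_\eps$, and the defining relation $N_\eps\theta_\eps(k_{m,\eps})=m\pi$; your Steps 1--3 supply exactly those omitted details, including the identity $\sin^2\theta_0=pk^2(1-pk^2/4)$ that turns $Lapk_c^2/\sin^2\theta_0$ into $La/(1-pk_c^2/4)$. Two points you use only implicitly are that $z$ is real, which gives $|2\mi+G_c|\ge 2$, and that the half-width claim also needs uniform convergence in $z$ on bounded sets together with the strict monotonicity of $4/(4+G_c^2)$ in $|z|$; both hold here.
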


The separation of neighboring centers is $O(\eps)$, whereas the width of each peak is $O(\eps^2)$. These are widths in $k$, not in $k^2$. 

\subsubsection{A fixed-frequency example without a unique limit}
Choose
\begin{equation}
 k_c=\sqrt{\frac2p},\qquad \theta_0(k_c)=\frac\pi2,
 \qquad G_*:=\frac{2Lar k_c}{3p}>0.\label{eq:Aexample}
\end{equation}
When $N_\eps$ is even, \eqref{eq:Aphase} gives
$\Phi_\eps=N_\eps\pi/2+L\eps\theta_2(k_c)+O(\eps^3)$ and
$\eta_\eps L\eps\theta_2(k_c)\to G_*$. Hence,
\begin{equation}
 R_\eps\rightarrow-\frac{G_*}{2\mi+G_*},\qquad
 |T_\eps|^2\rightarrow\frac4{4+G_*^2}>0
 .
 \label{eq:Aeven}
\end{equation}
When $N_\eps$ is odd, $|\sin\Phi_\eps|\to1$, so
\begin{equation}
 R_\eps\rightarrow-1,\qquad |T_\eps|^2\rightarrow0.
 \label{eq:Aodd}
\end{equation}
Thus, a fixed interior frequency generally does not have a unique scattering limit as $\eps=L/N_\eps\to0$. The complex transmission on the even subsequence has the further factor $(-1)^{N_\eps/2}$, while the power in \eqref{eq:Aeven} does not have such an ambiguity.

For example, $a=0.3$ and $L=1$ give $k_c\simeq3.086067$ and
$4/(4+G_*^2)\simeq0.579211$. This example rules out extending the sound-soft theorem to every fixed interior frequency without a phase or resolvent nonresonance condition. It does not contradict the effective band-edge behavior, whose frequency scaling and nonresonance condition are different.

\section{Interior-band behavior in higher dimensions: a local transport equation}
\label{append:B}

In this section, we formally show that at regular Bloch points, the bulk amplitudes satisfy a transport equation. For doing so, we use the notation 
\begin{equation}
 c(\alpha):=\widehat{C}^\alpha
 =\frac1{|D|}\int_{Y\setminus D}|\nabla V^\alpha|^2 \, d y,
 \qquad s_\eps:=\omega_\eps^2/v_{\rm b}^2.\label{eq:Bsymbol}
\end{equation}

We consider the physical first-band squared frequency, that is,
\begin{equation}
 h_\eps(\alpha):=\lambda_{1,\eps^2}^{\alpha}
 =c(\alpha)+O(\eps^2).
 \label{eq:Bbandexp}
\end{equation}
In the physical period-$\eps$ medium,
$\omega_{1,\eps}^{\alpha/\eps}=v_{\rm b}\sqrt{h_\eps(\alpha)}$. The term $O(\eps^2)$ can be made explicit using \eqref{eq:omega-square-expansion}.

Here, we fix an interior value $s_c=\omega_c^2/v_{\rm b}^2$, rather than taking an $O(\eps^2)$ detuning from $c(M)$. Suppose that
\begin{equation}
 c(\alpha_c)=s_c,\qquad \nabla_\alpha c(\alpha_c)\ne0.
\end{equation}
The scalar symbol is real analytic and reciprocal, $c(-\alpha)=c(\alpha)$. Its Fourier coefficients are exponentially summable. Let $q_\eps:= Q_\eps u_\eps$. For
the ansatz $q_\eps(n)=e^{\mi  n\cdot\alpha_c}A(\eps n)$, with the slowly varying envelope $A$ being a Schwartz function and $A(\eps n)$ being the envelope sampled at cell $n$, the Taylor expansion of the convolution
\begin{equation}
    (\mathfrak{C} q_\eps)(n)= \sum_{r\in \Z^d} c_r e^{\mi  (n-r) \cdot \alpha_c} A(\eps (n- r)),
\end{equation}
where $\mathfrak{C}$ is defined by \eqref{def:capacitanc_op}
and $c(\alpha)= \sum_{r\in \Z^d} c_r e^{-\mi  r\cdot \alpha}$, yields the following expansion:
\begin{align}
 (\mathfrak{C} q_\eps)(n)=e^{\mi  n\cdot\alpha_c}\bigg[
 &c(\alpha_c)A(\eps n)- \mi\eps\nabla c(\alpha_c)\cdot\nabla A(\eps n)\notag\\
 &-\frac{\eps^2}{2}\sum_{l,j}\partial_{\alpha_l\alpha_j}c(\alpha_c)
 \partial_{lj}A(\eps n)\bigg]+\mathcal R_\eps(n),\label{eq:BbulkTaylor}
\end{align}
where $\eps^{d/2}\|{\mathcal R_\eps}\|_{\ell^2}\le C\eps^3$ for some constant $C$ independent of $\eps$.

If $\omega_\eps=\omega_c+\eps\nu+O(\eps^2)$, then, from \eqref{eq:Bbandexp}, the leading equation follows:
\begin{equation}
 \mi \nabla c(\alpha_c)\cdot\nabla A
 + \frac{2\omega_c\nu}{v_{\rm b}^2}A=0.\label{eq:Btransport}
\end{equation}

Equation \eqref{eq:Btransport} is a local bulk modulation equation for a selected simple regular Bloch mode. It does not constitute an effective scattering problem for a finite sample. Such a problem also requires boundary matching, the contributions of other propagating and evanescent modes, and uniform control of the resulting boundary-coupled resolvent. In particular, regularity of the dispersion surface alone does not exclude vanishing surface coupling or finite-sample resonances. The establishment of these properties for higher-dimensional arrays, including the simpler case of symmetry-plane slabs, remains open in the present work.

\bibliographystyle{alpha}
\bibliography{mybib}

@article {arma-honeycomb1,
    AUTHOR = {Ammari, Habib and Hiltunen, Erik Orvehed and Yu, Sanghyeon},
     TITLE = {A high-frequency homogenization approach near the {D}irac
              points in bubbly honeycomb crystals},
   JOURNAL = {Arch. Ration. Mech. Anal.},
  FJOURNAL = {Archive for Rational Mechanics and Analysis},
    VOLUME = {238},
      YEAR = {2020},
    NUMBER = {3},
     PAGES = {1559--1583},
       DOI = {10.1007/s00205-020-01572-w},
       URL = {https://doi.org/10.1007/s00205-020-01572-w},
}

@article {sima-honeycomb,
    AUTHOR = {Ammari, Habib and Fitzpatrick, Brian and Hiltunen, Erik
              Orvehed and Lee, Hyundae and Yu, Sanghyeon},
     TITLE = {Honeycomb-lattice {M}innaert bubbles},
   JOURNAL = {SIAM J. Math. Anal.},
  FJOURNAL = {SIAM Journal on Mathematical Analysis},
    VOLUME = {52},
      YEAR = {2020},
    NUMBER = {6},
     PAGES = {5441--5466},
       DOI = {10.1137/19M1281782},
       URL = {https://doi.org/10.1137/19M1281782},
}

@book{cbms,
  author    = {Ammari, Habib and Davies, Bryn and Hiltunen, Erik Orvehed},
  title     = {Mathematical Theories for Metamaterials: {From} Condensed Matter Theory to Subwavelength Physics},
  series    = {{NSF-CBMS} Regional Conference Series in the Mathematical Sciences},
  publisher = {American Mathematical Society},
  year     = {2026},
  volume  = {136},
}

@article{fabryperot1,
    author = {Ammari, Habib and Li, Bowen and Liu, Ping and Shao, Yingjie},
    title = {Frequency-dependent capacitance matrix formulation for {F}abry-{P}érot-type Resonances. Part I: one-dimensional finite systems},
    journal = {arXiv preprint arXiv:2604.01159},
    year = {2026}
}

@article{fabryperot3,
      title={Frequency-dependent capacitance matrix formulation for {F}abry-{P}\'erot resonances in two and three dimensional systems}, 
      author={Habib Ammari and Bowen Li and Ping Liu and Yingjie Shao and Alexander Uhlmann},
      year={2026},
     journal={arXiv preprint arXiv:2605.27572},
}

@article{ammari.davies.ea2024Functional,
  title = {Functional Analytic Methods for Discrete Approximations of Subwavelength Resonator Systems},
  author = {Ammari, Habib and Davies, Bryn and Hiltunen, Erik Orvehed},
  year = 2024,
  journal = {Pure and Applied Analysis},
  volume = {6},
  number = {3},
  pages = {873--939},
  doi = {10.2140/paa.2024.6.873},
}

@article {LMS25,
    AUTHOR = {Ammari, Habib and Davies, Bryn and Hiltunen, Erik Orvehed},
     TITLE = {Spectral convergence in large finite resonator arrays: the
              essential spectrum and band structure},
   JOURNAL = {Bull. Lond. Math. Soc.},
  FJOURNAL = {Bulletin of the London Mathematical Society},
    VOLUME = {57},
      YEAR = {2025},
    NUMBER = {3},
     PAGES = {730--747},
}

@article{du2026homogenization,
  title={Homogenization of the scattered wave and scattering resonances for periodic high-contrast subwavelength resonators},
  author={Du, Yuxin and Fu, Xin and Jing, Wenjia},
  journal={Communications in Mathematical Physics},
  volume={407},
  number={8},
  pages={162},
  year={2026},
  publisher={Springer}
}

@article{ammari2017subwavelength,
  title={Subwavelength phononic bandgap opening in bubbly media},
  author={Ammari, Habib and Fitzpatrick, Brian and Lee, Hyundae and Yu, Sanghyeon and Zhang, Hai},
  journal={Journal of Differential Equations},
  volume={263},
  number={9},
  pages={5610--5629},
  year={2017},
  publisher={Elsevier}
}

@article{ammari2019bloch,
  title={Bloch waves in bubbly crystal near the first band gap: a high-frequency homogenization approach},
  author={Ammari, Habib and Lee, Hyundae and Zhang, Hai},
  journal={SIAM Journal on Mathematical Analysis},
  volume={51},
  number={1},
  pages={45--59},
  year={2019},
  publisher={SIAM}
}

@article {feppon2023,
    AUTHOR = {Feppon, Florian and Ammari, Habib},
     TITLE = {Homogenization of sound-soft and high-contrast acoustic
              metamaterials in subcritical regimes},
   JOURNAL = {ESAIM Math. Model. Numer. Anal.},
  FJOURNAL = {ESAIM. Mathematical Modelling and Numerical Analysis},
    VOLUME = {57},
      YEAR = {2023},
    NUMBER = {2},
     PAGES = {491--543},
}

@article {milton,
    AUTHOR = {Harutyunyan, Davit and Milton, Graeme W. and Craster, Richard
              V.},
     TITLE = {High-frequency homogenization for travelling waves in periodic
              media},
   JOURNAL = {Proc. A},
  FJOURNAL = {Proceedings A},
    VOLUME = {472},
      YEAR = {2016},
    NUMBER = {2191},
     PAGES = {20160066, 18},
}

@article {mourad2020,
    AUTHOR = {Ammari, Habib and Challa, Durga Prasad and Choudhury, Anupam
              Pal and Sini, Mourad},
     TITLE = {The equivalent media generated by bubbles of high contrasts:
              volumetric metamaterials and metasurfaces},
   JOURNAL = {Multiscale Model. Simul.},
  FJOURNAL = {Multiscale Modeling \& Simulation. A SIAM Interdisciplinary
              Journal},
    VOLUME = {18},
      YEAR = {2020},
    NUMBER = {1},
     PAGES = {240--293},
}

@article {hai2017,
    AUTHOR = {Ammari, Habib and Zhang, Hai},
     TITLE = {Effective medium theory for acoustic waves in bubbly fluids
              near {M}innaert resonant frequency},
   JOURNAL = {SIAM J. Math. Anal.},
  FJOURNAL = {SIAM Journal on Mathematical Analysis},
    VOLUME = {49},
      YEAR = {2017},
    NUMBER = {4},
     PAGES = {3252--3276},
}

@article{arendt2015dirichlet,
  title={The {D}irichlet-to-{N}eumann operator on exterior domains},
    AUTHOR = {Arendt, W. and ter Elst, A. F. M.}, 
  journal={Potential analysis},
  volume={43},
  number={2},
  pages={313--340},
  year={2015}
}

@article{grassle2025dirichlet,
  title={{D}irichlet-to-{N}eumann operator for the {H}elmholtz problem with general wavenumbers on the $ n $-sphere},
  author={Gr{\"a}{\ss}le, Benedikt and Sauter, Stefan A},
  journal={arXiv preprint arXiv:2503.18837},
  year={2025}
}

@article{larson2024space,
  title={Space-time CutFEM on overlapping meshes I: simple continuous mesh motion},
  author={Larson, Mats G and Logg, Anders and Lundholm, Carl},
  journal={Numerische Mathematik},
  volume={156},
  number={3},
  pages={1015--1054},
  year={2024},
  publisher={Springer}
}

@article{verchota1984layer,
  title={Layer potentials and regularity for the {D}irichlet problem for {L}aplace's equation in {L}ipschitz domains},
  author={Verchota, Gregory},
  journal={Journal of functional analysis},
  volume={59},
  number={3},
  pages={572--611},
  year={1984},
  publisher={Elsevier}
}

@book{Triebel2010Theory,
  author    = {Triebel, Hans},
  title     = {Theory of Function Spaces},
  series    = {Modern Birkh{\"a}user Classics},
  publisher = {Birkh{\"a}user Basel},
  year      = {1983},
}

@article{savare2002domain,
  title={Domain perturbations and estimates for the solutions of second order elliptic equations},
  author={Savar{\'e}, Giuseppe and Schimperna, Giulio},
  journal={Journal de math{\'e}matiques pures et appliqu{\'e}es},
  volume={81},
  number={11},
  pages={1071--1112},
  year={2002},
  publisher={Elsevier}
}

\end{document}